\newtheorem{thm}{Theorem}[section]
\newtheorem{lem}[thm]{Lemma}
\newtheorem{prop}[thm]{Proposition}
\newtheorem{con}{Conjecture}[section]
\theoremstyle{definition}
\newtheorem{defn}{Definition}[section]
\theoremstyle{remark}
\newtheorem{rem}{Remark}[section]
\numberwithin{equation}{section}
\newcommand{\dd}{\mathop{}\!\mathrm{d}}
\newcommand{\supp}{\text{spt}\,}
\newcommand{\MM}{T}
\newcommand{\MMM}{{\bar T}}
\newcommand{\RR}{\mathring R}
\newcommand{\R}{\mathbb R}
\newcommand{\ZZ}{\mathbb Z}
\newcommand{\NN}{\mathbb N}
\newcommand{\vv}[0]{\bar v}
\newcommand{\tb}[0]{\bar \theta}
\newcommand{\ppp}[0]{\bar{\bar p}}
\newcommand{\pex}[0]{\textsf{\textit{p}}}
\newcommand{\ootimes}{\mathbin{\mathring{\otimes}}}
\newcommand{\vex}[0]{\textsf{\textit{v}}}
\newcommand{\pp}[0]{\bar p}
\newcommand{\RRR}{\mathring{\bar R}}
\newcommand{\wpq}[1][q+1]{w^{\textup{(p)}}_{#1}}
\newcommand{\wcq}[1][q+1]{w^{\textup{(c)}}_{#1}}
\newcommand{\dpq}[1][q+1]{d^{\textup{(p)}}_{#1}}
\newcommand{\Wp}{w^{\textup{(p)}}}
\newcommand{\wc}{w^{\textup{(c)}}}
\newcommand{\Dp}{d^{\textup{(p)}}}
\newcommand{\Rnash}{R_{q+1}^\textup{Nash}}
\newcommand{\Rtransport}{R_{q+1}^\textup{trans}}
\newcommand{\Rosc}{R_{q+1}^\textup{osc}}
\newcommand{\Mtrans}{T_{q+1}^\textup{trans}}
\newcommand{\Mosc}{T_{q+1}^\textup{osc}}
\newcommand{\Mnash}{T_{q+1}^\textup{Nash}}
\newcommand{\de}{d^{(\text{e})}}
\newcommand{\tr}[0]{\textup{Tr}}
\newcommand{\vloc}{u^{{\mathcal{C}}}}
\newcommand{\vnon}{u^{\mathcal{P}}}
 \newcommand{\wloc}{w^{\mathcal{C}}}
\newcommand{\wnon}{w^{\mathcal{P}}}
\newcommand{\Rrem}{R_{q+1}^\textup{rem}}
\newcommand{\tRR}{{\mathring{\widetilde R} }}
\newcommand{\Rtrans}{R_{q+1}^\textup{trans}}
\newcommand{\Prem}{P_{q+1}^\textup{rem} }
\newcommand{\Trem}{T_{q+1}^\textup{rem} }
\newcommand{\matd}[1]{D_{t,#1}}
\def\dashint{\,\ThisStyle{\ensurestackMath{%
  \stackinset{c}{.2\LMpt}{c}{.5\LMpt}{\SavedStyle-}{\SavedStyle\phantom{\int}}}%
  \setbox0=\hbox{$\SavedStyle\int\,$}\kern-\wd0}\int}
\def\ddashint{\,\ThisStyle{\ensurestackMath{%
  \stackinset{c}{.2\LMpt}{c}{.5\LMpt+.2\LMex}{\SavedStyle-}{%
    \stackinset{c}{.2\LMpt}{c}{.5\LMpt-.2\LMex}{\SavedStyle-}{%
      \SavedStyle\phantom{\int}}}}\setbox0=\hbox{$\SavedStyle\int\,$}\kern-\wd0}\int}
\newcommand\widecheck[1]{%
\savestack{\tmpbox}{\stretchto{%
  \scaleto{%
    \scalerel*[\widthof{\ensuremath{#1}}]{\kern-.6pt\bigwedge\kern-.6pt}%
    {\rule[-\textheight/2]{1ex}{\textheight}}
  }{\textheight}%
}{0.5ex}}%
\stackon[1pt]{#1}{\scalebox{-1}{\tmpbox}}%
}
\newcommand{\coloneq}{\mathrel{\mathop:}=}
\newcommand{\ii}{\textup{i}}
\newcommand{\ee}{\textup{e}}
\newcommand{\Div}{\text{div}}
\newcommand{\del}{\partial}
\newcommand{\TT}[0]{\mathsf{T}}
\newcommand{\TTT}[0]{\mathbb{T}}
\newcommand{\uin}[0]{u^{\textup{in}}}
\newcommand{\bin}[0]{b^{\textup{in}}}
\newcommand{\vin}[0]{v^{\textup{in}}}
\newcommand{\tin}[0]{\theta^{\textup{in}}}
\newcommand{\wtq}[1][q+1]{w^{\textup{(t)}}_{#1}}
\newcommand{\dtq}[1][q+1]{d^{\textup{(t)}}_{#1}}
\newcommand{\wt}{w^{\textup{(t)}}}
\newcommand{\dt}{d^{\textup{(t)}}}
\newcommand{\Tosco}{T_{\text{osc},1}}
\newcommand{\Tosct}{T_{\text{osc},2}}
\newcommand{\Rosco}{R_{\text{osc},1}}
\newcommand{\Rosct}{R_{\text{osc},2}}
\newcommand{\thistheoremname}{}
 \newtheorem*{genericthm*}{\thistheoremname}
\newenvironment{namedthm*}[1]
  {\renewcommand{\thistheoremname}{#1}%
   \begin{genericthm*}}
  {\end{genericthm*}}
\numberwithin{equation}{section}
\begin{document}
\title{On Onsager-type conjecture for the Els\"{a}sser energies of the ideal MHD equations}

\author{Changxing Miao}
\address[Changxing Miao]{Institute  of Applied Physics and Computational Mathematics, Beijing 100191, P.R. China,}

\email{miao changxing@iapcm.ac.cn}

\author{Yao Nie}

\address[Yao Nie]{School of Mathematical Sciences and LPMC, Nankai University, Tianjin, China.}

 \email{nieyao@nankai.edu.cn}

\author{Weikui Ye}

\address[Weikui Ye]{School of Mathematical Sciences, South China Normal University, Guangzhou,  China}

 \email{904817751@qq.com}
\date{}

\begin{abstract}In this paper, we investigate the ideal magnetohydrodynamics (MHD) equations on tours $\TTT^d$. For $d=3$, we resolve the flexible part of Onsager-type conjecture for Els\"{a}sser energies of the ideal MHD equations. More precisely, for \(\beta < 1/3\), we construct weak solutions \((u, b) \in C^\beta([0,T] \times \mathbb{T}^3)\) with both the total energy dissipation and failure of cross helicity  conservation. The key idea of the proof relies on a symmetry reduction that embeds the ideal MHD system
into a 2$\frac{1}{2}$D Euler flow and the Newton-Nash iteration technique recently developed in \cite{GR}. For $d=2$, we show the non-uniqueness of H\"{o}lder-continuous  weak solutions  with non-trivial magnetic fields. Specifically, for \(\beta < 1/5\), there exist infinitely many solutions \((u, b) \in C^\beta([0,T] \times \mathbb{T}^2)\) with  the same initial data while satisfying the total energy dissipation with non-vanishing velocity and magnetic fields. The new ingredient is developing  a spatial-separation-driven iterative scheme that incorporates the magnetic field  as a controlled perturbation within the convex integration framework for the velocity field, thereby providing sufficient oscillatory freedom for Nash-type perturbations in the 2D setting. As a byproduct, we prove that any H\"{o}lder-continuous Euler solution can be approximated by a sequence of $C^\beta$-weak solutions for the ideal MHD equations in the $L^p$-topology for $1\le p<\infty$.

\end{abstract}
\maketitle
\emph{Keywords}: Ideal MHD system; Onsager-type conjecrure; Energy dissipation; Cross helicity

\emph{Mathematics Subject Classification}: 35Q30,~76D03.

\section{Introduction}
Ideal magnetohydrodynamics (MHD) equations describes the dynamics of electrically conducting fluids such as plasmas \cite{GLL} or liquid metals \cite{ST} in the presence of magnetic fields. This model is widely used in astrophysics to investigate phenomena such as solar flares, stellar winds, and the dynamics of accretion disks. Understanding these intricate physical mechanisms fundamentally relies on the mathematical theories on MHD equations.

 In this paper, we consider the ideal incompressible MHD equations
\begin{equation}
\left\{ \begin{alignedat}{-1}
&\del_t u+u\cdot\nabla  u  +\nabla P   =   b\cdot\nabla  b, &{\rm in}\quad \TTT^d\times(0,T),
 \\
&  \del_t b+u\cdot\nabla b  =b\cdot\nabla u, &{\rm in}\quad \TTT^d\times(0,T),
 \\
&  \nabla \cdot u=\nabla \cdot b  = 0, &{\rm in}\quad \TTT^d\times(0,T),
\end{alignedat}\right.  \label{E}
\end{equation}
associated with initial data $(u, b)\big|_{t=0} = (\uin, \bin)$, where $u: \TTT^d\times [0,T]\to\R^d$ denotes the velocity of the incompressible fluid, $p:\TTT^d\times [0,T]\to\R$ the pressure field, $b:\TTT^d\times [0,T]\to\R^d$  the magnetic field. We are interested in Onsager-type conjecture associated with the ideal MHD equations \eqref{E} and define the
weak solutions:
\begin{defn}[Weak solution]\label{def}Let $T>0$ and let $(\uin, \bin)\in C^{\alpha}(\TTT^d)$ for some $\alpha>0$ be divergence-free in the sense of distributions and have zero mean\footnote{Throughout this paper, ``zero mean'' is ``zero spatial mean''.}. We say that  $(u,b)\in C^0([0, T]\times\TTT^d)$ is a \emph{weak solution}  to the MHD equations~\eqref{E} if \begin{itemize}
     \item [(1)] For a.e. $t\in [0,T]$, $(u(\cdot, t), b(\cdot, t))$ is divergence-free in the sense of distributions;
     \item [(2)]For each divergence-free test function $\phi\in C^\infty_0([0,T)\times \mathbb T^d)$,
     \begin{align}\nonumber
\int_0^T \int_{\mathbb T^d} (\del_t-\Delta)\,\phi u+\nabla \phi : (u\otimes u-b\otimes b) \dd x \dd t=  -\int_{\mathbb T^d} \uin \phi(0,x)\dd x,\\
\int_0^T \int_{\mathbb T^d} (\del_t-\Delta)\,\phi b-\nabla \phi : (u\otimes b-b\otimes u) \dd x \dd t=-\int_{\mathbb T^d}\bin\phi(0,x)\dd x.\nonumber
\end{align}
   \end{itemize}
\end{defn}

\subsection{Onsager-type conjecture}
The symmetrization of the  equations \eqref{E} via the Els\"{a}sser variables $z_{\pm}$ streamlines the derivation of  two globally conserved invariants:  the \textit{total energy} $\mathcal{E}$ and the \textit{cross helicity} $
 \mathcal{H}_{c}$. Specifically, under the  transformation $z_{\pm}=u\pm b$, the governing equations \eqref{E} become
\begin{equation}\label{E-MHD}
\left\{ \begin{alignedat}{-1}
&\partial_t z_{\pm}+z_{\mp}\cdot\nabla z_{\pm}+\nabla p=0,
 \\
&  \nabla \cdot z_{\pm}=0,
\end{alignedat}\right.
\end{equation}
where  $p=P+\frac{1}{2}|b|^2$. For $C^1_{t,x}$ solutions $(u,b)$, the corresponding Els\"{a}sser variables $z_{\pm}$  inherit this regularity. Then taking the
$L^2_x$-inner product of the first equation of \eqref{E-MHD} with $z_{\pm}$ demonstrates that
\begin{align*}
\frac{\mathrm{d}}{\mathrm{d}t} \int_{\mathbb{T}^d} |z_+|^2  \mathrm{d}x = \frac{\mathrm{d}}{\mathrm{d}t} \int_{\mathbb{T}^d} |z_-|^2 \mathrm{d}x = 0,
\end{align*}
which immediately implies the conservation of the \emph{Els\"{a}sser energies} for smooth solutions:
\begin{enumerate}
\item[(i)]The total energy $\mathcal{E}(t)=\mathcal{E}(0)$, where
\begin{align}\label{vL2}
\mathcal{E}(t) = \frac{1}{2} \int_{\mathbb{T}^d} \left( |u(x,t)|^2 + |b(x,t)|^2 \right) \mathrm{d}x = \frac{1}{4} \int_{\mathbb{T}^d} \left( |z_+(x,t)|^2 + |z_-(x,t)|^2 \right) \mathrm{d}x.
\end{align}
\item[(ii)]The cross helicity $\mathcal{H}_c(t)=\mathcal{H}_c(0)$, where
\begin{equation}\label{tLp}
\mathcal{H}_c(t) = \int_{\mathbb{T}^d} u \cdot b\, \mathrm{d}x = \frac{1}{4} \int_{\mathbb{T}^d} \left( |z_+(x,t)|^2 - |z_-(x,t)|^2 \right) \mathrm{d}x.
\end{equation}

\end{enumerate}
Another crucial topological invariant is the \emph{magnetic helicity}, defined as
\begin{align*}
\mathcal{H}_m(t)=\int_{\mathbb{T}^d}a\cdot b\dd x,
\end{align*}
where $a$ is a vector potential of $b$, i.e. $b =\nabla\times a$. The total energy $\mathcal{E}$ and  cross helicity $\mathcal{H}_c$  are well-defined provided that $(u,b)\in L^\infty L^2$,  whereas the magnetic helicity remains well-defined under weaker condition $b\in L^\infty \dot H^{-1/2}$. This hierarchical regularity structure intuitively aligns with Taylor's conjecture that magnetic helicity  persists as an invariant for small resistivity (\cite{Tay74}, \cite{Tay86}). The mathematical formulation of the  Taylor's  conjecture was rigorously established  in \cite{FL}.

The MHD equations \eqref{E} reduces to the Euler equations when the magnetic field $b$  vanishes. For the Euler equations, Onsager \cite{Onsa} conjectured that weak solutions conserve energy if their H\"{o}lder regularity $\alpha>1/3$ while they violate the energy conservation law when $\alpha\le 1/3$. The positive/rigid part of the conjecture in 3D  was established by Constantin, E., and Titi  \cite{CWT},  and their method is dimension-independent and valid in any spatial dimensions. The proof of  the negative/flexible side required significantly greater effort, involving a series of pivotal works such as  \cite{B15, BDIS15, BDS, DS09, DS14, Ise17, N20, Sh00}.  Isett \cite{Ise18} and Giri-Radu \cite{GR} ultimately resolved  the flexible part of  Onsager's conjecture in 3D and 2D respectively,  by  constructing weak solutions in $C_t(C^{1/3-})$ with  compact temporal support, leaving the endpoint $\alpha=1/3$ open.


Such a regularity-induced dichotomy in energy behavior for the Euler equations  naturally motivates us to investigate the critical threshold below which the total energy, cross helicity, and magnetic helicity are no longer preserved for weak solutions of ideal MHD system \eqref{E}. Regarding the Onsager-type conjecture for magnetic helicity, some significant progress has been made in both its rigidity and flexibility aspects. For the rigid part, Caflisch et al. \cite{CKS} established the conservation result for solutions  $(u,b)\in C_tB^{\alpha}_{3,\infty}$ with $\alpha>0$. Subsequently, the authors in \cite{Alu09, KL07} extended it to
$L^3_{t,x}$-space. For the flexible part, Beekie et al. \cite{BBV}  showed weak solutions that do not conserve magnetic helicity in $C_t H^{\beta}(\TTT^3)$ for $0<\beta\ll1$. Ultimately, Daniel et al. \cite{DSS} proved the sharpness of the $L^3$ integrability criterion for the magnetic helicity conservation, and  resolved the flexible part.

On the total energy and the cross helicity, Buckmaster and Vicol \cite{BV21} proposed the following conjecture.
\begin{con}[Onsager-type conjecture for the Els\"{a}sser energies \cite{BV21}]\label{conjec}\,
\begin{enumerate}
\item[(a)] Any weak solution $(u, b)$ of the ideal MHD system \eqref{E} belonging to
$C_{x,t}^\alpha$ or $L_t^3 B_{3,\infty}^\alpha$ for $\alpha > 1/3$ conserves the total energy $\mathcal{E}$ and the cross helicity $\mathcal{H}_{c}$.
\item[(b)] For any $\alpha < 1/3$, there exist weak solutions $(u, b) \in C_{x,t}^\alpha$ or $L_t^3 B_{3,\infty}^\alpha$ which dissipate the total energy $\mathcal{E}$, and for which the cross helicity $\mathcal{H}_{c}$ is not a constant function of time.
\end{enumerate}
\end{con}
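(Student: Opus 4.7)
The plan splits along the two parts of the conjecture.

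For part (a), I would run the Constantin--E--Titi commutator argument directly in the Els\"asser variables. Mollifying the Els\"asser system \eqref{E-MHD} at spatial scale $\epsilon$ produces
\[
\partial_t z^{\epsilon}_{\pm} + \nabla\cdot(z^{\epsilon}_{\mp}\otimes z^{\epsilon}_{\pm}) + \nabla p^{\epsilon} \;=\; -\nabla\cdot R^{\epsilon}_{\pm},
\]
with the bilinear commutator $R^{\epsilon}_{\pm} := (z_{\mp}\otimes z_{\pm})^{\epsilon} - z^{\epsilon}_{\mp}\otimes z^{\epsilon}_{\pm}$. The standard bilinear estimate $\|R^{\epsilon}_{\pm}\|_{L^{3/2}} \lesssim \epsilon^{2\alpha}\|z_{-}\|_{B^{\alpha}_{3,\infty}}\|z_{+}\|_{B^{\alpha}_{3,\infty}}$ together with $\|\nabla z^{\epsilon}_{\pm}\|_{L^3}\lesssim \epsilon^{\alpha-1}$ gives an error of order $\epsilon^{3\alpha-1}$, which vanishes as $\epsilon\to 0^+$ exactly when $\alpha>1/3$. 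Pairing against $z^{\epsilon}_{\pm}$ and passing to the limit yields that $\|z_{\pm}(\cdot,t)\|_{L^2}^2$ is constant in time; by \eqref{vL2} and \eqref{tLp} this forces conservation of both $\mathcal{E}$ and $\mathcal{H}_c$.

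For part (b), focusing on the main case $d=3$, the key point is a symmetry reduction embedding 3D ideal MHD into a $2\tfrac12$D Euler flow. Given $(v_1,v_2,w)(x_1,x_2,t)$ on $\TTT^2$ in which $(v_1,v_2)$ solves 2D Euler (with pressure $p$) and $w$ is passively advected by $(v_1,v_2)$, a direct computation shows that
\[
u(x,t) := (v_1,v_2,w), \qquad b(x,t) := (0,0,w),
\]
both taken independent of $x_3$, form a solution of \eqref{E} on $\TTT^3$ with MHD pressure $P=p$. Under this embedding,
\[
\mathcal{E}(t) \;=\; \tfrac{1}{2}\|(v_1,v_2)\|_{L^2(\TTT^2)}^2 + \|w\|_{L^2(\TTT^2)}^2, \qquad \mathcal{H}_c(t) \;=\; \|w\|_{L^2(\TTT^2)}^2,
\]
up to the fixed factor from $x_3$-integration, so producing the weak MHD solutions demanded by part (b) reduces to constructing, for every $\beta<1/3$, a $2\tfrac12$D Euler flow $(v_1,v_2,w)\in C^\beta([0,T]\times\TTT^2)$ along which both $\|(v_1,v_2)\|_{L^2}^2$ and $\|w\|_{L^2}^2$ are strictly decreasing on $[0,T]$.

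I would obtain such a flow by a coupled Newton--Nash convex integration scheme. For the Euler part I would invoke the iteration of Giri--Radu \cite{GR}: a Newton step linearly preprocesses the low-frequency portion of the Reynolds defect by transport along the coarse flow, and a Nash step adds 2D Mikado/pipe correctors at frequency $\lambda_{q+1}$ to cancel the remainder, attaining regularity $C^\beta$ for every $\beta<1/3$. In parallel I would iterate on $w$ using scalar transport correctors supported in the same pipe directions and at the same frequency scales $\lambda_{q+1}$, with amplitudes tuned independently to push $\|w_q\|_{L^2}^2$ along any prescribed strictly decreasing profile. Provided the $w$-transport Reynolds defect and the $v$--$w$ cross-interaction errors fit inside the frequency budget of \cite{GR}, the limit yields the desired $2\tfrac12$D Euler flow, and via the embedding one obtains $(u,b)\in C^\beta([0,T]\times\TTT^3)$ solving \eqref{E} with $\mathcal{E}$ strictly decreasing and $\mathcal{H}_c$ non-constant on $[0,T]$.

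The principal obstacle is orchestrating this coupled iteration at the Onsager threshold $\beta\to 1/3$. In two spatial dimensions the Mikado/pipe correctors have strictly less transverse frequency room than in three, so the Newton step of \cite{GR} already saturates the available gap, and layering in the passive scalar iteration introduces new oscillation errors (in the $w$-transport Reynolds defect and in the $v$--$w$ interactions) that must be absorbed without loss in the same inductive bookkeeping. A secondary but genuine difficulty is arranging \emph{actual} anomalous $L^2$ dissipation of the limit $w$, rather than simply producing a rough transport solution with conserved $L^2$ mass: this requires the $w$-correctors to systematically cascade $L^2$ mass to ever finer scales that are discarded as $q\to\infty$, an effect automatic for the velocity part of the Giri--Radu scheme but which must be engineered by hand for the advected scalar.
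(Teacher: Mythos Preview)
Your proposal is correct and takes essentially the same approach as the paper: part (a) via the Constantin--E--Titi commutator argument in the Els\"asser variables (which the paper simply cites to \cite{CKS,CWT}), and part (b) via exactly the symmetry reduction $u=(v_1,v_2,\theta)$, $b=(0,0,\theta)$ to 2D Euler coupled with a passive tracer, followed by a Newton--Nash iteration adapted from \cite{GR}. The difficulty you correctly flag at the end---engineering genuine anomalous $L^2$ dissipation for the advected scalar rather than merely a rough conservative solution---is resolved in the paper by inserting a dedicated energy corrector $d^{(\text{e})}_{q+1}$ into the $\theta$-perturbation, supported on small squares spatially disjoint from all the Nash building blocks, so that it drives $\|\theta\|_{L^2}^2$ along a prescribed profile without producing new interaction errors.
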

\noindent The rigid  part $(a)$ of Conjecture \ref{conjec} has been resolved by the standard arguments in \cite{CKS,CWT}. Concerning the flexible part $(b)$ of Conjecture \ref{conjec},
the work \cite{BDSV} related to Onsager's conjecture for the 3D Euler equations implies  that there exist weak solutions $(u,b)=(u,0)\in C^{\alpha}_{t,x}$  for $\alpha<\frac{1}{3}$
  with energy dissipation, yet the
cross helicity is identically zero. Faraco et al. \cite{FLS}  constructed  some compactly supported $L^\infty$ weak solutions  with non-trivial $u$ and $b$, where the total energy and the cross helicity are not conserved over time, while the magnetic helicity vanishes identically in $\R^3$. Beekie et al. \cite{BBV} provided the first example of a non-conservative weak solution  $(u,b)\in C_tL^p(\TTT^3)$ for some $p>2$, which are of non-trivial total energy, the cross helicity and the magenetic helicity. However, the flexibility assertion part $(b)$ of Conjecture \ref{conjec} remains open.

In this paper, we are focused on the flexible aspects of the Onsager-type conjecture for the Els\"{a}sser energies in the context of ideal magnetohydrodynamics (MHD). Our approach builds on the method of convex integration, which has been applied to study the classical MHD system \cite{LZZ, MY}, and other fluid dynamics models, for instance, the Hall MHD \cite{Dai}, the stationary Navier-Stokes equations \cite{Luo},  the transport equations
\cite{BCD, CL21, CL22, MoS, MS}, the Boussinesq equations \cite{MNY, TZ17, TZ18}.
\subsection{The main results}
Our first goal is to construct non-unique weak solutions in $C^{\beta}_{t,x}(\beta<1/3)$ for the 3D ideal MHD equations, which  violate both the total energy conservation and cross-helicity preservation. More precisely,
\begin{thm}\label{main0}For any $0<\beta<\frac{1}{3}$ and $T>0$, there are infinitely many
weak solutions  $(u,b)\in C^{\beta}(\TTT^3\times[0,T])$ of the system~\eqref{E} sharing the same initial data such that neither
total energy $\mathcal{E}$ nor cross helicity $ \mathcal{H}_{c}$ is conserved in time.
\end{thm}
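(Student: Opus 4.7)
My plan is to reduce Theorem~\ref{main0} to a convex integration construction for a 2.5D Euler flow, and then apply the Newton-Nash scheme of Giri-Radu \cite{GR}. I consider a velocity field $V=(V_1,V_2,V_3)(x_1,x_2,t)$ on $\TTT^3$ that is $x_3$-independent, with $(V_1,V_2)$ solving the 2D incompressible Euler equations (with pressure $p$) and with $V_3$ passively transported by the planar field $(V_1,V_2)$. Setting $u:=V$, $b:=(0,0,V_3)$, and $P:=p$, one verifies directly that $b\cdot\nabla b\equiv 0$, $b\cdot\nabla u\equiv 0$, and $u\cdot\nabla u=(V_1,V_2)\cdot\nabla_{(1,2)}V$, so $(u,b)$ solves \eqref{E} on $\TTT^3$. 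Moreover
\begin{equation*}
\mathcal{E}(t)=\tfrac12\int_{\TTT^3}(V_1^2+V_2^2+2V_3^2)\dd x,\qquad \mathcal{H}_c(t)=\int_{\TTT^3}V_3^2\dd x.
\end{equation*}
Thus it suffices to construct a 2.5D Euler flow $V\in C^\beta(\TTT^3\times[0,T])$ for which $\int V_3^2\dd x$ is non-constant and $\int|V|^2\dd x+\int V_3^2\dd x$ is also non-constant in $t$.

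\textbf{Newton-Nash iteration.} To produce such a $V$, I would run an iteration $(V^q,p^q,R^q)\to V$ solving the 2.5D Euler-Reynolds system
\begin{equation*}
\partial_tV^q+(V_1^q,V_2^q)\cdot\nabla_{(1,2)}V^q+\nabla p^q=\Div R^q,\qquad \partial_1V_1^q+\partial_2V_2^q=0,
\end{equation*}
with each $V^q$ $x_3$-independent, $V_3^q$ approximately transported, and $\|R^q\|_{C^0}\to 0$. Following \cite{GR}, the passage $V^q\to V^{q+1}$ consists of a Newton step that uses a gluing-type low-frequency corrector to resolve the smooth part of $R^q$ by advancing $V^q$ along mollified planar characteristics, followed by a Nash step that cancels the remaining oscillatory stress with 2D Mikado-type shears at frequency $\lambda_{q+1}$. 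The Newton step is what compensates for the unavailability of disjointly supported Mikado flows in two dimensions, and is precisely the device that enables the threshold $\beta<1/3$ in the 2D Euler setting.

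\textbf{Energies and non-uniqueness.} At each stage the scheme allows one to tune $\int|V^q|^2\dd x$ and $\int(V_3^q)^2\dd x$ up to an error of order $\|R^q\|_{L^1}$ by adjusting the amplitudes of the Nash perturbations; the limit then satisfies $\int|V|^2\dd x=e(t)$ and $\int V_3^2\dd x=h(t)$ for any pair of admissible smooth profiles $(e,h)$, and taking $h$ and $e+h$ both non-constant gives non-conservation of both $\mathcal{H}_c$ and $\mathcal{E}$. Infinitely many weak solutions sharing the same initial data $(\uin,\bin)$ are then obtained by running the iteration from a common initial subsolution and localizing all added perturbations in a sub-interval $[t_\star,T]$ with $t_\star\in(0,T)$; varying $t_\star$ produces distinct limits agreeing on $[0,t_\star]$.

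\textbf{Main obstacle.} The hardest point will be executing the Giri-Radu Newton-Nash scheme in the coupled 2.5D setting: each Nash perturbation of the planar components must be paired with a compatible perturbation of $V_3$ that preserves the transport identity at the approximate level, and the Newton corrector must resolve simultaneously the Reynolds stress in the momentum equation and the transport flux in the $V_3$-equation, all while keeping every iterate $x_3$-independent. The quadratic self-interactions of the 2D Mikado shears, including cross-terms involving $V_3^q$, must decompose into pieces absorbable by the next Newton corrector; verifying that this decomposition closes the inductive estimates up to any $\beta<1/3$ is where the bulk of the technical bookkeeping will lie.
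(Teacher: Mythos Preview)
Your approach is essentially the same as the paper's: the symmetry reduction to the 2D Euler system coupled with a passive tracer is identical (the paper writes $u=(v_1,v_2,\theta)$, $b=(0,0,\theta)$, which is your $V_3=\theta$), and the core construction is the Giri--Radu Newton--Nash scheme adapted to handle simultaneously a trace-free Reynolds stress in the momentum equation and a vector-valued transport error in the tracer equation, exactly as you anticipate in your ``main obstacle'' paragraph.

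The one organizational difference is how non-uniqueness and non-conservation are obtained. The paper does \emph{not} prescribe energy profiles for Theorem~\ref{main0}; instead it proves a gluing proposition: given any two smooth solutions $(v^{(1)},\theta^{(1)})$ and $(v^{(2)},\theta^{(2)})$ of the 2D-Euler-plus-tracer system, there is a $C^\beta$ weak solution equal to the first on an initial interval and to the second on a final interval. Taking $(v^{(2)},\theta^{(2)})=((0,Af(x_1)),Af(x_1))$ for varying $A$ and $f$ then gives infinitely many solutions sharing the initial data, and choosing $A$ large immediately forces $\mathcal{H}_c$ and $\mathcal{E}$ to jump. Your proposed route through prescribed profiles $(e,h)$ and localized perturbations is closer to how the paper proves Theorem~\ref{t:main-energy1} (the dissipative case), where an extra energy corrector is built into the tracer perturbation; it would also work for Theorem~\ref{main0} but carries more overhead than the gluing-to-end-states argument. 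Note in particular that controlling $\int V_3^2$ and $\int |V|^2$ \emph{independently} would require two separate correctors, whereas the paper only ever controls the total energy directly and deduces non-conservation of $\mathcal{H}_c$ indirectly.
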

The solutions in Theorem \ref{main0}  violate energy conservation. Whether they exhibit energy dissipation remains unknown. In Theorem \ref{t:main-energy1}, we  further show the weak solutions  $(u,b)\in C^\beta_{t,x}$ with  $\beta<1/3$ that dissipate energy and exhibit non-conserved cross helicity.
\begin{thm}\label{t:main-energy1} For any  $0<\beta<\frac{1}{3}$ and $T>0$, there exist energy dissipative weak solutions $(u,b)\in C^{\beta}(\TTT^3\times[0,T])$ of the system~\eqref{E} that  do not conserve the cross helicity.
\end{thm}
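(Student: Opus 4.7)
Since Theorem~\ref{main0} already yields $C^\beta$ MHD solutions with non-constant $\mathcal{E}$ and non-constant $\mathcal{H}_c$, the new content of Theorem~\ref{t:main-energy1} is to upgrade ``non-constant'' to ``strictly decreasing'' for the total energy, while preserving the cross-helicity flexibility. The plan is therefore to inject an energy-profile prescription into the same Newton--Nash iteration that produced Theorem~\ref{main0}.

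I would fix in advance a smooth, strictly decreasing target $e(t)$ with $e(0)=\frac{1}{2}\|\uin\|_{L^2}^2+\frac{1}{2}\|\bin\|_{L^2}^2$, and re-run the iteration based on the $2\frac{1}{2}$D embedding
\[
u=(v_1,v_2,0),\quad b=(0,0,v_3),\quad v=(v_1,v_2,v_3)(x_1,x_2,t),
\]
which reduces the MHD Reynolds system to a 2D Euler Reynolds stress for the horizontal components $(v_1,v_2)$ plus a scalar transport error for $v_3$. At each stage $q$, I would calibrate the amplitude prefactor of the principal Nash building block so that
\[
\frac{1}{2}\|u_{q+1}\|_{L^2}^2 + \frac{1}{2}\|b_{q+1}\|_{L^2}^2 - e(t) \;=\; O(\delta_{q+2}),
\]
where $\delta_{q+2}$ denotes the next-scale error parameter. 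This is the energy-profile prescription standard in convex-integration proofs of Onsager's theorem for the Euler equations, transplanted to the $2\frac{1}{2}$D setting; in the limit $\mathcal{E}(t)=e(t)$ is strictly decreasing.

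The cross-helicity perturbation from the proof of Theorem~\ref{main0}---the sub-leading piece that breaks the pure $2\frac{1}{2}$D symmetry and drives $\mathcal{H}_c$ off its initial value---can be kept in place, inserted at a higher frequency scale than the principal Nash block. Its contribution to $\|u_{q+1}\|_{L^2}^2+\|b_{q+1}\|_{L^2}^2$ is then absorbed by the $O(\delta_{q+2})$ slack in the energy matching, so the two objectives do not interfere and $\int u_{q+1}\cdot b_{q+1}\,\mathrm{d}x$ retains a non-trivial time variation in the limit.

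The main obstacle is to close the Newton--Nash iteration while meeting three constraints simultaneously: (i) vanishing of the MHD Reynolds stress, (ii) matching of the prescribed decreasing profile $e(t)$, and (iii) forcing non-trivial variation of $\mathcal{H}_c$, all within the $C^\beta$ budget for $\beta<1/3$. Reconciling (ii) and (iii) requires a careful hierarchy of amplitude and frequency scales so that the energy-matching correction and the cross-helicity perturbation remain oscillation-orthogonal in the limit; and as in \cite{GR}, the $2\frac{1}{2}$D reduction effectively places us in a two-dimensional iteration, where Newton steps---rather than purely Nash-type additions---are needed to clean up the Reynolds stresses generated by the energy calibration that pure Nash oscillations cannot absorb in two spatial dimensions.
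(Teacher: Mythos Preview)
Your reduction is the wrong one. With $u=(v_1,v_2,0)$ and $b=(0,0,v_3)$ you have $u\cdot b\equiv 0$ pointwise, hence $\mathcal{H}_c(t)\equiv 0$ for every solution of this form; this is exactly the limitation of the Bronzi--Lopes Filho--Nussenzveig Lopes ansatz that the paper notes in the introduction. The proof of Theorem~\ref{main0} does \emph{not} add a ``sub-leading piece that breaks the pure $2\tfrac12$D symmetry'': the iteration stays entirely inside the modified ansatz $u=(v_1,v_2,\theta)$, $b=(0,0,\theta)$, for which $\mathcal{H}_c(t)=\|\theta(\cdot,t)\|_{L^2}^2$ is automatically nontrivial once $\theta$ is. So the mechanism you are counting on to generate cross helicity does not exist, and your proposed energy-matching scheme, built on the BLN reduction, would output solutions with $\mathcal{H}_c\equiv 0$.

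The paper's actual argument keeps the ansatz $u=(v,\theta)$, $b=(0,0,\theta)$ throughout and exploits the asymmetry of the relaxed system~\eqref{e:subsol-B}: the tracer error $T_q$ is a vector, not a symmetric matrix, which gives room to insert an \emph{energy corrector} $d^{(\mathrm{e})}_{q+1}$ into the $\theta$-perturbation whose spatial support is disjoint from the Nash building blocks $\phi_{(\lambda_{q+1}k)}$ used for $v$. This spatial separation means $d^{(\mathrm{e})}_{q+1}\cdot w^{(\mathrm{p})}_{q+1}=0$, so the corrector contributes to $\int(|v_{q+1}|^2+2|\theta_{q+1}|^2)$ without generating new stress errors, and its amplitude can be tuned to meet the prescribed profile $e(t)$. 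Non-conservation of $\mathcal{H}_c=\|\theta\|_{L^2}^2$ is then arranged not by any high-frequency perturbation but by a coarse choice: initialize with $v_1=\chi(t)\phi(\bar k\cdot x)k$, $\theta_1=0$, observe that the velocity perturbations vanish at $t=0,T$, and pick $e$ so that $e(0)-\|v(\cdot,0)\|_{L^2}^2\neq e(T)-\|v(\cdot,T)\|_{L^2}^2$, forcing $\|\theta(\cdot,0)\|_{L^2}^2\neq\|\theta(\cdot,T)\|_{L^2}^2$.
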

\begin{rem}Theorem \ref{t:main-energy1} establishes the flexible side of Conjecture~\ref{conjec} on $\TTT^3$. This result, together with the proof of rigid part \cite{CKS,CWT} shows that the threshold regularity exponent for the conservation of the total energy and the cross helicity is  $1/3$.
\end{rem}
For the 2D ideal MHD system,  the magnetic helicity is replaced by the mean-square magnetic potential $\int_{\TTT^2}|\psi|^2\dd x$,
where $\psi$ is the magnetic potential such that  $ b=\nabla^\perp\psi$.  This quantity is  conserved for {$C_wL^2$} weak solutions as in \cite{FLS}, exhibiting a stronger type of rigidity compared to its 3D counterpart. However, the flexibility of the Onager conjecture for Els\"{a}sser energies in the context of 2D ideal MHD equations with nontrivial magnetic fields remains unexplored.  This  motivates us to study the existence of energy-dissipative weak solutions  of the 2D ideal MHD equations. Our result is stated as follows:
\begin{thm}\label{main00}For any  $0<\beta<\frac{1}{5}$ and $T>0$, there exist infinely many solutions $(u,b)\in C^{\beta}([0,T]\times \TTT^2)$ of the system~\eqref{E} sharing the same initial data with both $u$,$b$ non-trivial and  the total energy dissipation.
\end{thm}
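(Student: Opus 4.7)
The plan is to prove Theorem \ref{main00} by a convex-integration scheme (of Nash/Newton--Nash type) adapted to the 2D ideal MHD system. Since we only target the flexible regime $\beta<1/5$ rather than the sharp Euler threshold $1/3$ of \cite{GR}, the construction can be softer; the gap $1/3\to 1/5$ reflects the additional constraints imposed by the induction equation together with the severely limited oscillation directions available in two spatial dimensions. The inductive object will be the MHD Reynolds-stress system
\begin{align*}
\partial_t u_q+\Div(u_q\otimes u_q-b_q\otimes b_q)+\nabla P_q&=\Div\,\RR_q^{(u)},\\
\partial_t b_q+\Div(u_q\otimes b_q-b_q\otimes u_q)&=\Div\,\RR_q^{(b)},\\
\Div u_q=\Div b_q&=0,
\end{align*}
at frequency $\lambda_q$ and amplitude $\delta_q^{1/2}\simeq\lambda_q^{-\beta}$, with $\|\RR_q^{(u)}\|_{L^1}+\|\RR_q^{(b)}\|_{L^1}\lesssim\delta_{q+1}$. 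Passing to the limit then yields $(u,b)\in C^\beta$ with both Reynolds remainders vanishing in $L^1$, so the limit is a weak solution.

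For the base step one takes $(u_0,b_0)$ to be a smooth approximation of a Hölder Euler flow carrying prescribed energy dissipation, paired with a nontrivial magnetic seed supported away from $t=0$. Infinitely many distinct limits with the \emph{same} initial data are produced by choosing different seeds and different Newton--Nash perturbations at each stage, all multiplied by a temporal cutoff vanishing in a neighbourhood of $t=0$ as in \cite{Ise18,BDSV}; the energy dissipation installed at the base level is preserved in the limit because later perturbations are small in $L^2$. Nontriviality of $b$ at the limit is likewise inherited from the seed, since the magnetic increments are chosen in a disjoint spatial region so that they cannot destructively interfere with $b_q$.

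The key novelty is a \emph{spatial-separation} ansatz for the increment $(w^{(u)}_{q+1},w^{(b)}_{q+1})$: rather than forcing a single family of building blocks to cancel both $\RR_q^{(u)}$ and $\RR_q^{(b)}$ at once, one partitions $\TTT^2$ into patches of size $\lambda_q^{-1}$ that alternately carry the velocity and the magnetic perturbation. Within each patch the profile is a highly oscillating 2D Beltrami- or shear-type block indexed by an admissible wave vector, tuned by the standard geometric lemma to reconstruct the local value of $-\RR_q^{(u)}$ or $-\RR_q^{(b)}$. Disjointness of the supports then decouples the self-interaction $w^{(u)}_{q+1}\otimes w^{(u)}_{q+1}-w^{(b)}_{q+1}\otimes w^{(b)}_{q+1}$ into two independent sums and kills the antisymmetric cross-term in the induction equation on the overlap. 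In this way $b$ enters as a \emph{controlled perturbation} inside an otherwise velocity-centred convex-integration iteration, which is the mechanism advertised in the introduction.

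The hardest step, in my view, is balancing the oscillation frequency $\lambda_{q+1}$, the amplitude $\delta_{q+1}^{1/2}$ and the antidivergence costs of the transport, oscillation and Nash errors under the severe 2D geometric constraint that excludes Mikado flows; this rigidity is exactly what forces $\beta<1/5$. A further difficulty is a new \emph{separation error} produced by the cutoffs that carve space into $u$- and $b$-regions: to absorb it while keeping the geometric-series decay of the Reynolds norms, a Newton step in the spirit of \cite{GR} is needed, in which a linear problem is first solved to extract the slow part of $(\RR_q^{(u)},\RR_q^{(b)})$ before the fast Nash oscillations are switched on. Once these error estimates are closed, $C^\beta$-summability of the increments gives the claimed regularity, the prescribed energy dissipation and the nontriviality of $b$ persist in the limit, and the $L^p$-approximation byproduct for arbitrary Hölder Euler solutions announced in the abstract drops out of the same scheme.
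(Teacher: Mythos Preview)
Your proposal has a genuine structural gap in the induction equation. The quadratic term there is $\Div(u\otimes b-b\otimes u)$, which contains \emph{only cross-terms}; the self-interaction $w^{(b)}_{q+1}\otimes w^{(b)}_{q+1}$ never appears. If, as you propose, $w^{(u)}_{q+1}$ and $w^{(b)}_{q+1}$ have disjoint spatial supports, then $w^{(u)}_{q+1}\otimes w^{(b)}_{q+1}-w^{(b)}_{q+1}\otimes w^{(u)}_{q+1}\equiv 0$, and there is simply no oscillatory mechanism available to cancel $\RR_q^{(b)}$. Your scheme therefore cannot close the magnetic Reynolds error, regardless of how the parameters are tuned or whether a Newton step is inserted.

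The paper avoids this obstruction by a different design: there is \emph{no} Reynolds error in the magnetic equation at any stage. One writes $u_q=u_q^{\mathcal{C}}+u_q^{\mathcal{P}}$ with $\mathrm{spt}_x\,u_q^{\mathcal{C}}\cap\mathrm{spt}_x\,b_q=\emptyset$, so that $b_q$ is exactly Lie-transported by the smooth small component $u_q^{\mathcal{P}}$ alone, and the relaxed system carries a single stress $\RR_q$ in the momentum equation only. The Nash perturbation is applied solely to $u_q^{\mathcal{C}}$; the magnetic increment $d_{q+1}$ is not an oscillatory block at all but the solution of an auxiliary 2D MHD system (the ``MHD perturbation flow'') forced by the tiny residual errors $F_{q+1}$ and the mollification commutators, which keeps $b_{q+1}$ smooth and $H^4$-small. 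Finite propagation speed then preserves the spatial separation inductively. No Newton step is used in this 2D argument (the Newton--Nash machinery is confined to the 3D part of the paper), and non-uniqueness with common initial data is obtained by running the iteration twice with energy profiles $e,\tilde e$ that agree on $[0,T/2]$, not by temporal cutoffs of the perturbations near $t=0$.
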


\begin{rem}Recently, Giri and Radu  in  \cite{GR} presented a nontrivial weak solution to the 2D Euler equations with temporal compact support, and  resolved the flexibility aspect of the Onsager conjecture. For the velocity field $u$ constructed as in \cite{GR}, the  pair $(u,0)\in C^{\beta}$ $(\beta<1/3)$ is a weak solution of the system \eqref{E}  that fails to conserve energy but does not exhibit energy dissipation.
\end{rem}

\begin{rem}
As discussed in  \cite{BV21}, it is nontrivial to construct weak solutions that conserve the magnetic helicity while do not conserve the Els\"{a}sser energies due to the capability of  convex integration schemes to ``break'' all quadratic conservation laws  which are well-defined at the regularity level of the weak solutions constructed. Specifically, the weak solutions constructed in Theorem~\ref{t:main-energy1} and Theorem \ref{main00} conserve magnetic helicity and magnetic potential,  respectively.
\end{rem}

 Our proofs rely on the convex integration scheme introduced by De Lellis and Sz\'{e}kelyhidi~\cite{DS2013}.  For the three-dimensional case, the main diffculity  is that the Reynolds-magnetic
tensor errors in the relaxed ideal MHD system restrict the oscillation freedom.  To overcome this diffculity, we consider a special class of weak solutions $(v, \theta)$ to 2D incompressible Euler equations coupled with a passive tracer equation (see \eqref{e:B}). The corresponding relaxation  gives sufficient oscillatory freedom to construct perturbations.
 By making use of
 the asymmetry of the errors, we construct energy correctors
decoupled from the Nash perturbations of $v$, and improve the Newton-Nash iteration scheme established in \cite{GR},
thereby obtaining the energy-dissipative solutions that reach the threshold regularity exponent $1/3$.

In the two-dimensional case, we propose a\textit{ spatial-separation-driven} iterative scheme. By splitting the velocity field \( u_q = u^{\mathcal{C}}_q + u^{\mathcal{P}}_q \), we enforce spatial disjointness between the supports of \( u^{\mathcal{C}}_q \) and the magnetic field \( b_q \). The spatial decoupling reduces the relaxed system to one that only contains  Reynolds stress error. By introducing the MHD perturbation flows together with finite propagation speed of the magnetic field, we embed the magnetic field as a controlled perturbation within the convex integration framework for the velocity field.

\subsection{Main ideas}We prove  Theorem \ref{main0} by constructing a special class of weak solutions of \eqref{E}, which originates from the example of Bronzi-Lopes Filho-Nussenzveig Lopes \cite{BLN}. They imposed a symmetry assumption that reduces the 3D ideal MHD system to a 2$\frac{1}{2}$D  Euler flow. Specifically, if $u = (v_1, v_2, v_3)(x_1, x_2)$ is a weak solution of the 3D Euler equations independent of $x_3$, then defining the velocity field as $(v_1, v_2,0)$ and the magnetic field as $(0,0,v_3)$  yields a weak solution to the 3D ideal MHD system. As shown in  \cite{Ise18}, such weak solution $(u,b)$ may violate energy conservation, while the cross helicity $\mathcal{H}_c=0$ due to the trivial third velocity component. To show weak solution with nontrivial cross helicity, this symmetry assumption inspires us to consider a special class of solutions $u=(v_1,v_2,\theta )(x_1, x_2)$ and $b=(0,0,\theta)(x_1,x_2)$, where $(v,\theta)$ solves the following system:
\begin{equation}
\left\{ \begin{alignedat}{-1}
&\del_t v+(v\cdot\nabla) v  +\nabla P   = 0, &{\rm in}\quad \TTT^2\times(0,T),
 \\
&  \del_t \theta+v\cdot\nabla\theta  =0, &{\rm in}\quad \TTT^2\times(0,T),
 \\
&  \nabla \cdot v  = 0, &{\rm in}\quad \TTT^2\times(0,T),
\end{alignedat}\right.  \label{e:B}
\end{equation}
which is the 2D incompressible Euler equations  coupled with  a passive tracer equation. Therefore,  Theorem \ref{main0} can be reduced to  the non-uniqueness of the system \eqref{e:B}.  Different from the 3D ideal MHD system \eqref{E},
the scalar $\theta$ is advected purely by the velocity field  $v$ in the system \eqref{e:B}. This avoids the use of the geometric lemma on skew-symmetric matrices. Nevertheless, within a two-dimensional framework, the fact that any two non-parallel lines necessarily intersect implies that Mikado flows cannot achieve non-interaction. Inspired by the approach of exploiting the extra dimension of time introduced in \cite{CL, GR}, we employ the gluing technique introduced in \cite{Ise18} to construct  approximation solutions such that the Reynolds errors support in disjoint short time intervals. This procedure not only  facilitates us to use temporal dimension, but also directly obtain estimates of the material derivative. In the convex integration stage, we employ the Nash-Newton iteration method introduced in the seminal work of \cite{GR}. We define the Newton perturbations to be the solutions to the Newtonian linearization of the Euler equations coupled with a passive tracer, where  the forcing term is augmented by temporal highly oscillatory functions. Based on the  vector error in the tracer equation and the geometric lemma for symmetric matrices, we construct Nash perturbations by selecting shear flows with different oscillatory directions. Finally, by employing the idea as in \cite{BCV}, we glue the weak solution constructed by convex integration with two smooth  solutions on initial and final temporal segments, thereby generating infinitely many weak solutions with same initial data that fail to conserve total energy and the cross helicity.

We prove Theorem \ref{t:main-energy1} by adapting the convex integration scheme developed for Theorem~\ref{main0}, preserving the strategy for constructing weak solutions
$(v,\theta)$ to the system \eqref{e:B} with $v=(v_1,v_2)$. To obtain weak solutions with energy dissipation, we impose a precised energy profile $e(t)$  in the iterative scheme.  Utilizing the asymmetric cross errors  in the tracer equation, we design energy correctors as  a part of the perturbation for the passive tracer $\theta$.  These correctors   are compactly supported in square domains  which are  disjoint with the supports of the Nash perturbations of the velocity field. This spatial separation  eliminates the interactions between the correctors and velocity perturbations such that the resulting error terms are negligible. For the special solutions $u=(v_1, v_2,\theta), b=(0,0,\theta)$, the cross helicity depends only on $\theta$
and  the perturbations of $(v_1,v_2)$ vanishes at time $t=0,T$. Combining this fact with  the selection of an energy profile $e(t)$ and  an initial solution $u_1$ such that $e(0)\neq e(T)$ and $u_1(0)\neq u_1(T)$, we establish  $\mathcal{H}_c(0)\neq \mathcal{H}_c(T)$, and show non-conservation of cross helicity.

To prove Theorem \ref{main00}, we develop different arguments  from those in  Theorems~\ref{main0} and \ref{t:main-energy1}, as symmetry reduction techniques  are  not applicable in the 2D framework. As we know, for any smooth solution $u$ of the incompressible 2D Euler equations, there exists a sequence of ideal MHD smooth solutions $(u_{\epsilon}, b_{\epsilon})$ such that
$\|(u_{\epsilon}, b_{\epsilon})-(u,0)\|_{H^{2+}}\to 0$ as $\epsilon\to 0$. A natural question is:
\emph{Given a weak solution $u$ of the Euler equations, does there exist a sequence of MHD weak solutions
$(u_{\epsilon}, b_{\epsilon})$ that converges to
$(u, 0)$ in some weak topology?}
If affirmed, we could expect the non-uniqueness  of weak solutions to the 2D ideal MHD equations, following from the non-uniqueness results on 2D Euler equations. Actually, this problem is non-trivial due to the low regularity of the  velocity field. A key insight arises from the perturbation effect of the magnetic field  in the process of approximating smooth Euler solutions by smooth MHD solutions. More precisely, we develop a spatial-segregation-
driven  convex integration scheme in which the approximate solutions $(u_q, b_q,p_q,\RR_q)$ possess more refined properties. More precisely,
\[u_q=u^{\mathcal{C}}_{q}+u^{\mathcal{P}}_q, \quad \supp_x u^{\mathcal{C}}_{q}\cap \supp_x b_q=\emptyset, \quad \supp_x \RR_{q}\cap \supp_x b_q=\emptyset\]
where $u^{\mathcal{P}}_{q}$  is smooth and small.  The disjoint support condition  ensures that $b_q$ is the Lie transport of the smooth component   $u^{\mathcal{P}}_q$, and thus allowing us to consider a relaxed ideal MHD system that contains only the Reynolds stress error, with its support  consistent with \( u^{\mathcal{C}}_q \). To implement the iterative procedure, we construct different perturbations to different components of $(u_q, b_q)$.
\begin{enumerate}
    \item For  $u^{\mathcal{C}}_{q}$, we construct a divergence-free perturbation $w^{\mathcal{C}}_{q+1}$, where the main perturbation part $\wpq$ is constructed by high-frequency oscillations in phase space to reduce the size of Reynolds stress error, and the correction \( \wcq \) ensures the divergence-free condition.
    \vskip 2mm
     \item For  $(u^{\mathcal{P}}_{q},b_q)$, we construct   the MHD perturbation flows $(d_{q+1}, \theta_{q+1})$ to cancel the errors that interference with the support of Reynolds stress at the next iteration. Specifically, $(\theta_{q+1},d_{q+1})$ is defined as the solution to the 2D ideal MHD equations, where these errors are as external forcing terms. Actually,  these errors can be small by leveraging the high-frequency oscillations of the perturbation $w^{\mathcal{C}}_{q+1}$
 ,  and  thus  $(\theta_{q+1},d_{q+1})$ remains small as desired.
\end{enumerate}
The smallness condition on \( u^{\mathcal{P}}_q \) and the finite propagation speed  to hyperbolic systems ensure that the compact supports of \( u^{\mathcal{C}}_{q+1} \) and \( b_{q+1} \) remain non-overlapping, and thus maintaining the self-consistency of the iterative scheme.

From the proof of Theorem \ref{main00}, we demonstrate that H\"{o}lder-continuous solutions of the Euler equations  can be uniformly approximated in $L^p$
  by weak solutions of the ideal MHD equations.
\begin{thm} \label{app}
Let \( 1 \leq p < \infty \), \( \gamma > 0 \), and \( \beta < \frac{1}{5} \). Assume that \( u_E \in C^\gamma([0, T] \times \mathbb{T}^2) \) is a weak solution of the incompressible Euler equations. Then there exists a sequence of weak solutions \( (u_\epsilon, b_\epsilon) \in C^\beta([0, T] \times \mathbb{T}^2) \) to the system \eqref{E} such that
\[
(u_\epsilon, b_\epsilon) \to (u_E, 0) \quad \text{in}\quad L^p, \quad \text{as}\quad\epsilon \to 0.
\]
 \end{thm}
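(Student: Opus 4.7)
The plan is to drive the spatial-separation convex integration scheme developed in the proof of Theorem~\ref{main00} from a mollification of $u_E$ as its starting profile, so that the resulting 2D MHD solution remains quantitatively close to $(u_E,0)$ in the $L^p$ topology. The mollification introduces an initial Reynolds stress, and a single run of the iterative scheme absorbs this stress while producing only a small perturbation in $L^p$.

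Fix $\epsilon>0$ and a mollification scale $\ell>0$ to be chosen. Let $\vv := u_E * \varphi_\ell$, a smooth divergence-free field, and observe the standard estimates coming from $u_E\in C^\gamma$:
\[
\|\vv - u_E\|_{L^p}\lesssim \ell^\gamma,\qquad \|\vv\|_{C^n}\lesssim \ell^{\gamma-n}, \qquad \|(u_E\ootimes u_E)*\varphi_\ell - \vv\ootimes\vv\|_{C^n}\lesssim \ell^{2\gamma-n}.
\]
Setting $\RR_0 := \vv\ootimes\vv - (u_E\ootimes u_E)*\varphi_\ell$ and $b_0\equiv 0$, the triple $(u_0,b_0,\RR_0):=(\vv,0,\RR_0)$ is a smooth subsolution of the relaxed 2D MHD system underlying Theorem~\ref{main00}. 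Because $b_0\equiv0$, the spatial-separation hypothesis $\supp \vloc_0 \cap \supp b_0=\emptyset$ holds trivially, with $\vloc_0=\vv$ and $\vnon_0=0$.

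Next I run the inductive construction from the proof of Theorem~\ref{main00} starting from this triple. At each step $q\ge 0$ it produces $(u_{q+1},b_{q+1},\RR_{q+1})$ with $u_{q+1}=\vloc_{q+1}+\vnon_{q+1}$ and $\supp\vloc_{q+1}\cap\supp b_{q+1}=\emptyset$, satisfying the iterative bounds
\begin{align*}
\|u_{q+1}-u_q\|_{L^p}+\|b_{q+1}-b_q\|_{L^p}&\le C\,\delta_{q+1}^{1/2},\\
\|u_{q+1}\|_{C^\beta}+\|b_{q+1}\|_{C^\beta}&\le M,
\end{align*}
with $\delta_q=\lambda_q^{-2\beta^\ast}$ for a fixed $\beta^\ast\in(\beta,1/5)$ and $\lambda_q$ a super-exponentially growing frequency. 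Summability of $\delta_q^{1/2}$ together with the uniform $C^\beta$-bound gives, by interpolation, convergence of $(u_q,b_q)$ in $C^\beta$ to a weak solution $(u_\epsilon,b_\epsilon)$ of~\eqref{E}. A triangle inequality then yields
\[
\|u_\epsilon-u_E\|_{L^p}+\|b_\epsilon\|_{L^p}\le \|\vv-u_E\|_{L^p}+\sum_{q\ge0}\bigl(\|u_{q+1}-u_q\|_{L^p}+\|b_{q+1}-b_q\|_{L^p}\bigr)\lesssim \ell^\gamma+\sum_{q\ge1}\delta_q^{1/2},
\]
so choosing $\ell$ small and the initial frequency $\lambda_1$ large forces the right-hand side below $\epsilon$. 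Letting $\epsilon\to 0$ produces the required approximating sequence.

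The main obstacle is to initialize the scheme from the order-one smooth profile $\vv$ rather than from the small starting datum implicit in the construction of Theorem~\ref{main00}, and to ensure that the spatial-separation condition is established from the very first step. One handles this by localizing the initial magnetic perturbation $b_1$ in a small cube disjoint from the region where the bulk of $\RR_0$ must be corrected, tuning the first-step amplitude and frequency so that the Nash-type perturbation absorbs $\RR_0$ (of size $\ell^{2\gamma}$) while keeping its own $L^p$ contribution small. The finite propagation speed of the MHD transport for $b$, which is already exploited in the proof of Theorem~\ref{main00}, then propagates the disjoint-support property through all later stages and closes the induction.
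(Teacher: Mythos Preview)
There is a genuine gap in the initialization. The iterative scheme of Proposition~\ref{p:main-prop00} does not merely require $\supp\vloc_q\cap\supp b_q=\emptyset$; it requires the quantitative support conditions \eqref{spt}, namely $\text{spt}_x(\vloc_q,\RR_q)\subset\Omega_q$ and $\text{spt}_x b_q\subset\mathcal{O}$, where $\Omega_q$ is the complement of a fixed small cube. The Nash perturbation $\wpq$ is built with the spatial cutoff $\chi_q$ supported in $\Omega_q$, so it can only cancel Reynolds stress sitting there. Your choice $\vloc_0=\vv=u_E*\varphi_\ell$ and the associated $\RR_0$ are supported on all of $\TTT^2$, so the first iteration step cannot even begin: the stress inside the cube is never touched. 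Your final paragraph flags this as ``the main obstacle'', but the proposed fix---placing $b_1$ in a small cube away from ``the region where the bulk of $\RR_0$ must be corrected''---does not help, because $\RR_0$ has no such region; it must be corrected everywhere. A second issue is that with $b_0\equiv0$ the MHD perturbation flow \eqref{e:wt} has $b_{\ell_q}\equiv0$ and $\RR_{\textup{mol,b}}\equiv0$, forcing $d_{q+1}\equiv0$ at every step; hence $b_q\equiv0$ for all $q$ and you are merely approximating $u_E$ by \emph{Euler} solutions with trivial magnetic field, which is not the content of the theorem.

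The paper's remedy is precisely the missing step: one first excises a small $\epsilon$-cube from $u_E$ via a spatial cutoff $\chi_u$, setting $\vloc_1\approx(\chi_u u_E)*\psi_{\epsilon^{1/20}}$, so that the support condition \eqref{spt} is met. The excised piece $(1-\chi_u)u_E$ has $L^p$ norm $O(\epsilon^{2/p})$ by the small measure of its support, and the error it generates is packaged as a forcing $F_\epsilon$ in an auxiliary MHD Cauchy problem whose solution provides the small smooth pair $(\vnon_1,b_1)=(\theta_\epsilon,d_\epsilon)$; a nontrivial magnetic initial datum $g$ supported inside the cube guarantees $b_1\not\equiv0$. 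Only after this preprocessing does the tuple $(\vloc_1,\vnon_1,b_1,\RR_1)$ verify all hypotheses of Proposition~\ref{p:main-prop00}, and the iteration can run.
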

\vskip 2mm
\noindent{\textbf{Organization}:}
In Section \ref{sec2}, we prove Theorems \ref{main0}  and \ref{t:main-energy1}; Section \ref{sec3} is devoted to the proofs of Theorems \ref{main00}  and \ref{app}. Finally, the appendix contains essential technical tools including geometric lemmas used for perturbation constructions, estimates for the inverse divergence iteration step, and other crucial analytical instruments.

\section{Proofs of Theorem \ref{main0} and Theorem \ref{t:main-energy1}}\label{sec2}
\subsection{Proof of Theorem \ref{main0}}
As previously discussed,  if the vector field $v(x_1, x_2)=(v_1, v_2)(x_1, x_2)$ and the scalar function $\theta(x_1, x_2)$ solve the  system \eqref{e:B} in weak sense, then
\[u(x_1, x_2)=(v_1, v_2, \theta)(x_1, x_2),\quad b(x_1, x_2)=(0,0,\theta)(x_1, x_2),\]
is a weak solution of the 3D ideal MHD system \eqref{E} independent of $x_3$ since $u\cdot \nabla b=b\cdot \nabla u=0$ in weak sense.  Hence, the proof of Theorem \ref{main0} can be reduced to demonstrating the non-uniqueness of weak solutions to the system \eqref{e:B}. More precisely, we show the following main proposition.
\begin{prop}\label{t:main}
Let  $(v^{(1)}, \theta^{(1)})\in C([0, T_1]; C^\infty(\TTT^2))$  and $(v^{(2)}, \theta^{(2)})\in C([0, T_2]; C^\infty(\TTT^2))$ solve the system \eqref{e:B} with mean-free initial data $(v^{(1)}(0, x)$ $ \theta^{(1)}(0,x))$, $(v^{(2)}(0, x), \theta^{(2)}(0, x))\in C^\infty(\TTT^2)$, respectively.  Fixed $ \widetilde{T}\le \tfrac{1}{4}\min\{T_1, T_2\}$ and $0<\beta<\frac{1}{3}$, there exists a weak solution $(v, \theta)\in C^{\beta}_{t,x}$ of the Cauchy problem for the system \eqref{e:B}  satisfying
\begin{align}\label{TW}
(v, \theta)\equiv(v^{(1)}, \theta^{(1)})\,\,\text{\rm on}\,\,[0, 2 \widetilde{T}],\quad \text{ and}\quad(v, \theta)\equiv(v^{(2)}, \theta^{(2)})\,\,\text{\rm on}\,\,[3\widetilde{T}, T_2].
\end{align}
\end{prop}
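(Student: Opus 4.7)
The plan is to build a weak solution of the coupled system \eqref{e:B} by running a Nash--Newton convex integration scheme on an Euler--Reynolds-type relaxation, while keeping the two prescribed smooth solutions frozen on the terminal windows $[0,2\widetilde{T}]$ and $[3\widetilde{T},T_2]$. At each stage $q$ the iterate $(v_q,\theta_q,\RR_q,\MM_q)$ would satisfy
\begin{align*}
&\partial_t v_q + \Div(v_q\otimes v_q) + \nabla p_q = \Div \RR_q, \\
&\partial_t \theta_q + v_q\cdot\nabla \theta_q = \Div \MM_q, \\
&\Div v_q = 0,
\end{align*}
with $\RR_q$ a traceless symmetric $2\times 2$ tensor and $\MM_q$ a vector in $\R^2$. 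The goal is to enforce $\RR_q,\MM_q\to 0$ uniformly and $(v_q,\theta_q)\to(v,\theta)$ in $C^\beta_{t,x}$ so that the limit solves \eqref{e:B} in the sense of distributions.

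For the base subsolution I would paste $(v^{(1)},\theta^{(1)})$ on $[0,2\widetilde{T}]$ and $(v^{(2)},\theta^{(2)})$ on $[3\widetilde{T},T_2]$ and interpolate smoothly across $[2\widetilde{T},3\widetilde{T}]$, so that the initial defects $(\RR_0,\MM_0)$ are smooth and compactly supported inside $(2\widetilde{T},3\widetilde{T})$. Demanding that every future perturbation vanish outside this window automatically produces \eqref{TW}.

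The inductive step $q\to q+1$ has three substeps. First, following Isett's gluing technique, I would partition the support of $(\RR_q,\MM_q)$ into short time intervals, solve the classical Cauchy problem for \eqref{e:B} from $(v_q,\theta_q)$ at each node, and patch the resulting smooth local exact solutions by a partition of unity; the glued approximation has defects $(\widetilde{\RR}_q,\widetilde{\MM}_q)$ concentrated in a disjoint union of very short intervals and carries good material-derivative bounds, compensating for the absence of 2D Mikado non-interaction by exploiting the temporal dimension. Second, following the Newton--Nash scheme of \cite{GR}, I would add a Newton perturbation $(v^{\textup{N}}_q,\theta^{\textup{N}}_q)$ obtained by solving the linearization of \eqref{e:B} around $v_q$ driven by $(\widetilde{\RR}_q,\widetilde{\MM}_q)$ modulated by temporally highly oscillatory functions; this kills most of the error at a cost of only one spatial derivative, which is precisely what permits $\beta$ to reach $1/3$. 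Third, I would add the Nash perturbations: a divergence-free $w_{q+1}$ built from shear flows oriented along unit vectors $\xi$ supplied by the geometric lemma for $2\times 2$ symmetric matrices (decomposing $\Id-\RR_q/\rho$ as $\sum a_\xi^2\,\xi\otimes\xi$), together with a scalar $\vartheta_{q+1}$ made of compatible shear profiles whose oscillation directions are chosen so that $\vartheta_{q+1}w_{q+1}$ cancels $\widetilde{\MM}_q$ via a direct vector decomposition (no skew-symmetric geometry is required, since the scalar is advected). Standard stationary phase together with the antidivergence estimates then produces $(\RR_{q+1},\MM_{q+1})$ of size $\delta_{q+2}$ for $\delta_q=\lambda_q^{-2\beta}$ with a superexponentially growing frequency $\lambda_q$.

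The principal obstacle will be the 2D rigidity: shear flows with distinct directions must intersect in space, so the high-frequency interaction term in the new oscillatory stress cannot be killed by disjoint spatial supports in the way 3D Mikado flows allow. The gluing stage together with the temporal Newton oscillations of \cite{GR} must therefore do all the work of emulating the missing ``third direction'', and the careful tracking of material derivatives through the Newton linearization is the delicate technical heart of the argument. Once the inductive bounds $\|\RR_q\|_0+\|\MM_q\|_0\lesssim\delta_{q+1}$ and $\|(v_{q+1}-v_q,\theta_{q+1}-\theta_q)\|_{C^\beta}\lesssim\delta_q^{1/2}$ close for $\beta<1/3$, summability gives the desired $C^\beta_{t,x}$ weak solution, and the support condition of the perturbations gives \eqref{TW}. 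Theorem \ref{main0} then follows by lifting $(v,\theta)$ to $u=(v_1,v_2,\theta)$, $b=(0,0,\theta)$ on $\TTT^3$ and varying $(v^{(2)},\theta^{(2)})$ among smooth solutions with arbitrary terminal states to generate infinitely many $C^\beta$ weak solutions sharing the common initial datum $(v^{(1)}(0,\cdot),\theta^{(1)}(0,\cdot))$, which destroys the conservation of both $\mathcal{E}$ and $\mathcal{H}_c$.
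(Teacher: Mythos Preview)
Your proposal is correct and follows essentially the same route as the paper: reduce to an iterative proposition on the Euler--Reynolds/tracer relaxation, initialize by gluing $(v^{(1)},\theta^{(1)})$ to $(v^{(2)},\theta^{(2)})$ across $[2\widetilde T,3\widetilde T]$, then at each stage mollify, apply Isett-type temporal gluing to localize the defects in short disjoint intervals, add Newton perturbations solving the linearized system forced by the glued errors modulated by high-frequency time functions (as in \cite{GR}), and finally add Nash shear-flow perturbations with amplitudes coming from the symmetric geometric lemma for $\RRR_q$ and from a direct coordinate decomposition $\Lambda_\theta=\{e_1,e_2\}$ for $\MMM_q$. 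One point worth making explicit, which the paper handles and your sketch leaves implicit, is the feedback from the tracer perturbation into the Reynolds stress: the shear flows in the velocity that pair with $\vartheta_{q+1}$ to cancel $\MMM_q$ themselves contribute a rank-one term $(a^\theta_k)^2\,k\otimes k$ to $\wpq\otimes\wpq$, so the Reynolds target must be augmented to $\RRR_q+\sum_{k\in\Lambda_\theta}\delta_{q+1}^{-1}(a^\theta_k)^2\,\nabla\Phi_i^{-1}(k\otimes k)\nabla\Phi_i^{-\TT}$ before applying the geometric lemma over $\Lambda_v$; otherwise the oscillation error does not close.
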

\noindent \textbf{Proposition \ref{t:main} implies Theorem \ref{main0}.} For each smooth initial data \((v^{(1)}(0, x), \theta^{(1)}(0, x))\), there exists a unique smooth solution to system \eqref{e:B} on the time interval \([0, T_1]\). Let \(A > 0\) be a constant and \(f: \mathbb{T} \to \mathbb{R}\) be a smooth periodic function. We define
\[
v^{(2)} = \big(0, A f(x_1)\big), \quad \theta^{(2)} = A f(x_1),
\]
which constitute a global smooth solution to system \eqref{e:B}. By Proposition \ref{t:main}, for any \(T > 0\), there exists a weak solution \((v, \theta) \in C^\beta_{t,x}\) to the Cauchy problem for system \eqref{e:B} that satisfies the gluing conditions:
\[
(v, \theta) \equiv (v^{(1)}, \theta^{(1)}) \quad \text{on } [0, T_1/2], \quad \text{and} \quad (v, \theta) \equiv (v^{(2)}, \theta^{(2)}) \quad \text{on } [2T_1, T].
\]
By changing the amplitude \(A\) and profile \(f\), we  can construct  infinitely many distinct weak solutions \((v, \theta)\) of the system \eqref{e:B}. Hence,  we obtain infinitely many weak solutions \((u, b)\) to the 3D ideal MHD system \eqref{E} by defining
\[
u = (v, \theta)(x_1, x_2), \quad b = (0, 0, \theta)(x_1, x_2).
\]
Furthermore, selecting \(A\) sufficiently large ensures that for \(t \geq 2T_1\), we conclude that the energy \(\mathcal{E}(t)\) and cross-helicity \(\mathcal{H}_c(t)\)
\[
\mathcal{E}(t) = \int_{\mathbb{T}^2} |v|^2 + 2|\theta|^2 \, \dd x = 2A \int_{\mathbb{T}} |f(x_1)|^2 \, \dd x > \int_{\mathbb{T}^2} |v^{(1)}(0)|^2 + 2|\theta^{(1)}(0)|^2 \, \dd x,
\]
\[
\mathcal{H}_c(t) = \int_{\mathbb{T}^2} |\theta|^2 \, \dd x = A^2 \int_{\mathbb{T}} |f(x_1)|^2 \, \dd x > \int_{\mathbb{T}^2} v^{(1)}(0) \cdot \theta^{(1)}(0) \, \dd x.
\]
 are non-conserved. This completes the proof of Theorem \ref{main0}.

Now, it suffices to show Proposition \ref{t:main}.  Without loss of generality, we assume
$T_1=T_2$ in  Proposition \ref{t:main}.  We reduce  Proposition \ref{t:main} to  an iterative proposition. Before stating this  proposition, we explicitly define all parameters and denotes.
\vskip 2mm
\noindent{\textit{Parameters}.} For all $q\ge 1$ and given  $0<\beta<1/3$, we define
\begin{align}\label{con-b}
 b_0:={\min\big\{1+\tfrac{1-3\beta}{12\beta},  \tfrac{3}{2}\big\}}.
\end{align}
For any $a\gg 1$ and  $b\in(1, b_0)$, we give
\begin{align}\label{lambdaq}
    \lambda_q \coloneq  \left\lceil a^{b^q}\right\rceil,\qquad   \delta_q \coloneq \lambda_2^{3\beta}\lambda_q^{-2\beta},
\end{align}
here $\lceil \cdot\rceil$ denotes the ceiling function. Given $b$ and $\beta$, 
 we define the positive integer $L$ such that
 \[L\coloneq {\max\Big\{\Big\lceil \tfrac{100\beta b^2}{-(2\beta b^2-b(3\beta+\frac{1}{3})+ \beta+\frac{1}{3})}\Big\rceil , \Big\lceil \tfrac{100\beta b}{\frac{1}{3}-\beta }\Big\rceil, N_0\Big\}+1.}\]We  require  $\alpha$  to satisfy
\begin{align}\label{e:params0}
   &\alpha =\min\{\tfrac{1}{100}, -\tfrac{1}{100L}(2\beta b^2-b(\beta+\tfrac{1}{3})+ \beta+\tfrac{1}{3}), \tfrac{1}{100L}(b-1)\beta(\tfrac{1}{3}-\beta)\}.
\end{align}
 Let $0<\widetilde{T}\le\tfrac{1}{4}$, $M$ and $r_0$ be two universal constants from geometric Lemma \ref{first S}. In the following, $a\in\mathbb{N}^+$ is  a large number depending on $b,\beta,\alpha$ and the initial data such that
 \[a > \max\big\{50^{\beta/\alpha },  M^{\frac{100}{\alpha}}, {(r_0/2)}^{-\frac{2}{\alpha}}, 3{\widetilde{T}^{-1}}\big\}.\]
Let $\ell_q$, $\tau_q$ and $\mu_{q+1}$ be given by
\begin{align} \label{e:ell}
    \ell_q \coloneq\lambda^{-1+\frac{1}{10}(2\beta b^2-b(3\beta+\frac{1}{3})+ \beta+\frac{1}{3})}_q,\quad
     \tau_q \coloneq \delta_q^{-\frac{1}{2}} {\lambda^{-1-5\alpha}_q},\quad\mu_{q+1}\coloneq \lambda^{\frac{1}{3}}_{q+1}\lambda^{\frac{2}{3}+5\alpha}_q\delta^{\frac{1}{2}}_{q+1},
\end{align}
and
\[\delta_{q+1,n}\coloneq (\tau_q\mu_{q+1}\ell^{10\alpha}_q)^{-n}\delta_{q+1 }, \quad 0\le n\le L.\]
The relation between $L$ and $\alpha$ leads to
\begin{equation}\label{lambdaN}
\lambda^L_q\ell_q^{L}+\delta_{q+1,L}
+\frac{\lambda_q}{\lambda_{q+1}}\delta^{\frac{1}{2}}_{q+1}\delta^{\frac{1}{2}}_q
+\ell^{-2}_q\lambda^{-2}_q\Big(\frac{\lambda_q}{\lambda_{q+1}}\Big)^{\frac{1}{3}}
 \Big(\frac{\delta_{q+1}}{\delta_{q}}\Big)^{\frac{1}{2}}\delta_{q+1}\leq  \delta_{q+2 }\lambda_{q+1}^{-4\alpha}, \quad \forall q\ge 1.
\end{equation}
We consider  the  following relaxation  of  the system \eqref{e:B}
 \begin{equation}
\left\{ \begin{alignedat}{-1}
&\del_t v_q-\Div (v_q\otimes v_q)  +\nabla p_q   = \Div \RR_q,
 \\
 &\del_t \theta_q+v_q\cdot\nabla \theta_q     = \Div \MM_q,
 \\
  &\nabla \cdot v_q = 0,
  \\ &(v_q, \theta_q) |_{t=0}=(\vin,\tin):=(v^{(1)}(0, x), \theta^{(1)}(0, x)),
\end{alignedat}\right.  \label{e:subsol-B}
\end{equation}
where
\begin{equation}\label{p_q}
\int_{\TTT^2}p_q\dd x=0,
\end{equation}
$\RR_q$  is a symmetric trace-free $2\times2$  matrix and $\MM_q$ is a vector field. For the system \eqref{e:subsol-B}, we shall establish the following iterative proposition.
\begin{prop}\label{p:main-prop}
Let $b\in(1,b_0)$, $\alpha$ satisfy  \eqref{e:params0} and $0<\widetilde{T}\le 1/4$.
If $(v_q, \theta_q, p_q,\RR_q, \MM_q)$ obeys the equations \eqref{e:subsol-B} and \eqref{p_q} with
\begin{align}
     &\|(v_q, \theta_q)\|_{0} \le M\sum_{i=1}^q\delta^{1/2}_i ,
    \label{e:vq-C03}
    \\
    &\|(v_q, \theta_q)\|_{N} \le M\delta_{q}^{1/2} \lambda^N_q ,\quad 1\leq N\leq L,
    \label{e:vq-C1}
    \\
    &\|(\RR_q, \MM_q)\|_{0} \le \delta_{q+1}\lambda_q^{-4\alpha} ,
    \label{e:RR_q-C01}
\\
&(v_q, \theta_q)=(v^{(1)}, \theta^{(1)}) \,\,\text{\rm on}\,\, [0, 2\widetilde{T}+\tau_q],\label{e:initial1}\\
 &(v_q, \theta_q)= (v^{(2)}, \theta^{(2)})\,\,\text{\rm on}\,\,[3\widetilde{T}-2\tau_q],\label{e:initial}
\end{align}
where $(v^{(1)}, \theta^{(1)})$ and $ (v^{(2)}, \theta^{(2)})$ are consistent with those in Proposition \ref{t:main}, then there exist a smooth solution $(v_{q+1}, \theta_{q+1}, p_{q+1}, \RR_{q+1}, \MM_{q+1})$ satisfying \eqref{e:subsol-B}--\eqref{e:initial}
with $q$ replaced by $q+1$ and
\begin{align}
        \big\|(v_{q+1}, \theta_{q+1}) - (v_q, \theta_q)\big\|_{0} +\frac1{\lambda_{q+1}} \big\|(v_{q+1}, \theta_{q+1})-(v_q, \theta_q)\big\|_1 &\le   M\delta_{q+1}^{1/2}.
        \label{e:velocity-diff1}
\end{align}
\end{prop}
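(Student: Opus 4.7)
My first move is to regularize $(v_q,\theta_q,\RR_q,\MM_q)$ by space-time convolution at scale $\ell_q$, producing smooth profiles that still satisfy \eqref{e:subsol-B} modulo commutator errors controllable via \eqref{e:vq-C1}--\eqref{e:RR_q-C01}. Because $\ell_q\ll \tau_q$, the collar conditions \eqref{e:initial1}--\eqref{e:initial} survive the mollification. On the active window $[2\widetilde T+\tau_q,\,3\widetilde T-2\tau_q]$ I would then run an Isett-type gluing: partition the window into intervals of length $\tau_q$, solve the exact system \eqref{e:B} on each sub-interval with Cauchy data taken from the mollified profile at the left endpoint, and paste the solutions together via a partition of unity whose cut-off regions have width $\sim\tau_q$. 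The output $(\bar v_q,\bar\theta_q,\bar p_q,\RRR_q,\MMM_q)$ has its new errors supported in disjoint temporal strips of width $\sim\tau_q$, and each strip automatically carries sharp material-derivative bounds inherited from the exact flow. This is the device that compensates for the absence of non-intersecting Mikado directions in~$\R^2$.

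\textbf{Newton stage.} Following \cite{GR}, I would build $L$ Newton perturbations iteratively. For $n=0,\dots,L-1$, let $(w^{(t)}_{n+1},d^{(t)}_{n+1})$ solve the linearization of \eqref{e:B} about $(\bar v_q,\bar\theta_q)$,
\begin{align*}
\partial_t w^{(t)}_{n+1}+\bar v_q\cdot\nabla w^{(t)}_{n+1}+w^{(t)}_{n+1}\cdot\nabla\bar v_q+\nabla P^{(t)}_{n+1}&=-\Div E^{R}_{n},\\
\partial_t d^{(t)}_{n+1}+\bar v_q\cdot\nabla d^{(t)}_{n+1}+w^{(t)}_{n+1}\cdot\nabla\bar\theta_q&=-\Div E^{T}_{n},
\end{align*}
with $\nabla\cdot w^{(t)}_{n+1}=0$ and zero data, where $E^R_n,E^T_n$ are the previous-stage errors modulated by a temporal oscillator at frequency $\mu_{q+1}$ supported in the gluing strips. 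Each Newton step contributes a factor $(\tau_q\mu_{q+1}\ell_q^{10\alpha})^{-1}<1$ to the $C^0$-norm, so after $L$ rounds the residual linear error is controlled by $\delta_{q+1,L}$, which by \eqref{lambdaN} already fits the final budget for $\RR_{q+1}$ with room to spare.

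\textbf{Nash stage and error estimates.} The remainder after the Newton sweep is localized in spatial scale and nearly piecewise-constant on the temporal strips, exactly the input required for oscillatory cancellation. I would then introduce a single Nash perturbation $(\wpq,\dpq)$ of spatial frequency $\lambda_{q+1}$ built from shear flows. For the symmetric trace-free Reynolds remainder I invoke the $2\times 2$ geometric lemma (Lemma \ref{first S}) to expand it as a positive combination of the rank-one tensors $\xi_k\otimes\xi_k-\tfrac12\Id$, and for the vector tracer remainder I use a parallel vector decomposition. The freedom granted by the gluing step lets me assign \emph{different} oscillatory directions to different temporal strips, bypassing the obstruction that two non-parallel lines in $\R^2$ must intersect. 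A divergence-free corrector $\wcq$ restores $\nabla\cdot w_{q+1}=0$, and $\dcq$ adjusts the tracer. Setting $(v_{q+1},\theta_{q+1})=(\bar v_q+w^{(t)}+\wpq+\wcq,\;\bar\theta_q+d^{(t)}+\dpq+\dcq)$, the new errors decompose as $\RR_{q+1}=\Rlin+\Rtransport+\Rosc+\Rrem$ (and likewise for $\MM_{q+1}$), each estimated via the inverse-divergence lemmas of the Appendix together with the parameter relations \eqref{e:ell}--\eqref{lambdaN}; the target estimates \eqref{e:vq-C03}--\eqref{e:velocity-diff1} at level $q+1$ then follow by geometric summation.

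\textbf{Main obstacle.} The delicate point is the simultaneous balancing of parameters. The oscillation error $\Rosc$, of $C^0$-size $\sim\ell_q^{-2}\lambda_q^{-2}(\lambda_q/\lambda_{q+1})^{1/3}(\delta_{q+1}/\delta_q)^{1/2}\delta_{q+1}$, is what forces $b<b_0$ in \eqref{con-b}; the Newton gain $\tau_q\mu_{q+1}\ell_q^{10\alpha}$ must strictly beat unity; and the gluing windows $\tau_q$ must be short enough that both Nash and Newton perturbations vanish on $[0,2\widetilde T+\tau_{q+1}]$ and $[3\widetilde T-2\tau_{q+1},T]$, so that the collar conditions \eqref{e:initial1}--\eqref{e:initial} persist at level $q+1$. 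The key inequality \eqref{lambdaN} is precisely the clean statement that these three demands are simultaneously satisfiable for $b\in(1,b_0)$.
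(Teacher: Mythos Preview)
Your outline is essentially correct and matches the paper's five-step scheme (mollification, gluing, Newton perturbations, Nash perturbations, error estimates), including the identification of \eqref{lambdaN} as the parameter inequality that closes the iteration. Two small corrections: the non-intersection of shear directions in $\R^2$ is \emph{not} achieved by assigning different directions to different gluing strips---within each strip $I_i$ all directions $k\in\Lambda_v\cup\Lambda_\theta$ are used simultaneously, and the separation comes from the fast temporal oscillators $g_{k,n}(\mu_{q+1}t)$ with pairwise disjoint supports (Proposition~\ref{time-g}); the gluing is needed instead to localize errors and obtain material-derivative bounds. Also, no corrector $\dcq$ is needed for the scalar tracer (the paper sets $d_{q+1}=\dpq+\dtq$ only), and the mollification is purely spatial.
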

\noindent \textbf{Proposition \ref{p:main-prop} implies Proposition \ref{t:main}.}\quad Firstly, we construct $(v_1, \theta_1)$ by gluing $(v^{(1)}, \theta^{(1)})$ and $(v^{(2)}, \theta^{(2)})$. One can show that $(v_1, \theta_1, \RR_1, \MM_1)$ satisfies \eqref{e:vq-C03}--\eqref{e:initial} by choosing $a$ large enough. Using Proposition  \ref{p:main-prop} inductively, we construct a sequence of solutions  $\{(v_q, \theta_q, p_q,\RR_q,\MM_q)\}$ to the system \eqref{e:subsol-B} satisfying \eqref{e:vq-C03}--\eqref{e:initial}. Thanks to \eqref{e:velocity-diff1},    the limit function $(v,\theta)\in C^{\beta''}(\TTT^3\times[0,1])$ ($\beta''<\beta<\frac{1}{3}$) solves the system \eqref{e:B} by virtue of \eqref{e:velocity-diff1}. Readers can refer to \cite[Section 3.1]{MNY} for the detailed proof and we omit it here for brevity.

We now turn our attention to  prove Proposition \ref{p:main-prop}, which proceeds in five  steps: mollification, gluing procedure, construction of the perturbations, estimates for the perturbations and the tensor errors.

\noindent\textit{Step 1: Mollification.}\,\,By making using of the spatial mollifier $\psi_{\ell_q}$ defined in \eqref{e:defn-mollifier-x},
we define  $(v_{\ell_q}, \theta_{\ell_q}, p_{\ell_q}, \RR_{\ell_q}, \MM_{\ell_q})$ by
\begin{align}
   & v_{\ell_q} \coloneq v_q * \psi_{\ell_q}, \quad\theta_{\ell_q} \coloneq \theta_q * \psi_{\ell_q},  \quad  p_{\ell_q}\coloneq p_q *\psi_{\ell_q} -|v_q|^2 + |v_{\ell_q}|^2,\label{moll-v}\\
 &   \RR_{\ell_q} \coloneq \RR_q * \psi_{\ell_q}  - (v_q \ootimes v_q) * \psi_{\ell_q}  + v_{\ell_q} \ootimes v_{\ell_q} , \label{moll-R}\\
 &\MM_{\ell_q} \coloneq \MM_q * \psi_{\ell_q}  - (v_q \theta_q) * \psi_{\ell_q}  + v_{\ell_q}\theta_{\ell_q},\label{moll-T}
\end{align}
which solves the following equations
\begin{equation}
\left\{ \begin{alignedat}{-1}
&\del_t v_{\ell_q} +\Div (v_{\ell_q}\otimes v_{\ell_q})  +\nabla p_{\ell_q}   =  \Div \RR_{\ell_q}  ,
\\
&\del_t\theta_{\ell_q}+v_{\ell_q}\cdot\nabla\theta_{\ell_q}=\Div\MM_{\ell_q},\\
 & \nabla \cdot v_{\ell_q} = 0,
  \\
  &( v_{\ell_q}, \theta_{\ell_q})\big|_{t=0}= (\vin*\psi_{\ell_{q}}, \tin*\psi_{\ell_{q}}).
\end{alignedat}\right.  \label{e:mollified-euler}
\end{equation}
By the standard method as in \cite[Propistion 2.2]{BDSV}, using $\lambda^{L}_q\ell^{L}_q\lesssim\delta_{q+2}\lambda^{-6\alpha}_{q+1}$ and $\lambda^{-3\alpha}_q\le \ell^{\frac{9}{4}\alpha}_q$,  we easily conclude the following bounds.
\begin{prop}[Estimates for mollified functions \cite{MNY}]\label{p:estimates-for-mollified1}
\begin{align}
&\|(v_{\ell_q}-v_{q}, \theta_{\ell_q}-\theta_{q})\|_{0} \lesssim \delta_{q+2} \lambda_{q+1}^{-5\alpha}, \label{e:v_ell-vq1}
\\
&\|(v_{\ell_q}, \theta_{\ell_q})\|_{N+1} \lesssim M\delta_{q}^{1/2} \lambda_{q} \ell_q^{-N}, && \forall N \geq 0, \label{e:v_ell-CN+1}
\\
&\|(\RR_{\ell_q}, \MM_{\ell_q})\|_{N+\alpha} \lesssim  \delta_{q+1} \ell_q^{-N+2\alpha}, && \forall N \geq 0 .\label{e:R_ell}
\end{align}
\end{prop}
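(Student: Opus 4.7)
The plan is to derive each of the three estimates from standard properties of the mollifier $\psi_{\ell_q}$, combined with the inductive bounds \eqref{e:vq-C03}--\eqref{e:RR_q-C01} and the scheduled parameter relations. The main analytic tools will be: (i) the approximation estimate $\|f - f*\psi_{\ell_q}\|_0 \lesssim \ell_q^N \|f\|_N$ for any $N\geq 0$; (ii) the smoothing estimate $\|f*\psi_{\ell_q}\|_{N+M} \lesssim \ell_q^{-M}\|f\|_N$ for any $N,M \geq 0$; and (iii) the Constantin--E--Titi quadratic commutator estimate
\[
\|(fg)*\psi_{\ell_q} - (f*\psi_{\ell_q})(g*\psi_{\ell_q})\|_{N+\alpha} \lesssim \ell_q^{2-N-\alpha}\|f\|_1\|g\|_1,
\]
which follows from the symmetric integral representation of the commutator and interpolation.

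For \eqref{e:v_ell-vq1}, I will apply (i) with $N = L$ to $v_q$ and $\theta_q$, then insert the inductive bound \eqref{e:vq-C1} to obtain a prefactor $\ell_q^L M\delta_q^{1/2}\lambda_q^L$. Invoking the relation $\lambda_q^L\ell_q^L \lesssim \delta_{q+2}\lambda_{q+1}^{-6\alpha}$ implied by \eqref{lambdaN}, together with the uniform boundedness of $\delta_q^{1/2}$ (which gives $\delta_q^{1/2} \lesssim \lambda_{q+1}^{\alpha}$ for $a$ sufficiently large), will then yield the desired $\delta_{q+2}\lambda_{q+1}^{-5\alpha}$ bound.

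Estimate \eqref{e:v_ell-CN+1} follows directly from tool (ii) applied with input regularity~$1$: this gives $\|(v_{\ell_q},\theta_{\ell_q})\|_{N+1} \lesssim \ell_q^{-N}\|(v_q,\theta_q)\|_1 \lesssim M\delta_q^{1/2}\lambda_q\,\ell_q^{-N}$ for $N \geq 1$, while the case $N=0$ is trivial from $\|v_{\ell_q}\|_1 \leq \|v_q\|_1$.

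The main obstacle will be \eqref{e:R_ell}. I plan to decompose
\[
\RR_{\ell_q} = \RR_q*\psi_{\ell_q} - \bigl[(v_q\ootimes v_q)*\psi_{\ell_q} - v_{\ell_q}\ootimes v_{\ell_q}\bigr].
\]
The first summand is controlled by (ii) and the hypothesis $\|\RR_q\|_0 \leq \delta_{q+1}\lambda_q^{-4\alpha}$; the auxiliary inequality $\lambda_q^{-3\alpha} \leq \ell_q^{9\alpha/4}$ — a direct consequence of the definition of $\ell_q$ in \eqref{e:ell} — will then convert the factor $\lambda_q^{-4\alpha}$ into sufficient additional powers of $\ell_q$ to reach the target exponent $-N+2\alpha$. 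The commutator term is controlled via tool (iii) and the inductive bound $\|v_q\|_1 \leq M\delta_q^{1/2}\lambda_q$, yielding $\ell_q^{2-N-\alpha}\delta_q\lambda_q^2$. The delicate step is to verify that this quantity is $\lesssim \delta_{q+1}\ell_q^{-N+2\alpha}$, which reduces to a single scalar inequality among $b$, $\beta$, $\alpha$ and the exponent $-1+\frac{1}{10}(2\beta b^2 - b(3\beta+\tfrac{1}{3})+\beta+\tfrac{1}{3})$ defining $\ell_q$; this inequality is precisely what the constraint $b < b_0$ from \eqref{con-b} together with the smallness of $\alpha$ prescribed by \eqref{e:params0} are designed to guarantee. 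The tracer error $\MM_{\ell_q}$ is handled identically, with the scalar--vector commutator $(v_q\theta_q)*\psi_{\ell_q} - v_{\ell_q}\theta_{\ell_q}$ playing the role of the quadratic tensor commutator.
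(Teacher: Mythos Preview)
Your proposal is correct and follows essentially the same approach as the paper: the paper simply cites \cite[Proposition 2.2]{BDSV} as the ``standard method'' and singles out the two parameter inequalities $\lambda_q^{L}\ell_q^{L}\lesssim\delta_{q+2}\lambda_{q+1}^{-6\alpha}$ and $\lambda_q^{-3\alpha}\le \ell_q^{9\alpha/4}$, which are precisely the relations you invoke for \eqref{e:v_ell-vq1} and the first half of \eqref{e:R_ell}. Your explicit identification of the Constantin--E--Titi commutator bound as the tool for the quadratic remainders in $\RR_{\ell_q}$ and $\MM_{\ell_q}$, and the reduction of its control to the scalar inequality governed by \eqref{con-b}--\eqref{e:params0}, is exactly the content of the cited BDSV argument.
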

\noindent{\textit{Step 2: Gluing procedure.}} Let $\{t_i\}$ be a time sequence given by
\begin{align*}
&t_0=2\widetilde{T}, \quad t_i\coloneq t_0+ i\tau_q, i\in\NN,\\
&i_{\max}=\sup\{i\ge 1, \tau_q\le3\widetilde{T}-t_i<2\tau_q\}+1.
\end{align*}
and the classical exact flows $\{(\vex_i, {\bm{\theta}_i}, \pex_i)\}_{0\le i\le i_{\max}}$
defined as
\begin{equation}\nonumber
(\vex_0, {\bm{\theta}_0}, \pex_0)=(\vex_{i_{\max}}, {\bm\theta_{i_{\max}}}, \pex_{i_{\max}}):=(v_q, \theta_q, p_q).
 \end{equation}
  For the following  system
 \begin{equation}
 \left\{ \begin{alignedat}{-1}
&\del_t \vex +(\vex\cdot\nabla) \vex  +\nabla \pex   = 0,
\\
&\del_t  \bm\theta +(\vex\cdot\nabla)  \bm\theta  =0,
\\
&  \nabla \cdot \vex  = 0,
\end{alignedat}\right.  \label{e:exact-B}
\end{equation}
the local well-posedness theory ensures that $\{(\vex, {\bm{\theta}}, \pex)\}$ is a unique smooth solution of the system \eqref{e:exact-B} with initial data  $(v_{\ell_q}(t_i), \theta_{\ell_q}(t_i))$ on $[t_i-\tau_q, t_i+\tau_q]$. Hence, for $1\le i\le i_{\max-1}$, we define $(\vex_i, \bm{\theta}_i,\pex_i)=(\vex,\bm\theta,\pex)$ that solves the system. Moreover, one obtains the following stability estimates:
\begin{prop}[Stability\cite{MNY}]
\label{p:stability}For $i\ge 0$, $|t-t_i|\le \tau_q$, and $0\le N\le L+1$, we have
\begin{align}
    \|(\vex_i -v_{\ell_q}, \bm\theta_i -\theta_{\ell_q}) \|_{{N+\alpha}} & \lesssim \tau_q \delta_{q+1} \ell_q^{-N-1+\frac{5}{4}\alpha}\label{e:stability-v},\\
    \|\nabla \pex_i -\nabla p_{\ell_q} \|_{{N+\alpha}} &\lesssim\delta_{q+1} \ell_q^{-N-1+\frac{5}{4}\alpha},\label{e:stability-p}\\
    \|(\matd{v_{\ell_q}}(\vex_i - v_{\ell_q}),  \matd{v_{\ell_q}}(\bm\theta_i - \theta_{\ell_q}))\|_{{N+\alpha}} & \lesssim  \delta_{q+1} \ell_q^{-N-1+\frac{5}{4}\alpha},\label{e:stability-matd}\\
  \|(\mathcal{R}(\vex_i -v_{\ell_q}), \mathcal{R}_{\vex}(\bm\theta_i -\theta_{\ell_q})) \|_{{\alpha}} &\lesssim \tau_q \delta_{q+1} \ell_q^{\frac{5}{4}\alpha}\label{e:stability-v-1},
\end{align}
where the inverse of the divergence operators $\mathcal{R}$ and $\mathcal{R}_{\vex}$ which are given in Definition \ref{def.R}.
\end{prop}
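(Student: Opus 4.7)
\textbf{Proof proposal for Proposition \ref{p:stability}.} The plan is to introduce the difference variables
\[
V_i \coloneq \vex_i - v_{\ell_q}, \qquad \Theta_i \coloneq \bm\theta_i - \theta_{\ell_q}, \qquad P_i \coloneq \pex_i - p_{\ell_q},
\]
and derive the linearized system they satisfy. Subtracting \eqref{e:exact-B} from \eqref{e:mollified-euler} and symmetrizing the quadratic terms using $\vex_i\cdot\nabla\vex_i - v_{\ell_q}\cdot\nabla v_{\ell_q} = v_{\ell_q}\cdot\nabla V_i + V_i\cdot\nabla \vex_i$ (and analogously for the tracer), one obtains
\[
\del_t V_i + v_{\ell_q}\cdot\nabla V_i + V_i\cdot\nabla \vex_i + \nabla P_i = -\Div\RR_{\ell_q}, \qquad \del_t \Theta_i + v_{\ell_q}\cdot\nabla\Theta_i + V_i\cdot\nabla\bm\theta_i = -\Div \MM_{\ell_q},
\]
with $V_i(t_i) = 0,\ \Theta_i(t_i) = 0$ and $\Div V_i = 0$. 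Since the forcing satisfies $\|\Div \RR_{\ell_q}\|_{N+\alpha} + \|\Div \MM_{\ell_q}\|_{N+\alpha} \lesssim \delta_{q+1}\ell_q^{-N-1+2\alpha}$ by \eqref{e:R_ell}, the derivation of \eqref{e:stability-v} reduces to a transport-type estimate for the linearized flow.

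For \eqref{e:stability-v}, I will run standard H\"older energy/characteristic estimates on the transport equation with drift $v_{\ell_q}$ and a perturbation by the zeroth-order coefficient $\nabla\vex_i$. The Gr\"onwall factor is controlled by $\tau_q(\|v_{\ell_q}\|_1 + \|\vex_i\|_1) \lesssim \tau_q\delta_q^{1/2}\lambda_q \lesssim \lambda_q^{-5\alpha}$, which is $O(1)$ (in fact, small), so it can be absorbed in the $\ell_q^{-3\alpha/4}$ loss built into the statement; similar commutator estimates of Faa di Bruno type handle the higher derivatives $N\le L+1$, at the cost of the further $\ell_q^{-3\alpha/4}$ loss that converts the $2\alpha$ in the forcing bound into the $\tfrac{5}{4}\alpha$ appearing in \eqref{e:stability-v}. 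The pressure estimate \eqref{e:stability-p} is then obtained by taking the divergence of the velocity equation and solving the resulting Poisson equation
\[
-\Delta P_i = \Div\bigl(V_i\cdot\nabla\vex_i + v_{\ell_q}\cdot\nabla V_i\bigr) + \Div\Div\RR_{\ell_q},
\]
so that Schauder estimates give the claimed bound without the $\tau_q$ factor, consistent with the fact that the pressure adjusts instantaneously. The material derivative estimate \eqref{e:stability-matd} follows by reading $\matd{v_{\ell_q}}V_i = -V_i\cdot\nabla\vex_i - \nabla P_i - \Div\RR_{\ell_q}$ directly from the equation and inserting \eqref{e:stability-v}, \eqref{e:stability-p}, and the forcing bound; the tracer case is identical.

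The most delicate point will be the inverse-divergence bound \eqref{e:stability-v-1}. For this I will integrate the velocity equation in time from $t_i$, apply the operator $\mathcal R$ of Definition \ref{def.R}, and exploit the one-derivative gain built into $\mathcal R$. Schematically,
\[
\mathcal R(V_i)(t) = -\int_{t_i}^t \mathcal R\bigl(v_{\ell_q}\cdot\nabla V_i + V_i\cdot\nabla\vex_i + \nabla P_i\bigr)\dd s - \int_{t_i}^t \RR_{\ell_q}\dd s;
\]
the last term contributes $\tau_q\|\RR_{\ell_q}\|_\alpha \lesssim \tau_q\delta_{q+1}\ell_q^{2\alpha}$ directly from \eqref{e:R_ell}, while the remaining terms are controlled by combining the previous transport and pressure estimates with the one-derivative gain of $\mathcal R$; the loss from the commutator $[\mathcal R, v_{\ell_q}\cdot\nabla]$ again costs one power of $\ell_q^{-3\alpha/4}$, producing the $\ell_q^{5\alpha/4}$ on the right of \eqref{e:stability-v-1}. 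The scalar analogue uses $\mathcal R_{\vex}$ in place of $\mathcal R$. The main obstacle in the whole proof will be carefully juggling these small losses so that the final H\"older exponent $\tfrac{5}{4}\alpha$ is not exceeded while simultaneously keeping the estimates uniform in $N$ up to $L+1$; all of this is, however, routine once the smallness $\tau_q\delta_q^{1/2}\lambda_q \lesssim \lambda_q^{-5\alpha}$ (which reduces Gr\"onwall to a bounded constant) has been recorded.
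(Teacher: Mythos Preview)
The paper does not supply its own proof of this proposition; it is stated with a direct citation to \cite{MNY}. Your proposal is the standard argument that underlies that reference (and, ultimately, the analogous stability lemma in \cite{BDSV}): derive the linearized transport system for $(V_i,\Theta_i,P_i)$, close the H\"older estimates by Gr\"onwall using the smallness $\tau_q\delta_q^{1/2}\lambda_q\lesssim\lambda_q^{-5\alpha}$, recover the pressure from the associated Poisson problem, read off the material derivative directly from the equation, and obtain \eqref{e:stability-v-1} by applying $\mathcal R$ (resp.\ $\mathcal R_{\vex}$) and integrating in time, controlling the commutator $[\mathcal R,v_{\ell_q}\cdot\nabla]$ at the cost of a small H\"older loss. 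This is exactly the route taken in \cite{MNY}.

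One small point you leave implicit: the coefficient $\nabla\vex_i$ in the $V_i$-equation must itself be bounded by $\delta_q^{1/2}\lambda_q\ell_q^{-N}$ on $|t-t_i|\le\tau_q$, which requires a short a~priori bootstrap on the exact solution $\vex_i$ (propagation of the $C^{N+1}$ norm of the data $v_{\ell_q}(t_i)$ over a time interval of length $\tau_q$). This is routine and is precisely what the sentence preceding Proposition~\ref{p:stability} about local well-posedness is meant to cover, but it should be stated before the Gr\"onwall step.
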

Based on $\{(\vex_i, {\bm{\theta}_i}, \pex_i)\}_{0\le i\le i_{\max}}$, we define the glued solution $(\vv_q, \tb_q, \ppp_q)$ by
\begin{align}
    \vv_q(x,t) \coloneq \sum_{i=0}^{i_{\textup{max}}} \eta_i(t) \vex_i(x,t),\,\,
     \tb_q(x,t) \coloneq \sum_{i=0}^{i_{\textup{max}}} \eta_i(t) \bm\theta_i(x,t),\,\,
    \ppp_q(x,t) \coloneq \sum_{i=0}^{i_{\textup{max}}} \eta_i(t) \pex_i(x,t),
    \label{e:pp_q}
\end{align}
where $\{ \eta_i\}_{i=0}^{i_{\textup{max}}}$ is a partition of unity on $[0,1]$ such that for all $N\ge 0$,
\begin{align*}
       &\supp \eta_0=[-1, t_0+\tfrac{2\tau_q}{3}], \qquad \,\,\,\eta_0 |_{J_0} =1, \qquad\|  \del_t^N \eta_0\|_{0} \lesssim \tau_q^{-N},\\
           & \supp\eta_i=I_{i-1} \cup J_i \cup I_{i},  \qquad\,\,\,\,\eta_i |_{J_i} =1, \qquad\, \|  \del_t^N \eta_i\|_{0} \lesssim \tau_q^{-N},\ \   1\le i\le i_{\max}-1,\\
       &\supp \eta_{i_{\max}}= [t_{i_{\max}-1}+\tfrac{\tau_q}{3}, 2],  \,\,\,\,\eta_i |_{J_{i_{\max}}} =1, \quad\|  \del_t^N \eta_{i_{\max}}\|_{0} \lesssim \tau_q^{-N}.
\end{align*}
Here the time intervals $\{I_i\}$ and $\{J_i\}$ are given by
\begin{align*}
 &I_i \coloneq [ t_i + \tfrac{\tau_q}3,\ t_i+\tfrac{2\tau _q}3], \,\, 0\le i\le i_{\max}-1,\\
  &J_0:=[0, t_0+\tfrac{\tau_q}3),\,\,J_{i_{\max}}:=(t_{i_{\max}-1}+\tfrac{2\tau _q}3, 3\widetilde{T}],\\
 &J_i \coloneq (t_i - \tfrac{\tau_q}3,\ t_i+\tfrac{\tau _q}3),\,\, 1\le i\le i_{\max}-1.
\end{align*}
It follows from the gluing procedure that  $(\vv_q, \tb_q, \pp_q)$ solves
\begin{equation}
\left\{ \begin{alignedat}{-1}
&\del_t \vv_q+\Div (\vv_q\otimes \vv_q)  +\nabla \pp_q   =  \Div \RRR_q,
 \\
 &\del_t \tb_q+\vv_q\cdot\nabla \tb_q     = \Div \MMM_q,
 \\
  &\nabla \cdot \vv_q = 0,
\end{alignedat}\right.  \label{e:glu-B}
\end{equation}
on $[0,1]$, where
\begin{align}
    \RRR_q &\coloneq
        \sum_{i=0}^{i_{\max}}\del_t \eta_i \mathcal R(\vex_i-\vex_{i+1} ) - \sum_{i=0}^{i_{\textup{max}}} \eta_i(1-\eta_i)(\vex_i-\vex_{i+1} )\ootimes (\vex_i-\vex_{i+1} )\notag,
\\
 \MMM_q &\coloneq
        \sum_{i=0}^{i_{\textup{max}}} \del_t \eta_i \mathcal R_{\vex}(\bm\theta_i-\bm\theta_{i+1} ) - \sum_{i=0}^{i_{\textup{max}}} \eta_i(1-\eta_i)(\vex_i-\vex_{i+1} ) (\bm\theta_i-\bm\theta_{i+1} )\notag,
\\
    \pp_q &  \coloneq \ppp_q -\int_{\TTT^3}\ppp_q \dd x- \sum_{i=0}^{i_{\textup{max}}} \eta_i(1-\eta_i)\big( |\vex_i - \vex_{i+1}|^2  - \int_{\mathbb T^3} |\vex_i - \vex_{i+1}|^2 \dd x\big).\label{ppp}
\end{align}
Following the proofs of  \cite[Proposition 4.4]{BDSV} and
 \cite[Proposition 4.6]{MNY}, we have
\begin{prop}[Estimates for $\RRR_q$ and $\MMM_q$]\label{p:estimate-RRRq}For all $0\le N\le L+1$, we have
    \begin{align}
        \|(\RRR_q, \MMM_q)\|_{N+\alpha}
        &\lesssim \delta_{q+1} \ell_q^{-N+ \alpha}, \label{e:RRR_q-N+alpha-bd}
        \\
        \| (\matd{\vv_q} \RRR_q, \matd{\vv_q}\MMM_q)\|_{N+\alpha }
        &\lesssim  \delta_{q+1} \delta_{q}^{1/2}\lambda_q \ell_q^{-N-\frac{3}{4}\alpha}.\label{e:matd-RRR_q}
    \end{align}
\end{prop}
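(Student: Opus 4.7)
The plan is to derive both estimates from Proposition \ref{p:stability}, combined with a telescoping identity for the differences $\vex_i-\vex_{i+1}$ and $\bm\theta_i-\bm\theta_{i+1}$, together with standard product and commutator rules. The key starting observation is that
\[
\vex_i-\vex_{i+1}=(\vex_i-v_{\ell_q})-(\vex_{i+1}-v_{\ell_q}),\qquad \bm\theta_i-\bm\theta_{i+1}=(\bm\theta_i-\theta_{\ell_q})-(\bm\theta_{i+1}-\theta_{\ell_q}),
\]
so every difference is directly controlled by the stability estimates \eqref{e:stability-v}--\eqref{e:stability-v-1}, and similarly for the inverse-divergence quantities $\mathcal{R}(\vex_i-\vex_{i+1})$ and $\mathcal{R}_{\vex}(\bm\theta_i-\bm\theta_{i+1})$.

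For the $C^{N+\alpha}$ estimate \eqref{e:RRR_q-N+alpha-bd}, I would bound the two types of summands separately. For the linear piece involving $\del_t\eta_i\mathcal{R}(\vex_i-\vex_{i+1})$, I combine $\|\del_t\eta_i\|_{0}\lesssim\tau_q^{-1}$ with the $C^\alpha$ bound \eqref{e:stability-v-1} at $N=0$ and, for higher $N$, with the $C^{N+\alpha}$ bound \eqref{e:stability-v} passed through the one-derivative-loss estimate for $\mathcal R$ from the appendix. The factor $\tau_q^{-1}\cdot\tau_q$ cancels and produces $\delta_{q+1}\ell_q^{-N+\frac{5}{4}\alpha}\lesssim\delta_{q+1}\ell_q^{-N+\alpha}$. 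For the quadratic piece $\eta_i(1-\eta_i)(\vex_i-\vex_{i+1})\ootimes(\vex_i-\vex_{i+1})$ I apply the Leibniz estimate $\|fg\|_{N+\alpha}\lesssim\|f\|_{N+\alpha}\|g\|_0+\|f\|_0\|g\|_{N+\alpha}$ and use \eqref{e:stability-v} twice; the resulting bound $\tau_q^2\delta_{q+1}^2\ell_q^{-N-2+\frac{5}{2}\alpha}$ is absorbed into $\delta_{q+1}\ell_q^{-N+\alpha}$ thanks to \eqref{lambdaq} and \eqref{e:ell}, which together imply $\tau_q^2\delta_{q+1}\ell_q^{-2+\frac{3}{2}\alpha}\ll 1$. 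The tracer term $\MMM_q$ is handled identically after replacing $\mathcal R$ by $\mathcal R_{\vex}$ and one copy of $(\vex_i-\vex_{i+1})$ by $(\bm\theta_i-\bm\theta_{i+1})$, which only strengthens the bound.

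For the material-derivative estimate \eqref{e:matd-RRR_q}, I would expand $\matd{\vv_q}=\del_t+\vv_q\cdot\nabla$ and distribute across each summand, exploiting the fact that $\eta_i=\eta_i(t)$ so $\matd{\vv_q}\eta_i=\del_t\eta_i$ and $\|\del_t^2\eta_i\|_{0}\lesssim\tau_q^{-2}$. On the pieces of the form $\matd{\vv_q}\mathcal{R}(\vex_i-\vex_{i+1})$, I would replace $\matd{\vv_q}$ by $\matd{v_{\ell_q}}$ plus the correction $(\vv_q-v_{\ell_q})\cdot\nabla$ and use the commutator estimate between $\matd{v_{\ell_q}}$ and $\mathcal R$ from the appendix, together with \eqref{e:stability-matd} and the bounds \eqref{e:v_ell-vq1}--\eqref{e:v_ell-CN+1} to control the correction. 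On the quadratic pieces the Leibniz rule produces one stability factor paired with one material-derivative factor, which is controlled by \eqref{e:stability-v} and \eqref{e:stability-matd}; summing the contributions and using the parameter relations in \eqref{e:ell} yields the claimed $\delta_{q+1}\delta_q^{1/2}\lambda_q\ell_q^{-N-\frac{3}{4}\alpha}$.

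The main obstacle is the commutator $[\matd{\vv_q},\mathcal R]$ (and its tracer analogue $[\matd{\vv_q},\mathcal R_{\vex}]$) appearing in the linear piece of the material-derivative estimate: one must ensure that the single-derivative loss in $\mathcal R$ combined with the extra spatial derivative from the transport term is still absorbed into $\delta_{q+1}\delta_q^{1/2}\lambda_q\ell_q^{-N-\frac{3}{4}\alpha}$. The parameters $\tau_q$ and $\ell_q$ in \eqref{e:ell} are tuned precisely so that this loss is compensated, and the bookkeeping mirrors that of \cite[Proposition 4.4]{BDSV} and \cite[Proposition 4.6]{MNY}, which the authors invoke. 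Apart from this, the argument is a routine combination of stability, product rules, and commutator estimates.
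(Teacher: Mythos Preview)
Your proposal is correct and follows exactly the approach the paper invokes by citing \cite[Proposition 4.4]{BDSV} and \cite[Proposition 4.6]{MNY}: telescope the differences through $v_{\ell_q}$ and $\theta_{\ell_q}$, apply the stability bounds \eqref{e:stability-v}--\eqref{e:stability-v-1}, and for the material derivative split $\matd{\vv_q}=\matd{v_{\ell_q}}+(\vv_q-v_{\ell_q})\cdot\nabla$ and control the commutators $[\matd{v_{\ell_q}},\mathcal R]$, $[\matd{v_{\ell_q}},\mathcal R_{\vex}]$ via the standard Schauder-type estimates. One terminological quibble: $\mathcal R$ is order $-1$ and \emph{gains} a derivative, so the estimate you want for $N\ge 1$ is $\|\mathcal R(\vex_i-\vex_{i+1})\|_{N+\alpha}\lesssim\|\vex_i-\vex_{i+1}\|_{N-1+\alpha}$, not a ``one-derivative-loss''; this only improves the bookkeeping and does not affect the argument.
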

\begin{prop}[Estimates for $\vv_q$ and $\tb_q$]\label{est-vvq}For  all $0\le N\le L+1$,
\begin{align}
&\|(\vv_{q}-v_{\ell_q}, \tb_{q}-\theta_{\ell_q})\|_{\alpha}  \le \delta_{q+1}^{1/2} \ell_q^\alpha,\label{e:stability-vv_q} \\
&\|(\vv_{q}, \tb_{q})\|_{0}  \le M\sum_{i=1}^q\delta^{1/2}_i+\delta_{q+1}^{1/2} \ell_q^\alpha, \label{e:stability-vv_q-N} \\
&\|(\vv_{q}, \tb_q)\|_{1+N}  \lesssim M\delta_{q}^{1/2} \lambda_{q} \ell_q^{-N}.\label{e:vv_q-bound}
\end{align}
Furthermore, we also have
\begin{align}\label{vvqtbq}
(\vv_{q}, \tb_q)\equiv (v^{(1)}, b^{(1)}) \,\,{\rm on}\,\,[0, 2\widetilde{T}+\tfrac{\tau_q}{3}],\quad (\vv_{q}, \tb_q)\equiv (v^{(2)}, b^{(2)}) \,\,\text{\rm on}\,\,[3\widetilde{T}-\tfrac{\tau_q}{3}, 1].
\end{align}
\end{prop}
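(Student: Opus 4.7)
The plan is to derive all four assertions from the partition-of-unity identity $\sum_i \eta_i \equiv 1$ combined with the stability estimates of Proposition \ref{p:stability} and the mollification bounds of Proposition \ref{p:estimates-for-mollified1}. I would dispose of the matching conditions \eqref{vvqtbq} first, since they are immediate from the construction. By design, the corner flows satisfy $(\vex_0, \bm\theta_0) = (\vex_{i_{\max}}, \bm\theta_{i_{\max}}) = (v_q, \theta_q)$ with no mollification, $\eta_0 \equiv 1$ on $J_0 = [0, t_0 + \tau_q/3) = [0, 2\widetilde{T} + \tau_q/3)$, and $\eta_i \equiv 0$ there for $i \geq 1$. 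Hence the gluing formula \eqref{e:pp_q} gives $(\vv_q, \tb_q) = (v_q, \theta_q)$ on $[0, 2\widetilde{T} + \tau_q/3]$, which the induction hypothesis \eqref{e:initial1} identifies with $(v^{(1)}, \theta^{(1)})$. The symmetric argument with $\eta_{i_{\max}}$ and \eqref{e:initial} handles the right endpoint.

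For \eqref{e:stability-vv_q}, using $\sum_i \eta_i = 1$, I would rewrite
\[
\vv_q - v_{\ell_q} = \sum_{i=0}^{i_{\max}} \eta_i (\vex_i - v_{\ell_q}), \qquad \tb_q - \theta_{\ell_q} = \sum_{i=0}^{i_{\max}} \eta_i (\bm\theta_i - \theta_{\ell_q}),
\]
and apply \eqref{e:stability-v} at $N = 0$ on each summand. The bounded-overlap property of the partition, together with $|\eta_i| \leq 1$, yields
\[
\|\vv_q - v_{\ell_q}\|_\alpha + \|\tb_q - \theta_{\ell_q}\|_\alpha \lesssim \tau_q\,\delta_{q+1}\,\ell_q^{-1 + \frac{5}{4}\alpha}.
\]
It then remains to verify $\tau_q\,\delta_{q+1}^{1/2} \lesssim \ell_q^{1 - \alpha/4}$ from the parameter definitions $\tau_q = \delta_q^{-1/2}\lambda_q^{-1-5\alpha}$, $\delta_{q+1}/\delta_q = \lambda_q^{-2\beta(b-1)}$, and the exponent of $\ell_q$ in \eqref{e:ell}, which holds for $a$ sufficiently large depending on $b,\beta,\alpha$. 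The $C^0$ bound \eqref{e:stability-vv_q-N} is then a direct triangle-inequality consequence of \eqref{e:stability-vv_q}, the mollification estimate \eqref{e:v_ell-vq1}, and the inductive assumption \eqref{e:vq-C03}.

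For \eqref{e:vv_q-bound}, I would split $\vv_q = v_{\ell_q} + \sum_i \eta_i (\vex_i - v_{\ell_q})$ (and similarly for $\tb_q$). The first term contributes the target size $M\,\delta_q^{1/2}\,\lambda_q\,\ell_q^{-N}$ via \eqref{e:v_ell-CN+1}, while each summand of the correction is bounded by \eqref{e:stability-v} as $\tau_q\,\delta_{q+1}\,\ell_q^{-N-1 + \frac{5}{4}\alpha}$. This remainder is absorbed into the main piece precisely when $\tau_q\,\delta_{q+1}\,\ell_q^{-1 + \frac{5}{4}\alpha} \ll \delta_q^{1/2}\lambda_q$, which is another routine parameter check from \eqref{e:ell}, again for $a$ large; spatial derivatives on $\eta_i$ produce no factor, while time derivatives (if the norm is understood in spacetime Hölder sense) cost at most $\tau_q^{-1}$, which the same margin absorbs.

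The main technical obstacle is not conceptual but the exponent bookkeeping: one must verify line-by-line that the loss $\tau_q\,\delta_{q+1}\,\ell_q^{-1+\frac{5}{4}\alpha}$ from Proposition \ref{p:stability} is strictly smaller than the respective targets $\delta_{q+1}^{1/2}\ell_q^\alpha$ and $\delta_q^{1/2}\lambda_q\ell_q^{-N}$, with a positive power of $\lambda_q$ to spare so that the universal implicit constants can be absorbed. This is exactly what the calibration of $\ell_q$, $\tau_q$, and $\mu_{q+1}$ in \eqref{e:ell} together with the restriction $b \in (1, b_0)$ from \eqref{con-b} are engineered to produce, so once the parameter inequalities are checked for $a$ large, the four estimates and the matching conditions follow mechanically from the identities above.
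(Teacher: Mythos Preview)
Your approach is correct and is precisely the standard argument the paper defers to (it gives no self-contained proof, citing instead \cite[Proposition~4.4]{BDSV} and \cite[Proposition~4.6]{MNY}). One small omission worth flagging: in the partition-of-unity expansion $\vv_q - v_{\ell_q} = \sum_i \eta_i(\vex_i - v_{\ell_q})$, the endpoint indices $i=0$ and $i=i_{\max}$ have $(\vex_i,\bm\theta_i) = (v_q,\theta_q)$ by definition rather than exact solutions of \eqref{e:exact-B}, so the stability bound \eqref{e:stability-v} does not apply there; you should instead invoke the mollification estimate \eqref{e:v_ell-vq1} (and interpolate to $C^\alpha$ if needed), which gives a contribution far below the target. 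With that caveat, the decomposition, the appeal to Proposition~\ref{p:stability} for the interior indices, and the reduction of everything to parameter inequalities between $\tau_q,\ell_q,\delta_q,\lambda_q$ is exactly how the cited references proceed.
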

Let $\Phi_i$ be the so-called inverse flow map, defined as the solution to the glued vector transport equations
    \begin{align}
         (\partial_t+\vv_q \cdot \nabla) \Phi_i = 0, \quad \Phi_i\big|_{t=t_i} = x. \label{e:phi_i-defn}
    \end{align}
   From Proposition \ref{est-vvq}, one establishes the following estimates for $\Phi_i$.
    \begin{prop}[Estimates for $\Phi_i$\cite{KMY}]\label{p:estimates-for-inverse-flow-map1} For $a\gg 1$, $0\le N\le L+1$ and every $t\in {\rm supp} \,\,\eta_i$,
\begin{align}
 \|\nabla\Phi_i-{\rm Id}_{3\times3}\|_0 &\le\frac1{10},\label{e:nabla-phi-i-minus-I3x3}
 \\
 \| (\nabla \Phi_i)^{-1}\|_N+ \|  \nabla \Phi_i\|_N &\le \ell_q^{-N}, \label{e:nabla-phi-i-CN1}
\\
\|\matd{\vv_q} \nabla \Phi_i\|_N &\lesssim \delta_q^{1/2} \lambda_q \ell_q^{-N}. \label{e:nabla-phi-i-matd1}
\end{align}
\end{prop}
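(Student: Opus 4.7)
The plan is to treat the three bounds as standard consequences of the transport equation satisfied by $\nabla \Phi_i$, combined with the estimates on $\vv_q$ provided by Proposition \ref{est-vvq}. The starting point is the identity obtained by differentiating \eqref{e:phi_i-defn} in space:
\begin{equation*}
\matd{\vv_q}(\nabla \Phi_i) = -(\nabla \vv_q)^{\top} \nabla \Phi_i, \qquad \nabla \Phi_i\big|_{t=t_i} = \mathrm{Id}.
\end{equation*}
Throughout, I will use that $t \in \supp \eta_i$ forces $|t-t_i| \lesssim \tau_q$.

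First, for \eqref{e:nabla-phi-i-minus-I3x3}, set $F_i := \nabla \Phi_i - \mathrm{Id}$. Then $F_i$ solves the forced transport equation $\matd{\vv_q} F_i = -(\nabla \vv_q)^{\top} - (\nabla \vv_q)^{\top} F_i$ with zero initial data. Applying a standard transport estimate and Grönwall in $C^0$, and using $\|\nabla \vv_q\|_0 \lesssim M \delta_q^{1/2} \lambda_q$ from \eqref{e:vv_q-bound}, one finds
\begin{equation*}
\|F_i\|_0 \lesssim \tau_q \|\nabla \vv_q\|_0 \lesssim M\,\tau_q \delta_q^{1/2} \lambda_q = M\lambda_q^{-5\alpha},
\end{equation*}
by the definition of $\tau_q$ in \eqref{e:ell}. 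For $a$ sufficiently large this is $\leq 1/10$, which yields \eqref{e:nabla-phi-i-minus-I3x3}.

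Second, for \eqref{e:nabla-phi-i-CN1}, I induct on $N$. Applying $D^N$ to the transport equation for $\nabla \Phi_i$ yields a forced transport equation whose forcing is a sum of terms of the form $D^{N_1}(\nabla \vv_q) \cdot D^{N_2}(\nabla \Phi_i)$ with $N_1 + N_2 \leq N$ and $N_1 \geq 1$. Using the Hölder-norm transport estimate (see the appendix), the induction hypothesis, and $\|\nabla \vv_q\|_N \lesssim M \delta_q^{1/2} \lambda_q \ell_q^{-N}$ from \eqref{e:vv_q-bound}, one obtains $\|\nabla \Phi_i\|_N \lesssim \ell_q^{-N}$; the key point is that $\tau_q \delta_q^{1/2} \lambda_q \lesssim \lambda_q^{-5\alpha} \ll 1$, so constants do not accumulate. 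The estimate for $(\nabla \Phi_i)^{-1}$ then follows from the Neumann series $(\nabla \Phi_i)^{-1} = \mathrm{Id} + \sum_{k \geq 1}(-F_i)^k$, which converges in $C^0$ by the first step, and from distributing derivatives onto the factors $F_i$ using the previous estimate.

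Third, for \eqref{e:nabla-phi-i-matd1}, the identity $\matd{\vv_q}(\nabla \Phi_i) = -(\nabla \vv_q)^{\top} \nabla \Phi_i$ is algebraic, so no further Grönwall is needed. A Leibniz estimate in Hölder norms gives
\begin{equation*}
\|\matd{\vv_q} \nabla \Phi_i\|_N \lesssim \|\nabla \vv_q\|_N \|\nabla \Phi_i\|_0 + \|\nabla \vv_q\|_0 \|\nabla \Phi_i\|_N \lesssim \delta_q^{1/2} \lambda_q \ell_q^{-N},
\end{equation*}
using step two and \eqref{e:vv_q-bound}. The main obstacle is verifying the smallness condition $\tau_q \|\nabla \vv_q\|_0 \ll 1$ which underlies the flow estimates and the convergence of the Neumann series; this is precisely encoded in the parameter choice \eqref{e:ell} via $\tau_q \delta_q^{1/2} \lambda_q = \lambda_q^{-5\alpha}$, and one must check that the Hölder-norm transport lemma applied at each induction step produces constants absorbable into $\ell_q^{-N}$ rather than $\lambda_q^{-N}$, which is exactly why the parameter $\ell_q$ is tuned to $\vv_q$'s frequency truncation in \eqref{e:ell}.
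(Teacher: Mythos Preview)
Your proposal is correct and follows the standard route for such flow-map estimates: differentiate the transport equation, use Grönwall together with $\tau_q\|\nabla\vv_q\|_0\lesssim\lambda_q^{-5\alpha}\ll1$, induct on $N$ using $\|\vv_q\|_{N+1}\lesssim M\delta_q^{1/2}\lambda_q\ell_q^{-N}$, and read off the material-derivative bound algebraically. The paper does not give its own proof here but cites \cite{KMY}, where exactly this argument is carried out; your sketch matches that approach.
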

\noindent{\textit{Step 3: Construction of the perturbation}.} We introduce highly oscillatory temporal functions to exploit additional temporal dimensions. This technique achieves the non-intersection property of Nash perturbations, while it simultaneously gives rise to new errors.  Inspired by \cite{GR}, we construct Newtonian perturbations to counteract these errors. \\
\noindent {\textbf{Newton perturbations.}}\,\,
For $0\leq i\leq i_{\max}-1,~n\in\NN$, we define the time cut-offs $\{\chi_{i,n}\}$ by
\begin{align*}
    &\supp\chi_{i,n}\subset \big[t_i+\tfrac{1}{3}\tau_q-\tfrac{2n+1}{12L}\tau_q,\quad
t_i+\tfrac{2}{3}\tau_q+\tfrac{2n+1}{12L}\tau_q\big],\\
&\chi_{i,n}(t)\equiv 1,\quad \forall t\in
    \big[t_i+\tfrac{1}{3}\tau_q-\tfrac{2n+1}{12L}\tau_q+\tfrac{1}{24L}\tau_q,\quad
t_i+\tfrac{2}{3}\tau_q+\tfrac{2n+1}{12L}\tau_q-\tfrac{1}{24L}\tau_q\big],
\end{align*}
and
\begin{align*}
    |\partial^N_t\chi_{i,n}|\lesssim\tau^{-N}_q,\quad \forall\,\, N\in \NN.
\end{align*}
From the definition of $\{\chi_{i,n}\}$, we easily deduce that
\begin{align*}
    \chi_{i,n}\chi_{i',n'}=0,\quad \forall\,\, 0\le i\neq i'\le i_{\max}-1,\,\, n, n'\in \NN.
\end{align*}
For $0\leq i\leq i_{\max}-1, n\in\NN$, we give another time cut-offs $\{\bar{\chi}_{i,n}\}$ by
\begin{align*}
    &\supp\bar{\chi}_{i,n}\subset \big[t_i+\tfrac{1}{3}\tau_q-\tfrac{2n}{12L}\tau_q,\quad
t_i+\tfrac{2}{3}\tau_q+\tfrac{2n}{12L}\tau_q\big],\\
&\bar{\chi}_{i,n}(t)\equiv 1,\quad \forall t\in
    \big[t_i+\tfrac{1}{3}\tau_q-\tfrac{2n}{12L}\tau_q+\tfrac{1}{24L}\tau_q,\quad
t_i+\tfrac{2}{3}\tau_q+\tfrac{2n}{12L}\tau_q-\tfrac{1}{24L}\tau_q\big],
\end{align*}
and
\begin{align*}
    |\partial^N_t\bar{\chi}_{i,n}|\lesssim\tau^{-N}_q,\quad \forall\,\, N\in \NN.
\end{align*}
We infer from the definitions of  $\{\chi_{i,n}\}$ and  $\{\bar{\chi}_{i,n}\}$ that, for any $n\in\NN$ and $0\le i\le i_{\max-1}$,
\begin{align*}
 {{\chi}_{i,n}\bar{\chi}_{i,n }=\bar{\chi}_{i,n},\,\,{\chi}_{i,n}\bar{\chi}_{i,n+1}= {\chi}_{i,n},\,\,\partial_t{\chi}_{i,n}\bar{\chi}_{i,n}=0,\,\,
 {\chi}_{i,n}\partial_t\bar{\chi}_{i,n+1}=0.}
 \end{align*}
We denote ${\Lambda_{\theta}=\{e_1, e_2 \}}$ and choose two sets $\Lambda_1$ and $\Lambda_2$ in Lemma \ref{first S} such that $\Lambda_1\cap\Lambda_2\cap \Lambda_{\theta}=\emptyset.$  For simplicity, we denote
\begin{align*}
\Lambda_1\cup\Lambda_2=:\Lambda_v, \quad     \Lambda_v\cup\Lambda_{\theta}=:\Lambda.
\end{align*}
Next, we give the  temporal functions  to achieve non-interaction for different Nash perturbations.
\begin{prop}\label{time-g}
 Let $n\in\{0,1,2,...,L-1\}$. For each $k\in\Lambda$, there exist $L$ many smooth 1-periodic time functions $g_{k, n}:\mathbb{R}\rightarrow\mathbb{R}$ such that
 \begin{align*}
\int_0^1g^2_{k, n}(t)\dd t=1,
 \end{align*}
and $\forall k,k'\in \Lambda$ and $ n,n'\in\{0,1,2,...,L-1\}$,
\begin{align*}
\supp g_{k, n}\cap\supp g_{k' ,n'}=\emptyset, \quad \forall (k,n)\neq (k',n').
 \end{align*}
Moreover, $\forall k,k'\in\Lambda$, $0\leq i,i'\leq i_{\max}-1$, and $n,n'\in \{0,1,2,...,L-1\}$, we have $$\chi_{i,n} g_{k ,n}\cdot\chi_{i',n'}  g_{k' ,n'}=0,\quad \forall (k,i,n)\neq (k',i',n').$$
\end{prop}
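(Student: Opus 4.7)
The plan is to build the family $\{g_{k,n}\}$ by partitioning $[0,1]$ into a fixed number of pairwise disjoint sub-intervals, placing a rescaled smooth bump on each, and then extending $1$-periodically. Since the index set $\Lambda \times \{0,1,\dots,L-1\}$ has cardinality $N := |\Lambda| \cdot L$, a finite number independent of the iteration parameter $q$, no delicate frequency balancing is needed—the construction is combinatorial together with a single normalisation.

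Concretely, I would first fix a profile $G \in C^\infty_c(\mathbb{R})$ with $\supp G \subset (0,1)$ and $\int_{\mathbb{R}} G(s)^2 \dd s = 1$. Then I enumerate the pairs $(k,n) \in \Lambda \times \{0,\dots,L-1\}$ as $\{(k_j,n_j)\}_{j=1}^{N}$ and assign each pair its own slot inside the unit interval via
\[
g_{k_j,n_j}(t) := N^{1/2}\, G\!\bigl(N(t - (j-1)/N)\bigr), \quad t \in [0,1],
\]
extended $1$-periodically to $\mathbb{R}$. By construction $\supp g_{k_j,n_j} \cap [0,1] \subset \bigl((j-1)/N,\, j/N\bigr)$, so the supports of $g_{k,n}$ and $g_{k',n'}$ are disjoint (modulo $\ZZ$) whenever $(k,n)\neq (k',n')$, and a change of variables yields $\int_0^1 g_{k,n}^2 \dd t = \int_{\mathbb{R}} G^2 \dd s = 1$.

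The ``moreover'' clause follows by case analysis and reduces to what is already available. Suppose $(k,i,n) \neq (k',i',n')$. If $(k,n) \neq (k',n')$, the disjoint-support property of the $g$'s forces $g_{k,n}\, g_{k',n'} \equiv 0$, so the product vanishes. If instead $(k,n) = (k',n')$ then $i \neq i'$, and the earlier identity $\chi_{i,n}\chi_{i',n'} = 0$ for $i \neq i'$ (displayed just above the statement of the proposition) gives $\chi_{i,n}\chi_{i',n'} \equiv 0$; again the product vanishes. In neither subcase is any compatibility between $\chi$ and $g$ required, which is why the proposition can be treated as a standalone combinatorial assertion.

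There is no genuine obstacle here: the only quantitative point to keep in mind is that $N = |\Lambda| L$ is fixed once $\beta$ and $b$ are chosen, so the amplitude factor $N^{1/2}$ contributes an absolute constant to later estimates and can be absorbed into the implicit constants appearing in the convex integration scheme. The whole proof amounts to exhibiting the profile $G$, writing down the formula above, and running through the two-line case distinction for the pointwise identity.
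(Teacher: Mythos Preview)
Your construction is correct and is exactly the natural one: partition the period into $N=|\Lambda|\cdot L$ slots, drop a normalised smooth bump into each, and extend periodically. The paper gives no proof of this proposition, treating it as routine (the same device appears in \cite{GR}); your explicit formula and the two-case analysis for the final identity---either $(k,n)\neq(k',n')$ kills the $g$-product, or $i\neq i'$ kills the $\chi$-product via the displayed relation $\chi_{i,n}\chi_{i',n'}=0$---are precisely what is required.
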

Using the fact that ${\Lambda_{\theta}=\{e_1, e_2 \}}$, one immediately gives  the vector function $\nabla\Phi_i\MMM_q\chi_{i,n}(t)$ in components form as follows.
\begin{equation}\label{Tqik}
-\nabla\Phi_i\MMM_q\chi_{i,n}(t)={\sum_{j=1}^2}\big(-\nabla\Phi_i\MMM_q\big)^j \chi_{i,n}(t)e_j =:\sum_{k\in\Lambda_{\theta}}  a^{\theta}_{k}(\nabla\Phi_i\MMM_q)\chi_{i,n}(t) k.
\end{equation}
Firstly, for  $n=0$, we define
\begin{align*}
 \MMM_{q,0}:=\MMM_{q},\quad \RRR_{q,i,0}:=\RRR_{q}+\sum_{k\in\Lambda_{\theta}}\delta^{-1 }_{q+1,n}   \big(a^{\theta}_{k}  (\nabla\Phi_i\MMM_{q,0})\big)^2\nabla\Phi^{-1}_i(k\otimes k)\nabla\Phi^{-\TT}_i,
\end{align*}
where smooth function $a_k$ stems from Lemma \ref{first S}. For given $\MMM_{q,n}$ and $\RRR_{q,i,n}$,  we define
\begin{align}
 &{a}^{\theta}_{q, k,i,n}:=\delta^{-1/2}_{q+1,n}\chi_{i,n}   a^{\theta}_{k}(\nabla\Phi_i\MMM_{q,n}),\quad {A}^{\theta}_{q,k,i,n}: =   \chi^2_{i,n}
\nabla {\Phi}^{-1}_i a^{\theta}_{k}(\nabla\Phi_i\MMM_q) k , \quad k\in \Lambda_{\theta}{\color{red},}\label{def-a-theta}\\
&a^{v}_{q,k,i,n}:= \delta_{q+1,n}^{1/2}\chi_{i,n} a_{k}\Big(\nabla\Phi_i\Big({\rm Id}-\frac{\RRR_{q,i,n}}{\delta_{q+1,n}}\Big)\nabla\Phi^{\TT}_i\Big), \quad k\in \Lambda_{v},\label{def-av}\\
&A^{v_1}_{q,k,i,n} := (a^{v}_{q,k,i,n})^2
\nabla\Phi^{-1}_i(k\otimes k)\nabla\Phi^{-\TT}_i,\quad k\in \Lambda_{v},\label{def-A-v1}\\
&A^{v_2}_{q,k,i,n} :=  (a^{\theta}_{q,k,i,n})^2
\nabla\Phi^{-1}_i(k\otimes k)\nabla\Phi^{-\TT}_i,\quad k\in \Lambda_{\theta}.\label{def-A-v2}
\end{align}
Then we define $\MMM_{q,n+1}$ and $\RRR_{q,i,n+1}$ to be
\begin{align}\label{next Tq}
\MMM_{q,n+1} :=&\sum_{j}\partial_t\bar{\chi}_{j,n+1}\mathcal{R}_{\vex}\dt_{q+1,j,n+1},\\
\RRR_{q,i,n+1}:= &\sum_{j}\partial_t\bar{\chi}_{j,n+1}\mathcal{R}\wt_{q+1,j,n+1}\notag\\
&+\sum_{  k\in\Lambda_{\theta}}\delta^{-1 }_{q+1,n+1}
\big(a^{\theta}_{k}  (\nabla\Phi_i\MMM_{q,n+1})\big)^2\nabla\Phi^{-1}_i(k\otimes k)\nabla\Phi^{-\TT}_i ,\label{next Rq}
\end{align}
where $(\wt_{q+1,i,n+1}, \dt_{q+1,i,n+1})$ solves the following system
\begin{equation}
\left\{ \begin{alignedat}{-1}
&\del_t \wt_{q+1,i,n+1}+\vv_{q}\cdot\nabla \wt_{q+1,i,n+1}+ \wt_{q+1,i,n+1}\cdot\nabla \vv_{q}  +\nabla p^{(\text{t})}_{q+1,i,n+1}  \\
&\qquad\qquad\qquad\quad=
\sum_{ k\in\Lambda_{v}}f_{k,n+1}(\mu_{q+1}t)\Div  {A}^{v_1}_{q,k,i,n}+\sum_{ k\in\Lambda_{\theta}}f_{k,n+1}(\mu_{q+1}t)\Div  {A}^{v_2}_{q,k,i,n},
 \\
& \del_t \dt_{q+1,i,n+1}+\vv_{q}\cdot\nabla \dt_{q+1,i,n+1}+ \wt_{q+1,i,n+1} \cdot\nabla   \tb_q  =
\sum_{ k\in\Lambda_{\theta}}f_{k,n+1}(\mu_{q+1}t) \Div  {A}^{\theta}_{q,k,i,n},\\
  &\nabla \cdot  \wt_{q+1,i,n+1}  = 0\\
 & (\wt_{q+1,i,n+1}, \dt_{q+1,i,n+1})|_{t=t_i} = 0.
\end{alignedat}\right.  \label{time}
\end{equation}
Here  $f_{k,n+1}:=-\mathbb{P}_{>0}g_{k,n+1}^2=1-g_{k,n+1}^2$ with  $\|\int_{t_i}^t f_{k, n+1}(\mu_{q+1} s)\dd s\|_{L^\infty}\lesssim \mu^{-1}_{q+1}$ and
\begin{align}\label{av-pi}
    \int_{\TTT^2}p^{\textup{(t)}}_{q+1,i,n+1}\dd x=0.
\end{align}
Now, we give the Newton perturbations $(\wt_{q+1}, \dt_{q+1})$ by
\begin{align}\label{def-wt-dt}
  \wtq  :=\sum_{n=0}^{L-1} \sum_{i}\bar{\chi}_{i,n+1}(t)\wt_{q+1,i,n+1},
\quad \dt_{q+1}:=\sum_{n=0}^{L-1} \sum_{i}\bar{\chi}_{i,n+1}(t)\dt_{q+1,i,n+1}.
\end{align}
The introduction of $\{\chi_{i,n}\}$ and $\{\bar{\chi}_{i,n}\}$ ensures that $\wtq(t_i)=\dtq(t_i)=0$.

\noindent{\textbf{Nash perturbations.}}\,\, The Nash perturbations are constructed by shear flows $\phi_{(\lambda_{q+1}k)}(x)k$ given by: Let $\psi:\mathbb{T}\rightarrow\mathbb{R}$ be a smooth mean-free cutoff function supported on the interval $[0,\lambda^{-1}_1]$ satisfying that $\phi=\psi'$ and $\int_{\mathbb{T}}\phi^2(x)\dd x=1$. We define
\begin{align}\label{phi-xi}
\phi_{(\lambda_{q+1}k)}(x):=\phi (\lambda_{q+1} \bar{k}\cdot x),\quad\bar{k}\perp k,~~ k\in \Lambda.
\end{align}
Then we give the the principal correctors $\wpq $ such that
\begin{align}
\wpq= & \sum_{n=0}^{L-1}\Big(\sum_{i;k\in\Lambda_v}g_{k,n+1}(\mu_{q+1}t)\bar{a}^{v}_{q,k,i,n}
\nabla\bar{\Phi}^{-1}_i\phi_{(\lambda_{q+1}k)}(\bar{\Phi}_i(x,t))k\notag\\
&+
\sum_{i;k\in\Lambda_{\theta }}g_{k,n+1}(\mu_{q+1}t)
\bar{a}^{\theta }_{q,k,i,n}
\nabla\bar{\Phi}^{-1}_i\phi_{(\lambda_{q+1}k)}(\bar{\Phi}_i(x,t))k\Big)
=:\sum_{n=0}^{L-1}w^{(\textup{p})}_{q+1,n},\label{defi-wpq}\\
 \dpq =&\sum_{n=0}^{L-1}\sum_{i;k\in\Lambda_{\theta }}\delta^{1/2}_{q+1,n}\chi_{i,n}(t)g_{k,n+1}(\mu_{q+1}t)
\phi_{(\lambda_{q+1}k)}(\bar{\Phi}_i(x,t))=: \sum_{n=0}^{L-1}d^{(\textup{p})}_{q+1,n} ,\label{defi-dpq}
\end{align}
where
\begin{align*}
&\bar{a}^{v}_{q,k,i,n}=\chi_{i,n} \delta^{1/2}_{q+1,n}a_{k}
\Big(\nabla\bar{\Phi}_i\big({\rm Id}-\frac{\RRR_{q,i,n}}{\delta_{q+1,n}}\big)
\nabla\bar{\Phi }^{\TT}_i\Big),\,\,\,\,\bar{a}^{\theta}_{q,k,i,n}=\chi_{i,n} \delta^{-1/2}_{q+1,n}a^{\theta}_{k}
\big(\nabla\bar{\Phi}_i\MMM_{q,n} \big)
\end{align*}
and $\bar{\Phi}_i$ is governed by $\vv_q+ \wtq$:
\begin{equation}
\left\{ \begin{alignedat}{-1}
&\del_t \bar{\Phi}_i+(\vv_q+ \wtq ) \cdot\nabla \bar{\Phi}_i   =0,
 \\
  &\bar{\Phi}_i(t_i,x)= x.
\end{alignedat}\right.  \label{barphi}
\end{equation}
Similar to \eqref{def-a-theta}, \eqref{def-A-v1} and \eqref{def-A-v2}, we define
\begin{align*}
&\bar{A}^{\theta}_{q,k,i,n}:=   \chi^2_{i,n}  a^{\theta}_{i}  (\nabla\bar{\Phi}_i\MMM_{q,n})
\nabla \bar{\Phi}^{-1}_ik ,\,\,   \bar{A}^{v_2}_{k,i,n}:=    (\bar{a}^{\theta}_{q,k,i,n})^2
\nabla\bar{\Phi}^{-1}_i(k \otimes k)\nabla\bar{\Phi}^{-\TT}_i,\quad \forall  k\in\Lambda_{\theta},\\
&\bar{A}^{v_1}_{q,k,i,n}  =  (\bar{a}^{v}_{q,k,i,n})^2
\nabla\bar{\Phi}^{-1}_i(k\otimes k)\nabla\bar{\Phi}^{-\TT}_i,\quad \forall k\in \Lambda_{v}.
 \end{align*}
Denote $\nabla^{\perp}=(-\partial_{x_2}, \partial_{x_1})$, one immediately has
\begin{align*}
\phi_{(\lambda_{q+1}k)}(x){k}=\lambda^{-1}_{q+1}\nabla^{{\perp}}\Big(\int_0^{\lambda_{q+1}\bar{k}\cdot x}\phi(z)\dd z\Big)=\lambda^{-1}_{q+1}\nabla^{{\perp}}\psi(\lambda_{q+1}\bar{k}\cdot x).
\end{align*}
Hence, we infer from $\det\nabla\Phi=1$ that
\begin{align*}
\nabla \Phi^{-1}_i\phi_{(\lambda_{q+1}k)}(\Phi_i(x,t)){k}=\lambda^{-1}_{q+1}\nabla^{{\perp}}\big(\psi(\lambda_{q+1}\bar{k}\cdot \Phi_i(x,t))\big).
\end{align*}
This equality together with \eqref{defi-wpq} yields that
\begin{align*}
\wpq
=&\lambda^{-1}_{q+1}\sum_{n=0}^{L-1}\sum_{i;k\in\Lambda_v}g_{k,n+1}(\mu_{q+1}t)\bar{a}^v_{q,k,i,n}
\nabla^{\perp}\big(\psi(\lambda_{q+1}\bar{k}\cdot\bar{\Phi}_i )\big) \\
   &+  \lambda^{-1}_{q+1}\sum_{n=0}^{L-1}
\sum_{i;k\in\Lambda_{\theta }}g_{k,n+1}(\mu_{q+1}t)
 { \bar{a}^{\theta }_{q,k,i,n} }
\nabla^{\perp}\big(\psi(\lambda_{q+1}\bar{k}\cdot\bar{\Phi}_i )\big)  .
\end{align*}
Since $\Div\wpq\neq 0$, we need to construct divergence-free corrector $\wcq$, defined as
\begin{align}
    \wcq =&
  \sum_{n=0}^{L-1} \lambda^{-1}_{q+1}\Big(\sum_{i;k\in\Lambda_v}g_{k,n+1}(\mu_{q+1}t)\nabla^{\perp}(\bar{a}^v_{q,k,i,n})
    \psi(\lambda_{q+1}\bar{k}\cdot\bar{\Phi}_i )  \notag\\
   &+  
\sum_{i;k\in\Lambda_{\theta }}g_{k,n+1}(\mu_{q+1}t)
 {\nabla^{\perp} (\bar{a}^{\theta }_{q,k,i,n} )}
 \psi(\lambda_{q+1}\bar{k}\cdot\bar{\Phi}_i )  \Big)\notag\\
 =:&\wc_{q+1,n}.\label{e:defn-wcq}
\end{align}
Collecting the above two equalities together leads to \begin{align}\label{wpq-wcq}
\wpq +\wcq\notag
=& \lambda^{-1}_{q+1}\nabla^{\perp} \sum_{n=0}^{L-1}\sum_{i;k\in\Lambda_v}g_{k,n+1}(\mu_{q+1}t) \bar{a}^v_{q,k,i,n}
    \psi(\lambda_{q+1}\bar{k}\cdot\bar{\Phi}_i )\notag\\
   & + \lambda^{-1}_{q+1}\nabla^{\perp}\sum_{n=0}^{L-1}
\sum_{i;k\in\Lambda_{\theta }}g_{k,n+1}(\mu_{q+1}t)
 { \bar{a}^{\theta }_{q,k,i,n} }
  \psi(\lambda_{q+1}\bar{k}\cdot\bar{\Phi}_i ).\end{align}
It follows that $\Div(\wpq+\wcq)=0$.

\noindent{\textit{Step 4: Estimates for the perturbations.}}\,\,We define
\[w_{q+1}=\wpq+\wcq+\wtq, \quad d_{q+1}=\dpq+\dtq.\]
Firstly, we establish the estimates for the Newtonian perturbation $(\wtq, \dtq)$. To this end, we start with estimating  $\wt_{q+1,i,n+1}$ and $\dt_{q+1,i,n+1}$ via the induction on $n$.
 \begin{prop}[Estimates for $(\wt_{q+1,i,n+1},\dt_{q+1,i,n+1})$]\label{est-RM}
 For $0\le n\le L-1$, $0\le N\le L+1$ and $t\in[t_{i-1}, t_{i+1}]$, we have
\begin{align}
\|\wt_{q+1,i,n+1}\|_{N+\alpha}+\|\dt_{q+1,i,n+1}\|_{N+\alpha}&\lesssim_N\mu^{-1}_{q+1}\delta_{q+1,n}\ell^{-N -1-3\alpha}_q,\label{estimate-wt}\\
\|\RRR_{q,i,n}\|_{N+\alpha}+\|\MMM_{q,n}\|_{N+\alpha}&\lesssim_N\delta_{q+1,n}\ell^{-N+\frac{3}{2}\alpha}_q,\label{estimate-Rv}\\
\|\matd {\vv_q}\RRR_{q,i,n}\|_{N+\alpha}
+\|\matd{\vv_q}\MMM_{q,n}\|_{N+\alpha}
&\lesssim_N  \tau^{-1}_q\delta_{q+1,n}\ell^{-N+\frac{3}{2}\alpha}_q. \label{estimate-DRv}
\end{align}
\end{prop}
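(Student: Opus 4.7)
\textbf{Proof plan for Proposition \ref{est-RM}.} The plan is to perform induction on $n\in\{0,1,\dots,L-1\}$, using the base case $n=0$ from Proposition \ref{p:estimate-RRRq} (since $\RRR_{q,i,0}$ differs from $\RRR_q$ by a quadratic correction in $\MMM_q$, and $\MMM_{q,0}=\MMM_q$), combined with the amplitude estimates at step $n$, followed by solving the linear system \eqref{time} for $(\wt_{q+1,i,n+1},\dt_{q+1,i,n+1})$, and finally reading off the recursive definitions \eqref{next Tq}--\eqref{next Rq} of $\RRR_{q,i,n+1}$ and $\MMM_{q,n+1}$. Throughout, I would use freely the inverse-flow estimates in Proposition \ref{p:estimates-for-inverse-flow-map1} together with $\|\vv_q\|_{1+N}\lesssim \delta_q^{1/2}\lambda_q\ell_q^{-N}$ from Proposition \ref{est-vvq}, and the inverse divergence estimates for $\mathcal{R}$ and $\mathcal{R}_{\vex}$ collected in the appendix.

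First I would carry out the amplitude estimates. Assuming \eqref{estimate-Rv}--\eqref{estimate-DRv} at level $n$, the chain and product rules (combined with $\|\nabla\Phi_i\|_N+\|(\nabla\Phi_i)^{-1}\|_N\lesssim \ell_q^{-N}$) give
\[
\|a^{v}_{q,k,i,n}\|_{N+\alpha}+\|a^{\theta}_{q,k,i,n}\|_{N+\alpha}
\lesssim \delta_{q+1,n}^{1/2}\ell_q^{-N+\alpha},
\]
and correspondingly $\|A^{v_1}_{q,k,i,n}\|_{N+\alpha}+\|A^{v_2}_{q,k,i,n}\|_{N+\alpha}+\|A^{\theta}_{q,k,i,n}\|_{N+\alpha}\lesssim \delta_{q+1,n}\ell_q^{-N+\alpha}$; analogous bounds hold for $\matd{\vv_q}$ acting on each amplitude with an extra factor $\tau_q^{-1}$, using $\matd{\vv_q}\nabla\Phi_i$ estimates from Proposition \ref{p:estimates-for-inverse-flow-map1} and the assumed material-derivative estimate on $\RRR_{q,i,n},\MMM_{q,n}$. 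These estimates feed directly into the forcing terms of \eqref{time}.

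Next I would estimate $(\wt_{q+1,i,n+1},\dt_{q+1,i,n+1})$ by treating \eqref{time} as an inhomogeneous transport-type system driven by $\vv_q$. Since the right-hand side has the form $f_{k,n+1}(\mu_{q+1}t)\Div(\cdot)$ with $f_{k,n+1}=1-g_{k,n+1}^2$ being mean-zero, the key is the standard Duhamel/integration-by-parts trick: writing $\int_{t_i}^t f_{k,n+1}(\mu_{q+1}s)\,ds=\mu_{q+1}^{-1}F_{k,n+1}(\mu_{q+1}t)$ with $\|F_{k,n+1}\|_0\lesssim 1$, and using that $\matd{\vv_q}$ commutes (up to commutators of controllable size) with $\mathcal{R}$, gives a gain of $\mu_{q+1}^{-1}$. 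Combined with the amplitude estimates above, this yields
\[
\|\wt_{q+1,i,n+1}\|_{N+\alpha}+\|\dt_{q+1,i,n+1}\|_{N+\alpha}
\lesssim \mu_{q+1}^{-1}\delta_{q+1,n}\ell_q^{-N-1-3\alpha},
\]
after absorbing all commutator losses into $\ell_q^{-3\alpha}$. The Schauder theory for the transport operator $\partial_t+\vv_q\cdot\nabla$ on the short interval $|t-t_i|\le\tau_q$ (of length $\lesssim \tau_q$, with $\tau_q\|\nabla\vv_q\|_0\ll 1$) provides the needed stability.

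Finally, I would insert these bounds into \eqref{next Tq}--\eqref{next Rq}. The term $\partial_t\bar\chi_{j,n+1}\mathcal{R}\wt_{q+1,j,n+1}$ contributes, using $\|\partial_t\bar\chi_{j,n+1}\|_0\lesssim \tau_q^{-1}$ and the $\mathcal{R}$-estimate, a bound of order $\tau_q^{-1}\mu_{q+1}^{-1}\delta_{q+1,n}\ell_q^{-N-3\alpha}$, which is precisely $\ell_q^{-7\alpha}\delta_{q+1,n+1}\ell_q^{-N}\lesssim \delta_{q+1,n+1}\ell_q^{-N+\frac{3}{2}\alpha}$ once the definition $\delta_{q+1,n+1}=(\tau_q\mu_{q+1}\ell_q^{10\alpha})^{-1}\delta_{q+1,n}$ is invoked; the quadratic correction in \eqref{next Rq} is of the same order by the amplitude estimate above. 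For the material-derivative estimate, applying $\matd{\vv_q}$ to \eqref{next Tq}--\eqref{next Rq} and using the commutator $[\matd{\vv_q},\mathcal{R}]$ bound, together with $|\partial_t^2\bar\chi|\lesssim\tau_q^{-2}$ and the material-derivative estimate on $\wt_{q+1,j,n+1}$ obtained by differentiating \eqref{time} in the transport direction, yields the extra $\tau_q^{-1}$ factor in \eqref{estimate-DRv}. The main obstacle I expect is bookkeeping: one must check that the $\mu_{q+1}^{-1}$ gain from temporal oscillation exactly compensates the $\tau_q^{-1}$ loss from $\partial_t\bar\chi$ and absorbs all $\ell_q^{-\alpha}$ losses from Schauder estimates, commutators, and the $\nabla\Phi_i$ factors, so that the geometric ratio $(\tau_q\mu_{q+1}\ell_q^{10\alpha})^{-1}$ closes the induction for all $0\le n\le L-1$; this is where the choice of parameters in \eqref{e:ell} is critical.
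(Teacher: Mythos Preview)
Your overall strategy---induction on $n$, base case from Proposition~\ref{p:estimate-RRRq}, amplitude estimates feeding into \eqref{time}, the $\mu_{q+1}^{-1}$ gain via temporal integration by parts, and closure through \eqref{next Tq}--\eqref{next Rq}---is correct and is exactly the paper's approach.

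There is, however, one point where your plan would not close as written. For the material-derivative estimate \eqref{estimate-DRv} at level $n+1$, applying $\matd{\vv_q}$ to $\MMM_{q,n+1}=\sum_j\partial_t\bar\chi_{j,n+1}\mathcal{R}_{\vex}\dt_{q+1,j,n+1}$ produces the term $\partial_t\bar\chi_{j,n+1}\,\mathcal{R}_{\vex}\matd{\vv_q}\dt_{q+1,j,n+1}$. Reading $\matd{\vv_q}\dt$ directly off \eqref{time} gives a forcing contribution $\sum_k f_{k,n+1}(\mu_{q+1}t)\Div A^\theta_{q,k,j,n}$; since $\mathcal{R}_{\vex}\Div$ is order zero this is of size $\delta_{q+1,n}$ in $C^\alpha$, with \emph{no} $\mu_{q+1}^{-1}$ gain (there is no further time integration here). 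Multiplied by $\tau_q^{-1}$ from $\partial_t\bar\chi$, the result is $\tau_q^{-1}\delta_{q+1,n}$, which exceeds the target $\tau_q^{-1}\delta_{q+1,n+1}$ by the large factor $\tau_q\mu_{q+1}$. The paper closes this by invoking the temporal support identity $\chi_{j,n}\,\partial_t\bar\chi_{j,n+1}=0$: since $A^\theta_{q,k,j,n}$ carries the factor $\chi_{j,n}^2$, the forcing vanishes identically on $\supp\partial_t\bar\chi_{j,n+1}$, leaving only $-\wt_{q+1,j,n+1}\cdot\nabla\tb_q$, which is acceptable. The same cancellation (with $A^{v_1},A^{v_2}$) is needed for $\matd{\vv_q}\RRR_{q,i,n+1}$. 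You should flag this explicitly; it is the mechanism that saves a full factor of $\tau_q\mu_{q+1}$ in the induction.

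A secondary point: your claim that $\|\mathcal{R}\wt_{q+1,j,n+1}\|_{N+\alpha}\lesssim\mu_{q+1}^{-1}\delta_{q+1,n}\ell_q^{-N-3\alpha}$ (i.e.\ gaining one power of $\ell_q$ over $\|\wt\|_{N+\alpha}$) needs justification. The paper obtains this by deriving a $B^{-1+\alpha}_{\infty,\infty}$ bound on $\wt$ and $\dt$ from the Lagrangian integral representation, exploiting that the forcing in \eqref{time} is in divergence form; your plan assumes this gain but does not indicate how to establish it.
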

\begin{proof}Firstly, for $n=0$, we deduce from \eqref{e:RRR_q-N+alpha-bd} and \eqref{e:nabla-phi-i-CN1} that
\begin{align*}
\|\RRR_{q,i,0}\|_{N+\alpha}+\|\MMM_{q,0}\|_{N+\alpha}&\lesssim\delta_{q+1,0}\ell^{-N+\frac{3}{2}\alpha}_q.
\end{align*}
With the aid of \eqref{e:matd-RRR_q} and \eqref{e:nabla-phi-i-matd1}, one gets
\begin{align*}
 \|\matd {\vv_q}\RRR_{q,i,0}\|_{N+\alpha}
+\|\matd{\vv_q}\MMM_{q,0}\|_{N+\alpha}&\lesssim\tau^{-1}_q\delta_{q+1,0}\ell^{-N+\frac{3}{2}\alpha}_q.
\end{align*}
Assuming that \eqref{estimate-Rv} and \eqref{estimate-DRv} hold for $n=m$, we want to prove them for $n=m+1$. Then we obtain from \eqref{def-a-theta}--\eqref{def-av} that
\begin{align}\label{E-a-theta}
\|{a}^{\theta}_{q, k,i,m}\|_{N+\alpha}\lesssim\delta^{1/2}_{q+1,m}\ell^{-N }_q, \quad \|\matd {\vv_q }{a}^{\theta}_{q, k,i,m}\|_{N+\alpha}\lesssim\tau^{-1}_q\delta^{1/2}_{q+1,m}\ell^{-N }_q,
\end{align}
\begin{align}\label{E-a-v}
\|a^{v}_{q,k,i,m}\|_{N+\alpha}\lesssim_N { \delta^{1/2}_{q+1,m}\ell^{-N-3\alpha}_q}, \quad \|\matd{\vv_q} a^{v}_{q,k,i,m}\|_{N+\alpha}
\lesssim_N  \tau^{-1}_q\delta^{1/2}_{q+1,m}\ell^{- N-3\alpha}_q.
\end{align}
Therefore, one has
\begin{align}
&\|{A}^{\theta}_{q, k,i,m}\|_{N+\alpha}\lesssim\delta^{1/2}_{q+1,m}\ell^{-N }_q,& \|\matd{\vv_q}{A}^{\theta}_{q, k,i,m}\|_{N+\alpha}\lesssim \tau^{-1}_q\delta^{1/2}_{q+1,m}\ell^{-N }_q\label{E-Atheta},\\
&\|{A}^{v_1}_{q, k,i,m}\|_{N+\alpha}\lesssim_N  \delta_{q+1,m}\ell^{-N- 3\alpha}_q,
&\|\matd{\vv_q}{A}^{v_1}_{q, k,i,m}\|_{N+\alpha}\lesssim_N  \tau^{-1}_q\delta_{q+1,m}\ell^{-N-3\alpha}_q,\label{E-Av1}\\
&\|{A}^{v_2}_{q, k,i,m}\|_{N+\alpha}\lesssim  \delta_{q+1,m}\ell^{-N-3\alpha}_q, &\|\matd{\vv_q}{A}^{v_2}_{q, k,i,m}\|_{N+\alpha}\lesssim \tau^{-1}_q\delta_{q+1,m}\ell^{-N-3\alpha}_q.\label{E-Av2}
\end{align}
Moreover, with the aid of the commutator estimate, we have
\begin{align*}
 \|[\mathbb{P}_{H}\Div,\vv_q\nabla] {A}^{v_1}_{q,k,i,m}\|_{ N+\alpha}&\lesssim \|\vv_q\|_{1+N+\alpha}\|{A}^{v_1}_{q,k,i,m}\|_{1 +\alpha}+\|\vv_q\|_{1 +\alpha}\|{A}^{v_1}_{q,k,i,m}\|_{1+N+\alpha}\\
 &\lesssim \delta_{q+1,m}\lambda_q\delta^{1/2}_{q}\ell^{ -N-1-4\alpha}_q,
\end{align*}
and
\begin{align*}
 \|[\mathbb{P}_{H}\Div,\vv_q\nabla] {A}^{v_2}_{q,k,i,m}\|_{ N+\alpha}&\lesssim \|\vv_q\|_{1+N+\alpha}\|{A}^{v_2}_{q,k,i,m}\|_{1 +\alpha}+\|\vv_q\|_{1 +\alpha}\|{A}^{v_2}_{q,k,i,m}\|_{1+N+\alpha}\\
 &\lesssim \delta_{q+1,m}\lambda_q\delta^{1/2}_{q}\ell^{-N-1-4\alpha}_q.
\end{align*}
The above two inequalities together with \eqref{E-Av1} and \eqref{E-Av2} yields that
\begin{align}
&\| \matd{\vv_q}\mathbb{P}_{H}\Div{A}^{v_1}_{q, k,i,m} \|_{N+\alpha}\lesssim \tau^{-1}_q\delta_{q+1,m}\ell^{ -N-1-3{\alpha}}_q,\\
&\|\matd{\vv_q}\mathbb{P}_{H}\Div{A}^{v_2}_{q, k,i,m}\|_{N+\alpha}\lesssim \tau^{-1}_q\delta_{q+1,m}\ell^{ -N-1-3{\alpha}}_q.\label{E-pt-Av2}
\end{align}
Owing to $\|\vv_q\|_{1}\lesssim \delta^{1/2}_q\lambda_q$, one easily verifies that the system \eqref{time} possesses a unique solution on $[t_{i-1}, t_{i+1}]$. Using Lagrangian coordinate $y(t,x)$ with $\partial_ty(t,x) =\vv_q(t,y(t,x))$ and $y(t_i,x)=x$,  we have
\begin{align}
  \wt_{q+1,i,m+1}(t,y(t,x))=& -\int_{t_i}^{t}\big(\mathbb{P}_H(\wt_{q+1,i,m+1}\cdot\nabla \vv_q)\big)(s,y(s,x))\dd s\notag\\
  &+\int_{t_i}^{t}\Big(\frac{\nabla\Div}{\Delta}(\vv_q\cdot\nabla \wt_{q+1,i,m+1})\Big)(s,y(s,x))\dd s\notag\\
&+\sum_{i;k\in\Lambda_{v}}\int_{t_i}^{t}f_{k,m+1}(\mu_{q+1}s) (\mathbb{P}_{H}\Div  {A}^{v_1}_{q,k,i,m})(s,y(s,x))\dd s\notag\\
&+\sum_{i;k\in\Lambda_{\theta}}\int_{t_i}^{t}f_{k,m+1}(\mu_{q+1}s) (\mathbb{P}_{H}\Div  {A}^{v_2}_{q,k,i,m})(s,y(s,x))\dd s \label{int-w}
\end{align}
and
\begin{align*}
\dt_{q+1,i,m+1}(t,y(t,x))=&-\int_{t_i}^{t} (\wtq\cdot \nabla   \tb_q) (s,y(s,x))\dd s\\
&+
\sum_{i;k\in\Lambda_{\theta}}\int_{t_i}^{t}f_{k,m+1}(\mu_{q+1}s) (\Div  {A}^{\theta}_{q,k,i,m})(s,y(s,x))\dd s.
\end{align*}
Note that $\|\int_{t_i}^tf_{k,m+1}(\mu_{q+1}s) \dd s\|_{L^\infty}\le \mu^{-1}_{q+1}$, via integration by parts, we have, for $t\in [t_i, t_{i+1}]$,
\begin{align*}
\| \wt_{q+1,i,m+1}(t)\|_{\alpha}\lesssim&\int^t_{t_i}\|\wt_{q+1,i,m+1}(s)\|_{\alpha}\|\vv_q(s)\|_{1+\alpha}\dd s\\
&+\mu^{-1}_{q+1} \Big\|(\mathbb{P}_{H}\Div  {A}^{v_1}_{q,k,i,m},  \mathbb{P}_{H}\Div{A}^{v_2}_{q,k,i,m})\circ y\Big|_{t_i}^{t}\Big\|_{\alpha} \\
&+\mu^{-1}_{q+1}\int_{t_i}^{t}\|\big(  \matd{\vv_q}\mathbb{P}_{H}\Div  {A}^{v_1}_{q,k,i,m}(s), \partial_t\mathbb{P}_{H}\Div{A}^{v_2}_{q,k,i,m}\big)(s,y(s,x)) \|_{\alpha}\dd s\\
\lesssim&  \mu^{-1}_{q+1}\delta_{q+1,m}\ell^{ -1-3{\alpha}}_q,
\end{align*}
where we have used  \eqref{E-Av1}--\eqref{E-pt-Av2} and Gr\"{o}nwall's inequality. Thus, one deduces  that, for $t\in [t_{i-1}, t_{i+1}]$,
\begin{align}
    \| \wt_{q+1,i,m+1}\|_{\alpha}\lesssim \mu^{-1}_{q+1}\delta_{q+1,m}\ell^{ -1-3{\alpha}}_q.\label{e-w-alpha}
\end{align}
In the same way as leading to \eqref{e-w-alpha}, we have
$ \|\dt_{q+1,i,m+1}\|_{\alpha}\lesssim \mu^{-1}_{q+1}\delta_{q+1,m}\ell^{ -1}_q$.

Since $ y(t,x),~y^{-1}(t,x)$ are measure-preserving diffeomorphism mappings for each $t\in [t_{i-1},t_{i+1}]$, we have, for any function $F\in B^{s}_{p,q}$ with $s\in(-1,1), 1\le p,q\le \infty$,
\begin{align*}
    \|F\circ y\|_{B^s_{p,q}}\sim \|F\|_{B^s_{p,q}}.
\end{align*}
Therefore, we obtain from \eqref{int-w} that, for $t\in[t_{i-1}, t_{i+1}]$,
\begin{align}\label{e-R-wi}
\| \wt_{q+1,i,m+1}\|_{B^{-1+\alpha}_{\infty,\infty}}\lesssim&\tau_q\|\wt_{q+1,i,m+1}(s)\|_{\alpha}\|\vv_q(s)\|_{\alpha}\notag\\
&+\mu^{-1}_{q+1}\|( {A}^{v_1}_{q,k,i,m}, {A}^{v_2}_{q,k,i,m}, \partial_t{A}^{v_1}_{q,k,i,m}, \partial_t{A}^{v_2}_{q,k,i,m})\|_{\alpha}\notag\\
\lesssim& \mu^{-1}_{q+1}\delta_{q+1,m}\ell^{-3{\alpha}}_q.
\end{align}
Similarly, we have
\begin{equation}\label{e-R-di}
\| \dt_{q+1,i,m+1}\|_{B^{-1+\alpha}_{\infty,\infty}}\lesssim \mu^{-1}_{q+1}\delta_{q+1,m}.
\end{equation}
From the definitions of $\RRR_{q,i,m+1}$ and $\MMM_{q,m+1}$ in \eqref{next Tq} and \eqref{next Rq}, we conclude that
  \begin{align*}
\|\RRR_{q,i,m+1}\|_{\alpha}+\|\MMM_{q,m+1}\|_{\alpha}&\lesssim\tau^{-1}_q\mu^{-1}_{q+1}\delta_{q+1,m}\ell^{-3{\alpha}}_q\le \delta_{q+1,m+1}\ell^{\frac{3}{2}\alpha}_q.
\end{align*}
Hence we prove \eqref{estimate-Rv} for $N=0$. For each $N\ge 1$, taking the $N$th derivative of $\eqref{time}_1$ yields
\begin{align*}
    &\del_t \sum_{|\gamma|=N }\partial^{\gamma} \wt_{q+1,i,m+1}+\vv_{q}\nabla \sum_{|\gamma|=N }\partial^{\gamma}\wt_{q+1,i,m+1}\\
    =&-\sum_{|\gamma|=N; \gamma_1+ \gamma_2=\gamma,\gamma_1\neq\gamma}\partial^{\gamma_2}\vv_{q}\nabla \partial^{\gamma_1}\wt_{q+1,i,m+1}- \sum_{ |\gamma|=N;\gamma_1+ \gamma_2=\gamma}\partial^{\gamma_2}\wt_{q+1,i,m+1}\nabla \partial^{ \gamma_1}v_q  \\
    &+\nabla \sum_{|\gamma|=N }\partial^{\gamma}p_{q+1,i,m+1}
\sum_{i;k\in\Lambda_{v}}f_{k,m+1}  \sum_{|\gamma|=N }\partial^{\gamma}\Div  {A}^{v_1}_{q,k,i,m}+\sum_{i;k\in\Lambda_{\theta}}f_{k,m+1}\sum_{|\gamma|=N }\partial^{\gamma}\Div  {A}^{v_2}_{q,k,i,m}.
\end{align*}
Applying $\mathbb{P}_H$ and Lagrangian coordinates to the above equation yields an expression similar to \eqref{int-w}, and then using \eqref{E-Av1}\--\eqref{E-pt-Av2} and \eqref{e-w-alpha}, we have
\begin{align}\label{e-w-N}
\| \wt_{q+1,i,m+1}\|_{N+\alpha}\lesssim& \mu^{-1}_{q+1}\delta_{q+1,m}\ell^{ -N-1-3{\alpha}}_q,\,\,\,\forall\, 0\le N\le L+1.
\end{align}
Similarly, we infer from \eqref{E-Atheta} that $\| \dt_{q+1,i,m+1}\|_{N+\alpha}\lesssim_N \mu^{-1}_{q+1}\delta_{q+1,m}\ell^{ -N-1}_q.$ Then one immediately has \eqref{estimate-Rv} for $n=m+1$. Therefore, we prove \eqref{estimate-Rv}  by induction on $n$, and \eqref{estimate-wt} is also valid.

From the definition of $\MMM_{q,m+1}$ in \eqref{next Tq}, we have
\begin{align*}
\matd{\vv_q}\MMM_{q,m+1} =&\sum_{i}\partial^2_t\bar{\chi}_{i,m+1}\mathcal{R}_{\vex}\dt_{q+1,i,m+1}+\sum_{i}\partial_t\bar{\chi}_{i,m+1}\mathcal{R}_{\vex}\matd{\vv_q} \dt_{q+1,i,m+1}\\
&+\sum_{i}\partial_t\bar{\chi}_{i,m+1}[\mathcal{R}_{\vex},\vv_q\cdot\nabla ] \dt_{q+1,i,m+1}\\
=&\sum_{i}\partial^2_t\bar{\chi}_{i,m+1}\mathcal{R}_{\vex}\dt_{q+1,i,m+1}-\sum_{i}\partial_t\bar{\chi}_{i,m+1}\mathcal{R}_{\vex}(w_{q+1,i,m+1}\cdot\nabla\tb_q)\\
&+\sum_{i}\partial_t\bar{\chi}_{i,m+1}[\mathcal{R}_{\vex},\vv_q\cdot\nabla ] \dt_{q+1,i,m+1},
\end{align*}
where we have used the fact that $$\partial_t\bar{\chi}_{i,m+1}\mathcal{R}_{\vex}\matd{\vv_q} \dt_{q+1,i,m+1}=-\partial_t\bar{\chi}_{i,m+1}\mathcal{R}_{\vex}(\wt_{q+1,i,m+1}\cdot\nabla\tb_q)$$ by $\eqref{time}_2$ and $\partial_t\bar{\chi}_{i,n+1}A^{\theta}_{q,k,i,n }=0$. Therefore,  one infers from \eqref{estimate-wt} that
\begin{align}
\|\matd{\vv_q}\MMM_{q,m+1} \|_{N+\alpha}  \lesssim & \tau^{-2}_{q}\|\mathcal{R}_{\vex}\dt_{q+1,i,m+1}\|_{N+\alpha}+ \tau^{-1}_{q}\|\mathcal{R}_{\vex}(\wt_{q+1,i,m+1}\cdot\nabla\tb_q)\|_{N+\alpha}\notag\\
&+\tau^{-1}_q\|[\mathcal{R}_{\vex},\vv_q\cdot\nabla ] \dt_{q+1,i,m+1}\|_{N+\alpha}\notag\\
 \lesssim&\tau^{-2}_q\mu^{-1}_{q+1}\delta_{q+1,m}+\tau^{-1}_q\mu^{-1}_{q+1}\delta_{q+1,m}\ell^{-N-1-\frac{5\alpha}{2}}_q\notag\\
 \lesssim& \tau^{-1}_q\delta_{q+1,m+1}\ell^{-N+\frac{3}{2}\alpha}_q.\label{DMn+1}
\end{align}
It follows from $\eqref{time}_1$ and $\partial_t\bar{\chi}_{i,m+1}A^{v_1}_{q,k,i,m }=\partial_t\bar{\chi}_{i,m+1}A^{v_2}_{q,k,i,m }=0$ that
\begin{align*}
  &\|\matd{\vv_q}\RRR_{q,i,m+1}\|_{N+\alpha} \\
\lesssim&\tau^{-2}_q\|\mathcal{R}\wt_{q+1,i,m+1}\|_{N+\alpha}+ \tau^{-1}_q(\|\mathbb{P}_H\mathcal{R}(\wt_{q+1,i,m+1}\cdot\nabla \vv_q)\|_{N+\alpha}\notag+\|[\mathcal{R},\vv_q\cdot\nabla ] \wt_{q,i,m+1}\|_{N+\alpha})\\
&+\delta^{-1}_{q+1,m+1}(\|\matd{\vv_q}\nabla\Phi_i\|_{N+\alpha}\|\MMM_{q,m+1}\|^2_0+\|\matd{\vv_q}\nabla\Phi_i\|_{0}\|\MMM_{q,m+1}\|_0\|\MMM_{q,m+1}\|_{N+\alpha}\\&+\|\matd{\vv_q}\nabla\Phi_i\|_{0}\|\MMM_{q,m+1}\|^2_0\|\nabla\Phi_i\|_{N+\alpha}+\|\matd{\vv_q}\MMM_{q,m+1}\|_{N+\alpha}\|\MMM_{q,m+1}\|_0\\ &+\|\matd{\vv_q}\MMM_{q,m+1}\|_{0}\|\MMM_{q,m+1}\|_{N+\alpha}+\|\matd{\vv_q}\MMM_{q,m+1}\|_{0}\|\MMM_{q,m+1}\|_{0}\|\nabla\Phi_i\|_{N+\alpha})\\
 \lesssim& \tau^{-1}_q\delta_{q+1,m+1}\ell^{-N+\frac{3}{2}\alpha}_q.
\end{align*}
Thus, we prove \eqref{estimate-DRv} and complete the proof of Proposition \ref{est-RM}.
\end{proof}

Based on the estimates of $(\wt_{q+1,i,n+1},\dt_{q+1,i,n+1})$, we can derive the estimates for  $(\wtq, \dtq)$.
\begin{prop}[Estimates for $(\wtq, \dtq)$]\label{est-wtq}Let $\wtq$ and $\dtq$ be defined in \eqref{def-wt-dt}. We have
\begin{align}
&\|(\wtq, \dtq)\|_{\alpha}+\ell^{ N}_q\|(\wtq, \dtq)\|_{N+\alpha}\le \mu^{-1}_{q+1}\delta_{q+1}\ell^{-1-{2}\alpha}_q , \label{wt-dt-alpha}\\
& \Big\|\matd {\vv_q+\wtq}\sum_n\RRR_{q,i,n}\Big\|_{N+\alpha}
+\Big\|\matd{\vv_q+\wtq}\sum_n\MMM_{q,n}\Big\|_{N+\alpha}
 \lesssim\tau^{-1}_q\delta_{q+1}\ell^{-N}_q .\label{es-DR-v+w}
\end{align}
\end{prop}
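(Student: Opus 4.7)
The plan is to reduce these composite estimates to the per-slice estimates already proved in Proposition \ref{est-RM} by exploiting the disjoint-support structure of the cutoffs $\{\bar{\chi}_{i,n+1}\}$, summing over $n$ using the fast geometric decay of $\delta_{q+1,n}$, and absorbing implicit constants via the largeness of $a$.

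First I would establish \eqref{wt-dt-alpha}. Observe that for each fixed $n\in\{0,\dots,L-1\}$, the temporal cutoffs $\bar{\chi}_{i,n+1}$ and $\bar{\chi}_{i',n+1}$ with $i\neq i'$ have pairwise disjoint supports: each support has length at most $\tau_q/3+(n+1)\tau_q/(6L)\le \tau_q/3+\tau_q/6$, while adjacent $t_i$'s are separated by $\tau_q$. Hence at every $t$ only one $i$ contributes to the $i$-sum defining $\wtq$, and therefore
\begin{align*}
\|\wtq\|_{N+\alpha}+\|\dtq\|_{N+\alpha}\le \sum_{n=0}^{L-1}\sup_i\big(\|\wt_{q+1,i,n+1}\|_{N+\alpha}+\|\dt_{q+1,i,n+1}\|_{N+\alpha}\big).
\end{align*}
Inserting \eqref{estimate-wt} and summing, the factor $\delta_{q+1,n}=(\tau_q\mu_{q+1}\ell_q^{10\alpha})^{-n}\delta_{q+1}$ is geometrically decreasing in $n$ under the parameter choice \eqref{e:ell} (since $\tau_q\mu_{q+1}\ell_q^{10\alpha}\gg 1$ once $a$ is large), so the sum is controlled by its $n=0$ term and gives the bound $\lesssim \mu_{q+1}^{-1}\delta_{q+1}\ell_q^{-N-1-3\alpha}$. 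The desired inequality \eqref{wt-dt-alpha} then follows upon choosing $a$ large enough that the implicit multiplicative constant is swallowed by the gap $\ell_q^{-3\alpha}$ versus $\ell_q^{-2\alpha}$.

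For \eqref{es-DR-v+w} I decompose
\begin{align*}
\matd{\vv_q+\wtq}\RRR_{q,i,n} = \matd{\vv_q}\RRR_{q,i,n} + \wtq\cdot\nabla \RRR_{q,i,n},
\end{align*}
and analogously for $\MMM_{q,n}$. Summing the first piece in $n$ and using \eqref{estimate-DRv} (again dominated by the $n=0$ term) gives the target bound $\tau_q^{-1}\delta_{q+1}\ell_q^{-N+\frac{3}{2}\alpha}$. For the second piece I use the standard product H\"older estimate together with \eqref{wt-dt-alpha} and \eqref{estimate-Rv}:
\begin{align*}
\|\wtq\cdot\nabla\RRR_{q,i,n}\|_{N+\alpha}\lesssim \|\wtq\|_0\|\RRR_{q,i,n}\|_{N+1+\alpha}+\|\wtq\|_{N+\alpha}\|\RRR_{q,i,n}\|_{1+\alpha},
\end{align*}
which is bounded by $\mu_{q+1}^{-1}\delta_{q+1}\delta_{q+1,n}\ell_q^{-N-2-\frac{3}{2}\alpha}$. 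Summing in $n$ and comparing with the target, one needs $\tau_q\mu_{q+1}^{-1}\ell_q^{-2-\frac{3}{2}\alpha}\ll 1$; this follows from the explicit values \eqref{e:ell} and the admissibility constraints \eqref{e:params0} on $\alpha$, $b$, so the transport correction is absorbed.

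The main technical obstacle is the last parameter check: verifying that $\tau_q\mu_{q+1}^{-1}\ell_q^{-2-\frac{3}{2}\alpha}$ is indeed subleading. Using $\tau_q=\delta_q^{-1/2}\lambda_q^{-1-5\alpha}$, $\mu_{q+1}=\lambda_{q+1}^{1/3}\lambda_q^{2/3+5\alpha}\delta_{q+1}^{1/2}$, and $\ell_q=\lambda_q^{-1+\frac{1}{10}(2\beta b^2-b(3\beta+\frac13)+\beta+\frac13)}$, the inequality becomes a polynomial condition in $\lambda_q$ whose exponent is negative precisely by virtue of \eqref{e:params0} and \eqref{lambdaN}, so taking $a$ large closes the estimate. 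All intermediate losses of the form $\ell_q^{-c\alpha}$ with universal $c$ are handled uniformly since $L$ in \eqref{e:params0} is chosen so that $\alpha L$ remains bounded.
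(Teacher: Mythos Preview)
Your argument for \eqref{wt-dt-alpha} is correct and coincides with the paper's: disjoint supports in $i$, geometric sum in $n$ dominated by $n=0$, and absorption of the implicit constant via the extra $\ell_q^{-\alpha}$.

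For \eqref{es-DR-v+w} your decomposition $\matd{\vv_q+\wtq}=\matd{\vv_q}+\wtq\cdot\nabla$ is a legitimate route, but there is a slip in the parameter check. After summing in $n$, your second piece is bounded by $\mu_{q+1}^{-1}\delta_{q+1}^2\ell_q^{-N-2-\frac{3}{2}\alpha}$, and comparing with the target $\tau_q^{-1}\delta_{q+1}\ell_q^{-N}$ yields the requirement
\[
\tau_q\,\mu_{q+1}^{-1}\,\delta_{q+1}\,\ell_q^{-2-\frac{3}{2}\alpha}\lesssim 1,
\]
not the condition you wrote without the factor $\delta_{q+1}$. This matters: the condition you stated actually \emph{fails} (its $\lambda_q$-exponent is $\approx 2\beta+O(b-1)>0$), whereas the correct one holds because the extra $\delta_{q+1}\sim\lambda_{q+1}^{-2\beta}$ supplies exactly the missing decay. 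With this correction your argument closes.

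The paper bypasses this explicit comparison by a more conceptual observation: the bound $\mu_{q+1}^{-1}\delta_{q+1}\ell_q^{-N-1-2\alpha}$ on $\|\wtq\|_{N+\alpha}$ is dominated by the bound on $\|\vv_q\|_{N+\alpha}$, so the entire derivation of \eqref{estimate-DRv} (which uses only such upper bounds on the transport velocity) can be rerun verbatim with $\vv_q+\wtq$ in place of $\vv_q$. This avoids isolating $\wtq\cdot\nabla$ as a separate term, though of course it encodes the same parameter relation implicitly.
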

\begin{proof}Applying \eqref{estimate-wt} to \eqref{def-wt-dt} , we have, for $0\le N\le L+1$,
\begin{align*}
\|(\wtq, \dtq)\|_{\alpha}+\ell^{N}_q\|(\wtq, \dtq)\|_{N+\alpha}\lesssim\mu^{-1}_{q+1}\delta_{q+1}\ell^{-1-2{\alpha}}_q\le \delta^{1/2}_{q+1,1}.
\end{align*}
Hence, we derive \eqref{wt-dt-alpha}. Note that the upper bound of $\|\wtq\|_{N+\alpha}$ is less than that of $\|\vv_q\|_{N+\alpha}$, combining this equality with the derivation process of \eqref{estimate-DRv}, we obtain \eqref{es-DR-v+w}.
\end{proof}
The estimates for $\wtq$ enables us to readily derive the following estimate of $\bar{\Phi}_i$.
 \begin{prop}[Estimates for $\bar{\Phi}_i$]\label{estimates-for-inverse-bar-flow} For $a\gg 1$,
 $0\le N\le L+1 $ and every $t\in[t_{i-1}, t_{i+1}]$, we have
\begin{align}
 \|\nabla\bar{\Phi}^{\pm1}_i-{\rm Id}_{3\times3}\|_0 &\le\frac1{10},\label{e:nabla-barphi-i-minus-I3x3}
 \\
 \| (\nabla \bar{\Phi}_i)^{-1}\|_N+ \|  \nabla \bar{\Phi}_i\|_N &\le \ell_q^{-N}, \label{e:nabla-barphi-i-CN}
\\
\|\matd{\vv_q+\wtq} \nabla \bar{\Phi}^{\pm1}_i\|_N &\lesssim \delta_q^{1/2} \lambda_q \ell_q^{-N}, \label{e:nabla-barphi-i-matd}\\
 \|\nabla\bar{\Phi}^{\pm1}_i- \nabla{\Phi}^{\pm1}_i \|_{\alpha}&\lesssim \ell^{-2}_q\lambda^2_q(\tfrac{\lambda_q}{\lambda_{q+1}})^{1/3}
 (\tfrac{\delta_{q+1}}{\delta_{q}})^{1/2} . \label{e: barphi-phi}
\end{align}
\end{prop}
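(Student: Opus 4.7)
The plan is to adapt the proof of Proposition~\ref{p:estimates-for-inverse-flow-map1} to the perturbed transport velocity $\vv_q + \wtq$, exploiting that $\wtq$ is a lower-order perturbation of $\vv_q$ in every relevant norm by Proposition~\ref{est-wtq}. The decisive input is that $\tau_q\|\vv_q + \wtq\|_1 \lesssim \tau_q\delta_q^{1/2}\lambda_q = \lambda_q^{-5\alpha} \ll 1$, so on the short interval $[t_{i-1},t_{i+1}]$ of length $\lesssim \tau_q$ the standard transport-ODE estimates produce only small perturbations of the identity.

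For \eqref{e:nabla-barphi-i-minus-I3x3}, differentiating \eqref{barphi} gives $\matd{\vv_q+\wtq}\nabla\bar{\Phi}_i = -(\nabla(\vv_q+\wtq))^{\TT}\nabla\bar{\Phi}_i$ with $\nabla\bar{\Phi}_i|_{t=t_i}=\mathrm{Id}$. Passing to Lagrangian coordinates along the flow of $\vv_q+\wtq$ and applying Grönwall, together with $\|\wtq\|_1 \lesssim \mu_{q+1}^{-1}\delta_{q+1}\ell_q^{-2-2\alpha}\ll \delta_q^{1/2}\lambda_q$, yields $\|\nabla\bar{\Phi}_i - \mathrm{Id}\|_0 \lesssim \tau_q\delta_q^{1/2}\lambda_q \lesssim \lambda_q^{-5\alpha}\le 1/10$ for $a$ sufficiently large. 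The analogue for $(\nabla\bar{\Phi}_i)^{-1}$ follows from the ODE $\partial_t(\nabla\bar{\Phi}_i)^{-1} = (\nabla\bar{\Phi}_i)^{-1}\nabla(\vv_q+\wtq)$ treated identically. The higher-order bounds \eqref{e:nabla-barphi-i-CN} are obtained by successively differentiating the transport ODE, using \eqref{e:v_ell-CN+1} and \eqref{wt-dt-alpha}, and applying Grönwall iteratively at each order. For \eqref{e:nabla-barphi-i-matd} one simply takes $C^N$ norms of the identity $\matd{\vv_q+\wtq}\nabla\bar{\Phi}_i^{\pm 1} = \mp(\nabla(\vv_q+\wtq))^{\TT}\nabla\bar{\Phi}_i^{\pm 1}$ (up to transposes) and invokes \eqref{e:nabla-barphi-i-CN}.

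The main task is \eqref{e: barphi-phi}. Set $\Psi_i := \bar{\Phi}_i - \Phi_i$, which vanishes at $t=t_i$. Subtracting the two transport equations yields $(\partial_t + (\vv_q+\wtq)\cdot\nabla)\Psi_i = -\wtq\cdot\nabla\Phi_i$. Differentiating once in $x$ gives
\[
\matd{\vv_q+\wtq}\nabla\Psi_i = -(\nabla(\vv_q+\wtq))^{\TT}\nabla\Psi_i - (\nabla\wtq)^{\TT}\nabla\Phi_i - \wtq\cdot\nabla^2\Phi_i.
\]
Passing to Lagrangian coordinates along $\vv_q+\wtq$ and applying Grönwall in $C^\alpha$ with the bounds of Proposition~\ref{p:estimates-for-inverse-flow-map1} and Proposition~\ref{est-wtq}, one obtains
\[
\|\nabla\Psi_i\|_\alpha \lesssim \tau_q\bigl(\|\nabla\wtq\|_\alpha\|\nabla\Phi_i\|_\alpha + \|\wtq\|_\alpha\|\nabla^2\Phi_i\|_\alpha\bigr) \lesssim \tau_q\,\mu_{q+1}^{-1}\delta_{q+1}\ell_q^{-2-3\alpha}.
\]
Substituting the definitions $\tau_q = \delta_q^{-1/2}\lambda_q^{-1-5\alpha}$ and $\mu_{q+1} = \lambda_{q+1}^{1/3}\lambda_q^{2/3+5\alpha}\delta_{q+1}^{1/2}$ from \eqref{e:ell}, the bound becomes $(\delta_{q+1}/\delta_q)^{1/2}\lambda_q^{-5/3-10\alpha}\lambda_{q+1}^{-1/3}\ell_q^{-2-3\alpha}$, which is comfortably dominated by the stated target $\ell_q^{-2}\lambda_q^2(\lambda_q/\lambda_{q+1})^{1/3}(\delta_{q+1}/\delta_q)^{1/2}$, with large slack from the negative power of $\lambda_q$. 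The inverse-Jacobian difference $\nabla\bar{\Phi}_i^{-1} - \nabla\Phi_i^{-1}$ is reduced to the Jacobian difference via the matrix identity $A^{-1}-B^{-1} = -A^{-1}(A-B)B^{-1}$ together with \eqref{e:nabla-phi-i-CN1} and \eqref{e:nabla-barphi-i-CN}.

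The main bookkeeping obstacle will be tracking the various small powers of $\lambda_q^{\alpha}, \ell_q^{-\alpha}$ so that the final bound absorbs into the target; these are controlled using the schema \eqref{lambdaN} and the choice \eqref{e:params0} of $\alpha$. No genuinely new analytical difficulty arises beyond Proposition~\ref{p:estimates-for-inverse-flow-map1} because the perturbative character of $\wtq$ with respect to $\vv_q$, quantified by Proposition~\ref{est-wtq}, ensures that the flow $\bar{\Phi}_i$ inherits all the qualitative properties of $\Phi_i$.
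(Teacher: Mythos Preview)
Your proposal is correct and follows essentially the same approach as the paper: the first three estimates are obtained by observing that $\wtq$ is a lower-order perturbation of $\vv_q$ and invoking the argument of Proposition~\ref{p:estimates-for-inverse-flow-map1}, while \eqref{e: barphi-phi} is derived from the transport equation for the difference $\bar{\Phi}_i-\Phi_i$ via Grönwall. The only cosmetic difference is that the paper writes this difference equation with drift $\vv_q$ and forcing $-\wtq\cdot\nabla\bar{\Phi}_i$, whereas you use drift $\vv_q+\wtq$ and forcing $-\wtq\cdot\nabla\Phi_i$; both rearrangements lead to the same bound $\tau_q\mu_{q+1}^{-1}\delta_{q+1}\ell_q^{-2-O(\alpha)}$, comfortably below the target.
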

\begin{proof}Since $\nabla \bar{\Phi}_i$ is the flow of $\vv_q+\wtq$ and the upper bound of $\|\vv_q+\wtq\|_{N}$ is less than that of $\|\vv_q+\wtq\|_{N}$, we immediately show \eqref{e:nabla-barphi-i-minus-I3x3}--\eqref{e:nabla-barphi-i-matd} by Proposition \ref{p:estimates-for-inverse-flow-map1}. Note that
\begin{equation}
\left\{ \begin{alignedat}{-1}
&\del_t (\bar{\Phi}_i- {\Phi}_i)+ \vv_q  \cdot \nabla  (\bar{\Phi}_i- {\Phi}_i)+ \wtq  \cdot\nabla \bar{\Phi}_i  =0,
 \\
  &(\bar{\Phi}_i- {\Phi}_i)(t_i,x)= 0,
\end{alignedat}\right.  
\end{equation}
we deduce by Gr\"{o}nwall's inequality and $2\tau_q\|\vv_q\|_{1+\alpha}\le 1$  that, for $t\in[t_{i-1}, t_{i+1}]$,
\begin{align*}
\|\nabla\bar{\Phi}_i- \nabla{\Phi}_i \|_{\alpha}
&\lesssim  \tau_q(\|\wtq \|_{1+\alpha}\|\nabla \bar{\Phi}_i\|_0+\|\wtq \|_0\|\nabla \bar{\Phi}_i\|_{1+\alpha})
\\
&\lesssim\tau_q\mu^{-1}_{q+1}\ell^{-2-\frac{3\alpha}{2}}_q\delta_{q+1}
=\lambda^{-4\alpha}_q\ell^{-\frac{3\alpha}{2}}_q
(\ell_q\lambda_q)^{-2}(\tfrac{\lambda_q}{\lambda_{q+1}})^{1/3} (\tfrac{\delta_{q+1}}{\delta_{q}})^{1/2}.\end{align*}
This equality implies \eqref{e: barphi-phi} and we complete Proposition \ref{estimates-for-inverse-bar-flow}.
\end{proof}

Observing that the difference between ($\bar{a}^{\theta }_{q,k,i,n},~\bar{a}^{v }_{q,k,i,n},~ \bar{A}^{\theta}_{q,k,i,n}, ~\bar{A}^{v_1}_{q,k,i,n} ,~\bar{A}^{v_2}_{q,k,i,n})$ and $({a}^{\theta }_{q,k,i,n}$, ${a}^{v }_{q,k,i,n}, {A}^{\theta}_{q,k,i,n}, {A}^{v_1}_{q,k,i,n},{A}^{v_2}_{q,k,i,n})$ only depend  on   $\bar{\Phi}_i$ and $\Phi_i$, we easily show the following proposition by combining \eqref{E-a-theta}--\eqref{E-Av2} with Proposition \ref{estimates-for-inverse-bar-flow}.
\begin{prop}\label{barA-A}  For $0\le n\le L-1$, $0\le N\le L+1$ and $t\in[t_{i-1}, t_{i+1}]$, we have
 \begin{align*}
 &\|(\bar{a}^{\theta }_{q,k,i,n}, \bar{a}^{v}_{q,k,i,n})\|_{N+\alpha}\lesssim\delta^{1/2}_{q+1,n}\ell^{-(1+\frac{1}{2}\alpha)N}_q,\\
&\|(\matd {\vv_q+\wtq}\bar{a}^{\theta }_{q,k,i,n},\matd {\vv_q+\wtq}\bar{a}^{v }_{q,k,i,n})\|_{N+\alpha}
\lesssim \tau^{-1}_q\delta^{1/2}_{q+1,n}\ell^{-(1+\frac{1}{2}\alpha)N}_q ,\\
&\|({A}^{\theta}_{q,k,i,n}-\bar{A}^{\theta}_{q,k,i,n},{A}^{v_1}_{q,k,i,n}-\bar{A}^{v_1}_{q,k,i,n},{A}^{v_2}_{q,k,i,n}-\bar{A}^{v_2}_{q,k,i,n})\|_{\alpha}
\lesssim \ell^{-2}_q\lambda^2_q\big(\tfrac{\lambda_q}{\lambda_{q+1}}\big)^{1/3}
 \big(\tfrac{\delta_{q+1}}{\delta_{q}}\big)^{1/2}\delta_{q+1}.
\end{align*}
\end{prop}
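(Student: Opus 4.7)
The plan is to obtain all three claims by essentially mirroring the arguments that produced \eqref{E-a-theta}--\eqref{E-Av2} for the unbarred quantities, but now using the bounds on $\bar{\Phi}_i$ from Proposition~\ref{estimates-for-inverse-bar-flow} in place of those on $\Phi_i$; the only genuinely new input is the quantitative closeness \eqref{e: barphi-phi}, which powers the third estimate.

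First I would prove the $C^{N+\alpha}$ bounds on $\bar{a}^{\theta}_{q,k,i,n}$ and $\bar{a}^{v}_{q,k,i,n}$. These amplitudes have exactly the structure of their unbarred cousins $a^{\theta}_{q,k,i,n}$, $a^{v}_{q,k,i,n}$ with $\Phi_i$ replaced by $\bar{\Phi}_i$. Applying the Faà di Bruno/chain rule estimate for composition with the smooth geometric functions $a_k$ and $a^\theta_k$ from Lemma~\ref{first S}, together with the bounds \eqref{e:nabla-barphi-i-CN} on $\bar{\Phi}_i$ and the bounds \eqref{estimate-Rv} on $\RRR_{q,i,n}$ and $\MMM_{q,n}$, yields the desired $\delta_{q+1,n}^{1/2}\ell_q^{-(1+\tfrac12\alpha)N}$ control word-for-word as in the derivation of \eqref{E-a-theta}--\eqref{E-a-v}. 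The slight loss $\ell_q^{-\frac12\alpha N}$ (compared with the unbarred case) is absorbed harmlessly since we may as well unify the exponent across all factors.

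Next I would tackle the material derivative estimates. The crucial observation is that $\bar{\Phi}_i$ is transported by $\vv_q+\wtq$, so that $\matd{\vv_q+\wtq}\bar{\Phi}_i=0$, and the Leibniz formula gives $\matd{\vv_q+\wtq}(F\circ\bar\Phi_i)=\matd{\vv_q+\wtq}F$ composed appropriately, with analogous statements for $\nabla\bar\Phi_i$ governed by \eqref{e:nabla-barphi-i-matd}. Combining with the material derivative bound \eqref{es-DR-v+w} on $\RRR_{q,i,n}$ and $\MMM_{q,n}$ (applied with the flow $\vv_q+\wtq$ — note that Proposition~\ref{est-wtq} already supplies exactly this estimate) and the chain rule gives $\tau_q^{-1}\delta_{q+1,n}^{1/2}\ell_q^{-(1+\tfrac12\alpha)N}$, following the same pattern that produced \eqref{E-a-theta}--\eqref{E-Av2}.

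The heart of the proposition is the third estimate, and this is the main obstacle. Here I would write each of $A^\sharp-\bar A^\sharp$ (for $\sharp\in\{\theta,v_1,v_2\}$) as a telescoping multilinear expression in the differences $\nabla\bar\Phi_i^{\pm 1}-\nabla\Phi_i^{\pm 1}$ and $\bar a_{\cdot}-a_{\cdot}$; for instance
\begin{align*}
\bar A^{v_1}_{q,k,i,n}-A^{v_1}_{q,k,i,n}
&=\bigl((\bar a^v_{q,k,i,n})^2-(a^v_{q,k,i,n})^2\bigr)\nabla\bar\Phi_i^{-1}(k\otimes k)\nabla\bar\Phi_i^{-\TT}\\
&\quad+(a^v_{q,k,i,n})^2\bigl(\nabla\bar\Phi_i^{-1}-\nabla\Phi_i^{-1}\bigr)(k\otimes k)\nabla\bar\Phi_i^{-\TT}\\
&\quad+(a^v_{q,k,i,n})^2\nabla\Phi_i^{-1}(k\otimes k)\bigl(\nabla\bar\Phi_i^{-\TT}-\nabla\Phi_i^{-\TT}\bigr),
\end{align*}
and analogously for $\bar A^\theta-A^\theta$ and $\bar A^{v_2}-A^{v_2}$. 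The differences $\bar a-a$ are in turn controlled by a Lipschitz (mean value) argument applied to the composition with $a_k$ or $a^\theta_k$: $\|\bar a^v_{q,k,i,n}-a^v_{q,k,i,n}\|_\alpha\lesssim \|Da_k\|_{C^1}\|\nabla\bar\Phi_i-\nabla\Phi_i\|_\alpha\|\RRR_{q,i,n}\|_0/\delta_{q+1,n}+\cdots$. Feeding \eqref{e: barphi-phi} into every term and using the $C^\alpha$ bounds on the other factors, one arrives at an overall bound of order $\ell_q^{-2}\lambda_q^2(\lambda_q/\lambda_{q+1})^{1/3}(\delta_{q+1}/\delta_q)^{1/2}\,\delta_{q+1,n}$. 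Since $\delta_{q+1,n}\le \delta_{q+1}$, this yields the claimed estimate; the verification that the bookkeeping of the various $\ell_q^{-2\alpha}$ losses remains consistent with the parameter choices in \eqref{lambdaN} is routine and completes the proof.
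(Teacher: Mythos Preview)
Your proposal is correct and follows essentially the same approach as the paper, which merely observes that the barred and unbarred quantities differ only through $\bar\Phi_i$ versus $\Phi_i$ and then points to \eqref{E-a-theta}--\eqref{E-Av2} together with Proposition~\ref{estimates-for-inverse-bar-flow}. Your proof fleshes out exactly this: rerun the amplitude estimates with $\bar\Phi_i$ in place of $\Phi_i$, use $\matd{\vv_q+\wtq}\bar\Phi_i=0$ together with \eqref{es-DR-v+w} for the material derivative bounds, and use a telescoping decomposition fed by \eqref{e: barphi-phi} for the difference estimate.
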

Based on Propositions \ref{est-wtq} and \ref{barA-A}, we  obtain the following bounds.
\begin{prop}[Estimates for $w_{q+1}$ and $d_{q+1}$]\label{estimate-wq+11} For $0\leq n\le L-1$ and $0\le N\le L$, there exists a universal constant $M$ satisfying
\begin{align}
&\|\Wp_{q+1,n}\|_{\alpha}+\tfrac{1}{\lambda^N_{q+1}}\|\Wp_{q+1,n}\|_N\lesssim \delta^{1/2}_{q+1,n} ,\label{estimate-wp}\\
&\|\wc_{q+1,n}\|_{\alpha}+\tfrac{1}{\lambda^N_{q+1}}\|\wc_{q+1,n}\|_N\lesssim\lambda^{-1}_{q+1}\delta^{1/2}_{q+1,n}\ell^{-1-\frac{\alpha}{2}}_q,\label{estimate-wc}\\
&\|\Dp_{q+1,n}\|_{\alpha}+\tfrac{1}{\lambda^N_{q+1}}\|\Dp_{q+1,n}\|_N\lesssim \delta^{1/2}_{q+1,n}.  \label{estimate-d}\\
&\|(w_{q+1},d_{q+1 })\|_0+\tfrac{1}{\lambda^N_{q+1}}\|(w_{q+1},d_{q+1 })\|_N\le \frac{M}{2}\delta^{1/2}_{q+1}.\label{estimate-w}
\end{align}
\end{prop}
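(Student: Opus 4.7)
The plan is to prove each of the four estimates in turn by exploiting two structural features of the perturbations: the amplitudes $\bar a^{v}_{q,k,i,n}, \bar a^{\theta}_{q,k,i,n}$ are slowly varying (bounded by Proposition \ref{barA-A}), while $\phi_{(\lambda_{q+1}k)}(\bar\Phi_i)$ oscillates essentially at frequency $\lambda_{q+1}$ (since $\nabla\bar\Phi_i^{\pm1}$ is close to the identity by Proposition \ref{estimates-for-inverse-bar-flow}). Throughout, the non-overlapping support structure from Proposition \ref{time-g} means that at any space-time point at most one pair $(k,i,n)$ contributes, so summing over $k,i$ only absorbs a constant depending on $|\Lambda|$.

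First I would prove \eqref{estimate-wp} for $\Wp_{q+1,n}$. The $C^0$ bound is immediate: $\|g_{k,n+1}\|_0\lesssim 1$, $\|\bar a^{v}_{q,k,i,n}\|_0+\|\bar a^{\theta}_{q,k,i,n}\|_0\lesssim \delta^{1/2}_{q+1,n}$ by Proposition \ref{barA-A}, $\|\nabla\bar\Phi_i^{-1}\|_0\lesssim 1$ by \eqref{e:nabla-barphi-i-CN}, and $\|\phi_{(\lambda_{q+1}k)}\|_0\lesssim 1$. For the $C^N$ bound ($1\le N\le L$), I would expand $\partial^N$ via the Leibniz rule, noting that any derivative landing on $\phi_{(\lambda_{q+1}k)}(\bar\Phi_i)$ costs $\lambda_{q+1}$ (after using $\|\nabla\bar\Phi_i\|_0\lesssim 1$ and the Faà di Bruno formula, with lower order terms controlled by \eqref{e:nabla-barphi-i-CN}), while a derivative on the amplitude or on $\nabla\bar\Phi_i^{-1}$ costs only $\ell_q^{-1}$. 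Since $\ell_q^{-1}\ll \lambda_{q+1}$ by the parameter choices, the dominant contribution is $\lambda_{q+1}^N\delta^{1/2}_{q+1,n}$, yielding \eqref{estimate-wp}.

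For \eqref{estimate-wc}, I would repeat the same argument for $\wc_{q+1,n}$ from \eqref{e:defn-wcq}: compared with $\Wp_{q+1,n}$, the shear flow is replaced by $\lambda_{q+1}^{-1}\psi(\lambda_{q+1}\bar k\cdot\bar\Phi_i)$ (gaining a factor $\lambda_{q+1}^{-1}$) and the amplitude is differentiated once (costing $\ell_q^{-1-\alpha/2}$ by Proposition \ref{barA-A}). The derivative counting is identical, giving \eqref{estimate-wc}. The bound \eqref{estimate-d} for $\Dp_{q+1,n}$ is strictly easier, since the amplitude is just $\delta^{1/2}_{q+1,n}\chi_{i,n}$, and one again sees that $\phi_{(\lambda_{q+1}k)}(\bar\Phi_i)$ contributes a $\lambda_{q+1}^N$ factor when differentiated.

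Finally, I would deduce \eqref{estimate-w} by summing \eqref{estimate-wp}--\eqref{estimate-d} over $n\in\{0,\dots,L-1\}$, together with the Newtonian bound \eqref{wt-dt-alpha} from Proposition \ref{est-wtq}. The key point is that $\delta^{1/2}_{q+1,n}=(\tau_q\mu_{q+1}\ell_q^{10\alpha})^{-n/2}\delta^{1/2}_{q+1}$ is a geometric sequence whose ratio is much smaller than $1$ by \eqref{e:ell}; summing therefore yields a constant multiple of $\delta^{1/2}_{q+1}$, where the implicit constant depends only on $|\Lambda|$ and the geometric lemma. Combined with the subdominant corrector and Newtonian contributions (both absorbed using $\lambda_{q+1}^{-1}\ell_q^{-1-\alpha/2}\ll 1$ and the smallness of $\mu_{q+1}^{-1}\delta_{q+1}\ell_q^{-1-2\alpha}$ relative to $\delta^{1/2}_{q+1}$), this produces the universal bound $\frac{M}{2}\delta^{1/2}_{q+1}$.

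The main obstacle will be establishing the $C^N$ bound \eqref{estimate-wp} while keeping the constant universal: this requires Faà di Bruno expansions of $\phi_{(\lambda_{q+1}k)}(\bar\Phi_i)$ with a careful count of how lower-order derivatives of $\bar\Phi_i$ (bounded only by $\ell_q^{-N}$, hence larger than $1$) interact with the $\lambda_{q+1}$ factors from the shear function, and ensuring via \eqref{e:ell} that the combined exponent $N$ of $\lambda_{q+1}$ genuinely dominates. The other delicate point is verifying that the geometric series in $n$ converges with constant independent of $q$, which follows from \eqref{e:ell} and \eqref{e:params0}, but must be checked against the specific ratio $(\tau_q\mu_{q+1}\ell_q^{10\alpha})^{-1/2}$.
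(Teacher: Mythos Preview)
Your proposal is correct and matches the paper's approach: the paper does not write out a proof, merely remarking that the bounds follow from Propositions~\ref{est-wtq} and~\ref{barA-A}, and your argument spells out exactly this derivation (amplitude bounds from Proposition~\ref{barA-A}, flow-map bounds from Proposition~\ref{estimates-for-inverse-bar-flow}, disjoint supports from Proposition~\ref{time-g}, Newtonian contribution from Proposition~\ref{est-wtq}, then summation in $n$). The one minor point you might add is that, because the supports of $g_{k,n+1}$ are pairwise disjoint in $n$ as well, the sum over $n$ in \eqref{estimate-w} is actually a supremum rather than a genuine geometric series, so the constant is immediately universal without even checking the ratio.
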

\subsubsection{Step 5: Estimates for $(\RR_{q+1}, T_{q+1})$} Let us define $(v_{q+1},\theta_{q+1}) $ by
$$v_{q+1}=\vv_q+w_{q+1},\,\,\theta_{q+1}= \tb_{q}+d_{q+1}.$$
One verifies that $(v_{q+1}, \theta_{q+1}, P_{q+1}, \RR_{q+1}, \MM_{q+1})$ satisfies \eqref{e:subsol-B} with replacing $q$ by $q+1$,
where
\begin{align*}
\Div\RR_{q+1}
=& \Div\underbrace{\mathcal{R}\big( \matd{\vv_q+\wtq} (\wpq+\wcq)\big)}_{\Rtransport}+\Div\underbrace{\mathcal R \big((\wpq+\wcq)\cdot\nabla (\vv_q+\wtq)  \big)}_{\Rnash}\\
&+\Div   \Big(\wcq\otimes w_{q+1}+w_{q+1}\otimes \wcq-\wcq\otimes \wcq +\wtq\otimes \wtq \notag\\
&+ \mathcal{R}(\partial_t\wtq+ \vv_q  \cdot\nabla\wtq+\wtq \cdot\nabla\vv_q )+\mathcal{R}\nabla P^{(\text{t})}_{q+1}+\mathcal{R}\Div(\wpq\otimes \wpq+\RRR_{q} )\Big)\\
 =:& \Div\Rtransport+\Div\Rnash+\Div\Rosc ,
\end{align*}
and
\begin{align*}
\Div\MM_{q+1}
=& \Div\underbrace{\mathcal R_{\vex}(\matd{\vv_q+\wtq}\dpq )}_{\Mtrans}+\Div\underbrace{ \mathcal R_{\vex}((\wpq+\wcq)\cdot\nabla(\tb_q+\dtq))}_{\Mnash}\\
&+\Div   \Big(\wtq\otimes \dtq+\wcq\otimes\dpq+ \vv_q\otimes \dtq+\wtq \otimes \tb_q \notag\\
&
 +\mathcal{R}_{\vex}\Div\big(\wpq\otimes \dpq+\MMM_{q}\big)+\mathcal{R}_{\vex}\partial_t\dtq \Big)\\
  =:& \Div\Mtrans+\Div\Mnash+\Div\Mosc ,
\end{align*}
and $$P_{q+1}=\pp_q+P^{(\text{t})}_{q+1}, \,\,P^{(\text{t})}_{q+1}=\sum_i\sum_{n=0}^{L-1}p^{(\text{t})}_{q+1,i,n+1}.$$Moreover, we have by \eqref{av-pi} that
\begin{align}\label{av-Pt}
\int_{\TTT^2}P^{(t)}_{q+1}\dd x=0.
\end{align}

\begin{prop}[Estimates for $(\Mtrans,\Rtransport)$]\label{proptrans1}
\begin{equation}\label{Trans}
\begin{aligned}
\|(\Mtrans,\Rtransport)\|_{\alpha}\lesssim\delta_{q+2}\lambda^{-5\alpha}_{q+1}.
\end{aligned}
\end{equation}
\end{prop}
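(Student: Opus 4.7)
The starting observation is that, by construction, the highly oscillatory spatial profile $\psi(\lambda_{q+1}\bar k\cdot \bar\Phi_i)$ appearing in \eqref{wpq-wcq} and \eqref{defi-dpq} is Lie-transported along the flow of $\vv_q+\wtq$: since $\bar\Phi_i$ satisfies \eqref{barphi}, we have $\matd{\vv_q+\wtq}\psi(\lambda_{q+1}\bar k\cdot\bar\Phi_i)=0$. The plan is to use this to move $\matd{\vv_q+\wtq}$ off the oscillatory profiles so that it only hits the slow temporal factors $g_{k,n+1}(\mu_{q+1}t)$, the time cutoffs $\chi_{i,n}$, and the amplitudes $\bar{a}^{v}_{q,k,i,n},\bar{a}^{\theta}_{q,k,i,n}$, and then to apply the inverse divergence $\mathcal{R}$ (respectively $\mathcal{R}_{\vex}$) and gain the factor $\lambda_{q+1}^{-1+\alpha}$ furnished by the high-frequency inverse-divergence lemma from the appendix.

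Concretely, writing $\wpq+\wcq=\lambda_{q+1}^{-1}\nabla^{\perp}F$ as in \eqref{wpq-wcq} and commuting $\matd{\vv_q+\wtq}$ past $\nabla^{\perp}$ produces the main piece $\lambda_{q+1}^{-1}\nabla^{\perp}\matd{\vv_q+\wtq}F$ plus a commutator proportional to $\nabla(\vv_q+\wtq)\cdot\nabla F$, which is controlled via Propositions \ref{est-vvq}, \ref{est-wtq} and \ref{barA-A}. The main piece splits, for each triple $(i,k,n)$, into a temporal contribution of order $\mu_{q+1}\,g'_{k,n+1}(\mu_{q+1}t)\,\bar{a}^{v}_{q,k,i,n}\,\psi(\lambda_{q+1}\bar k\cdot\bar\Phi_i)$ and an amplitude contribution in which $\matd{\vv_q+\wtq}$ acts on $\chi_{i,n}\bar{a}^{v}_{q,k,i,n}$, yielding size $\tau_q^{-1}\delta_{q+1,n}^{1/2}$. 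Applying $\mathcal{R}$ to each summand is legitimate because $\|\nabla\bar\Phi_i-\mathrm{Id}\|_0\le 1/10$ and $\|\bar\Phi_i\|_{1+\alpha}\lesssim 1$ by Proposition \ref{estimates-for-inverse-bar-flow}, while the amplitudes have the $C^{\alpha}$-size supplied by Proposition \ref{barA-A}. One thus arrives at a pointwise bound of order $\lambda_{q+1}^{-1+\alpha}(\mu_{q+1}+\tau_q^{-1})\delta_{q+1,n}^{1/2}$ per summand; since $\mu_{q+1}\ge \tau_q^{-1}$ under the choices in \eqref{e:ell}, summing over $0\le n\le L-1$ gives a total bound $\lesssim \lambda_{q+1}^{-1+\alpha}\mu_{q+1}\delta_{q+1}^{1/2}$.

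The main obstacle will be the closing algebraic inequality
\begin{equation*}
\lambda_{q+1}^{-1+\alpha}\mu_{q+1}\delta_{q+1}^{1/2}\;\le\;\delta_{q+2}\lambda_{q+1}^{-5\alpha}.
\end{equation*}
Substituting $\mu_{q+1}=\lambda_{q+1}^{1/3}\lambda_q^{2/3+5\alpha}\delta_{q+1}^{1/2}$ and $\delta_q=\lambda_2^{3\beta}\lambda_q^{-2\beta}$ from \eqref{lambdaq}--\eqref{e:ell} and using $\lambda_{q+1}=\lambda_q^b$, this reduces to the single condition
\begin{equation*}
2\beta b^{2}-2b\!\left(\tfrac{1}{3}+\beta\right)+\tfrac{2}{3}+5(1+b)\alpha\;<\;0,
\end{equation*}
which vanishes at $(b,\alpha)=(1,0)$ and has negative derivative in $b$ at $b=1$ whenever $\beta<1/3$; it is therefore enforced by the constraints $b<b_{0}$ in \eqref{con-b} and the smallness of $\alpha$ in \eqref{e:params0}. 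The estimate for $\Mtrans$ is obtained by running the same argument on $\dpq$: the scalar structure removes the need for any $\nabla^{\perp}$-commutator, the amplitude $\delta_{q+1,n}^{1/2}\chi_{i,n}$ has $C^{\alpha}$-size $\delta_{q+1,n}^{1/2}$ and material derivative of size $\tau_q^{-1}\delta_{q+1,n}^{1/2}$, and $\mathcal{R}_{\vex}$ enjoys the same high-frequency gain. Summing the two contributions yields \eqref{Trans}.
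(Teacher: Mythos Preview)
Your proposal is correct and follows essentially the same route as the paper: both exploit that $\psi(\lambda_{q+1}\bar k\cdot\bar\Phi_i)$ and $\phi_{(\lambda_{q+1}k)}(\bar\Phi_i)$ are Lie-transported by $\vv_q+\wtq$, commute $\matd{\vv_q+\wtq}$ with $\nabla^\perp$ in the representation \eqref{wpq-wcq}, and then invoke Lemma~\ref{l:non-stationary-phase} to gain the factor $\lambda_{q+1}^{-1+\alpha}$. The paper records a nominally sharper intermediate bound $\lambda_{q+1}^{-2+\alpha}\mu_{q+1}\ell_q^{-1-\alpha/2}\delta_{q+1}^{1/2}$ for $\Rtransport$ alone but closes the joint estimate via the $\Mtrans$ bound $\lambda_{q+1}^{-1+\alpha}\mu_{q+1}\delta_{q+1}^{1/2}$, exactly as you do, and your closing polynomial condition $2\beta b^2-2b(\tfrac13+\beta)+\tfrac23+O(\alpha)<0$ is the same one the paper invokes via \eqref{con-b}--\eqref{e:params0}.
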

\begin{proof}Note  that $\matd {\vv_q+\wtq} \phi_{(\lambda_{q+1}k)}(\bar{\Phi}_i)=0$, we can express $\Mtrans$ directly from the definition of $\dpq$ in~\eqref{defi-dpq} as
\begin{align*}
\Mtrans=\mathcal{R}_{\vex}\Big(\sum_{n=0}^{L-1}\sum_{i;k\in\Lambda_{\theta }}\delta^{1/2}_{q+1,n}(\chi'_{i,n}g_{k,n}+\chi_{i,n}g'_{k,n})
\phi_{(\lambda_{q+1}k)}(\bar{\Phi}_i(x,t))\Big).
\end{align*}
By Lemma \ref{l:non-stationary-phase}, we have
\begin{align}\label{e-Trans}
\|\Mtrans\|_{\alpha}\lesssim \lambda^{-1+\alpha}_{q+1}\mu_{q+1}\delta^{1/2}_{q+1} =\big(\frac{\lambda_q}{\lambda_{q+1}}\big)^{\frac{2}{3}+2\alpha}\lambda^{3\alpha}_{q+1}\delta_{q+1}.
\end{align}
Thanks to \eqref{wpq-wcq}, one deduces that
\begin{align}
&\Rtransport=\mathcal{R}\Big(\matd {\vv_q+\wtq}(\wpq +\wcq)\Big)\notag\\
=& \lambda^{-1}_{q+1} \mathcal{R}\Big(\sum_{n=0}^{L-1}\sum_{i;k\in\Lambda_v}\matd{\vv_q+\wtq}\nabla^{\perp}\big(g_{k,n+1}(\mu_{q+1}t) \bar{a}^v_{q,k,i,n}
    \psi(\lambda_{q+1}\bar{k}\cdot\bar{\Phi}_i )\big) \notag\\
    &+\sum_{n=0}^{L-1}
\sum_{i;k\in\Lambda_{\theta }}\matd{\vv_q+\wtq}\nabla^{\perp}\big(g_{k,n+1}(\mu_{q+1}t)
 { \bar{a}^{\theta }_{q,k,i,n} }
  \psi(\lambda_{q+1}\bar{k}\cdot\bar{\Phi}_i )\big)\Big)\notag\\
  =&\lambda^{-1}_{q+1} \mathcal{R}\Big(\sum_{n=0}^{L-1}\sum_{i;k\in\Lambda_v}\big(\nabla^{\perp}\matd{\vv_q+\wtq}\big(g_{k,n+1}(\mu_{q+1}t) \bar{a}^v_{q,k,i,n}\big)\big)
    \psi(\lambda_{q+1}\bar{k}\cdot\bar{\Phi}_i )\notag\\
    &-\sum_{n=0}^{L-1}\sum_{i;k\in\Lambda_v}\nabla^{\perp}(\vv_q+\wtq)\cdot\nabla   \big(\bar{a}^v_{q,k,i,n}g_{k,n+1}(\mu_{q+1}t)\psi(\lambda_{q+1}\bar{k}\cdot\bar{\Phi}_i )\big)\notag\\
    &+ \sum_{n=0}^{L-1}\sum_{i;k\in\Lambda_{\theta}}\big(\nabla^{\perp}\matd{\vv_q+\wtq}\big(g_{k,n+1}(\mu_{q+1}t) \bar{a}^{\theta}_{q,k,i,n}\big)\big)
    \psi(\lambda_{q+1}\bar{k}\cdot\bar{\Phi}_i )\notag\\
    &-\sum_{n=0}^{L-1}\sum_{i;k\in\Lambda_{\theta}}\nabla^{\perp}(\vv_q+\wtq)\cdot\nabla   \big(\bar{a}^{\theta}_{q,k,i,n}g_{k,n+1}(\mu_{q+1}t)\psi(\lambda_{q+1}\bar{k}\cdot\bar{\Phi}_i )\big)\Big).\notag\end{align}
Applying Lemma \ref{l:non-stationary-phase} and Proposition \ref{barA-A} to the above equality. we have
\begin{align}\label{tranw}
\|\Rtransport\|_{\alpha}\lesssim \lambda^{-2+\alpha}_{q+1}\mu_{q+1}\ell^{-1-\frac{\alpha}{2}}_q\delta^{1/2}_{q+1}.
\end{align}
This inequality combined with \eqref{e-Trans} yields
\begin{align*}
    \|(\Mtrans,\Rtransport)\|_{\alpha}\lesssim \big(\frac{\lambda_q}{\lambda_{q+1}}\big)^{\frac{2}{3}+2\alpha}\lambda^{3\alpha}_{q+1}\delta_{q+1}\lesssim \delta_{q+2}\lambda^{-5\alpha}_{q+1},
\end{align*}
where we have used the conditions $b<\frac{1}{3\beta}$ and $\alpha<\frac{-(2\beta b^2-(2\beta+\frac{2}{3})b+\frac{2}{3})}{100}$, and this condition can be guaranteed by \eqref{con-b} and \eqref{e:params0}. Hence, we complete the proof of Proposition \ref{proptrans1}.
\end{proof}
\begin{prop}[Estimates for $(\Mnash,\Rnash)$]\label{nash1}
\begin{equation}
\begin{aligned}
\|(\Mnash,\Rnash)\|_{\alpha}\lesssim \delta_{q+2}\lambda^{-7\alpha}_{q+1}.
\end{aligned}
\end{equation}
\end{prop}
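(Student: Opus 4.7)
The plan is to mimic the approach of Proposition \ref{proptrans1}, exploiting the fact that $\wpq$ and $\wcq$ are highly oscillatory at frequency $\lambda_{q+1}$ while $\nabla(\vv_q+\wtq)$ and $\nabla(\tb_q+\dtq)$ are comparatively slowly varying. The gain $\lambda_{q+1}^{-1+\alpha}$ from the inverse divergence operators $\mathcal R$ and $\mathcal R_{\vex}$ applied to such a product will beat the size of the $\nabla(\vv_q+\wtq)$ contribution $\delta_q^{1/2}\lambda_q$ provided the parameters $b$ and $\alpha$ satisfy the right inequalities.

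First, I would decompose $(\wpq+\wcq)\cdot\nabla(\vv_q+\wtq)$ and $(\wpq+\wcq)\cdot\nabla(\tb_q+\dtq)$ using the explicit formulas \eqref{defi-wpq}, \eqref{e:defn-wcq}, \eqref{defi-dpq}, obtaining a finite sum of terms of the schematic form
\[
g_{k,n+1}(\mu_{q+1}t)\,\mathcal{A}_{k,i,n}(x,t)\,\phi_{(\lambda_{q+1}k)}\big(\bar{\Phi}_i(x,t)\big),
\]
where the amplitude $\mathcal A_{k,i,n}$ collects factors of $\bar{a}^v_{q,k,i,n}$ or $\bar{a}^\theta_{q,k,i,n}$ (or their gradients, for the $\wcq$ contribution), powers of $\nabla\bar{\Phi}_i^{\pm 1}$, and one factor of $\nabla(\vv_q+\wtq)$ or $\nabla(\tb_q+\dtq)$. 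Combining Proposition \ref{barA-A}, Proposition \ref{estimates-for-inverse-bar-flow}, the bound $\|\vv_q\|_{1+N+\alpha}\lesssim \delta_q^{1/2}\lambda_q\ell_q^{-N-\alpha}$ from Proposition \ref{est-vvq}, and the bounds for $\wtq$, $\dtq$ in Proposition \ref{est-wtq} together with the analogous estimates for $\tb_q$, I obtain $\|\mathcal{A}_{k,i,n}\|_{N+\alpha}\lesssim \delta_{q+1,n}^{1/2}\,\delta_q^{1/2}\lambda_q\,\ell_q^{-N-2\alpha}$ for $0\le N\le L$.

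Second, I would invoke Lemma \ref{l:non-stationary-phase} with $\bar{\Phi}_i$ playing the role of the phase and $\mathcal{A}_{k,i,n}$ of the amplitude, which yields the gain $\lambda_{q+1}^{-1+\alpha}$ from $\mathcal{R}$ and $\mathcal{R}_{\vex}$. The contribution coming from $\wcq$ is smaller than that of $\wpq$ by a factor $\lambda_{q+1}^{-1}\ell_q^{-1-\alpha/2}\ll 1$ and is absorbed in the leading order, while the sum over the $L$ indices $n$ is absorbed in a further $\lambda_{q+1}^{O(\alpha)}$. Summing up, I arrive at
\[
\|\Rnash\|_\alpha+\|\Mnash\|_\alpha \lesssim \lambda_{q+1}^{-1+\alpha}\,\delta_{q+1}^{1/2}\,\delta_q^{1/2}\,\lambda_q\,\ell_q^{-2\alpha}.
\]

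Third, I would verify that this is bounded by $\delta_{q+2}\lambda_{q+1}^{-7\alpha}$. Substituting $\delta_j=\lambda_2^{3\beta}\lambda_j^{-2\beta}$ and $\lambda_{q+1}\approx\lambda_q^b$, and absorbing $\ell_q^{-2\alpha}\lesssim\lambda_{q+1}^{2\alpha/b}$, the required inequality reduces to
\[
2\beta b^2-(1+\beta)\,b+(1-\beta)+O(\alpha)\le 0.
\]
The quadratic $2\beta b^2-(1+\beta)b+(1-\beta)$ has roots $b=1$ and $b=(1-\beta)/(2\beta)$, and is strictly negative on the open interval between them. Since $b_0<(1-\beta)/(2\beta)$ for every $\beta<1/3$ by \eqref{con-b}, the quadratic is strictly negative at every $b\in(1,b_0)$, with enough room to absorb the $O(\alpha)$ correction thanks to the choice \eqref{e:params0}. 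The main subtlety I anticipate lies in the bookkeeping of derivatives: the amplitude $\mathcal A_{k,i,n}$ depends on $\bar{\Phi}_i$ whose derivatives carry $\ell_q^{-1}$ factors, and one must verify that none of these corrections overwhelm the $\lambda_{q+1}^{-1}$ gain from stationary phase; this parallels, and is in fact slightly easier than, the estimate for $\Rtransport$ already handled in Proposition \ref{proptrans1}.
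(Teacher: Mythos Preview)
Your plan is correct and follows essentially the same approach as the paper: decompose $(\wpq+\wcq)\cdot\nabla(\vv_q+\wtq)$ and $(\wpq+\wcq)\cdot\nabla(\tb_q+\dtq)$ into oscillatory pieces, apply Lemma~\ref{l:non-stationary-phase}, and use the amplitude bounds from Propositions~\ref{est-vvq}, \ref{est-wtq}, and \ref{barA-A} to obtain $\lambda_{q+1}^{-1+\alpha}\delta_{q+1}^{1/2}\delta_q^{1/2}\lambda_q$ (plus the smaller high-derivative remainder), then verify this is below $\delta_{q+2}\lambda_{q+1}^{-7\alpha}$ via the parameter inequality in \eqref{lambdaN}. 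One minor remark: since $\delta_{q+1,n}\le\delta_{q+1}$ for all $0\le n\le L-1$ and $L$ is a fixed integer, the sum over $n$ costs only a constant factor rather than $\lambda_{q+1}^{O(\alpha)}$, but this does not affect the outcome.
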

\begin{proof}
Combining with Proposition \ref{est-vvq},  Proposition~\ref{barA-A}, Proposition~\ref{est-wtq} and   Lemma \ref{l:non-stationary-phase}, we have
\begin{align*}
\|(\Mnash,\Rnash)\|_{\alpha}
\lesssim&\frac{M\delta^{1/2}_{q+1}\delta^{1/2}_q\lambda_q}{\lambda^{1-\alpha}_{q+1}}+\frac{M\delta^{1/2}_{q+1}\delta^{1/2}_q\lambda_q}{\lambda^{N_0-\alpha}_{q+1}\ell^{N_0+\alpha}_q}\lesssim\delta_{q+2}\lambda^{-7\alpha}_{q+1},
\end{align*}
where we have used \eqref{lambdaN} and\eqref{wpq-wcq}. Hence, we complete the proof of Proposition \ref{nash1}.
\end{proof}
\begin{prop}[Estimates for ($\Mosc$, $\Rosc$)]\label{propMosc}
\begin{align}
\|(\Mosc,\Rosc)\|_{\alpha}\lesssim\delta_{q+2}\lambda^{-5\alpha}_{q+1}.
\end{align}
\end{prop}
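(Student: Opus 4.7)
The plan is to decompose $\Rosc$ and $\Mosc$ into three groups: small quadratic interactions involving correctors, the Newton-forcing balance produced by substituting the equation \eqref{time}, and the main high-frequency oscillation carried by $\wpq\otimes\wpq+\RRR_q$ (respectively $\wpq\otimes\dpq+\MMM_q$). The first group—$\wcq\otimes w_{q+1}$, $w_{q+1}\otimes\wcq$, $\wcq\otimes\wcq$, $\wtq\otimes\wtq$ and their analogues in $\Mosc$ ($\wtq\otimes\dtq$, $\wcq\otimes\dpq$, $\vv_q\otimes\dtq$, $\wtq\otimes\tb_q$)—is estimated directly from the $C^\alpha$ bounds in Proposition \ref{estimate-wq+11} and Proposition \ref{est-wtq}. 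Since $\|\wcq\|_\alpha \lesssim \lambda_{q+1}^{-1}\delta_{q+1}^{1/2}\ell_q^{-1-\alpha/2}$ and $\|\wtq\|_\alpha+\|\dtq\|_\alpha \lesssim \mu_{q+1}^{-1}\delta_{q+1}\ell_q^{-1-2\alpha}$, pairings with $\delta_{q+1}^{1/2}$-sized quantities (or with $\tb_q$, $\vv_q$) land below $\delta_{q+2}\lambda_{q+1}^{-5\alpha}$ once the parameter inequalities \eqref{e:params0} and \eqref{lambdaN} are used.

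For the second group, I substitute the Newton equation \eqref{time} termwise into $\mathcal{R}(\partial_t\wtq+\vv_q\cdot\nabla\wtq+\wtq\cdot\nabla\vv_q)+\mathcal{R}\nabla P^{(\text{t})}_{q+1}$ (and analogously into $\mathcal{R}_{\vex}\partial_t\dtq$). Using the definitions \eqref{def-wt-dt} of $\wtq$, $\dtq$ via the cut-offs $\bar{\chi}_{i,n+1}$, this substitution yields, modulo pressure contributions,
\[
\sum_{n,i}\partial_t\bar{\chi}_{i,n+1}\mathcal{R}\wt_{q+1,i,n+1}+\sum_{n,i,k\in\Lambda_v}\bar{\chi}_{i,n+1}f_{k,n+1}(\mu_{q+1}t)A^{v_1}_{q,k,i,n}+\sum_{n,i,k\in\Lambda_\theta}\bar{\chi}_{i,n+1}f_{k,n+1}(\mu_{q+1}t)A^{v_2}_{q,k,i,n},
\]
with a corresponding expression for the tracer counterpart producing $\sum_{n,i}\partial_t\bar{\chi}_{i,n+1}\mathcal{R}_{\vex}\dt_{q+1,i,n+1}+\sum f_{k,n+1}A^\theta_{q,k,i,n}$.

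The heart of the argument is the third group. The disjointness properties of $\{g_{k,n+1}g_{k',n'+1}\}$ (Proposition \ref{time-g}) and of $\{\chi_{i,n}\chi_{i',n}\}_{i\neq i'}$ kill all off-diagonal terms in $\wpq\otimes\wpq$, leaving only diagonal contributions of the form $g_{k,n+1}^2(\mu_{q+1}t)(\bar A^{v_1}_{q,k,i,n}+\bar A^{v_2}_{q,k,i,n})\phi^2(\lambda_{q+1}\bar{k}\cdot\bar{\Phi}_i)$. Writing $\phi^2 = 1+\mathbb{P}_{\neq 0}\phi^2$ and $g_{k,n+1}^2=1-f_{k,n+1}$, I treat the two factors separately. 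After replacing $\bar{\Phi}_i$ by $\Phi_i$ at the cost of an error controlled by Proposition \ref{barA-A} (this error is of size $\ell_q^{-2}\lambda_q^2(\lambda_q/\lambda_{q+1})^{1/3}(\delta_{q+1}/\delta_q)^{1/2}\delta_{q+1}$ and is absorbed through \eqref{lambdaN}), the mean-one part collapses via the geometric Lemma \ref{first S}—applied to $\nabla\Phi_i(\Id-\RRR_{q,i,n}/\delta_{q+1,n})\nabla\Phi_i^{\TT}$ for the $\Lambda_v$ modes and using \eqref{Tqik} for the $\Lambda_\theta$ modes—to $\chi_{i,n}^2(\delta_{q+1,n}\Id-\RRR_{q,i,n})+\chi_{i,n}^2\sum_{k\in\Lambda_\theta}\delta_{q+1,n}^{-1}\bigl(a^\theta_k(\nabla\Phi_i\MMM_{q,n})\bigr)^2\nabla\Phi_i^{-1}(k\otimes k)\nabla\Phi_i^{-\TT}$. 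The trace piece is absorbed into $P_{q+1}$; the $-\RRR_{q,i,n}$ piece, together with the definition \eqref{next Rq} at level $n=0$, cancels $\RRR_q$; and the $f_{k,n+1}$-part cancels precisely the Newton-forcing isolated in step two, leaving exactly $\sum_{n,i}\partial_t\bar{\chi}_{i,n+1}\mathcal{R}\wt_{q+1,i,n+1}$, i.e.\ $\RRR_{q,i,n+1}$ at the next level by \eqref{next Rq}. Iterating for $n=0,\dots,L-1$ exhausts the hierarchy and leaves the residual $\RRR_{q,i,L}$, which by Proposition \ref{est-RM} obeys $\|\RRR_{q,i,L}\|_\alpha \lesssim \delta_{q+1,L}\ell_q^{3\alpha/2}\leq \delta_{q+2}\lambda_{q+1}^{-4\alpha}$ thanks to \eqref{lambdaN}. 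The high-frequency parts $(\bar A^{v_1}+\bar A^{v_2})\mathbb{P}_{\neq 0}\phi^2(\lambda_{q+1}\bar{k}\cdot\bar{\Phi}_i)$ are handled by the stationary-phase inverse divergence Lemma \ref{l:non-stationary-phase}, which supplies the gain $\lambda_{q+1}^{-1+\alpha}$ needed to push these contributions below $\delta_{q+2}\lambda_{q+1}^{-5\alpha}$. The treatment of $\Mosc$ is entirely parallel, with $\wpq\otimes\dpq+\MMM_q$ playing the role of $\wpq\otimes\wpq+\RRR_q$ and with $\mathcal{R}_{\vex}$ in place of $\mathcal{R}$; the directions $\Lambda_\theta=\{e_1,e_2\}$ together with \eqref{Tqik} ensure the geometric identification proceeds vectorially.

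The main obstacle I expect is maintaining the telescoping identity \emph{exactly} across all $L$ levels. Each application of Lemma \ref{first S} injects a new quadratic-in-$\MMM_{q,n}$ term into $\RRR_{q,i,n+1}$, and the growth $\delta_{q+1,n}=(\tau_q\mu_{q+1}\ell_q^{10\alpha})^{-n}\delta_{q+1}$ must be balanced against the per-step high-frequency gain; this balance is encoded precisely in \eqref{lambdaN} and in the choice of $L$ just below \eqref{con-b}. The companion bounds \eqref{estimate-Rv}--\eqref{estimate-DRv} from Proposition \ref{est-RM} are tailored for this balance, so once the termwise cancellation is laid out carefully, the final size estimate follows by induction on $n$ with the parameter constraints \eqref{e:params0} and \eqref{con-b}.
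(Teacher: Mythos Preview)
Your overall architecture matches the paper's proof closely: the three-way split, the use of $g_{k,n+1}^2=1-f_{k,n+1}$ together with $\phi^2=1+\mathbb P_{\neq 0}\phi^2$, the $\bar\Phi_i\to\Phi_i$ replacement via Proposition~\ref{barA-A}, the telescoping through the Newton hierarchy, and the stationary-phase gain from Lemma~\ref{l:non-stationary-phase} are all exactly what the paper does.

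There is, however, a genuine gap in your treatment of $\Mosc$. You place $\vv_q\otimes\dtq$ and $\wtq\otimes\tb_q$ in the ``estimate directly'' group, but these terms are \emph{too large} to be bounded pointwise in $C^\alpha$. Indeed $\|\vv_q\|_\alpha$ is of order one while $\|\dtq\|_\alpha\lesssim\mu_{q+1}^{-1}\delta_{q+1}\ell_q^{-1-2\alpha}\approx(\lambda_q/\lambda_{q+1})^{1/3}\delta_{q+1}^{1/2}$ from \eqref{wt-dt-alpha}; the ratio of this to $\delta_{q+2}$ has $\lambda_{q+1}$-exponent $\tfrac{1-b}{3b}+\beta(2b-1)$, which is \emph{positive} for $\beta$ close to $1/3$ and any admissible $b\le 3/2$. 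So the direct estimate fails. Your second group is also inconsistent: substituting into $\mathcal R_{\vex}\partial_t\dtq$ alone cannot produce $\sum f_{k,n+1}A^\theta_{q,k,i,n}$, because the Newton equation $\eqref{time}_2$ reads $\partial_t\dt+\vv_q\!\cdot\!\nabla\dt+\wt\!\cdot\!\nabla\tb_q=\sum f\Div A^\theta$, not $\partial_t\dt=\cdots$.

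The fix, which is what the paper does, is to move $\vv_q\otimes\dtq$ and $\wtq\otimes\tb_q$ out of the first group and into the second: since $\Div\vv_q=\Div\wtq=0$, one has $\Div(\vv_q\otimes\dtq)=\vv_q\!\cdot\!\nabla\dtq$ and $\Div(\wtq\otimes\tb_q)=\wtq\!\cdot\!\nabla\tb_q$, and these are precisely the transport terms needed to invoke $\eqref{time}_2$. With this regrouping your substitution produces the claimed output, the telescoping goes through, and the residual $\sum_i\partial_t\bar\chi_{i,L}\mathcal R_{\vex}\dt_{q+1,i,L}$ (not $\RRR_{q,i,L}$ itself, though the size is the same) is controlled via \eqref{e-R-di} by $\tau_q^{-1}\mu_{q+1}^{-1}\delta_{q+1,L-1}\lesssim\delta_{q+1,L}$, which suffices by \eqref{lambdaN}.
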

\begin{proof}We decompose $\Mosc$ into two parts $\Tosco$ and $\Tosct$, where
\begin{align*}
\Tosco= \mathcal{R}_{\vex}\big(\Div(\MMM_q+ \wpq\otimes\dpq)+\partial_t\dtq+\vv_q\cdot\nabla\dtq+\wtq\cdot\nabla\bar{\theta}_q\big)
\end{align*}
and
\begin{align*}
\Tosct=\wtq\otimes \dtq+\wcq\otimes\dpq.
\end{align*}
For $\Tosco$, since $\chi_{i,n}g_{k,n}\cdot \chi_{i',n'}g_{k',n'}=0$ for $(k,i,n)\neq(k',i',n')$, we deduce from the definitions of $\wpq$ and $\dpq$ that
\begin{align}
&\mathcal{R}_{\vex}\Div\big(\MMM_q+\sum_{n;i;k\in\Lambda_{\theta}}g^2_{k,n+1} \bar{A}^{\theta}_{q,k,i,n}\phi^2_{(\lambda_{q+1}k)}(\bar{\Phi}_i(x,t))\big)\notag\\
=&\mathcal{R}_{\vex}\Div\big(\MMM_q+\sum_{n;i;k\in\Lambda_{\theta}}g^2_{k,n+1}\bar{A}^{\theta}_{q,k,i,n}+\sum_{n;i;k\in\Lambda_{\theta}}g^2_{k,n+1}\bar{A}^{\theta}_{q,k,i,n}\mathbb{P}_{\neq 0}\big(\phi^2_{(\lambda_{q+1}k)}(\bar{\Phi}_i(x,t))\big)\big)\notag\\
=&\mathcal{R}_{\vex}\Div\big(\MMM_q+\sum_{n;i;k\in\Lambda_{\theta}}g^2_{k,n+1}{A}^{\theta}_{q,k,i,n}+
 \sum_{n;i;k\in\Lambda_{\theta}}g^2_{k,n+1} ( \bar{A}^{\theta}_{k,i,n}-{A}^{\theta}_{q,k,i,n})\notag\\
&+\sum_{n;i;k\in\Lambda_{\theta}}g^2_{k,n+1}\bar{A}^{\theta}_{q,k,i,n}\mathbb{P}_{\neq 0}\big(\phi^2_{(\lambda_{q+1}k)}(\bar{\Phi}_i(x,t))\big)\big)\notag.
\end{align}
Using the fact that
\begin{align}
    \sum_{n;i;k\in\Lambda_{\theta}}{A}^{\theta}_{q,k,i,n}&= \sum_{n;i;k\in\Lambda_{\theta}}   \chi^2_{i,n}  a^{\theta}_{i}  (\nabla\Phi_i\MMM_{q,n})
\nabla\Phi^{-1}_ik \notag\\
&= -\sum_i\sum_{n=0}^{L-1 } \chi^2_{i,n} \MMM_{q,n}  = - \MMM_q- \sum_i\sum_{n=0}^{L-2 } \partial_t\bar{\chi}_{i,n+1}\mathcal{R}_{\vex}d_{q+1,i,n+1},\notag
\end{align}
and
\begin{align*}
\mathcal{R}_{\vex}\partial_t\dtq=\sum_{i}\sum_{n=0}^{L-1} \partial_t\bar{\chi}_{i,n+1}(t)\mathcal{R}_{\vex}\dt_{q+1,i,n+1} + \sum_{i}\sum_{n=0}^{L-1}\bar{\chi}_{i,n+1}(t)\mathcal{R}_{\vex}\partial_t\dt_{q+1,i,n+1},
\end{align*}
we have
\begin{align*}
 \Tosco=&\sum_i\partial_t\bar{\chi}_{i,L}(t)\mathcal{R}_{\vex}\dt_{q+1,i,L}
\sum_{n;i;k\in\Lambda_{\theta}}g^2_{k,n+1}\mathcal{R}_{\vex}\Div ( \bar{A}^{\theta}_{q,k,i,n}-{A}^{\theta}_{q,k,i,n})\notag\\
&+\sum_{n;i;k\in\Lambda_{\theta}}\mathcal{R}_{\vex}\big(g^2_{k,n+1}\Div\bar{A}^{\theta}_{q,k,i,n}\mathbb{P}_{\neq 0}\big(\phi^2_{(\lambda_{q+1}k)}(\bar{\Phi}_i(x,t))\big)\big).
\end{align*}
Using this  equality together with \eqref{e-R-di}, Proposition \ref{barA-A} and Lemma \ref{l:non-stationary-phase}, we conclude that
\begin{align*}
\|\Tosco\|_{\alpha}\lesssim&\tau^{-1}_q\mu^{-1}_{q+1}\delta_{q+1,L-1}+\ell^{-2}_q\lambda^2_q\big(\frac{\lambda_q}{\lambda_{q+1}}\big)^{1/3}
 \big(\frac{\delta_{q+1}}{\delta_{q}}\big)^{1/2}\delta_{q+1}+\frac{\delta^{1/2}_{q+1}\ell^{-1+\frac{\alpha}{2}}_q}{\lambda^{1-\alpha}_{q+1}}\\
\lesssim&\ell^{-2\alpha}_q\delta_{q+1,L}+\delta_{q+2}\lambda^{-5\alpha}_{q+1}\lesssim \delta_{q+2}\lambda^{-5\alpha}_{q+1}.
\end{align*}
By Proposition \ref{est-wtq} and Proposition \ref{estimate-wq+11}, one deduces that
\begin{align*}
\|\Tosct\|_{\alpha}\lesssim  \mu^{-2}_{q+1}\delta^2_{q+1}\ell^{-2-\alpha}_q+M\lambda^{-1}_{q+1}\ell^{-1-\frac{\alpha}{2}}_q\delta_{q+1}.
\end{align*}
Hence we obtain
\begin{align*}
\|\Mosc\|_{\alpha}\le \|\Tosco\|_{\alpha}+\|\Tosct\|_{\alpha}\lesssim \delta_{q+2}\lambda^{-5\alpha}_q.
\end{align*}
Let us now proceed to estimate $\Rosc$ by dividing it into two parts, $\Rosco$ and $\Rosct$, where
\begin{align*}
  \Rosco=  \mathcal{R}\big(\Div(\wpq\otimes \wpq+\RRR_{q})+\partial_t\wtq+\vv_q\cdot\nabla\wtq+\wtq\cdot\nabla\vv_q+\nabla P^{(\text{t})}_{q+1}\big)
\end{align*}
and
\begin{align*}
\Rosct=&\wcq\otimes w_{q+1}+w_{q+1}\otimes \wcq-\wcq\otimes \wcq +\wtq\otimes \wtq.
\end{align*}
For $\Rosco$, since $\chi_{i,n}g_{k,n}\cdot \chi_{i',n'}g_{k',n'}=0$ for $(k,i,n)\neq(k',i',n')$ and $\mathbb{P}_{=0}\big(\phi^2_{(\lambda_{q+1}k)}(\bar\Phi_i)\big)=1$, we obtain
\begin{align}\label{Pu}
& \Div(  \RRR_q+ \wpq\otimes\wpq  ) \notag\\
=& \Div\Big(\RRR_q+\sum_{ n;i;k\in\Lambda_{v}}g^2_{k,n} {A}^{v_1}_{q,k,i,n}+\sum_{ n;i;k\in\Lambda_{\theta}} g^2_{k,n}{A}^{v_2}_{q,k,i,n} + \sum_{ n;i;k\in\Lambda_{v}}g^2_{k,n}\mathbb{P}_{\neq0}(\phi^2_{(\lambda_{q+1}k)}(\bar{\Phi}_i))\bar{A}^{v_1}_{q,k,i,n}
\notag\\
&+\sum_{n;i;k\in\Lambda_{\theta}}g^2_{k,n}\mathbb{P}_{\neq0}(\phi^2_{(\lambda_{q+1}k)}(\bar{\Phi}_i))\bar{A}^{v_2}_{q,k,i,n}+
 \sum_{n;i;k\in\Lambda_{v}}g^2_{k,n}(\bar{A}^{v_1}_{q,k,i,n}-{A}^{v_1}_{q,k,i,n})\notag\\
&+\sum_{n;i;k\in\Lambda_{\theta}}g^2_{k,n} ( \bar{A}^{v_2}_{q,k,i,n}-{A}^{v_2}_{q,k,i,n})\Big).
\end{align}
By Geometric lemma \ref{first S}, one gets
\begin{align*}
  & \Div\sum_{n;i;k\in\Lambda_{\theta}}{A}^{v_1}_{q,k,i,n}\\
   =& \Div\sum_{n;i;k\in\Lambda_{\theta}}  (a^{v}_{q,k,i,n})^2
\nabla\Phi^{-1}_i(k\otimes k)\nabla\Phi^{-\TT}_i=\Div\sum_{n;i}\chi^2_{i,n}(\delta_{q+1,n}{\rm Id}-\RRR_{q,i,n})\\
=&-\Div\RRR_q-\sum_i\sum_{k\in\Lambda_{\theta}}\delta^{-1}_{q+1}(a^{\theta}_k(\nabla\Phi_i\MMM_q))^2\nabla \Phi^{-1}_i(k\otimes k)\nabla\Phi^{-\TT}_i)-\Div\sum_i\sum_{n=1}^{L-1}\chi^2_{i,n}\RRR_{q,i,n}\\
=&-\Div\RRR_{q }-\Div\sum_{n=0}^{L-2}\sum_{  i}\partial_t\bar{\chi}_{i,n+1}\mathcal{R}\wt_{q+1,i,n+1}\\
&+\Div\sum_i\sum_{n=0 }^{L-1}\sum_{  k\in\Lambda_{\theta}}\chi^2_{i,n}\delta^{-1 }_{q+1,n}   \big(a^{\theta}_{k}  (\nabla\Phi_i\MMM_{q,n})\big)^2\nabla\Phi^{-1}_i(k\otimes k)\nabla\Phi^{-\TT}_i\\
  =&  -\Div\RRR_{q }-\sum_{n=1 }^{L-1}\sum_{i }\partial_t\bar{\chi}_{i,n}\wt_{q+1,i,n}-\sum_{n=0 }^{L-1}\sum_{ i;k\in\Lambda_{\theta}}\Div{A}^{v_2}_{q,k,i,n}.
\end{align*}
Combining this equality with $\mathbb{P}_{= 0}(g^2_{k,n+1})=1$ and $f_{k,n+1}:=\mathbb{P}_{\neq 0}(g^2_{k,n+1})$, we obtain from \eqref{Pu} that
\begin{align*}
& \Div(  \RRR_q+ \wpq\otimes\wpq  ) \\
=-& \sum_{ n;i;k\in\Lambda_{v}}f_{k,n+1}\Div{A}^{v_1}_{q,k,i,n}-\sum_{ n;i;k\in\Lambda_{\theta}} f_{k,n+1}\Div{A}^{v_2}_{q,k,i,n}-\sum_{n=1 }^{L-1}\sum_{i }\partial_t\bar{\chi}_{i,n}\wt_{q+1,i,n}
\notag\\
& +\sum_{ n;i;k\in\Lambda_{v}}g^2_{k,n+1}\mathbb{P}_{\neq0}(\phi^2_{(\lambda_{q+1}k)}(\bar{\Phi}_i))\Div \bar{A}^{v_1}_{q,k,i,n}+\sum_{n;i;k\in\Lambda_{\theta}}g^2_{k,n+1}\mathbb{P}_{\neq0}(\phi^2_{(\lambda_{q+1}k)}(\bar{\Phi}_i))\Div \bar{A}^{v_2}_{q,k,i,n}\\
&+\sum_{n;i;k\in\Lambda_{v}}g^2_{k,n+1}\Div(\bar{A}^{v_1}_{q,k,i,n}-{A}^{v_1}_{q,k,i,n})+\sum_{n;i;k\in\Lambda_{\theta}}g^2_{k,n+1} \Div( \bar{A}^{v_2}_{q,k,i,n}-{A}^{v_2}_{q,k,i,n}).
\end{align*}
This equality together with $\eqref{time}_1$ shows that
\begin{align*}
    \Rosco=&\mathcal{R}\big(-\sum_{i }\partial_t\bar{\chi}_{i,L}\wt_{q+1,i,L} +\sum_{ n;i;k\in\Lambda_{v}}g^2_{k,n+1}\mathbb{P}_{\neq0}(\phi^2_{(\lambda_{q+1}k)}(\bar{\Phi}_i))\Div \bar{A}^{v_1}_{q,k,i,n}\\
&+\sum_{n;i;k\in\Lambda_{\theta}}g^2_{k,n+1}\mathbb{P}_{\neq0}(\phi^2_{(\lambda_{q+1}k)}(\bar{\Phi}_i))\Div \bar{A}^{v_2}_{q,k,i,n}+\sum_{n;i;k\in\Lambda_{v}}g^2_{k,n+1}\Div(\bar{A}^{v_1}_{q,k,i,n}-{A}^{v_1}_{q,k,i,n})\\
&+\sum_{n;i;k\in\Lambda_{\theta}}g^2_{k,n+1} \Div( \bar{A}^{v_2}_{q,k,i,n}-{A}^{v_2}_{q,k,i,n})\big)
\end{align*}
By \eqref{E-Av1}, \eqref{E-Av2}, \eqref{e-R-wi} and Proposition \ref{barA-A}, we have
\begin{align*}
\|\Rosco\|_{\alpha}\lesssim&\tau^{-1}_q\mu^{-1}_{q+1}\delta_{q+1,L-1}\ell^{-1-{2\alpha}}_q+{\delta_{q+1}\ell^{-\frac{\alpha}{2}}_q}\lambda^{-1+\alpha}_{q+1}+\ell^{-2}_q\lambda^2_q\big(\frac{\lambda_q}{\lambda_{q+1}}\big)^{1/3}
 \big(\frac{\delta_{q+1}}{\delta_{q}}\big)^{1/2}\delta_{q+1}\\
 \lesssim&\delta_{q+2}\lambda^{-5\alpha}_{q+1}.
\end{align*}
It follows from Proposition \ref{estimate-wq+11} that
\begin{align*}
\|\Rosct\|_{\alpha}\lesssim&M\lambda^{-1}_{q+1}\ell^{-1-\frac{\alpha}{2}}_q\delta_{q+1}\lesssim \delta_{q+2}\lambda^{-6\alpha}_{q+1}.
\end{align*}
Thus, we finish the proof of Proposition \ref{propMosc}.
 \end{proof}
  Now we shall demonstrate that  $(v_{q+1}, \theta_{q+1}, \RR_{q+1}, \MM_{q+1})$  \eqref{p_q}--\eqref{e:initial} with $q$ replaced by $q+1$ as outlined below.

First of all, we infer from \eqref{ppp} that $\int_{\TTT^3}\bar{p}_q\dd x=0$. Since $p_{q+1}=\bar{p}_q+P^{{t}}_{q+1}$, we get from \eqref{av-Pt} that $p_{q+1}$ satisfies \eqref{p_q} at $q+1$ level.
Next, we have by \eqref{e:stability-vv_q-N} and \eqref{estimate-w}
\begin{align*}
\|(v_{q+1}, \theta_{q+1})\|_0\le&\|(\vv_{q}, \tb_{q})\|_0+\|(w_{q+1}, d_{q+1})\|_0\\
\le&M\sum_{i=1}^q\delta^{1 / 2}_i+ \delta_{q+1}^{1/2} \ell_q^\alpha +\frac{M}{2}\delta^{1/2}_{q+1}=M\sum_{i=1}^{q+1}\delta^{1 / 2}_i,
\end{align*}
This inequality shows that $(v_{q+1}, \theta_{q+1})$ satisfies \eqref{e:vq-C0} at $q+1$ level. By virtue of \eqref{e:vv_q-bound} and \eqref{estimate-w}, we obtain for $1\leq N\leq L$,
\begin{align*}
\|(v_{q+1}, \theta_{q+1})\|_N\le &\|(\vv_{q}, \tb_{q})\|_N+\|(w_{q+1}, d_{q+1})\|_N\\
\le&  CM\delta_{q}^{1 / 2} \lambda_{q} \ell_q^{-N} +\frac{M}{2} \delta_{q+1}^{1 / 2} \lambda^N_{q+1}\le M \delta_{q+1}^{1 / 2} \lambda^N_{q+1}.
\end{align*}
Collecting Proposition \ref{proptrans1}--Proposition \ref{propMosc} together shows that
\begin{align*}
\|(\RR_{q+1}, \MM_{q+1})\|_{0}\le  \delta_{q+2}\lambda_{q+1}^{-3\alpha}.
\end{align*}
Noting  that
\[\supp w_{q+1},\,\, \supp d_{q+1}\subset \cup_{i=0}^{i_{\max}-1}(I_i+\big(-\tfrac{\tau_q}{6}, \tfrac{\tau_q}{6}\big))\subset\big(2\widetilde{T}+\tfrac{\tau_q}{6}, 3\widetilde{T}-\tfrac{\tau_q}{6}\big),\]
we deduce
$$(v_{q+1}, \theta_{q+1})(t,x)=(\vv_q, \tb_q)(t,x), \quad t\in[0, 2\widetilde{T}+\tfrac{\tau_q}{6}]\cup [3\widetilde{T}-\tfrac{\tau_q}{6}, 1], \,x\in\TTT^2.$$ Combining with \eqref{vvqtbq} and $20\tau_{q+1}<\tau_q$ , we gets
\begin{align*}
(v_{q+1}, \theta_{q+1})\equiv (v^{(1)}, b^{(1)}) \,\,{\rm on}\,\,\big[0, 2\widetilde{T}+\tau_{q+1}\big],\quad(v_{q+1}, \theta_{q+1})\equiv (v^{(2)}, b^{(2)}) \,\,\text{\rm on}\,\,\big[3\widetilde{T}-{\tau_{q+1}}, 1\big].
\end{align*}
This fact shows that \eqref{e:initial1} and \eqref{e:initial} hold at $q+1$ level. Hence, we complete the proof of Proposition~\ref{p:main-prop}.

\subsection{Proof of Theorem  \ref{t:main-energy1}}To prove  Theorem  \ref{t:main-energy1}, we modify the iterative scheme as in the proof of Proposition \ref{p:main-prop} by introducing an energy profile.
\begin{prop}\label{Prope}
Let $b\in(1,b_0)$ and $\alpha$ satisfy \eqref{e:params0}.
If $(v_q, \theta_q, p_q,\RR_q, \MM_q)$ obeys the equations \eqref{e:subsol-B} and \eqref{p_q} with
\begin{align}
     &\|(v_q, \theta_q)\|_{0} \le M\sum_{i=1}^q\delta^{1/2}_i ,
    \label{e:vq-C02}
    \\
    &\|(v_q, \theta_q)\|_{N} \le M\delta_{q}^{1/2} \lambda^N_q ,\quad 1\leq N\leq L,
    \label{e:vq-C1}
    \\
    &\|(\RR_q, \MM_q)\|_{0} \le \delta_{q+1}\lambda_q^{-4\alpha} ,
    \label{e:RR_q-C0}\\
    &
10M^2\delta_{q+1}\leq e(t)-\int_{\mathbb{T}^2}(|v_q|^2+2|\theta_q|^2)\dd x \leq 30M^2\delta_{q+1},\label{v-theta-energy2}
\end{align}
 then there exist smooth functions $(v_{q+1}, \theta_{q+1}, p_{q+1}, \RR_{q+1}, \MM_{q+1})$ satisfying \eqref{e:subsol-B}, \eqref{p_q},
\eqref{e:vq-C02}--\eqref{v-theta-energy2}
with $q$ replaced by $q+1$ and
\begin{align}
        \big\|(v_{q+1}, \theta_{q+1}) - (v_q, \theta_q)\big\|_{0} +\frac1{\lambda_{q+1}} \big\|(v_{q+1}, \theta_{q+1})-(v_q, \theta_q)\big\|_1 &\le   M\delta_{q+1}^{1/2}.
        \label{e:velocity-diff}
\end{align}
\end{prop}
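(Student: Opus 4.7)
I would reuse the five-step convex integration scheme from Proposition~\ref{p:main-prop} verbatim for the velocity side, and only augment the tracer perturbation $d_{q+1}$ by a spatially localized energy corrector $\de_{q+1}$ that pointwise (in time) absorbs the remaining energy gap
\[
\rho(t) := e(t) - \int_{\TTT^2}\bigl(|\vv_q|^2+2|\tb_q|^2\bigr)\dd x - \int_{\TTT^2}\bigl(|\wpq+\wcq|^2+2|\dpq|^2\bigr)\dd x.
\]
Steps~1 and~2 (mollification, gluing) are unchanged, so Propositions~\ref{p:estimates-for-mollified1}--\ref{est-vvq} furnish $(\vv_q,\tb_q,\pp_q,\RRR_q,\MMM_q)$ with the same bounds as before, and the stability estimate $\|\vv_q-v_q\|_0+\|\tb_q-\theta_q\|_0\lesssim \delta_{q+1}^{1/2}\ell_q^\alpha$ combined with hypothesis \eqref{v-theta-energy2} gives $\rho(t)\sim \delta_{q+1}$ on the time window where the perturbations are active. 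The Newton and Nash perturbations $\wtq, \dtq, \wpq, \wcq, \dpq$ are defined exactly as in Section~Step~3 of the previous proof, and hence obey the estimates of Propositions~\ref{est-RM}--\ref{estimate-wq+11}.

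\textbf{Construction of the energy corrector.} Fix once and for all a small square $Q_\ast\subset\TTT^2$ and a bump $\eta\in C_0^\infty(Q_\ast)$ with $\|\eta\|_{L^2}=1$, arranged so that $Q_\ast$ is disjoint from $\supp_x(\wpq+\wcq+\dpq)$ uniformly in $q$. This is feasible because the shear flows $\phi_{(\lambda_{q+1}k)}$ are supported on thin strips of total relative measure $\lesssim |\Lambda|\lambda_1^{-1}\ll 1$, and the diffeomorphism $\bar\Phi_i$ satisfies \eqref{e:nabla-barphi-i-minus-I3x3}, so a fixed-size complement of all strips is available. I then set
\[
\de_{q+1}(x,t) := \sqrt{\tfrac{\rho(t)}{2}}\,\eta(x)\cos(\lambda_{q+1}x_1),\qquad d_{q+1} := \dpq+\dtq+\de_{q+1},
\]
and keep $w_{q+1}=\wpq+\wcq+\wtq$ as before. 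The amplitude obeys $|\gamma_q(t)|:=\sqrt{\rho(t)/2}\lesssim \delta_{q+1}^{1/2}$, with temporal derivative $\lesssim \tau_q^{-1}\delta_{q+1}^{1/2}$ (since $\rho'$ inherits its scale from the glued energy evolution), so $\de_{q+1}$ fits within the perturbation budget \eqref{e:velocity-diff}.

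\textbf{Error analysis and energy balance.} Because of the spatial disjointness, all bilinear interactions $\wpq\cdot\de_{q+1}$, $\wcq\cdot\de_{q+1}$, $\dpq\cdot\de_{q+1}$, as well as $\MMM_q\cdot\de_{q+1}$ (after arranging $\supp_x\MMM_q\cap Q_\ast=\emptyset$ via the gluing), vanish identically. The only genuinely new contribution to $\MM_{q+1}$ is the self-transport term $\mathcal R_{\vex}(\matd{\vv_q+\wtq}\de_{q+1})$; after integration by parts against $\cos(\lambda_{q+1}x_1)$ it gains a factor $\lambda_{q+1}^{-1}$, and is bounded by
\[
\lambda_{q+1}^{-1+\alpha}\bigl(\tau_q^{-1}+\|\vv_q\|_1\bigr)\delta_{q+1}^{1/2}\lesssim \delta_{q+2}\lambda_{q+1}^{-5\alpha},
\]
which matches the budget of Propositions~\ref{proptrans1}--\ref{propMosc}. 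Using $\int_{\TTT^2}\cos^2(\lambda_{q+1}x_1)\,\eta^2\dd x = \tfrac12+O(\lambda_{q+1}^{-\infty})$ and the orthogonality $\int\de_{q+1}\dpq=\int\de_{q+1}\tb_q = 0$ (by spatial disjointness), one computes
\[
\int_{\TTT^2}\bigl(|v_{q+1}|^2+2|\theta_{q+1}|^2\bigr)\dd x = \int_{\TTT^2}\bigl(|\vv_q|^2+2|\tb_q|^2\bigr)\dd x + \int\bigl(|\wpq+\wcq|^2+2|\dpq|^2\bigr)\dd x + \rho(t) + O(\delta_{q+2}),
\]
so by definition of $\rho$ the energy gap at level $q+1$ is $O(\delta_{q+2})$, yielding \eqref{v-theta-energy2} at $q+1$ after choosing the implicit constants.

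\textbf{Main obstacle.} The crux is arranging and maintaining the spatial separation $Q_\ast\cap\supp_x(\wpq+\wcq+\dpq+\MMM_q)=\emptyset$ at every stage of the iteration. The strips carrying the Nash perturbations have fixed relative measure independent of $q$, and the glued error $\MMM_q$ inherits its spatial support from the mollified tracer stress, which by Propositions~\ref{p:estimate-RRRq}--\ref{est-vvq} can be localized away from $Q_\ast$ by choosing the initial solutions $(v^{(1)},\theta^{(1)})$ and $(v^{(2)},\theta^{(2)})$ and the set $\Lambda$ once at the start. All remaining verifications (the bounds \eqref{e:vq-C02}--\eqref{e:RR_q-C0} at level $q+1$, and the difference bound \eqref{e:velocity-diff}) reduce to the calculations already carried out in Propositions~\ref{proptrans1}--\ref{propMosc}, since $\de_{q+1}$ contributes only a single, decoupled high-frequency bump.
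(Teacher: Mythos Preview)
Your overall strategy---augment $d_{q+1}$ by an energy corrector that is spatially decoupled from the Nash building blocks---is exactly what the paper does. But the two concrete choices you make both break the scheme.

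\textbf{(1) The disjointness cannot be arranged at a fixed macroscopic scale.} The shear profiles $\phi_{(\lambda_{q+1}k)}(\bar\Phi_i)$ are $\lambda_{q+1}^{-1}$-periodic in the direction $\bar k$; their supports form a family of strips of width $\sim\lambda_1^{-1}\lambda_{q+1}^{-1}$ spaced $\lambda_{q+1}^{-1}$ apart, hence $\lambda_{q+1}^{-1}$-\emph{dense} in $\TTT^2$. A fixed square $Q_\ast$ of positive area will therefore meet $\sim\lambda_{q+1}$ of these strips for every large $q$, and your claimed identity $\wpq\cdot\de_{q+1}=0$ fails. The paper resolves this by placing the corrector at the \emph{fast} scale: one chooses a bump $h$ supported in $[0,\tfrac1{100}]^2$ and sets $h_\Lambda(x)=h(\lambda_{q+1}(x-x_\Lambda))$ with $x_\Lambda$ chosen so that $h_\Lambda\cdot\phi_{(\lambda_{q+1}k)}=0$ for every $k\in\Lambda$; disjointness is thus guaranteed cell-by-cell at scale $\lambda_{q+1}^{-1}$, uniformly in $q$.

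\textbf{(2) The corrector must be composed with the flow.} With your ansatz $\de_{q+1}=\gamma_q(t)\eta(x)\cos(\lambda_{q+1}x_1)$, the advection piece of $\matd{\vv_q+\wtq}\de_{q+1}$ contains
\[
\gamma_q\,(\vv_q)_1\,\eta\,\bigl(-\lambda_{q+1}\sin(\lambda_{q+1}x_1)\bigr),
\]
and applying $\mathcal R_\vex$ via Lemma~\ref{l:non-stationary-phase} recovers only one factor of $\lambda_{q+1}^{-1}$, leaving a contribution of size $\gamma_q\|\vv_q\|_0\sim\delta_{q+1}^{1/2}$, not $\delta_{q+2}$. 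Your claimed bound with $\|\vv_q\|_1$ in place of $\|\vv_q\|_0$ is incorrect: the leading term involves the \emph{amplitude} of $\vv_q$, not its gradient. The paper avoids this by writing the corrector as $\de_{q+1}=\sum_i\tilde\eta_i\,e_q^{1/2}\,h_\Lambda(\lambda_{q+1}\bar\Phi_i)$, so that $\matd{\vv_q+\wtq}\bigl(h_\Lambda(\lambda_{q+1}\bar\Phi_i)\bigr)=0$ and only the slow coefficients $\tilde\eta_i\,e_q^{1/2}$ contribute to the transport error, exactly as for $\dpq$ in Proposition~\ref{proptrans1}. A minor related point: the cross term $\int\de_{q+1}\tb_q\,\dd x$ does not vanish by spatial disjointness (the glued tracer $\tb_q$ is global); it is small only because $\de_{q+1}$ oscillates at frequency $\lambda_{q+1}$.

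Once you make these two changes---fast-scale support separation and composition with $\bar\Phi_i$---the remaining bookkeeping (defining the energy gap $e_q(t)$ so as to also absorb the Newton contributions $|\wtq|^2+2\wtq\cdot\vv_q+2|\dtq|^2+4\dtq\tb_q$, which you omitted from $\rho$) goes through as you outline, and the proof reduces to the estimates of Propositions~\ref{proptrans1}--\ref{propMosc}.
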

\begin{proof}Similar to the proof of Proposition \ref{p:main-prop}, the construction of the weak solution at the
$q+1$-th step is split into three parts: mollification, gluing, and perturbation. The new ingredient  lies in the perturbation step, where we introduce an additional energy perturbation $d^{(\text{e})}$.

Before mollification, we set the initialization to be
$$\theta_1=0, \quad v_1=\chi(t)\phi(\bar{k}\cdot x){k}\quad \textup{and}\quad \RR_1:=\mathcal{R}\big(\chi'(t)\cdot\phi(\bar{k}\cdot x){k}\big),$$
where $\chi(t)$ is a smooth, monotonically decreasing function with $\chi(0)=1$ and $\chi(T)=0$ and $\phi$ the periodic  function defined in \eqref{phi-xi}.\\
\noindent \textbf{Mollification}: ${(v_q, p_q, \theta_q, \RR_q, \MM_q)\mapsto (v_{\ell_q},  p_{\ell_q}, \theta_{\ell_q}, \RR_{\ell_q}, \MM_{\ell_q})}$.  Here ${(v_{\ell_q},  p_{\ell_q}, \theta_{\ell_q}, \RR_{\ell_q}, \MM_{\ell_q})}$ is consistent with that in \eqref{moll-v}--\eqref{moll-T} and satisfies the estimates in Proposition \ref{p:estimates-for-mollified1}. Moreover, one infers from \eqref{e:v_ell-vq1} that
\[8M^2\delta_{q+1}\le e(t)-\int_{\TTT^2}(|v_{\ell_q}|^2+2|\theta_{\ell_q}|^2)\dd x\le 40M^2\delta_{q+1}.\]
\noindent \textbf{Gluing}: $ (v_{\ell_q},  p_{\ell_q}, \theta_{\ell_q}, \RR_{\ell_q}, \MM_{\ell_q})\mapsto (\vv_q,  \bar{p}_q, \bar{\theta}_q, \RRR_{q}, \MMM_{q})$.
 The definition of $(\vv_q,  \bar{p}_q, \bar{\theta}_q)$ here shares the same manner as in \eqref{e:pp_q}, in which $(\vex_0, \bm{\theta}_0, \pex_0)=(\vex_{i_{\max}}, \bm{\theta}_{i_{\max}}, \pex_{i_{\max}})=(v_1,p_1,\theta_1)$, for $1\le i\le i_{\max}-1$, $(\vex_i, \bm{\theta}_i, \pex_i)$ is the unique exact solution to the system \eqref{e:exact-B} with initial data $(v_{\ell_q}(i\tau_q), \theta_{\ell_q}(i\tau_q))$. \\
 \noindent \textbf{Perturbation}: We define $ h: \mathbb{T}^2 \rightarrow \mathbb{T}^2 $ as a smooth, mean-free cutoff function supported on the interval $[0, \frac{1}{100}]^2$ with $\|h\|_{L^2} = 1$.  We define
\[
h_{\Lambda} := h(\lambda_{q+1}(x - x_{\Lambda}))
\]
such that
\[
h_{\Lambda} \cdot \phi_{(\lambda_{q+1}k)} = 0 \quad \forall k \in \Lambda,
\]
where $ \phi_{(\lambda_{q+1}k)}$ is defined in \eqref{phi-xi}. Furthermore, we define the energy gap $e_q$ by
   \begin{align*}
e_q(t) \coloneq& \frac{1}{\sum_i  \int_{\mathbb{T}^2}{\tilde \eta_i}^2\dd x}\Big(e(t) -\int_{\mathbb T^2}|\vv_q|^2+|\bar{\theta}_{q}|^2+|\wpq|^2+2|\dpq|^2+ {E(t,x)} \dd x- 20\delta_{q+2}\Big),
\end{align*}
where
\begin{align*}
 E(t,x):= &|\wtq|^2+2\wtq\cdot\vv_q
+2(| \dtq|^{2}
+2\dtq\bar{\theta}_{q})
\end{align*}
and the ${\tilde \eta_i(t,x)}_{i\geq0} $ is introduced as in \cite{BDSV,KMY} with the following properties:
    \begin{enumerate}
    \item $\tilde\eta_i \in C^\infty_c(\mathbb T^2 \times (J_{i}\cup I_{i}\cup J_{i+1}) ; [0,1])$ and $\tilde\eta_i(\cdot,t) \equiv 1$ for $t\in I_i$ such that:
        \begin{align}
         \|\del^n_t  \tilde\eta_i\|_{L^\infty_tC^m_x    }  \lesssim_{n,m} \tau_q^{-n},~~~n,m\ge 0. \notag
    \end{align}
    \item $\supp~\tilde\eta_i \cap \supp ~ \tilde\eta_j=\emptyset$ if $i\neq j$.
    \item  For all $t\in[0,T]$, we have
    $ \frac{1}{4} \le \sum_{i=0} \int_{\mathbb T^2}  \eta_i^2(x,t) \dd x \le 1.$
\end{enumerate}
Now we define the energy corrector $\de_{q+1}$ by
\begin{align*}
    \de_{q+1}=\sum_i{\tilde \eta_i(t,x)}e_{q}^{1/2} h_{\Lambda}(\lambda_{q+1}\bar{\Phi}_i),
\end{align*}
here $\bar{\Phi}_i$ is given by \eqref{barphi}.
From  \eqref{e:glu-B} and \eqref{time},
Using $\Div \vv_q=\Div\wtq=0$, we have
\begin{align*}
    { \Big|\frac{\dd}{\dd t}\int_{\mathbb T^2}  {E(t,x)} \dd x\Big|= \Big|\int_{\TTT^2}\matd{\vv_q } E (t,x)\dd x\Big|\lesssim \tau^{-1}_q\delta_{q+1}\ell^{\frac{\alpha}{2}}_q.}
\end{align*}
{This inequality combined with the fact that $(\wt_{q+1,i,n+1}, \dt_{q+1,i,n+1})|_{t=t_i} = 0 $ yields $|E(t)|\le\delta_{q+1}$}. Hence, we conclude that
$5M^2\delta_{q+1}\le e_q(t)\le 50M^2\delta_{q+1}$. 

We define
\[w_{q+1}=\wpq+\wcq+\wtq,\quad d_{q+1}=\dpq+\dtq+\de_{q+1}.\]
By the definition of $\de_{q+1}$, we have
\begin{align*}
&\Big|e(t)-\int_{\TTT^2}(|v_{q+1}|^2+2|\theta_{q+1}|^2)\dd x-20\delta_{q+2}\Big|\\
=&\Big|\int_{\TTT^2}(|\wcq|^2+2\vv_q\cdot(\wpq+\wcq)+2(\wpq+\wcq)\cdot\wtq+2\wpq\cdot\wcq)\dd x\\
&+2\int_{\TTT^2}(2\bar{\theta}_q(\dpq+\de_{q+1})+2\dpq\dtq)\dd x+2\int_{\TTT^2}\sum_i{\tilde \eta^2_i(t,x)}e_{q}\mathbb{P}_{\neq 0}(h^2_{\Lambda}(\lambda_{q+1}\bar{\Phi}_i))\dd x\Big|\\
\le& 10\delta_{q+2}.
\end{align*}
Hence we show \eqref{v-theta-energy2} at $q+1$ level. Noting that $\de_q\wpq=\de_q\wcq=0$, we easily obtain \eqref{e:RR_q-C01} with $q$ replaced by $q+1$.

Now, Proposition \ref{Prope} can help us to obtain a weak solution $(v,\theta) \in C^{\beta}(\mathbb{T}^2)$ to the system~\eqref{e:B}, where the energy profile is defined as
\[
e(t) := \int_{\mathbb{T}^2} |v|^2 + 2|\theta|^2 \, \dd x.
\]
The  fact $w_{q+1}(0) = w_{q+1}(T) = 0$ implies that
\[
v(0) = u_1(0) = \phi(\bar{k}\cdot x)k \quad \text{and} \quad v(T) = u_1(T) = 0.
\]
Choosing ${e}(t)$ such that ${e}(0) - 1 \neq {e}(T)$, we derive
\begin{align*}
\int_{\mathbb{T}^2} |\theta(\cdot,0)|^2 \, \dd x
&= \frac{1}{2}\left( e(0) - \int_{\mathbb{T}^2} |v(\cdot,0)|^2 \, \dd x  \right) \\
&= \frac{1}{2}\left( e(0) - \int_{\mathbb{T}^2} |\phi|^2 \, \dd x \right) \\
&\neq \frac{1}{2}e(T) = \int_{\mathbb{T}^2} |\theta(\cdot,T)|^2 \, \dd x.
\end{align*}
This fact shows that the pair $u = (v,\theta)$ together with $b = (0,0,\theta)$ is a weak solution to the 3D ideal magnetohydrodynamics system \eqref{E} such that
 the total energy $\mathcal{E}(t)=e(t)$ is strictly decreasing and the cross helicity $\mathcal{H}_c(t) := \|\theta(t)\|_{L^2(\mathbb{T}^2)}^2$ fails to be conserved. Hence, we prove Theorem~\ref{t:main-energy1}.

\end{proof}

\section{Proofs of Theorem \ref{main00} and Theorem \ref{app}}\label{sec3}
\subsection{}We present a quantitative version of Theorem \ref{main00}.
\begin{prop}\label{t:main00}
Let $T>0$, $0\le \beta<\frac{1}{5}$ and $0<e(t), \widetilde{e}(t)\in C^{\infty}([0,T]; \R^{+}) $ with $e(t)=\widetilde{e}(t)$ for $t\in[0, \tfrac{T}{2}]$, then there exist two weak solutions $ (u, b) , (\widetilde{u}, \widetilde{b}) \in C^{\beta}([0,T]\times \R^2)\times {C^2}([0,T]\times \R^2) $ to the system  \eqref{E} such that $(v, b)|_{t=0}=(\widetilde{u}, \widetilde{b})|_{t=0}$,
\begin{align*}
  \int_{\mathbb R^3} |u|^2+|b|^2 \dd x=e(t) \quad \text{and}\quad\int_{\mathbb R^3} (|\widetilde{u }|^2+|\widetilde{b}|^2) \dd x=\widetilde{e}(t) ,~~\forall t\in[0,T].
\end{align*}
\end{prop}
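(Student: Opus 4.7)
The plan is to build both $(u,b)$ and $(\widetilde u, \widetilde b)$ through a single spatial-separation-driven Nash iteration parametrized by the prescribed energy profile, as sketched in the introduction. Given a target profile $e(t)$, I would produce an approximate sequence $(u_q, b_q, p_q, \RR_q)$ with the decomposition $u_q = u^{\mathcal{C}}_q + u^{\mathcal{P}}_q$ where $u^{\mathcal{P}}_q$ is smooth and $\|u^{\mathcal{P}}_q\|_0$ is summable in $q$; the spatial disjointness $\supp_x u^{\mathcal{C}}_q \cap \supp_x b_q = \emptyset$ and $\supp_x \RR_q \cap \supp_x b_q = \emptyset$ maintained at every stage; Reynolds stress $\|\RR_q\|_0 \lesssim \delta_{q+1}$; and $\int (|u_q|^2 + |b_q|^2)\dd x$ matching $e(t)$ up to $O(\delta_{q+1})$. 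The disjointness collapses the induction equation for $b$ to Lie transport by $u^{\mathcal{P}}_q$, so the relaxed system contains only a Reynolds-type error supported in $\supp \RR_q$, bringing the problem close to the Euler convex-integration setting.

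The step $q\mapsto q+1$ proceeds as follows. After a standard mollification/gluing in time, I construct high-frequency shear perturbations $\wpq+\wcq$ supported strictly inside $\supp_x\RR_q$ (hence disjoint from $b_q$), selecting amplitudes via the geometric lemma for symmetric $2\times 2$ matrices (Lemma \ref{first S}) so that the new Reynolds error drops below $\delta_{q+2}$. In parallel, I update the smooth component and the magnetic field by letting $(d_{q+1},\theta_{q+1})$ be the short-time solution of the 2D ideal MHD system with smooth initial data and external forcing equal to those residual terms from $\wpq+\wcq$ that would leak into the future Reynolds support. Because the high oscillations beat these forcings down by a factor $\lambda_{q+1}^{-1}$ against smooth quantities, $(d_{q+1},\theta_{q+1})$ is small and, by finite propagation speed of the hyperbolic MHD system over one iteration time $\tau_q$, stays in a small neighbourhood of $\supp b_q$. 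Finally, an energy corrector supported in a cube spatially disjoint from both $\supp\RR_q$ and $\supp b_q$ enforces the prescribed energy up to the next error scale, exactly as $\de$ does in the proof of Theorem~\ref{t:main-energy1}.

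Since $b_q$ is advected by the smooth small $u^{\mathcal{P}}_q$, uniform $C^2_{t,x}$ bounds on $b$ follow by standard transport estimates, whereas $u_q$ converges in $C^\beta$ for any $\beta<\tfrac{1}{5}$ from the usual interpolation of $\|u_q-u_{q-1}\|_0\lesssim\delta_q^{1/2}$ and $\|u_q-u_{q-1}\|_1\lesssim\delta_q^{1/2}\lambda_q$ together with the parameter hierarchy $\delta_q\lambda_q^{2\beta}$ summable; the threshold $1/5$ (versus $1/3$) accounts for the cost of solving the auxiliary MHD system to produce $(d_{q+1},\theta_{q+1})$ at each stage, which forces $u^{\mathcal{P}}_q$ to be sufficiently small relative to $\wpq$. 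To achieve the same initial data for the two solutions, I exploit $e\equiv\widetilde e$ on $[0,T/2]$: I run a single iteration on $[0,T/2]$ targeting the common profile, and then branch on $[T/2,T]$ into two iterations targeting $e$ and $\widetilde e$ respectively, gluing in time via the partition-of-unity technique from Section~\ref{sec2}. Both limit sequences coincide on $[0,T/2]$, hence share initial data at $t=0$, while being distinct on $[T/2,T]$ because $e\neq \widetilde e$ there forces the two total energies to differ on a set of positive measure.

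The main obstacle is preserving the disjointness conditions $\supp u^{\mathcal{C}}_{q+1}\cap \supp b_{q+1}=\emptyset$ and $\supp \RR_{q+1}\cap \supp b_{q+1}=\emptyset$ uniformly in $q$. Two competing effects threaten them: the Reynolds support can enlarge because new stress terms are generated by the perturbation, and the magnetic support can expand under transport by $u^{\mathcal{P}}_{q+1}$. The first is handled by localizing $\wpq,\wcq$ in small cubes inside $\supp\RR_q$ and showing, via the inverse-divergence estimates of the Appendix, that the new Reynolds stress inherits this spatial localization up to a boundary layer of size $\lambda_{q+1}^{-1}$. The second is handled by the summable smallness of $\|u^{\mathcal{P}}_q\|_0$: finite propagation speed yields a total displacement of $\supp b_q$ bounded by $\sum_q \tau_q\|u^{\mathcal{P}}_q\|_0$, which is made arbitrarily small by taking $a$ large, so a fixed positive-measure geometric buffer between $\supp b_q$ and $\supp \RR_q$ can be locked in from the start and preserved throughout the infinite iteration.
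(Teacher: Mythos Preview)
Your overall strategy matches the paper's: spatial separation $u_q = u_q^{\mathcal C} + u_q^{\mathcal P}$ with $\supp_x u_q^{\mathcal C}$, $\supp_x \RR_q$ disjoint from $\supp_x b_q$; Nash perturbations $\wpq+\wcq$ supported away from $b_q$; auxiliary MHD flows $(\theta_{q+1}, d_{q+1})$ absorbing the small residual forcing; and finite propagation speed (via the summable smallness of $u_q^{\mathcal P}$) to lock in a fixed geometric buffer between $\supp_x b_q$ and $\supp_x \RR_q$. Three of your technical choices diverge from the paper, and the first is a genuine problem.

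The paper does \emph{not} glue in time in this section. Gluing exact local MHD solutions and interpolating with a partition of unity $\{\eta_i\}$ would produce a $\sum_i\eta_i'$ commutator error in the induction equation, destroying the central feature of the relaxed system \eqref{e:subsol-euler}: the magnetic equation is kept \emph{exact} at every step, which is precisely what makes the spatial separation self-consistent. The paper obtains the needed material-derivative control on the stress not by gluing but by a flow-line temporal mollification $\RR_{\ell_q,\tau_q}(t,x)=\int \RR_{\ell_q}(t+s,\Phi^{-1}(t+s,x;t))\varphi_{\tau_q}(s)\,\dd s$, which leaves $b_q$ untouched. Your gluing step should be dropped and replaced by this device.

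Two smaller divergences: (i) there is no separate spatial energy corrector here; the profile enters directly through the Nash amplitude $a_{k,i}=\eta_i\chi_q\,e_q^{1/2} a_k(\mathrm{Id}-\RR_{\ell_q,\tau_q}/e_q)$ with the gap $e_q$ as in \eqref{e0:energy-gap}; (ii) the same-initial-data mechanism is not a branch-then-time-glue. The paper runs two full iterations from the same seed and shows (Proposition~\ref{p:main-prop2}) that, since every ingredient at step $q+1$ (mollified fields, flows $\Phi_i$, forcing $F_{q+1}$, and the evolution \eqref{e:wt} started at $t=0$) depends only on data up to time $t+O(\tau_q)$, agreement on $[0,T/4+5\tau_q]$ propagates to $[0,T/4+5\tau_{q+1}]$. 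Your time-gluing of the two branches is unnecessary and would again conflict with the exact magnetic equation. Finally, your attribution of the $1/5$ threshold to the auxiliary MHD system is off: $(\theta_{q+1},d_{q+1})$ has size $O(\lambda_{q+1}^{-3})$ and is invisible at the level of the threshold; $\beta<1/5$ comes from the oscillation/remainder balance $\tau_q\lambda_q\delta_q^{1/2}\delta_{q+1}\le \delta_{q+2}$ in an iteration that does not employ the Newton step of \cite{GR}.
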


By choosing different energy profile $e(t) $ in Proposition \ref{t:main00}, one easily shows Theorem~\ref{main00}. Hence, it suffices to prove Proposition  \ref{t:main00}. We shall prove it by Propositions \ref{p:main-prop00} and \ref{p:main-prop2}.

 \subsubsection{Iteration propositions}
 Before stating the iteration proposition,  we give all parameters needed  in the inductive procedure, which are different from the parameters in section 2.  Given $0<\beta<\tfrac{1}{5}$ and $T<\infty$, we define
\begin{align}\label{b-alpha}
1<b\leq {\min\big\{1+\tfrac{1-5\beta}{12\beta},  \tfrac{3}{2}\big\}},\quad N_0=\tfrac{50(b+1)}{b-1} ,\quad 0<\alpha<\min\Big\{\tfrac{2\beta b^2-(\tfrac12+\tfrac32 \beta)b+\tfrac12-\tfrac{\beta}{2}}{10},\tfrac{1}{100}\Big\}.
\end{align}
Let $a\in\mathbb{N}^+$ be a large number depending on $b,\beta,\alpha,T$. We define
\begin{align}\label{delta}
    \lambda_q  \coloneq  a^{b^q} , \quad   \delta_q &\coloneq c\lambda_q^{-2\beta},~~q\in\mathbb{N}^+,
\end{align}
and
\begin{align}\label{def-tauq}
\ell_q:={\lambda^{-\frac{3}{4}-8\alpha}_q\lambda^{-\frac{1}{4}}_{q+1} }
\big(\tfrac{\delta_{q+1}}{\delta_{q}}\big)^{\frac{3}{8}},\,\,
\tau_q:=(\lambda^{\frac{1}{2}}_q\lambda^{\frac{1}{2}}_{q+1}
\delta^{\frac{1}{4}}_q\delta^{\frac{1}{4}}_{q+1})^{-1}.
\end{align}
Without loss of generality, we set $c=1$,
 $e(t)$ and $\widetilde{e}(t)$ given in Theorem \ref{t:main00} satisfy
\begin{align}\label{e1}
 \frac{1}{3}\delta_2 \le e(t), \widetilde{e}(t)\le 3\delta_2 , \quad \forall t\in [0, T].
\end{align}
The above conditions ensure that
\begin{align}\label{beta} \tau_q\lambda_q\delta^{1/2}_{q} \delta_{q+1}=\lambda^{1/2}_q\lambda^{-1/2}_{q+1}\delta^{1/4}_{q}\delta^{3/4}_{q+1}\leq \delta_{q+2}\lambda_{q+1}^{-4 \alpha}\quad,\quad
\ell^{-N_0 }_q\lambda^{ -N_0 +10}_q   \leq \lambda^{ -10}_{q+1}.\end{align}
Let $\epsilon_0=\lambda^{-\frac{1}{20}}_1$, we define the periodic domains $\Omega_{q}$ and $\mathcal{O}$ by
\begin{align*}
\Omega_{q}&=\bigcup_{k\in\ZZ^2}\Big(\big([0,1]^2\setminus [\tfrac{1}{2}-\tfrac{3}{4}\epsilon_0-\lambda^{-\frac{1}{30}}_{q-1} ,  \tfrac{1}{2}+\tfrac{3}{4}\epsilon_0+\lambda^{-\frac{1}{30}}_{q-1} ]^2\big)+k\Big)\\
\mathcal{O}&=\bigcup_{k\in\ZZ^2} \Big(  [\tfrac{1}{2}-\tfrac{2}{4}\epsilon_0  ,  \tfrac{1}{2}+\tfrac{2}{4}\epsilon_0  ]^2 +k\Big),
\end{align*}
This fact implies $\Omega_{q}\cap\mathcal{O}=\emptyset$ for any $q\geq1$.
We consider  the so-called MHD-Reynolds system.
\begin{equation}
\left\{ \begin{alignedat}{-1}
&\partial_t  u_q+\Div ( u_q \otimes  u_q)+\Div (b_q \otimes b_q)+\nabla P_q = \Div \RR_q,\\
&\partial_t b_q  +\Div (   u_q \otimes b_q)    -\Div (  b_q \otimes  u_q)      = 0,
\end{alignedat}\right.\label{MHD-R}
\end{equation}
where $q\geq1$ and the \emph{Reynolds stress} $\RR_q$  is a symmetric trace-free $2\times2$  matrix.  Let
 $$ u_q= \vloc_q+\vnon_q \quad \text{with} \quad  \text{spt}_x \vloc_q\subset \Omega_q,\quad\text{spt}_x b_q\subset \mathcal{O}.$$
This equality shows that $\vloc_q\otimes b_q=0$ and the  system \eqref{MHD-R} is reduced to
\begin{equation}
\left\{ \begin{alignedat}{-1}
&\partial_t  u_q+\Div \big((\vloc_q+\vnon_q)\otimes (\vloc_q+\vnon_q)\big)+\Div (b_q \otimes b_q)+\nabla P_q = \Div \RR_q,\\
&\partial_t b_q  +\Div (  \vnon_q\otimes b_q)    -\Div (  b_q \otimes \vnon_q)      = 0,
\end{alignedat}\right.  \label{e:subsol-euler}
\end{equation}
Define $ D_{t,q}:=\partial_t+ u_q\cdot \nabla $.

Now we introduce the \textit{spatial-separation-driven} iterative scheme as follows.
\begin{prop}
\label{p:main-prop00}
Let $b$ and $\alpha$ satisfy \eqref{b-alpha} and $T>0 ~ $. Assume that $(u_1, b_1,P_1,\RR_1)$ is a smooth solution of the system \eqref{e:subsol-euler} on $[0,T]$ with $$\text{spt}_x b_1\subset \bigcup_{k\in\ZZ^2}\Big(  [\tfrac{1}{2}-\tfrac{1}{16}\epsilon_0  ,  \tfrac{1}{2}+\tfrac{1}{16}  ]^2 +k\Big)\subsetneqq  \mathcal{O}.$$
For $q\geq1$, if $( u_q, b_q, P_q,\RR_q)$ obeys the equations \eqref{e:subsol-euler} with
\begin{align}
    &\| u_q\|_{0}+\|b_q\|_0\leq 1+\sum_{l=1}^q \delta^{1/2}_l,\label{e:vq-C0}
    \\
    &\|\vloc_q\|_{1}+\|\vnon_q\|_{1}+\|b_q\|_{1} \le   \lambda_q \delta^{1/2}_q  ,
    \label{e:vq-C1}
    \\
    & \|\vnon_q\|_{H^4}+\|b_q\|_{H^4} \le
    \epsilon^{\frac{3}{2}}_0+\sum_{l=1}^{q }\lambda^{-3}_l,
    \label{e:vq-H3}\\
&\|\vloc_q\|_{H^N}+\|\vnon_q\|_{H^N}+\|b_q\|_{H^N} \le   \lambda^{N }_q \delta^{1/2}_q,   \qquad \qquad\qquad\quad\quad\quad\,\, {N\ge 5 }\label{e:vq-CN}
    \\
    & \|  D_{t,q}\vloc_q\|_{N}+\|  D_{t,q}\vnon_q\|_{N}+\|  D_{t,q}b_q\|_{N} \le \lambda^{1+N}_q\delta_{q},\qquad\qquad \qquad   N=0,1,
    \label{e:Dv} \\
    &\| \RR_q \|_{N} \le \delta_{q+1}\lambda_q^{N-3 \alpha},  \qquad\qquad \qquad  \qquad \qquad\qquad\qquad\qquad\quad\,   N=0,1,2,
    \label{e:RR_q-C0}
    \\
    &\big\| D_{t,q}\RR_q \big\|_{N} \le \lambda^{1+N}_q\delta^{1/2}_{q}\delta_{q+1} ,\qquad\qquad\qquad \qquad\qquad\qquad\qquad\,\,  N=0,1,
    \label{e:DRR_q-C0} \\
    & \text{spt}_x (\vloc_q,\RR_q)\subset \Omega_q, ~~~~
      {\text{spt}_x b_q\subset   \mathcal{O}},\label{spt}\\
    &\frac{1}{3}\delta_{q+1}\le e(t) -\int_{\mathbb T^2} | u_q|^2+|b_q|^2 \dd x \le 3\delta_{q+1},\label{e:energy-q-estimate}
\end{align}
then there exists a solution $ ( u_{q+1} ,P_{q+1} ,b_{q+1} , \RR_{q+1})$ to the system \eqref{e:subsol-euler}  satisfying  \eqref{e:vq-C0}--\eqref{e:energy-q-estimate} with $q$ replaced by $q+1$, and
\begin{align}
       &\|\vloc_{q+1} - \vloc_q\|_{0}+ \|\vnon_{q+1} - \vnon_q\|_{0}\leq  \delta_{q+1}^{1/2},\label{e:velocity-diff}\\
       & \|b_{q+1} - b_q\|_{0}\leq  \delta_{q+1}^{1/2}, \quad\quad b_{q+1}(0,x)=b_q(0,x)*\psi_{\ell_q}.
\label{e:b-spt}
\end{align}
\end{prop}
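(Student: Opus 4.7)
The plan is to implement the spatial-separation-driven convex integration scheme outlined in the introduction, organized into four main stages: mollification, construction of perturbations, propagation of the magnetic field, and verification of the inductive estimates including spatial disjointness. First I would mollify $(u_q, b_q, \RR_q)$ at scale $\ell_q$ to produce a smooth approximation whose spatial supports are only slightly enlarged; by the choice of $\ell_q$ in \eqref{def-tauq} and the geometry of $\Omega_q$, $\mathcal{O}$ (with a safety buffer $\lambda_{q-1}^{-1/30}$ built into $\Omega_q$), the mollified Reynolds stress remains supported in $\Omega_q$ and the mollified magnetic field stays strictly inside $\mathcal{O}$, both disjoint. Standard Constantin--De Lellis--Sz\'ekelyhidi estimates give the usual mollification bounds $\|\RR_{\ell_q}-\RR_q\|_0\lesssim \ell_q\lambda_q^{1-3\alpha}\delta_{q+1}$ and the analogous $H^N$ estimates using \eqref{e:vq-C1}--\eqref{e:vq-CN}.

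Next, for the velocity we construct a Nash--Mikado perturbation $w_{q+1}=\wpq+\wcq$ supported in $\Omega_{q+1}$. Using the geometric lemma for symmetric $2\times 2$ matrices, write $\delta_{q+1}\,\mathrm{Id} - \RR_{\ell_q} = \sum_k a_k^2(\nabla\Phi)\nabla\Phi^{-1}(k\otimes k)\nabla\Phi^{-\TT}$ where $\Phi$ is the inverse flow of $\vloc_{\ell_q}+\vnon_{\ell_q}$ restricted to small time intervals of length $\tau_q$, and $a_k$ is multiplied by a spatial cutoff $\eta$ adapted to $\Omega_{q+1}$ so that $\wpq$ is localized in $\Omega_{q+1}$. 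The principal perturbation is built from shear profiles $\phi_{(\lambda_{q+1}k)}(\Phi)k$, and $\wcq$ is the standard divergence-free corrector obtained by writing $\wpq + \wcq = \lambda_{q+1}^{-1}\nabla^\perp(\cdots)$. By construction $\supp_x w_{q+1}\cap \supp_x b_q=\emptyset$. For the MHD perturbation $(d_{q+1},\theta_{q+1})$, I would solve the linearized MHD system with forcing equal to the (small) error terms that would otherwise intersect $\supp_x b_q$, obtaining solutions that are smooth and of size $\epsilon_0^2 \lambda_q^{-M}$ by using the high-frequency oscillation of $\wpq$ together with inverse divergence estimates; then set $\vnon_{q+1} = \vnon_q + d_{q+1}$ and update $b_{q+1}$ via this forcing.

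The crucial step, which I expect to be the main obstacle, is preserving the spatial disjointness $\supp_x \vloc_{q+1}\cap \supp_x b_{q+1}=\emptyset$ and $\supp_x \RR_{q+1}\cap \supp_x b_{q+1}=\emptyset$ under the iteration. The magnetic field $b_{q+1}$ evolves by the ideal induction equation driven by $\vnon_{q+1}$, so its support spreads in time with finite propagation speed bounded by $\|\vnon_q\|_0 + \|d_{q+1}\|_0 \lesssim \epsilon_0^{3/2}+\sum_l \lambda_l^{-3}$ from \eqref{e:vq-H3}; since we chose the buffer region between $\mathcal{O}$ and $\Omega_{q+1}$ to have width $\epsilon_0 + O(\lambda_{q}^{-1/30})$ and the timescale is bounded by $T$, one verifies that the propagated support of $b_{q+1}$ remains in $\{|x - \tfrac12|_\infty \le \tfrac12\epsilon_0 + CT(\epsilon_0^{3/2}+\sum_l \lambda_l^{-3})\}\subsetneq \mathcal{O}$ for $\epsilon_0=\lambda_1^{-1/20}$ small. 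This yields $\vloc_{q+1}\cdot b_{q+1}=0$ pointwise, so the advection terms $u_q\otimes b_q$ involving $\vloc$ vanish and only the $\vnon$-driven MHD dynamics remain, which are smooth and small.

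Finally I would compute the new Reynolds stress by decomposing $\Div\RR_{q+1}$ into transport, Nash, oscillation, magnetic, and mollification errors, applying the stationary-phase/inverse-divergence operator to each piece exactly as in the proof of Proposition~\ref{proptrans1}--\ref{propMosc}, with the key bounds $\|\Rtransport\|_0\lesssim \tau_q\lambda_{q+1}^{-1+\alpha}\delta_q^{1/2}\delta_{q+1}^{1/2}$, $\|\Rnash\|_0\lesssim \lambda_q\delta_q^{1/2}\lambda_{q+1}^{-1+\alpha}\delta_{q+1}^{1/2}$, and oscillation error $\lesssim \ell_q^{-N_0}\lambda_{q+1}^{-N_0+\alpha}\delta_{q+1}$, all of which are $\le \delta_{q+2}\lambda_{q+1}^{-3\alpha}$ by \eqref{beta} and the parameter choices \eqref{b-alpha}. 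The material derivative bound \eqref{e:DRR_q-C0} follows from the stability estimate on $\nabla\Phi$ and commutator bounds for $D_{t,q+1}$ against the inverse divergence. Energy matching \eqref{e:energy-q-estimate} is achieved by a standard amplitude adjustment of $a_k$ via an energy profile $e_q(t)$ as in Proposition~\ref{Prope}, using $\int|\wpq|^2 \approx \sum a_k^2 = 2\delta_{q+1}\,\mathrm{vol}(\Omega_{q+1}) + \mathrm{tr}\RR_{\ell_q}$ and the freedom in choosing $a_k$. Inductive bounds \eqref{e:vq-C0}--\eqref{e:vq-H3} for $\vnon_{q+1}=\vnon_q+d_{q+1}$ and $b_{q+1}$ follow from the smallness of $d_{q+1}$ and the transport-type estimates with factor $\lambda_l^{-3}$ geometrically summable.
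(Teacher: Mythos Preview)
Your high-level plan (mollify, build a Nash perturbation for $\vloc$, solve an auxiliary MHD system for the perturbation of $(\vnon,b)$, then verify disjointness via finite propagation speed) matches the paper's architecture, but two points are either wrong or missing in a way that would break the argument.

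First, you invoke concentrated shear profiles $\phi_{(\lambda_{q+1}k)}(\Phi)k$ and cite Propositions~\ref{proptrans1}--\ref{propMosc}; those belong to the Section~\ref{sec2} construction (the 3D case reduced to $2\tfrac12$D Euler), where temporal intermittency and the Newton--Nash iteration are used precisely to avoid interactions of concentrated 2D profiles in different directions. For Proposition~\ref{p:main-prop00} the paper uses instead smooth exponential plane waves $e^{2\pi i\lambda_{q+1}k\cdot\Phi_i}\bar k$ (Lemma~\ref{Betrimi}) with no temporal concentration and no Newton step; this is why the regularity threshold here is $1/5$, not $1/3$. If you kept concentrated profiles without the Newton machinery the oscillation error would contain uncontrolled cross terms of size $\delta_{q+1}$. (Your transport bound $\|\Rtransport\|_0\lesssim \tau_q\lambda_{q+1}^{-1+\alpha}\delta_q^{1/2}\delta_{q+1}^{1/2}$ also has $\tau_q$ where $\tau_q^{-1}$ is meant.)

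Second, and more seriously, you do not explain how $\text{spt}_x\RR_{q+1}\subset\Omega_{q+1}$ is to be preserved. The operators $\mathcal R$, $\mathcal R_{\vex}$ of Definition~\ref{def.R} are nonlocal, so applying the ``stationary-phase/inverse-divergence operator'' to the transport, Nash and oscillation errors would spread support over all of $\mathbb T^2$ and destroy the inductive hypothesis \eqref{spt}. The paper resolves this with a \emph{local} inverse-divergence iteration (Lemma~\ref{tracefree}): each error of the form $G\,\rho(\lambda_{q+1}\Phi)$ is written as $\Div\mathring R+\nabla P+E$ with all three pieces supported in $\text{spt}_x G\subset\Omega_{q+1}$; after $N_0$ iterations the remainder $E$ has size $(\ell_q^{-1}/\lambda_{q+1})^{N_0}\lesssim\lambda_{q+1}^{-10}$. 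This remainder cannot be placed in $\RR_{q+1}$ (it is not in divergence form), so it is fed as the forcing $F_{q+1}$ into the MHD perturbation flow \eqref{e:wt}. Thus the role of $(\theta_{q+1},d_{q+1})$ is not to cancel ``errors that would otherwise intersect $\text{spt}_x b_q$'' but to \emph{exactly} solve a \emph{nonlinear} (not linearized) MHD system driven by the small compactly supported remainders $F_{q+1}$ together with the mollification commutators $\RR_{\textup{mol,v}},\RR_{\textup{mol,b}}$; this is what simultaneously keeps the magnetic equation error-free and $\RR_{q+1}$ supported in $\Omega_{q+1}$. Without this local decomposition the spatial-separation scheme collapses.
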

The following proposition shows that if $e(t)=\tilde{e}(t)$ for $[0, T/4]$, then the limit functions of the approximate solutions constructed in Proposition \ref{p:main-prop00} share the same initial value.
\begin{prop}\label{p:main-prop2}Let $T>0$ and $e(t)=\widetilde{e}(t)$ for $t\in[0, \tfrac{T}{4}+8\tau_1]$. Suppose that  $(v_{q } ,P_{q } ,b_{q } , \RR_{q })$ solves
\eqref{e:subsol-euler} and satisfies  \eqref{e:vq-C0}--\eqref{e:energy-q-estimate} ,    $(\widetilde{v}_{q } ,\widetilde{P}_{q } ,\widetilde{b}_{q } , \tRR_{q })$ solves
\eqref{e:subsol-euler} and satisfies  \eqref{e:vq-C0}--\eqref{e:energy-q-estimate}  with $e(t)$ replaced by $\widetilde{e}(t)$. Then   if
\begin{align*}
 v_{q }=\widetilde{v}_{q },\quad b_{q }=\widetilde{b}_{q }  \quad \RR_{q } ={\tRR}_{q },\quad\text{on}\quad [0, \tfrac{T}{4}+5\tau_q],
\end{align*}
 we have
\begin{align*}
 v_{q +1}=\widetilde{v}_{q+1 },\quad b_{q +1}=\widetilde{b}_{q+1 }  \quad \RR_{q+1 } ={\tRR}_{q+1},\quad\text{on}\quad [0, \tfrac{T}{4}+5\tau_{q+1}].
\end{align*}
\end{prop}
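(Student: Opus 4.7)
The plan is to walk through every sub-step of the construction that produces $(v_{q+1}, b_{q+1}, \RR_{q+1})$ from $(v_q, b_q, \RR_q)$ in Proposition~\ref{p:main-prop00}, and to verify at each sub-step that the value of the output at a time $t$ depends only on the input on a window of the form $[t-C\tau_q, t+C\tau_q]$ for some universal constant $C$. Since the parameter choices in \eqref{def-tauq} force $\tau_{q+1}/\tau_q \ll 1$ for $a$ chosen sufficiently large, the buffer $5\tau_q$ furnished by the inductive hypothesis comfortably absorbs this temporal spreading and still leaves $5\tau_{q+1}$ of agreement at level $q+1$.

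First I would record that the spatial mollification $\psi_{\ell_q}$ acts pointwise in $t$, so the mollified tuple agrees with its tilded analogue on the full interval $[0, T/4 + 5\tau_q]$. Next, the gluing stage replaces the mollified solution near each node $t_i = i\tau_q$ by the classical exact flow of the underlying ideal MHD/Euler-type system launched from $t_i$ and supported in $[t_i-\tau_q, t_i+\tau_q]$. Smoothness and local well-posedness yield uniqueness of each such flow, and since the launching data at every admissible $t_i$ coincide with their tilded versions, the glued objects $\vv_q$, $\bar b_q$, $\RRR_q$ agree with their tilded analogues on an interval slightly shorter than $[0, T/4 + 5\tau_q]$, say $[0, T/4 + 4\tau_q]$. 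The perturbation stage is where the argument must be the most careful: the MHD perturbation flows $(d_{q+1}, \theta_{q+1})$ are solutions of a 2D ideal MHD system driven by level-$q$ error forcings and satisfy the usual hyperbolic continuous dependence on data, while the shear-flow correctors $\wpq$, $\wcq$ are built from amplitudes and inverse flow maps that are either pointwise in $t$ or $O(\tau_q)$-local in $t$. All of these constructions therefore transmit the equality of inputs, up to a further shrinkage by $O(\tau_q)$.

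The only place where the hypothesis $e \equiv \widetilde e$ plays a role is the energy corrector, built from an energy gap $e_q(t)$ that depends on $e(t)$ and on spatial integrals of the level-$q$ fields; the assumption $e \equiv \widetilde e$ on $[0, T/4 + 8\tau_1] \supset [0, T/4 + 5\tau_q]$ together with the field equality already established guarantees $e_q = \widetilde e_q$, and hence the correctors coincide on the same interval. The initial-value clause $b_{q+1}(0,\cdot) = b_q(0,\cdot) * \psi_{\ell_q}$ required in \eqref{e:b-spt} is automatic from $b_q(0,\cdot) = \widetilde b_q(0,\cdot)$. The main obstacle---largely bookkeeping---is to pin down the universal constant $C$ governing the total temporal spread from gluing and hyperbolic perturbation; once $C \le 4$ is verified, the inductive condition reduces to $5\tau_{q+1} + C\tau_q \le 5\tau_q$, i.e.\ $\tau_{q+1} \le (1 - C/5)\tau_q$, which by \eqref{def-tauq} and $b>1$ holds for all sufficiently large $a$, completing the induction.
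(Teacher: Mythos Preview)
Your high-level strategy—walk through each stage of the construction and track how far in time the dependence spreads—is exactly what the paper does, and the final inequality $5\tau_{q+1}\le (5-C)\tau_q$ is the right closing step. However, you have described the wrong construction. Proposition~\ref{p:main-prop2} refers to the 2D iteration of Proposition~\ref{p:main-prop00}, which has \emph{no gluing stage}; the glued objects $\vv_q$, $\bar b_q$, $\RRR_q$ you mention belong to the 3D scheme of Section~2. In the 2D scheme the steps are only mollification and perturbation. The temporal spreading that costs you $\tau_q$'s comes instead from (i) the mollification-along-flow-lines $\RR_{\ell_q,\tau_q}(t,x)=\int \RR_{\ell_q}(t+s,\Phi^{-1}(t+s,x;t))\varphi_{\tau_q}(s)\,ds$, which uses $\RR_{\ell_q}$ on $[t-\tau_q,t+\tau_q]$; (ii) the backward flow maps $\Phi_i$ with anchor times $t_i=i\tau_q$; and (iii) the temporal cutoffs $\eta_i$ whose supports have length $\sim\tau_q$. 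With these corrections the bookkeeping in the paper goes $5\tau_q\to 4\tau_q\to 2\tau_q$, after which $5\tau_{q+1}\le 2\tau_q$ suffices.

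Two smaller points. First, the MHD perturbation flow $(\theta_{q+1},d_{q+1})$ is an initial-value problem launched at $t=0$ with zero data; once the forcings $\Div\RR_{\textup{mol,v}}$, $\Div\RR_{\textup{mol,b}}$, $F_{q+1}$ agree on $[0,\tfrac{T}{4}+2\tau_q]$, uniqueness of smooth solutions gives equality of $(\theta_{q+1},d_{q+1})$ on that entire interval—this is forward causality, not an $O(\tau_q)$-local dependence, so no further shrinkage occurs here. Second, there is no separate ``energy corrector'' in this scheme: the energy profile enters through the amplitude $a_{k,i}=\eta_i\chi_q e_q^{1/2}a_k(\mathrm{Id}-\RR_{\ell_q,\tau_q}/e_q)$, so the hypothesis $e\equiv\widetilde e$ is needed precisely to make $e_q=\widetilde e_q$ and hence $\bar a_{k,i}=\widetilde{\bar a}_{k,i}$.
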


\noindent \textbf{Propositions \ref{p:main-prop00} and \ref{p:main-prop2} imply Proposition \ref{t:main00}.}
We start the iteration with
 $$(u_1,b_1,P_1,\RR_1)=(\widetilde{u}_1,\widetilde{b}_1,\widetilde{P}_1,\tRR_1)
=( 0+g(x) ,g(x),0,0),$$
and
$$\vloc_1=\widetilde{u^{\mathcal{C}}_1}=0,\qquad\vnon_1=\widetilde{u^{\mathcal{P}}_1}=g(x),$$
where $g(x)\in C^{\infty}(\mathbb{T}^2)$ satisfies
$$\text{spt}_xg(x)\subset \bigcup_{k\in\ZZ^2}\Big(\big([0,1]^2\setminus [\tfrac{1}{2}-\tfrac{1}{16}\epsilon_0  ,  \tfrac{1}{2}+\tfrac{1}{16}\epsilon_0  ]^2\big)+k\Big),\quad \|g\|_{H^{N_0}}\leq \lambda^{-5}_{ 1}.$$

Let $e(t), \widetilde{e}(t)$ be two distinct  smooth functions satisfying \eqref{e1} on $[0,T]$, and $e (t)=\widetilde{e}(t),~t\in[0,\frac{T}{2}] $. It is easy to see that $(u_1,P_1,b_1,\RR_1)$ and $(\widetilde{u}_1,\widetilde{P}_1,\widetilde{b}_1,\tRR_1)$ satisfy \eqref{e:vq-C0}--\eqref{spt} for $q=1$, and $e(t),~\widetilde{e}(t)$ satisfy \eqref{e:energy-q-estimate} for $q=1$.

Then, making use of Proposition \ref{p:main-prop}, we obtain  $C^0_{t,x}$  convergent sequence of functions such that
$$ (u^{\mathcal{C}}_q , u^{\mathcal{P}}_q ,P_q,b_q,\RR_q) \to (\vloc,\vnon , P ,b ,0 ) ,~(\widetilde{u^{\mathcal{C}}_q} ,\widetilde{u^{\mathcal{P}}_q} , \widetilde{P}_q,\widetilde{b}_q,\widetilde{\RR}_q) \to (\widetilde{{\vloc}} ,\widetilde{{\vnon}} , \widetilde{P} ,\widetilde{b} ,0 ).  $$
Moreover, we have $\text{spt}_x\vloc  \cap \text{spt}_x b=\text{spt}_x\widetilde{\vloc}  \cap \text{spt}_x \widetilde{b}=\emptyset$ and
\begin{alignat}{8}
     \int_{\mathbb{T}^2}|u|^2+|b|^2 = e (t),\quad \int_{\mathbb{T}^2}|\widetilde{u}|^2+|\widetilde{b}|^2 = \widetilde{e} (t),\quad t\in[0,T].\end{alignat}
     Letting $u=\vloc+\vnon$ and $\widetilde{u}=\widetilde{{\vloc}}+\widetilde{{\vnon}}$, one can deduce  that $(u, b),~(\widetilde{u},\widetilde{b})\in C^{\beta'}_{t,x}$ for all $\beta'<\beta<\frac{1}{5}$ as in \cite{BDSV} and solve \eqref{E}.

Finally,  Proposition \ref{p:main-prop2} ensures  that $(u ,b)\big|_{t=0}=(\widetilde{u}, \widetilde{b})\big|_{t=0}$, and so we prove that the non-uniqueness for weak solutions  by choosing two  smooth  non-increasing energy profiles $e(t),\widetilde{e}(t)$ such that
$$e (t)\neq \widetilde{e}(t),~t\geq\tfrac{T}{2}. $$ Thus, we complete the proof of Proposition \ref{t:main00}, which immediately implies Theorem~\ref{main00}. We now direct our focus to the proofs of Proposition  \ref{p:main-prop00} and  Proposition \ref{p:main-prop2}.

\subsubsection{Proof of Proposition \ref{p:main-prop00}}We prove Proposition \ref{p:main-prop00} in three steps: mollification, constructing perturbations, and estimating errors.\\
\noindent{\textbf{Mollification}}\,\, Let  mollifiers $\varphi_{\epsilon}(t)$ and $\psi_{\epsilon}(x)$ be given by \eqref{Moll},
 and $( u_{\ell_q},P_{\ell_q},b_{\ell_q}, \RR_{\ell_q} )$ be defined by
\begin{align*}
   &  u_{\ell_q} \coloneq  u_q * \psi_{\ell_q} ,~~\vloc_{\ell_q} \coloneq \vloc_q * \psi_{\ell_q},~~\vnon_{\ell_q} \coloneq \vnon_q * \psi_{\ell_q},\\
   &P_{\ell_q} \coloneq P_q * \psi_{\ell_q},~~b_{\ell_q} \coloneq b_q * \psi_{\ell_q} ,~~\RR_{\ell_q} \coloneq \RR_q * \psi_{\ell_q},
\end{align*}
where $\ell_q$ is introduced in \eqref{def-tauq}. We denote
\[D_{t, \ell_q}=\partial_t+ u_{\ell_q}\cdot\nabla, \quad  \text{and}\quad  t_i:=i\tau_q,~~0\leq i\leq  \lceil\tfrac{T}{\tau_q}\rceil.\]

Let $\Phi(s,x;t)$ be the backward flow map with the drift velocity $ u_{\ell_q}$ defined on  $[t ,t+2\tau_q]$ starting at the initial time $s=t$:
\begin{equation}
\left\{ \begin{alignedat}{-1}
&D_{s, \ell_q}\Phi = 0,
\\
  &  \Phi |_{s=t} =  x .
\end{alignedat}\right.
\end{equation}
Particularly, we denote $\Phi (s,x;t_i)$ by $\Phi_i(s,x)$. Then $\Phi_i(s,x)$ satisfies that
 \begin{equation}
\left\{ \begin{alignedat}{-1}
&\partial_s \Phi_i + u_{\ell_q}\cdot\nabla\Phi_i= 0,
\\
  &  \Phi_i (t_i ,x ) =  x.
\end{alignedat}\right.
 \label{flow}
\end{equation}
From the definition of $\Phi$, one deduces that the forward flow  $\Phi^{-1}$ satisfies
\begin{equation}
\left\{ \begin{alignedat}{-1}
&\partial_s \Phi^{-1} (s,x; t)   =  u_{\ell_q}(s,\Phi^{-1} (s,x;t)),
\\
  &  \Phi^{-1}|_{s=t} =  x .
\end{alignedat}\right.
\end{equation}
We give the mollification  as introduced in \cite{GR,Ise17},
\begin{align*}
    \RR_{\ell_q,\tau_q}(t,x):=(\varphi_{\tau_q} *_{\Phi^{-1} } \RR_{\ell_q})(t,x)=
\int_{\mathbb{R}}\RR_{\ell_q}(t+s,\Phi^{-1}(t+s,x;t))
\varphi_{\tau_q}(s)\dd s.
\end{align*}
Then we mollify $(u_q,b_q)$, the resulting pair solves the following system:
\begin{equation}
\left\{ \begin{alignedat}{-1}
&\partial_t  u_{\ell_q}+\Div \big( u_{\ell_q}\otimes  u_{\ell_q}\big)+\Div (b_{\ell_q} \otimes b_{\ell_q})+\nabla P_q = \Div \RR_{\ell_q}+\Div\RR_{\textup{mol,v}},\\
&\partial_t b_{\ell_q}  +\Div (  \vnon_{\ell_q}\otimes b_{\ell_q})    -\Div (  b_{\ell_q} \otimes \vnon_{\ell_q})      = \Div\RR_{\textup{mol,b}},
\end{alignedat}\right.  \label{modify-e:subsol-euler}
\end{equation}
where  $$\RR_{\textup{mol,v}}=\big( u_{\ell_q}\otimes  u_{\ell_q}-( u_q \otimes u_q)_{\ell_q}\big)-\big(b_{\ell_q}\otimes b_{\ell_q}-(b_q \otimes b_q)_{\ell_q}\big),$$ $$\RR_{\textup{mol,b}}=\big(\vnon_{\ell_q}\otimes b_{\ell_q}-(\vnon_{ q}\otimes b_{ q})_{\ell_q}\big)-\big(b_{\ell_q} \otimes \vnon_{\ell_q}-(b_{ q} \otimes \vnon_{ q})_{\ell_q}\big) .$$ By the standard method as in \cite{GR}  ,  we have the following two propositions.
\begin{prop}[ Estimates for mollified functions]\label{p:estimates-for-mollified}
Let $q\geq1$, then
\begin{align}
&\|\vloc_{\ell_q }\|_{N+1}+\|\vnon_{\ell_q }\|_{N+1}+\|b_{\ell_q }\|_{N+1}  \lesssim \delta_{q}^{1 / 2} \lambda_{q} \ell_q^{-N}, && \forall N \geq 0, \label{e:v_ell-CN+1}
\\
&\|  D_{t,\ell_q}\vloc_{\ell_q}\|_{N}+\|  D_{t,q}\vnon_{\ell_q}\|_{N}+\|  D_{t,q}b_{\ell_q}\|_{N} \le \lambda^{1+N}_q\delta_{q} , && N=0,1\\
&\|\RR_{\ell_q }\|_{N+\alpha} \lesssim  \delta_{q+1} \ell_q^{-N+\frac{5}{4}\alpha}, && \forall N \geq 0,\label{e:R_ell}\\
&\|D_{t, \ell_q}\RR_{\ell_q }\|_{N } \lesssim  \lambda^{1}_q\delta^{1/2}_{q}\delta_{q+1} \ell_q^{-N+\frac{5}{4}\alpha}, && \forall N=0,1,\label{e:R_ell}\\
&\|D^n_{t, \ell_q}\RR_{\ell_q,\tau_q}\|_{N}\leq \ell_q^{-N}\tau^{-n}_q\delta_{q+1}, && \forall N,n \geq 0, \label{e:v_ell-CN+1}
\end{align}
Moreover, ~we~have :
\begin{align}
&\| \vloc_{\ell_q}-\vloc_{q} \|_{H^5}+\| \vnon_{\ell_q}-\vnon_{q} \|_{H^5}+\| b_{\ell_q}-b_{q} \|_{H^5}
 \lesssim   \ell^{N_0-5}_q\lambda^{ N_0+1}_q \delta_{q}^{1 / 2} \leq \lambda^{ -10}_{q+1},    \label{e:v_ell-vq}
\\
&\| \RR_{\ell_q}-\RR_{\ell_q,\tau_q} \|_{N} \lesssim \lambda^{1/2}_q\lambda^{-1/2+N}_{q+1}\delta^{1/4}_{q}\delta^{3/4}_{q+1},~~~~~ \forall N =0,1,2 .\label{e:R_ell-vq}
\end{align}
\end{prop}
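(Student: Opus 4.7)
The plan is to adapt the standard mollification arguments of \cite{GR,Ise17} to the present spatial-separation framework. I would dispatch the pure spatial bounds by pairing the inductive hypotheses \eqref{e:vq-C1}--\eqref{e:DRR_q-C0} with the standard mollifier inequalities, then handle the material-derivative bounds via commutator arguments, and finally treat the Lagrangian-averaged estimates on $\RR_{\ell_q,\tau_q}$ by exploiting the flow structure built into its definition.

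For the pure spatial estimates, applying $\|f*\psi_{\ell_q}\|_{N+k} \lesssim \ell_q^{-k}\|f\|_N$ directly to \eqref{e:vq-C1} immediately yields the bounds on $\|\vloc_{\ell_q}\|_{N+1}$, $\|\vnon_{\ell_q}\|_{N+1}$ and $\|b_{\ell_q}\|_{N+1}$. For the H\"older bound on $\RR_{\ell_q}$ one interpolates between the $N=0$ and $N=1$ cases of \eqref{e:RR_q-C0}; the discrepancy between $\ell_q^\alpha$ and $\ell_q^{5\alpha/4}$ is absorbed since $\ell_q<1$. The $H^5$ difference bound in \eqref{e:v_ell-vq} follows from the classical remainder estimate $\|f - f*\psi_{\ell_q}\|_{H^5} \lesssim \ell_q^{N_0-5}\|f\|_{H^{N_0}}$ applied to \eqref{e:vq-CN} at $N = N_0$, combined with the second inequality in \eqref{beta}.

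For the material-derivative bounds on $D_{t,\ell_q}(\vloc_{\ell_q},\vnon_{\ell_q},b_{\ell_q})$ and $D_{t,\ell_q}\RR_{\ell_q}$, the plan is to exploit the commutator identity
\[
D_{t,\ell_q}(f*\psi_{\ell_q}) = (D_{t,q}f)*\psi_{\ell_q} + \big[u_{\ell_q}\cdot\nabla,\,\psi_{\ell_q}*\big]f + \big((u_{\ell_q}-u_q)\cdot\nabla f\big)*\psi_{\ell_q},
\]
whose first term is controlled by \eqref{e:Dv} or \eqref{e:DRR_q-C0}. The remaining two terms are classical Constantin--E--Titi type commutators of size $\ell_q\|\nabla u_q\|_0\|\nabla f\|_N$ plus lower order, whose prefactor $\ell_q\lambda_q\delta_q^{1/2}$ is $o(1)$ by \eqref{def-tauq}, so they are absorbed into the leading term.

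The main obstacle, and the genuinely new ingredient, is the estimate on the Lagrangian-mollified tensor $\RR_{\ell_q,\tau_q}$. The key observation is the flow identity
\[
(\partial_t + u_{\ell_q}(t,x)\cdot\nabla_x)\big[\RR_{\ell_q}(t+s, \Phi^{-1}(t+s,x;t))\big] = \partial_s\big[\RR_{\ell_q}(t+s,\Phi^{-1}(t+s,x;t))\big],
\]
which follows from $\partial_s\Phi^{-1}=u_{\ell_q}\circ\Phi^{-1}$ and the initial condition $\Phi^{-1}|_{s=t}=x$. An integration by parts in $s$ therefore converts each application of $D_{t,\ell_q}$ into a factor $\tau_q^{-1}$ produced by $\varphi'_{\tau_q}$; iterating yields $\|D^n_{t,\ell_q}\RR_{\ell_q,\tau_q}\|_N \lesssim \tau_q^{-n}\ell_q^{-N}\delta_{q+1}$ after absorbing $\|\nabla^N_x\Phi^{-1}\|_0 \lesssim \ell_q^{-N+1}$ on $|s|\leq\tau_q$, itself a consequence of the ODE for $\nabla\Phi^{-1}$ and of $\tau_q\|u_{\ell_q}\|_1 \lesssim 1$ from \eqref{def-tauq}. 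Finally, for the difference bound \eqref{e:R_ell-vq} I would Taylor-expand in $s$: the leading $O(\tau_q)$ term equals $\tau_q\|D_{t,\ell_q}\RR_{\ell_q}\|_0 \lesssim \tau_q\lambda_q\delta_q^{1/2}\delta_{q+1} = \lambda_q^{1/2}\lambda_{q+1}^{-1/2}\delta_q^{1/4}\delta_{q+1}^{3/4}$ by \eqref{def-tauq}, matching the $N=0$ case; the $N=1,2$ cases follow by interpolating between this bound and the trivial high-derivative upper bound $\|\RR_{\ell_q}\|_N+\|\RR_{\ell_q,\tau_q}\|_N \lesssim \delta_{q+1}\ell_q^{-N+5\alpha/4}$, using the parameter relations in \eqref{def-tauq} to convert $\ell_q^{-N}$ factors into the desired $\lambda_{q+1}^{N}$ scaling.
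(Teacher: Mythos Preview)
Your proposal is correct and follows essentially the same approach the paper intends: the paper itself gives no proof beyond ``By the standard method as in \cite{GR}'', and your breakdown---standard mollifier bounds for the spatial estimates, Constantin--E--Titi commutators for the material derivatives, and the flow identity $D_{t,\ell_q}\bigl[\RR_{\ell_q}(t+s,\Phi^{-1}(t+s,x;t))\bigr]=\partial_s\bigl[\RR_{\ell_q}(t+s,\Phi^{-1}(t+s,x;t))\bigr]$ to convert $D_{t,\ell_q}$ into $\tau_q^{-1}$ on the Lagrangian average---is exactly the \cite{GR,Ise17} machinery being invoked. Your treatment of \eqref{e:R_ell-vq} also agrees with how the paper later uses it in Proposition~\ref{propRem}: the $N=0$ case via the Taylor remainder $\tau_q\|D_{t,\ell_q}\RR_{\ell_q}\|_0$, and the $N=1,2$ cases via the crude triangle inequality $\|\RR_{\ell_q}\|_N+\|\RR_{\ell_q,\tau_q}\|_N\lesssim\ell_q^{-N}\delta_{q+1}$ together with the parameter relation $\ell_q^{-N}\delta_{q+1}\lesssim\lambda_{q+1}^N\tau_q\lambda_q\delta_q^{1/2}\delta_{q+1}$.
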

\begin{prop}[Estimates for $\Phi_i$]\label{p:estimates-for-inverse-flow-map} For  $n=0,1,2,~  N\in \mathbb{N}$ and $\forall s\in  [t_i,t_{i+2}]$, we have
\begin{align}
& |\Phi_i-x|\lesssim \tau_q, \label{Phi-x}
 \\
&\|(\nabla\Phi_i)^{\pm1}-{\rm Id}\|_N \lesssim \tau_q\lambda_q\delta_q^{1/2} \ell_q^{-N}.\\
 &\| (\nabla \Phi_i)^{-1}\|_N+ \|  \nabla \Phi_i\|_N \le \ell_q^{-N}, \label{e:nabla-phi-i-CN}
\\
&\|D^n_{t,\ell_q} (\nabla \Phi_i)^{-1}\|_N+\|D^n_{t,\ell_q} \nabla \Phi_i\|_N \lesssim (\lambda_q\delta_q^{1/2})^n \ell_q^{-N}, \label{e:nabla-phi-i-matd}\\
&{\|D^n_{t,\ell_q} ({\rm {det}}(\nabla \Phi_i))^{-1}\|_N} \lesssim (\lambda_q\delta_q^{1/2})^n \ell_q^{-N}. \label{e:nabla-phi-i-matd2}
\end{align}
\end{prop}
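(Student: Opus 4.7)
The plan is to derive these estimates by standard ODE/transport arguments applied to the mollified, divergence-free drift $u_{\ell_q}$, exploiting the CFL-type smallness $\tau_q\lambda_q\delta_q^{1/2}\ll 1$ that follows from \eqref{def-tauq} and \eqref{beta}. Since mollification commutes with the divergence, $\nabla\cdot u_{\ell_q}=0$, so the flow is volume preserving, and Proposition \ref{p:estimates-for-mollified} gives $\|u_{\ell_q}\|_{N+1}\lesssim\delta_q^{1/2}\lambda_q\ell_q^{-N}$ for all $N\ge 0$. Together with $\|u_{\ell_q}\|_0\lesssim 1$ (from \eqref{e:vq-C0}), this is enough input to run the standard estimates.

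For \eqref{Phi-x}, one reads $\Phi_i(s,x)$ as the position at time $t_i$ of the trajectory of $u_{\ell_q}$ passing through $(s,x)$; integration of the Lagrangian ODE immediately gives $|\Phi_i(s,x)-x|\le |s-t_i|\,\|u_{\ell_q}\|_0\lesssim\tau_q$ for $s\in[t_i,t_{i+2}]$. Differentiating the transport equation \eqref{flow} in $x$ yields
\begin{equation*}
D_{s,\ell_q}(\nabla\Phi_i)=-(\nabla u_{\ell_q})^{\TT}\nabla\Phi_i,\qquad \nabla\Phi_i\big|_{s=t_i}=\mathrm{Id},
\end{equation*}
and Gr\"onwall along characteristics (using $\tau_q\|\nabla u_{\ell_q}\|_0\lesssim\tau_q\lambda_q\delta_q^{1/2}\ll 1$) gives $\|\nabla\Phi_i-\mathrm{Id}\|_0\lesssim\tau_q\lambda_q\delta_q^{1/2}$. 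The same argument with the equation $D_{s,\ell_q}(\nabla\Phi_i)^{-1}=(\nabla\Phi_i)^{-1}(\nabla u_{\ell_q})^{\TT}$ (obtained by differentiating $(\nabla\Phi_i)(\nabla\Phi_i)^{-1}=\mathrm{Id}$ along $D_{s,\ell_q}$) gives the matching bound for $(\nabla\Phi_i)^{-1}-\mathrm{Id}$.

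For the higher-norm estimates, one differentiates the transport equation for $\nabla\Phi_i$ a total of $N$ times in $x$ and controls the resulting source by Fa\`a di Bruno / Leibniz combined with the mollification bounds $\|\nabla u_{\ell_q}\|_N\lesssim\delta_q^{1/2}\lambda_q\ell_q^{-N}$. Induction on $N$, together with the CFL-type smallness, closes the Gr\"onwall step and delivers $\|\nabla\Phi_i\|_N+\|(\nabla\Phi_i)^{-1}\|_N\le\ell_q^{-N}$. The material-derivative bounds come by applying $D_{t,\ell_q}$ directly to the identity $D_{s,\ell_q}\nabla\Phi_i=-(\nabla u_{\ell_q})^{\TT}\nabla\Phi_i$: the $n=1$ case is immediate, while $n=2$ requires the auxiliary bound $\|D_{t,\ell_q}\nabla u_{\ell_q}\|_N\lesssim\lambda_q^2\delta_q\,\ell_q^{-N}$ already furnished by Proposition \ref{p:estimates-for-mollified}, after which one iterates. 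Finally, since $\nabla\cdot u_{\ell_q}=0$, Liouville's theorem yields $\det(\nabla\Phi_i)\equiv 1$, so $\|D_{t,\ell_q}^n(\det\nabla\Phi_i)^{-1}\|_N$ is a trivial consequence of the bounds already established (in fact it vanishes for $n\ge 1$).

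The main technical obstacle is the bookkeeping for the $N$-derivative estimates: one must verify that the combinatorial sums arising from differentiating the nonlinear ODE $N$ times are dominated by the single factor $\ell_q^{-N}$ rather than producing spurious powers of $\tau_q^{-1}$ or $\lambda_q$. This is handled by writing all quantities in terms of the normalized variables $\tau_q\lambda_q\delta_q^{1/2}$ and $\ell_q\lambda_q$, then using the parameter relations \eqref{def-tauq}–\eqref{beta} to absorb lower-order terms into the leading one at each inductive step, following essentially the scheme of \cite{KMY} that underlies Proposition \ref{p:estimates-for-inverse-flow-map1}.
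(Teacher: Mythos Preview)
Your approach is correct and is precisely the ``standard method'' the paper defers to (it gives no proof, citing \cite{GR}): transport-equation estimates for $\nabla\Phi_i$ driven by the mollified bound $\|u_{\ell_q}\|_{N+1}\lesssim\delta_q^{1/2}\lambda_q\ell_q^{-N}$ and the CFL smallness $\tau_q\lambda_q\delta_q^{1/2}\ll1$, with the determinant estimate \eqref{e:nabla-phi-i-matd2} rendered trivial by incompressibility. One small caveat: Proposition~\ref{p:estimates-for-mollified} as stated only records $\|D_{t,\ell_q}u_{\ell_q}\|_N$ for $N=0,1$, so for the $n=2$ material-derivative bound at general $N$ you should note that the required control of $\|D_{t,\ell_q}\nabla u_{\ell_q}\|_N$ follows from the commutator identity $D_{t,\ell_q}\nabla u_{\ell_q}=\nabla D_{t,\ell_q}u_{\ell_q}-(\nabla u_{\ell_q})^{\TT}\nabla u_{\ell_q}$ together with the mollification-transferred bound $\|D_{t,\ell_q}u_{\ell_q}\|_{N+1}\lesssim\lambda_q\delta_q\,\ell_q^{-N}$ (obtainable from \eqref{e:Dv} and the spatial-only mollification), rather than being literally read off the proposition.
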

With the aid of \eqref{e:v_ell-vq}, one easily deduces that
\begin{align}
\|\RR_{\textup{mol,v}}\|_{H^5}+\|\RR_{\textup{mol,b}}\|_{H^5}&\leq   \lambda^{ -10}_{q+1},\label{est-Rell} \\
\|\RR_{\textup{mol,v}}\|_{H^N}+\|\RR_{\textup{mol,b}}\|_{H^N}&\leq   \ell^{- N  }_q\lambda^{ -10}_{q+1},\,\,\forall N\ge 6.\label{est-RellN}
\end{align}
Moreover, we deduce by $ \ell_q\ll\tau_q\ll\lambda^{-\frac{1}{30}}_{q}$ that $$\text{spt}_x \RR_{\ell_q},~~\text{spt}_x \RR_{\ell_q,\tau_q}\subset
 \bigcup_{k\in\ZZ^2}\Big(\big([0,1]^2\setminus [\tfrac{1}{2}-\tfrac{3}{4}\epsilon_0-10\lambda^{-\frac{1}{30}}_{q} ,  \tfrac{1}{2}+\tfrac{3}{4}\epsilon_0+10\lambda^{-\frac{1}{30}}_{q} ]^2\big)+k\Big)
.$$

\noindent{\textbf{Perturbation}}\,\,
We  give nonnegative smooth  temporal cutoff $\{\eta_i(t)\}_{i\in\ZZ}$ such that
\begin{align*}
& \supp \,\,\eta_i(t)\subset[i\tau_q-\tfrac{\tau_q}{6}, (i+1)\tau_q ], \quad \eta_i(t)|_{[i\tau_q,(i+1)\tau_q-\tfrac{\tau_q}{6}]}\equiv 1,\quad\|\tfrac{\dd^N}{\dd t^N}\eta_i\|_0\lesssim \tau^{-N}_q;
\end{align*}
and the   smooth  spatial cutoff
$ \chi_q(x) $ satisfying $\|\tfrac{\dd^N}{\dd x^N}\chi_q\|_0\lesssim \lambda^{\frac{N}{30}}_q$
\begin{align*}
&\supp \,\,\chi_q(x)\subset\bigcup_{k\in\ZZ^2}\Big(\big([0,1]^2\setminus [\tfrac{1}{2}-\tfrac{3}{4}\epsilon_0-6\lambda^{-\frac{1}{30}}_{q } ,  \tfrac{1}{2}+\tfrac{3}{4}\epsilon_0+6\lambda^{-\frac{1}{30}}_{q } ]^2\big)+k\Big),\\
&\chi_q(x)=1, \quad \text{if} \,\,x\in {\bigcup_{k\in\ZZ^2}\Big(\big([0,1]^2\setminus [\tfrac{1}{2}-\tfrac{3}{4}\epsilon_0-8\lambda^{-\frac{1}{30}}_{q } ,  \tfrac{1}{2}+\tfrac{3}{4}\epsilon_0+8\lambda^{-\frac{1}{30}}_{q } ]^2\big)+k\Big)}.
\end{align*}
This fact implies that $\chi_q\RR_{\ell_q,\tau_q}=\RR_{\ell_q,\tau_q}.$

The energy gap $e_q$ is defined as follows:
\begin{align}
e_q(t) \coloneq   \frac{1}{3\int_{\mathbb{T}^2}\chi^2_q\dd x}\Big(e(t) - \int_{\mathbb T^2}
   | u_q|^2+|b_q|^2\dd x   - \frac{\delta_{q+2}}{2} \Big). \label{e0:energy-gap}
\end{align}
It follows from \eqref{e:energy-q-estimate} that $e_q(t)$ is strictly positive in $[0,T]$ and satisfy
\begin{align}\label{eq}
\frac{1}{8}\delta_{q+1}\leq 3 e_q(t) \leq 8\delta_{q+1}.
\end{align}
Let $\Lambda=\Lambda_1 \cup \Lambda_2$ be given in Lemma \ref{first S}. We set
 $$\Lambda_i=\Lambda_1,\quad i ~\textup{is ~odd};\qquad\Lambda_i=\Lambda_2,\quad i ~~\textup{is ~even}.$$
Now, we construct \textit{the principle perturbation} $ \wpq$ as follows~$(\bar{k}:=k^{\perp})$:
 \begin{align}
 \wpq&:=\sum_{i;k\in\Lambda_i} \ii \eta_i\chi_qe^{1/2}_q a_k
 \big({\rm{Id}}-\tfrac{\RR_{\ell_q,\tau_q}}{e_q}\big)
 e^{2\pi\ii \lambda_{q+1}k\cdot\Phi_i}\bar{k}\notag =:\sum_{i;k\in\Lambda_i} \ii a_{k,i}e^{2\pi\ii \lambda_{q+1}k\cdot\Phi_i}\bar{k}.\label{def-a}
 \end{align}
We rewrite it as
 \begin{align}
 \wpq&=\sum_{i;k\in\Lambda_i}(\ii a_{k,i}e^{2\pi \ii \lambda_{q+1}k\cdot(\Phi_i-x)})e^{2\pi i\lambda_{q+1}k\cdot x}\bar{k}\\
  &=\frac{1}{ \lambda_{q+1}  }\sum_{i;k\in\Lambda_i}(   a_{k,i}e^{2\pi \ii \lambda_{q+1}k\cdot (\Phi_i-x)}) \nabla^{\perp} e^{ 2\pi \ii\lambda_{q+1}k\cdot x}\notag\\
 &=\frac{1}{ \lambda_{q+1}  }\sum_{i;k\in\Lambda_i}  \bar{a}_{k,i}   \nabla^{\perp} e^{ 2\pi \ii\lambda_{q+1}kx}= \sum_{i;k\in\Lambda_i} \ii \bar{a}_{k,i}     e^{ 2\pi \ii\lambda_{q+1}k\cdot x}\bar{k}.
 \end{align}
Now we introduce \textit{the incompressibility corrector} $\wcq$ by
\begin{equation}\label{wc}
 \begin{aligned}
 \wcq= \sum_{i;k\in\Lambda_i} \frac{1}{ \lambda_{q+1}  } \nabla^{\perp} \bar{a}_{k,i} e^{ 2\pi \ii\lambda_{q+1}k\cdot x} .
 \end{aligned}
 \end{equation}
One can rewrite $\wcq$ as
  \begin{align}\label{rewrite-wcq}
 \wcq=& \sum_{i;k\in\Lambda_i} \frac{1}{ \lambda_{q+1}  } \nabla^{\perp} \big( - a_{k,i}e^{2\pi \ii \lambda_{q+1}k\cdot(\Phi_i-x)}\big) e^{ 2\pi \ii\lambda_{q+1}k\cdot x} \notag\\
  =&\sum_{i;k\in\Lambda_i} \Big[-\frac{1}{ \lambda_{q+1} }\nabla^{\perp} ( {a}_{k,i}) -\frac{1}{ \lambda_{q+1}  } {a}_{k,i} \nabla^{\perp} ( \ii\lambda_{q+1}k\cdot(\Phi_i -x)) \Big]e^{ 2\pi \ii\lambda_{q+1}k\cdot \Phi_i }.
 \end{align}
 We define $\wloc_{q+1}$ by
\[\wloc_{q+1}:=\wpq+\wcq=\sum_{i;k\in\Lambda_i} \frac{1}{ \lambda_{q+1} } \nabla^{\perp}\big( \bar{a}_{k,i} e^{ 2\pi \ii\lambda_{q+1}k\cdot x}\big) .\]
Finally, we define the MHD perturbation flow  $(\theta_{q+1},d_{q+1})$  which solves
 \begin{equation}
\left\{ \begin{alignedat}{-1}
&\partial_t \theta_{q+1} + \theta_{q+1}\cdot\nabla \theta_{q+1} + \theta_{q+1}\cdot\nabla \vnon_{\ell_q}+\vnon_{\ell_q}\cdot\nabla \theta_{q+1}+\nabla P^{(\textup{per})}_{q+1} \\
   &\qquad\qquad\qquad=   d_{q+1}\cdot\nabla d_{q+1}+b_{\ell_q}\cdot\nabla d_{q+1} + d_{q+1}\cdot\nabla b_{\ell_q}+ \Div\RR_{\textup{mol,v}}+F_{q+1},\\
&\partial_t d_{q+1} + \theta_{q+1}\cdot\nabla d_{q+1} + \vnon_{\ell_q}\nabla d_{q+1}+\theta_{q+1}\cdot\nabla b_{\ell_q})\\
      &\qquad\qquad\qquad=d_{q+1}\cdot\nabla \theta_{q+1}+b_{\ell_q}\cdot\nabla \theta_{q+1}+d_{q+1}\cdot\nabla \vnon_{\ell_q} + \Div\RR_{\textup{mol,b}} ,
\\
 & \Div \theta_{q+1} =\Div d_{q+1} = 0,
  \\
  & (\theta_{q+1},~d_{q+1}) |_{t=0}=  0 ,
\end{alignedat}\right.
 \label{e:wt}
\end{equation}
 where the forcing term $F_{q+1}$ derives from from the terms $\Div(\wpq\otimes \wpq)$, $D_{t,\ell_q} (\wpq+\wcq)$ and $(\wpq+\wcq)\nabla \vloc_{\ell_q}$ , which will be given in \eqref{est-Fq+1}.

Now, we set $ \wnon_{q+1}:=\theta_{q+1}$ and define $w_{q+1}$ by
\[w_{q+1}=\wloc_{q+1}+\wnon_{q+1}.\]

In order to estimate the perturbation $\wloc_{q+1}$, we firstly establish the derivative estimates of the coefficient functions $a_{k,i}$ as below.
\begin{prop}\label{est-a}
Let $0\leq M\leq 2N_0$ and $N=0,1,2$ and $a_{k,i}$ be defined in \eqref{def-a}, we have
\begin{align}\label{key0}
\|D^N_{t,\ell_q}a_{k,i}\|_M\lesssim
\ell_q^{-M}\tau^{-N}_q\delta^{1/2}_{q+1}.
\end{align}
\end{prop}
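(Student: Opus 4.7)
The plan is to decompose $a_{k,i}=\ii\eta_i(t)\,\chi_q(x)\,e_q^{1/2}(t)\,a_k\bigl(\mathrm{Id}-\RR_{\ell_q,\tau_q}/e_q\bigr)$ as a product of four factors and estimate $D^N_{t,\ell_q}\partial_x^M a_{k,i}$ by a Leibniz-type expansion. First I would handle the easy factors: for the temporal cutoff, $\|D^N_{t,\ell_q}\eta_i\|_{M}=\|\partial_t^N\eta_i\|_0\,\delta_{M0}\lesssim \tau_q^{-N}$; for the spatial cutoff, $\|\partial_x^M\chi_q\|_0\lesssim \lambda_q^{M/30}$, which is negligible compared with the target $\ell_q^{-M}$ since $\ell_q\ll\lambda_q^{-1/30}$. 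The material derivative of $\chi_q$ picks up $u_{\ell_q}\cdot\nabla\chi_q$, controlled by $\|u_{\ell_q}\|_0\lambda_q^{M/30}$ and similarly negligible.

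Next I would estimate $e_q^{1/2}$. From \eqref{eq} we have $e_q\gtrsim\delta_{q+1}$, so $s\mapsto s^{1/2}$ is smooth on the image of $e_q$; a Fa\`a di Bruno argument then reduces matters to bounding $\partial_t^N e_q$. From \eqref{e0:energy-gap}, $\partial_t e_q$ involves $\partial_t\!\int(|u_q|^2+|b_q|^2)\,\dd x$, which via the equations \eqref{e:subsol-euler} and integration by parts equals a low-order expression in $\nabla u_q:\RR_q$ (with no loss of derivative), bounded by $\lambda_q\delta_q^{1/2}\delta_{q+1}\lesssim\tau_q^{-1}\delta_{q+1}$ thanks to \eqref{beta}. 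Iterating this procedure using the material-derivative hypotheses \eqref{e:Dv} and \eqref{e:DRR_q-C0} yields $\|\partial_t^N e_q^{1/2}\|_0\lesssim \delta_{q+1}^{1/2}\tau_q^{-N}$ for $N\le 2$, which is exactly the desired scaling.

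The main factor is the composition $a_k(\mathrm{Id}-\RR_{\ell_q,\tau_q}/e_q)$. By \eqref{e:RR_q-C0}, \eqref{e:R_ell-vq} and \eqref{eq}, the argument lies in the ball $B_{1/2}(\mathrm{Id})$ where $a_k$ is smooth per Lemma \ref{first S}. Via Fa\`a di Bruno applied to $a_k$, the estimate reduces to bounding mixed derivatives of $\RR_{\ell_q,\tau_q}/e_q$; here the key input is the last line of Proposition \ref{p:estimates-for-mollified}, namely $\|D^n_{t,\ell_q}\RR_{\ell_q,\tau_q}\|_M\le \ell_q^{-M}\tau_q^{-n}\delta_{q+1}$, which is precisely why the error was mollified along the flow. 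Combining with the $e_q$ bounds gives $\|D^N_{t,\ell_q}\partial_x^M\!\bigl(a_k(\mathrm{Id}-\RR_{\ell_q,\tau_q}/e_q)\bigr)\|_0\lesssim \ell_q^{-M}\tau_q^{-N}$. Multiplying the four factor estimates via Leibniz then produces \eqref{key0}.

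The hardest point is that $D_{t,\ell_q}$ and $\partial_x$ do not commute: $[\partial_x,D_{t,\ell_q}]=(\partial_x u_{\ell_q})\cdot\nabla$. When reducing mixed derivatives to a product of $N$ material derivatives and $M$ spatial derivatives in the order needed for the estimates on $\RR_{\ell_q,\tau_q}$, each commutator costs a factor $\|u_{\ell_q}\|_{k+1}\lesssim \delta_q^{1/2}\lambda_q\ell_q^{-k}$. The parameter inequality $\tau_q\lambda_q\delta_q^{1/2}\le 1$ from \eqref{beta} ensures these commutator contributions are absorbed into $\ell_q^{-M}\tau_q^{-N}$, closing the argument. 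A variant is to pass to Lagrangian coordinates along the flow $\Phi_i$, where $D_{t,\ell_q}$ becomes $\partial_t$ and the commutator issue disappears, at the mild cost of tracking pushforwards of spatial derivatives through $\nabla\Phi_i$, whose norms are controlled by Proposition \ref{p:estimates-for-inverse-flow-map}.
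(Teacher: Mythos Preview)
Your proposal is correct and follows essentially the same route as the paper's proof: the paper also factors $a_{k,i}=\eta_i\chi_q e_q^{1/2}a_k(\mathrm{Id}-\RR_{\ell_q,\tau_q}/e_q)$, bounds $e_q'$ and $e_q''$ by differentiating \eqref{e0:energy-gap} and integrating by parts via \eqref{e:subsol-euler} (yielding $|e_q'|\lesssim\lambda_q\delta_q^{1/2}\delta_{q+1}$ and $|e_q''|\lesssim\lambda_q^2\delta_q\delta_{q+1}$, both $\lesssim\tau_q^{-N}\delta_{q+1}$), and then applies the product rule with the flow-mollified estimate $\|D^n_{t,\ell_q}\RR_{\ell_q,\tau_q}\|_M\lesssim\ell_q^{-M}\tau_q^{-n}\delta_{q+1}$ from Proposition~\ref{p:estimates-for-mollified}. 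The only cosmetic difference is that the paper writes out the Leibniz terms for $N=1,2$ by hand rather than invoking Fa\`a di Bruno and the commutator identity explicitly; your treatment of the $[\partial_x,D_{t,\ell_q}]$ commutator via $\tau_q\lambda_q\delta_q^{1/2}\le 1$ (which indeed follows from the definition of $\tau_q$ in \eqref{def-tauq}) is a clean way to package what the paper does implicitly.
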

\begin{proof}
Using \eqref{eq} and the definition of $a_{k,i}$ in \eqref{def-a}, one obtains
\begin{align*}
\|a_{k,i}\|_M
\lesssim&
\delta^{1/2}_{q+1}\| \chi_{q} \|_M   \|a_k (  {\rm {Id}}-\tfrac{\RR_{\ell_q,\tau_q}}{e_q})   \|_0 +
\delta^{1/2}_{q+1}\|\chi_{q}\|_0   \|a_k (  {\rm {Id}}-\tfrac{\RR_{\ell_q,\tau_q}}{e_q})\|_M\\
\lesssim&\delta^{1/2}_{q+1}\lambda^{ M}_q+\delta^{1/2}_{q+1}\ell^{-M}_q\lesssim \delta^{1/2}_{q+1}\ell^{-M}_q,
\end{align*}
where we have used Propositions \ref{p:estimates-for-mollified} and  \ref{p:estimates-for-inverse-flow-map}.

With the aid of the equations \eqref{e:subsol-euler} and \eqref{e0:energy-gap}, one gets
\begin{align*}
e'_q(t)=\frac{1}{3}e'(t)
-\frac{1}{3}\int_{\mathbb{T}^2}\Div \RR_q\cdot  u_q\dd x=\frac{1}{3}e'(t)
+\frac{1}{3}\int_{\mathbb{T}^2} \RR_q:\nabla  u_q\dd x.
\end{align*}
This equality yields that
\begin{align*}
e''_q(t)=&\frac{1}{3}e''(t)
+\frac{1}{3}\frac{\dd}{\dd t}\int_{\mathbb{T}^2}  \RR_q:\nabla  u_q\dd x\\
=&\frac{1}{3}e''(t)
+\frac{1}{3}\int_{\mathbb{T}^2} D_{t,q}\RR_q:\nabla  u_q-( u_q\cdot\nabla\RR_q):\nabla  u_q\dd x\\
&+\frac{1}{3}\int_{\mathbb{T}^2} \RR_q:D_{t,q} (\nabla  u_q )-\RR_q:\big( u_q\cdot\nabla(\nabla  u_q)\big)\dd x\\
=&\frac{1}{3}e''(t)
+\frac{1}{3}\int_{\mathbb{T}^2} D_{t,q}\RR_q:\nabla  u_q -( u_q\cdot\nabla\RR_q):\nabla  u_q\dd x\\
&+\frac{1}{3}\int_{\mathbb{T}^2} \RR_q:D_{t,q} \nabla  u_q
+\big( u_q\cdot\nabla\RR_q\big):\nabla  u_q
\\
=&\frac{1}{3}e''(t)
+\frac{1}{3}\int_{\mathbb{T}^2} D_{t,q}\RR_q:\nabla  u_q +\RR_q:D_{t,q} \nabla  u_q \dd x.
 \end{align*}
 Note that $$\|D_{t,q}(\nabla  u_q)\|_0\leq\|\nabla D_{t,q}   u_q \|_0+\|\nabla  u_q\cdot\nabla   u_q \|_0\leq  2\lambda^2_q\delta_q,
 $$
one easily infers that
 \begin{align}
 \| e'_q(t)\|_0
\lesssim&   1+ \|  \RR_q :\nabla  u_q\|_0
\lesssim \lambda_q\delta_{q+1}\delta^{1/2}_q ,\label{rho1}\\
\|e''_q(t)\|_0
\lesssim&   1+ \|D_{t,q}\RR_q: \nabla  u_q \|_0
+ \|\RR_q:D_{t,q}(\nabla  u_q) \|_0
\lesssim    \lambda^2_q\delta_q\delta_{q+1} \label{rho2}.
  \end{align}
For simplicity, let $a_k:=a_k ({\rm Id}-\frac{\RR_{\ell_{t,x}}}{e_q})   $. Using the fact  $D_{t,\ell_q}\Phi_i=0$, we infer from \eqref{rho1}, \eqref{rho2}, Propositions \ref{p:estimates-for-mollified} and \ref{p:estimates-for-inverse-flow-map}  that
\begin{align*}
\|D_{t,\ell_q}a_{k,i}\|_M \lesssim&\delta^{1/2}_{q+1}\|a_kD_{t,\ell_q}( \eta_i\chi_{q} )   \|_M
+\|(\tfrac{\dd}{\dd t}e^{1/2}_q){\rho_0}^{1/2}\eta_i\chi_{u}a_k  \|_M+\delta^{1/2}_{q+1} \|(D_{t,\ell_q}a_k) \eta_i\chi_{q}\|_M \\
\lesssim&
\ell^{-M}_q\delta^{1/2}_{q+1}\tau^{-1}_q,
\end{align*}
and
\begin{align*}
\|D^2_{t,\ell_q}a_{k,i}\|_M \lesssim& \delta^{1/2}_{q+1}\|a_kD^2_{t,\ell_q}( \eta_i\chi_{q} )  \|_M
+\delta^{1/2}_{q+1}\| D_{t,\ell_q}( \eta_i\chi_{q} )\cdot   D_{t,\ell_q}a_k \|_M\\
&+\|(\tfrac{\dd^2}{\dd t^2}e^{1/2}_q )\cdot  \eta_i\chi_{q}a_k  \|_M+\|(\tfrac{\dd}{\dd t}e^{1/2}_q ) D_{t,\ell_q}( \eta_i\chi_{q}a_k ) \|_M\\
&+\delta^{1/2}_{q+1} \|(D^2_{t,\ell_q}a_k) \eta_i\chi_{q}   \|_M +\delta^{1/2}_{q+1} \|(D_{t,\ell_q}a_k) D_{t,\ell_q}( \eta_i\chi_{q} )  \|_M\\
\lesssim&
\ell^{-M}_q\delta^{1/2}_{q+1}\tau^{-2}_q.
\end{align*}
Therefore we complete the proof of Proposition \ref{est-a}.
\end{proof}
Based on the estimates for the  coefficients $a_{k,i}$ and $\text{spt}_xa_{k,i}\subset \Omega_q$, one can bound the perturbation $w_{q+1}$ as follows.
\begin{prop}[Estimates for $w_{q+1}$]\label{estimate-wq+1}We have
\begin{align}
&\|\wpq\|_0+\tfrac{1}{\lambda^N_{q+1}}\|\wpq\|_N\le \delta^{1/2}_{q+1},\label{estimate-wp} \\
&\|\wcq\|_0+\tfrac{1}{\lambda^N_{q+1}}\|\wcq\|_N\le\lambda^{-1}_{q+1}\delta^{1/2}_{q+1}\ell^{-1}_q, \label{estimate-wc}\\
&\|D_{t,\ell_q}\wpq\|_0+\tfrac{1}{\lambda_{q+1}}\|D_{t,\ell_q}\wpq\|_1\le \tau^{-1}_q\delta^{1/2}_{q+1}, \\
&\|D_{t,\ell_q}\wcq\|_0+\tfrac{1}{\lambda_{q+1}}\|D_{t,\ell_q}\wcq\|_1\le (\ell_q\lambda_{q+1} )^{-1}\tau^{-1}_q\delta^{1/2}_{q+1}.\label{estimate-Dt-wc}
\end{align}
\end{prop}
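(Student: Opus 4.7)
\textbf{Proof plan for Proposition \ref{estimate-wq+1}.}
The overall strategy is to treat every quantity as a linear combination of products of amplitude functions (bounded via Proposition \ref{est-a}) and oscillatory phases $e^{2\pi\ii\lambda_{q+1}k\cdot\Phi_i}$, and then to do Leibniz book-keeping, using that each spatial derivative hitting the phase costs $\lambda_{q+1}\|\nabla\Phi_i\|_0\sim\lambda_{q+1}$, while each spatial derivative hitting an amplitude costs only $\ell_q^{-1}$, and each material derivative $D_{t,\ell_q}$ annihilates the phase because $D_{t,\ell_q}\Phi_i=0$ by \eqref{flow}. The parameter choices in \eqref{b-alpha}--\eqref{def-tauq} will ensure that $\ell_q^{-1}\ll\lambda_{q+1}$ and $\tau_q^{-1}\ll\lambda_{q+1}\delta_{q+1}^{1/2}$, so the oscillatory factor always dominates.

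First I would prove the bounds \eqref{estimate-wp} on $\wpq$. From the formula $\wpq=\sum_{i;k\in\Lambda_i}\ii a_{k,i}e^{2\pi\ii\lambda_{q+1}k\cdot\Phi_i}\bar k$, the $C^0$ estimate follows at once from Proposition \ref{est-a} with $N=M=0$ and the finiteness of $\Lambda$, noting that the temporal cutoffs $\eta_i$ are pairwise essentially disjoint so that only $O(1)$ terms are active at a given time. For $\|\wpq\|_N$, apply Faà di Bruno/Leibniz and use $\|a_{k,i}\|_M\lesssim\ell_q^{-M}\delta_{q+1}^{1/2}$ together with the flow bound $\|(\nabla\Phi_i)^{\otimes N}\|_0\lesssim 1$ from \eqref{e:nabla-phi-i-CN}; the dominant contribution, in which all $N$ derivatives fall on the phase, yields $\lambda_{q+1}^N\delta_{q+1}^{1/2}$, while mixed terms with $j$ phase derivatives and $N-j$ amplitude derivatives are bounded by $\lambda_{q+1}^j\ell_q^{-(N-j)}\delta_{q+1}^{1/2}\lesssim\lambda_{q+1}^N\delta_{q+1}^{1/2}$ because $\ell_q^{-1}\le\lambda_{q+1}$.

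Next I would handle $\wcq$ via the representation \eqref{rewrite-wcq}, which writes $\wcq$ as a sum of terms of the form $[\lambda_{q+1}^{-1}\nabla^\perp a_{k,i}+a_{k,i}\nabla^\perp(\Phi_i-x)\cdot(\ii k)]e^{2\pi\ii\lambda_{q+1}k\cdot\Phi_i}$. The explicit $\lambda_{q+1}^{-1}$ in the first summand combined with Proposition \ref{est-a} gives $\lambda_{q+1}^{-1}\ell_q^{-1}\delta_{q+1}^{1/2}$, and the second summand is of the same or smaller order thanks to the estimate $\|\nabla\Phi_i-\mathrm{Id}\|_0\lesssim\tau_q\lambda_q\delta_q^{1/2}$ from Proposition \ref{p:estimates-for-inverse-flow-map}, which by the definition of $\tau_q$ and $\ell_q$ in \eqref{def-tauq} is bounded by $\lambda_{q+1}^{-1}\ell_q^{-1}$. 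Higher derivatives are controlled by the same Leibniz argument as for $\wpq$, now starting from this $\lambda_{q+1}^{-1}\ell_q^{-1}\delta_{q+1}^{1/2}$ base estimate; each further derivative again picks up at most $\lambda_{q+1}$, giving \eqref{estimate-wc}.

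Finally, for the material derivative bounds, the crucial observation is that $D_{t,\ell_q}e^{2\pi\ii\lambda_{q+1}k\cdot\Phi_i}=0$. Hence
\begin{equation*}
D_{t,\ell_q}\wpq=\sum_{i;k\in\Lambda_i}\ii\,(D_{t,\ell_q}a_{k,i})\,e^{2\pi\ii\lambda_{q+1}k\cdot\Phi_i}\bar k,
\end{equation*}
and the $C^0$ estimate follows from $\|D_{t,\ell_q}a_{k,i}\|_0\lesssim\tau_q^{-1}\delta_{q+1}^{1/2}$, while the $C^1$ estimate follows by letting one spatial derivative hit the phase (cost $\lambda_{q+1}$) or the amplitude $D_{t,\ell_q}a_{k,i}$ (cost $\ell_q^{-1}\tau_q^{-1}\delta_{q+1}^{1/2}$, subdominant). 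The computation for $D_{t,\ell_q}\wcq$ is identical using the representation \eqref{rewrite-wcq}, noting that $D_{t,\ell_q}(\Phi_i-x)=-u_{\ell_q}$ is bounded and that the $\lambda_{q+1}^{-1}$ prefactor persists. The only delicate point throughout is the parameter bookkeeping: one must verify $\ell_q^{-1}\le\lambda_{q+1}$ and $\tau_q^{-1}\ell_q^{-1}\le\lambda_{q+1}\tau_q^{-1}$ so that the Leibniz error terms are always absorbed into the leading oscillatory contribution; both inequalities are immediate from \eqref{b-alpha}--\eqref{def-tauq} and \eqref{beta}, and this is the main (but routine) obstacle in the proof.
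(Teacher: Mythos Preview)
Your proposal is correct and follows essentially the same approach as the paper: the paper's proof invokes Proposition \ref{est-a} directly for \eqref{estimate-wp}--\eqref{estimate-wc}, then computes $D_{t,\ell_q}\wpq=\sum \ii(D_{t,\ell_q}a_{k,i})e^{2\pi\ii\lambda_{q+1}k\cdot\Phi_i}\bar k$ using $D_{t,\ell_q}\Phi_i=0$ and applies Leibniz for the $C^1$ bound, exactly as you outline. Your write-up is in fact more detailed than the paper's, which dispatches the spatial estimates in a single sentence and the material-derivative ones in two displayed lines.
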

\begin{proof}
By Proposition \ref{est-a}, one can obtain \eqref{estimate-wp} and \eqref{estimate-wc} directly. Moreover, we have
\begin{align*}
 \|D_{t,\ell_q}\wpq\|_0
\le \| D_{t,\ell_q}{a}_{k,i}  e^{2\pi\ii\lambda_{q+1}k\cdot \Phi_i } \|_0
    \le  \tau^{-1}_q\delta^{1/2}_{q+1},
\end{align*}
and
\begin{align*}
  \|\partial_jD_{t,\ell_q}\wpq\|_0
\le \|\partial_j(D_{t,\ell_q}{a}_{k,i})   e^{ 2\pi\ii\lambda_{q+1}k\cdot{\Phi}_i}\|_0
+\|(D_{t,\ell_q}{a}_{k,i}) \partial_je^{ 2\pi\ii\lambda_{q+1}k\cdot{\Phi}_i}\|_0
\le   \lambda_{q+1}\tau^{-1}_q \delta^{1/2}_{q+1}.
\end{align*}
A similar calculation yields \eqref{estimate-Dt-wc} for $\wcq$.
\end{proof}
\noindent{\textbf{Reynolds stress}}\,\,
  Let $ u_{q+1}= u_{\ell_q}+w_{q+1},~b_{q+1}=b_{\ell_q}+d_{q+1} $, and
 $$ {T^{\text{rem}}_{q+1}}:=\Div \Big(\vloc_{\ell_q}\otimes \wnon_{q+1}+\wnon_{q+1}\otimes \vloc_{\ell_q}+\wloc_{q+1}\otimes \wnon_{q+1}+\wnon_{q+1}\otimes\wloc_{q+1}+\RR_{\ell_q} -\RR_{\ell_q,\tau_q}  \Big),$$
then we obtain that
\begin{align*}
&\partial_t  u_{q+1} +\Div \big( u_{q+1}\otimes  u_{q+1}-b_{q+1}\otimes b_{q+1}\big)  +\nabla P_{\ell_q}\\
=&\underbrace{ \partial_t \wloc_{q+1} +  u_{\ell_q}\cdot\nabla \wloc_{q+1}}_{T^{\textup{tran}}_{q+1}}+ \underbrace{\wloc_{q+1} \cdot \nabla u_{\ell_q}}_{T^{\textup{Nash}}_{q+1}}+
\underbrace{\Div \Big(\wloc_{q+1}\otimes \wloc_{q+1}+ \RR_{\ell_q,\tau_q}\Big)}_{T^{\textup{osc}}_{q+1}}+\Trem+\Div\RR_{\textup{mol,v}}\\
 & +  \partial_t \wnon_{q+1} + (\wnon_{q+1} +\vnon_{\ell_q})\cdot\nabla \wnon_{q+1}+ \wnon_{q+1}\cdot\nabla\vnon_{\ell_q}- (d_{q+1} +b_{\ell_q})\cdot\nabla d_{q+1}- d_{q+1}\cdot\nabla b_{\ell_q}  ,
\end{align*}
and
\begin{align*}
 &  \partial_t b_{q+1} +\Div \Big( \vnon_{q+1}\otimes b_{q+1}- b_{q+1}\otimes \vnon_{q+1}  \Big)   \\    =&\partial_t d_{q+1}     + (\wnon_{q+1} +\vnon_{\ell_q})\cdot\nabla d_{q+1} + \wnon_{q+1}\cdot\nabla b_{\ell_q}\\
 &- (d_{q+1} +b_{\ell_q})\cdot\nabla \wnon_{q+1}- d_{q+1}\cdot\nabla \vnon_{\ell_q}+\Div\RR_{\textup{mol,b}}  .
\end{align*}

\begin{prop}[Transport error]\label{proptrans}
$T^{\textup{tran}}_{q+1}$ has the following decomposition
 \begin{align*}
 T^{\textup{tran}}_{q+1}
 =& \textup{div}\sum_{m,i;k\in\Lambda_i} \lambda^{-1}_{q+1}A^{(m)}_{\textup{tran}}
      (\mathcal{L}_{\alpha_m}e^{2\pi ik\cdot })\circ(\lambda_{q+1}\Phi_i   )\\
& +\nabla \sum_{m,i;k\in\Lambda_i} \lambda^{-1}_{q+1}B^{(m)}_{\textup{tran}}    (\mathcal{L}_{\alpha_m}e^{2\pi ik\cdot })\circ(\lambda_{q+1}\Phi_i   )
  \\
  &+\sum_{i;k\in\Lambda_i} \lambda^{-N_0}_{q+1}C^{(N_0)}_{\textup{tran}}    (\mathcal{L}_{\alpha_m}e^{2\pi ik\cdot })\circ(\lambda_{q+1}\Phi_i   )\\
 :=& \textup{div} \RR^{\textup{tran}}_{q+1}+\nabla P^{\textup{tran}}_{q+1}+F^{\textup{tran}}_{q+1},
 \end{align*}
 where $\mathcal{L}_{\alpha_n} f = \partial^{\alpha_n} (\Delta^{-1})^n f$.  Moreover, we have $\text{spt}_x(\RR^{\textup{tran}}_{q+1},P^{\textup{tran}}_{q+1},F^{\textup{tran}}_{q+1})\subset \Omega_{q+1}$ and
\begin{align*}
&\|D_{t,\ell_q} \RR^{\textup{tran}}_{q+1}\|_{N-1}
\lesssim  (\lambda^N_{q+1}\delta^{1/2}_{q+1}) (\frac{\tau^{-1}_q}{\lambda_{q+1}}  \delta^{1/2}_{q+1}), ~ &&N=1,2\\
&\|  \RR^{\textup{tran}}_{q+1} \|_{N}\lesssim  \lambda^N_{q+1}(\frac{\tau^{-1}_q}{\lambda_{q+1}}  \delta^{1/2}_{q+1}), ~~~~&&N=0,1,2\\
&\| P^{\textup{tran}}_{q+1}\|_{0}\lesssim (\frac{\tau^{-1}_q}{\lambda_{q+1}} \delta^{1/2}_{q+1}),~~~
 \|F^{\textup{tran}}_{q+1}\|_N\lesssim    \lambda^{N-10}_{q+1},~~~&&0\leq  N\leq N_0.
\end{align*}
\end{prop}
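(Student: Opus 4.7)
The plan is to write $T^{\textup{tran}}_{q+1} = D_{t,\ell_q}\wloc_{q+1}$, use the transport identity $D_{t,\ell_q}\Phi_i = 0$ to push the material derivative onto the amplitudes, and then iterate a standard high-frequency (stationary-phase / inverse-divergence) expansion to split the resulting oscillatory sum into divergence-form, gradient-form, and higher-order remainder pieces.

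First, I would rewrite each piece of $\wloc_{q+1}=\wpq+\wcq$ as $\sum_{i;k\in\Lambda_i}A_{k,i}(t,x)\,e^{2\pi\ii\lambda_{q+1}k\cdot\Phi_i}$, reading off the vector-valued amplitude $A_{k,i}$ directly from \eqref{def-a} and \eqref{rewrite-wcq}. Applying $D_{t,\ell_q}$ annihilates the phase factor, so
\[
D_{t,\ell_q}\wloc_{q+1}=\sum_{i;k\in\Lambda_i}(D_{t,\ell_q}A_{k,i})\,e^{2\pi\ii\lambda_{q+1}k\cdot\Phi_i}.
\]
When $A_{k,i}$ involves $\nabla\Phi_i$, the identity $D_{t,\ell_q}\nabla\Phi_i=-(\nabla u_{\ell_q})^{\mathsf T}\nabla\Phi_i$ from Proposition \ref{p:estimates-for-inverse-flow-map} lets one bound $D_{t,\ell_q}A_{k,i}$. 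Combining Propositions \ref{est-a}, \ref{p:estimates-for-mollified} and \ref{p:estimates-for-inverse-flow-map} then yields $\|D_{t,\ell_q}A_{k,i}\|_N\lesssim\lambda_{q+1}^{-1}\tau_q^{-1}\delta_{q+1}^{1/2}\ell_q^{-N}$ for $0\le N\le 2N_0$, which already encodes the $\lambda_{q+1}^{-1}\tau_q^{-1}\delta_{q+1}^{1/2}$ factor on the right-hand sides of the claim.

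Next, using the factorization $e^{2\pi\ii\lambda_{q+1}k\cdot\Phi_i}(x)=(e^{2\pi\ii k\cdot})\circ(\lambda_{q+1}\Phi_i)(x)$ and the Green's identity $(2\pi\ii k)^{-2}\Delta e^{2\pi\ii k\cdot y}=e^{2\pi\ii k\cdot y}$, I would integrate by parts $N_0$ times along the composition with $\Phi_i$, at each step transferring a derivative from the phase to either the amplitude $A_{k,i}$ or to $\nabla\Phi_i$. Sorting the resulting terms by their vector structure produces: (i) pieces of the form $\Div\bigl[\lambda_{q+1}^{-1}A^{(m)}_{\textup{tran}}(\mathcal{L}_{\alpha_m}e^{2\pi\ii k\cdot})\circ(\lambda_{q+1}\Phi_i)\bigr]$, which assemble into $\Div\RR^{\textup{tran}}_{q+1}$; (ii) analogous pieces with a gradient outside, forming $\nabla P^{\textup{tran}}_{q+1}$; and (iii) an order-$N_0$ tail $F^{\textup{tran}}_{q+1}$. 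Each iteration of integration by parts gains a factor of $(\lambda_{q+1}\ell_q)^{-1}$, so by \eqref{beta} the tail satisfies $\|F^{\textup{tran}}_{q+1}\|_N\lesssim\lambda_{q+1}^{N-10}$; the $\RR^{\textup{tran}}_{q+1}$ and $P^{\textup{tran}}_{q+1}$ estimates follow from the amplitude bound multiplied by $\lambda_{q+1}^N$ from the surviving phase.

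For the support property, $A_{k,i}$ vanishes outside $\{\chi_q\neq 0\}$, and Proposition \ref{p:estimates-for-inverse-flow-map} gives $|\Phi_i-x|\lesssim\tau_q\ll\lambda_q^{-1/30}$, so every term is supported in an $O(\tau_q)$-fattening of $\{\chi_q\neq 0\}$, which by the buffer built into the definition of $\Omega_q$ lies inside $\Omega_{q+1}$. The main obstacle, in my view, is the material-derivative estimate $\|D_{t,\ell_q}\RR^{\textup{tran}}_{q+1}\|_{N-1}$: it requires commuting $D_{t,\ell_q}$ past the iterated inverse Laplacians and compositions with $\Phi_i$, producing $D_{t,\ell_q}^2 a_{k,i}$, which Proposition \ref{est-a} controls by $\tau_q^{-2}\delta_{q+1}^{1/2}\ell_q^{-M}$. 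The extra $\tau_q^{-1}$ loss is exactly compensated by the additional factor $\lambda_{q+1}\delta_{q+1}^{1/2}$ appearing on the right-hand side of the stated bound. To execute this cleanly one would invoke a material-derivative version of the non-stationary phase lemma (Lemma \ref{l:non-stationary-phase} from the appendix), repeatedly exploiting $D_{t,\ell_q}\Phi_i=0$ to commute material derivatives past compositions with $\Phi_i$ at no additional cost.
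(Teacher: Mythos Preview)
Your approach is the same as the paper's: push $D_{t,\ell_q}$ onto the amplitudes via $D_{t,\ell_q}\Phi_i=0$, then run an iterated inverse-divergence expansion to produce the $\Div$, $\nabla$, and tail pieces, and finally use Proposition~\ref{est-a} (including the second material derivative) for the bounds. Two small corrections are in order.

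First, your claimed amplitude bound $\|D_{t,\ell_q}A_{k,i}\|_N\lesssim\lambda_{q+1}^{-1}\tau_q^{-1}\delta_{q+1}^{1/2}\ell_q^{-N}$ is off by a factor $\lambda_{q+1}$ for the principal part: the $\wpq$-amplitude is $\ii a_{k,i}\bar k$ with no $\lambda_{q+1}^{-1}$, so Proposition~\ref{est-a} only gives $\tau_q^{-1}\delta_{q+1}^{1/2}\ell_q^{-N}$. The missing $\lambda_{q+1}^{-1}$ is supplied by the first integration-by-parts step itself, not the amplitude. Second, the tool you want is Lemma~\ref{tracefree} (the local inverse-divergence iteration), not Lemma~\ref{l:non-stationary-phase}: the latter passes through the nonlocal operator $\mathcal R$ and would destroy the compact-support claim $\text{spt}_x\subset\Omega_{q+1}$. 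Your verbal description of ``integrating by parts $N_0$ times and sorting into divergence, gradient, and tail'' is exactly Lemma~\ref{tracefree}, which moreover guarantees $\supp A^{(m)},B^{(m)},C^{(N_0)}\subset\supp G\subset\supp\chi_q$, so no $O(\tau_q)$-fattening is needed. The paper then handles $D_{t,\ell_q}\RR^{\textup{tran}}_{q+1}$ by applying $D_{t,\ell_q}$ directly to the coefficients $A^{(m)}$ via the commutator identity $D_{t,\ell_q}\partial^{\gamma}G=\partial^{\gamma}D_{t,\ell_q}G-\sum\partial^{\gamma_1}(\partial^{\zeta}u_{\ell_q}\cdot\nabla\partial^{\gamma_2}G)$, landing on $D_{t,\ell_q}^2 a_{k,i}$ just as you anticipated.
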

\begin{proof}
 Since $D_{t,\ell_q} \wpq= \sum_{i;k\in\Lambda_i}
    \ii D_{t,\ell_q}{a}_{k,i}   e^{ 2\pi \ii\lambda_{q+1}k\cdot {\Phi_i}}\bar{k}$, employing   Proposition \ref{tracefree} with $\Phi=\Phi_i,~G= \ii D_{t,\ell_q}{a}_{k,i} $ and $\rho^{(0)}=  e^{2\pi \ii k\cdot x}   $, we obtain
 \begin{align*}
 &\partial_t\wpq+ u_{\ell_q}\nabla\wpq \\
 =& \Div \sum_{m,i;k\in\Lambda_i} \lambda^{-1}_{q+1}A^{(m)}_{\text{tran,p}}
      (\mathcal{L}_{\alpha_m}e^{2\pi ik\cdot })\circ(\lambda_{q+1}\Phi_i   )
 +\nabla \sum_{m,i;k\in\Lambda_i} \lambda^{-1}_{q+1}B^{(m)}_{\text{tran,p}}    (\mathcal{L}_{\alpha_m}e^{2\pi ik\cdot })\circ(\lambda_{q+1}\Phi_i   )
  \\
  &+\sum_{i;k\in\Lambda_i} \lambda^{-N_0}_{q+1}C^{(N_0)}_{\text{tran,p}}    (\mathcal{L}_{\alpha_m}e^{2\pi ik\cdot })\circ(\lambda_{q+1}\Phi_i   )\\
 :=& \Div  \RR_{\text{tran,p}}+\nabla P_{\text{tran,p}}+F_{\text{tran,p}}.
 \end{align*}
 Using Propositions \ref{p:estimates-for-mollified}-\ref{est-a}, we have $\|G\|_m \lesssim {\ell^{-m}_q}\tau^{-1}_q\delta^{1/2}_{q+1} $, and
 \begin{align*}
&\|(A^{(m)}_{\text{tran,p}}(G), B^{(m)}_{\text{tran,p}}(G))\|_0 \lesssim \|G \|_{m-1}\lesssim {\ell^{-(m-1)}_q}\tau^{-1}_q\delta^{1/2}_{q+1},\\
&\|C^{(N_0)}_{\text{tran,p}}(G)\|_0 \lesssim \|G \|_{N_0}\lesssim {\ell^{-{N_0}}_q}\tau^{-1}_q\delta^{1/2}_{q+1}.
\end{align*}
Then we deduce that for $N=0,1,2$,
\begin{align}
 \|\RR_{\text{tran,p}}\|_{N}
\lesssim &  \sum_{m=1}^{ N_0}  \lambda^{-m+N}_{q+1}\|A^{(m)}_{\textup{tran}} \|_0+\sum_{n=1}^{ N_0}  \lambda^{-m}_{q+1}\|A^{(m)}_{\textup{tran}} \|_N\label{tran-p}\\
\lesssim & \lambda^{ N}_{q+1} \sum_{m=1}^{ N_0}(\frac{{\ell^{-1}_q}}{\lambda_{q+1}})^{m-1}(\frac{\tau^{-1}_q}{\lambda_{q+1}}) \delta^{1/2}_{q+1}
 \lesssim  \lambda^{ N}_{q+1} \frac{\tau^{-1}_q}{\lambda_{q+1}}  \delta^{1/2}_{q+1},\notag\\
 \| P_{\text{tran,p}}\|_{0}
\lesssim & \sum_{m=1}^{ N_0}  \lambda^{-m}_{q+1}\|B^{(m)}_{\textup{tran}} \|_0
\lesssim  \sum_{m=1}^{ N_0}(\frac{{\ell^{-1}_q}}{\lambda_{q+1}})^{m-1}(\frac{\tau^{-1}_q}{\lambda_{q+1}}) \delta^{1/2}_{q+1}
\lesssim
  \frac{\tau^{-1}_q}{\lambda_{q+1}}  \delta^{1/2}_{q+1},
\end{align}
and for $N\le N_0$,
\begin{align}
\|F^{\textup{tran}}_{q+1}\|_N
\lesssim&  \lambda^{-{ N_0} }_{q+1}\|C^{(  N_0)}_{\textup{tran}} \|_N     +\lambda^{-{ N_0}+N}_{q+1}\|C^{(  N_0)}_{\textup{tran}} \|_0\notag\\
\lesssim &  (\frac{ {\ell^{-1}_q}}{\lambda_{q+1}})^{{ N_0}}\lambda^{4+N}_{q+1}  \delta^{1/2}_{q+1}
\lesssim
\lambda^{N-10}_{q+1},
\end{align}
where we let $N_0=\frac{50(b+1)}{b-1}$  such that  $\ell^{-N_0 }_q\lambda^{ -N_0 +10}_q   \leq \lambda^{ -10}_{q+1}$.

To estimate $\|D_{t,\ell_q} \RR_{\text{tran,p}}\|_{N-1}$ for $N=1, 2$, it is sufficient to estimate $\|D_{t,\ell_q}(A^{(m)}_{\textup{tran,p}})  \|_{N-1}$. Using
\begin{align}
 D_{t,\ell_q}A^{(m)}_{\textup{tran,p}}
  \sim &
D_{t,\ell_q}\partial^{(\gamma)} (D_{t,\ell_q} {a}_{k,i})
\notag\\
\sim &\sum_{\substack{|\zeta|= 1\\ \gamma_1+\gamma_2+\zeta=\gamma \\|\gamma|=m-1}}   \partial^{(\gamma_1)} (\partial^{(\zeta)} u_{\ell_q}\cdot \nabla \partial^{(\gamma_2)}D_{t,\ell_q}{a}_{k,i})+\partial^{(\gamma)}D_{t,\ell_q}(D_{t,\ell_q} {a}_{k,i}),\notag
\end{align}
one can easily deduce that, for $N=1,2$,
\begin{align}
 &\|D_{t,\ell_q}A^{(m)}_{\textup{tran,p}}\|_{N-1}\notag\\
  \lesssim&
\sum_{\substack{|\zeta|= 1\\ \gamma_1+\gamma_2+\zeta=\gamma }}     \|\partial^{(\gamma_1)} \big(\partial^{(\zeta)}  u_{\ell_q}\cdot \nabla \partial^{(\gamma_2)} (D_{t,\ell_q}{a}_{k,i})\big)\|_{N-1}
+\|\partial^{(\gamma)}D_{t,\ell_q}(D_{t,\ell_q} {a}_{k,i})\|_{N-1}\notag\\
\lesssim& \ell^{-(m-1)-(N-1)}_q\lambda_q\delta^{1/2}_{q}\tau^{-1}_q\delta^{1/2}_{q+1}
+ \ell^{-(m-1)-(N-1)}_q\tau^{-2}_q\delta^{1/2}_{q+1}
\lesssim\ell^{-(m-1)-(N-1)}_q\tau^{-2}_q\delta^{1/2}_{q+1}.
\end{align}
Hence, we have
\begin{align*}
\|D_{t,\ell_q} \RR^{\textup{tran}}_{q+1}\|_{N-1}\leq& \sum_{m=1}^{ N_0}  \lambda^{-m+N-1}_{q+1}\|D_{t,\ell_q}A^{(m)}_{\textup{tran}} \|_{N-1}+\sum_{n=1}^{ N_0}  \lambda^{-m}_{q+1}\|A^{(m)}_{\textup{tran}} \|_{N-1}\\
\lesssim&
\lambda^{ N-1}_{q+1}\sum_{m=1}^{ N_0}(\ell_q\lambda_{q+1})^{-(m-1) } \tau^{-2}_q\lambda^{-1}_{q+1}\delta^{1/2}_{q+1}\\
\lesssim&
(
\lambda^{ N}_{q+1}\delta^{1/2}_{q+2}) \tau^{-1}_q\lambda^{-1}_{q+1}\delta^{1/2}_{q+1},
\end{align*}
where we use the fact that $\tau^{-1}_q\ll\lambda_{q+1}\delta^{1/2}_{q+2}$.

 Using Proposition \ref{tracefree} with $\Phi=\Phi_i,~G= \Big[\frac{1}{ \lambda_{q+1} } \nabla ( {a}_{k,i}) + \frac{1}{ \lambda_{q+1} } {a}_{k,i}\nabla ( \ii\lambda_{q+1}k(\Phi_i -x)) \Big]$ and $\rho^{(0)}=  e^{2\pi \ii k\cdot x}   $, we deduce that
 \begin{align*}
 &\partial_t\wcq+ u_{\ell_q}\nabla\wcq \\
 =& \Div \sum_{m;i;k\in\Lambda_i} \lambda^{-m}_{q+1}A^{(m)}_{\text{tran,c}}    (\mathcal{L}_{\alpha_m}e^{2\pi ik\cdot })\circ(\lambda_{q+1}\Phi_i   )\\
 &+\nabla \sum_{m;i;k\in\Lambda_i}\lambda^{-m}_{q+1} B^{(m)}_{\text{tran,c}}    (\mathcal{L}_{\alpha_m}e^{2\pi ik\cdot })\circ(\lambda_{q+1}\Phi_i   ) \\
 &+\sum_{i;k\in\Lambda_i}\lambda^{-N_0}_{q+1} C^{(N_0)}_{\text{tran,c}}  (\mathcal{L}_{\alpha_m}e^{2\pi ik\cdot })\circ(\lambda_{q+1}\Phi_i   ) \\
 :=& \Div \RR_{\text{tran,c}}+\nabla P_{\text{tran,c}}+F_{\text{tran,c}}.
 \end{align*}
Following the methods on estimating $\RR_{\text{tran,p}}$, we obtain
\begin{align}
&\|D_{t,\ell_q}\RR_{\text{tran,c}}\|_{N-1}
\lesssim (
\lambda^{ N}_{q+1}\delta^{1/2}_{q+2}) \tau^{-1}_q\lambda^{-1}_{q+1}\delta^{1/2}_{q+1},\qquad \qquad N=1,2,\\
 &\|\RR_{\text{tran,c}}\|_{N}
\lesssim   \lambda^{ N}_{q+1} \frac{\tau^{-1}_q}{\lambda_{q+1}}  \delta^{1/2}_{q+1},\qquad \qquad \qquad \qquad \qquad \,\,\, N=0,1,2, \\
 &\| P_{\text{tran,c}}\|_{0}
\lesssim
  \frac{\tau^{-1}_q}{\lambda_{q+1}}  \delta^{1/2}_{q+1}, ~~~~
  \|F_{\text{tran,c}}\|_N
\lesssim \lambda^{N-10}_{q+1} ,\qquad\qquad\,\,\,N\leq N_0 .\label{tran-c}
\end{align}
 Finally, let
\[\RR^{\textup{tran}}_{q+1}=\RR_{\text{tran,p}}+\RR_{\text{tran,c}}, \,\,P^{\textup{tran}}_{q+1}=P_{\text{tran,p}}+P_{\text{tran,c}},\,\,F^{\textup{tran}}_{q+1}=F_{\text{tran,p}}+F_{\text{tran,c}}.\]
Collecting \eqref{tran-p}--\eqref{tran-c} together, we finish the proof of Proposition \ref{proptrans}.
\end{proof}
\begin{prop}[Estimates for $\Rnash$]\label{nash}
$T^{\textup{Nash}}_{q+1}$ can be decomposed into the three parts:
 \begin{align*}
 T^{\textup{Nash}}_{q+1}
 =& \textup{div} \sum_{m,i;k\in\Lambda_i} \lambda^{-m}_{q+1}A^{(m)}_{\textup{Nash }}
      (\mathcal{L}_{\alpha_m}e^{2\pi \ii k\cdot })\circ(\lambda_{q+1}\Phi_i   )\\
 &+\nabla \sum_{m,i;k\in\Lambda_i} \lambda^{-m}_{q+1}B^{(m)}_{\textup{Nash }}    (\mathcal{L}_{\alpha_m}e^{2\pi \ii k\cdot })\circ(\lambda_{q+1}\Phi_i   )
  \\
  &+\sum_{i;k\in\Lambda_i} \lambda^{-N_0}_{q+1}C^{(N_0)}_{\textup{Nash }}   (\mathcal{L}_{\alpha_m}e^{2\pi \ii k\cdot })\circ(\lambda_{q+1}\Phi_i   )\\
 =:& \textup{div}  \RR^{\textup{Nash}}_{q+1}+\nabla P^{\textup{Nash}}_{q+1}+F^{\textup{Nash}}_{q+1},
 \end{align*}
 where $\mathcal{L}_{\alpha_n} f = \partial^{\alpha_n} (\Delta^{-1})^n f$. Moreover, we have $\text{spt}_x(\RR^{\textup{Nash}}_{q+1},P^{\textup{Nash}}_{q+1},F^{\textup{Nash}}_{q+1})\subset \Omega_{q+1}$ and
\begin{align*}
&\|D_{t,\ell_q} \Rnash\|_{N-1}
\lesssim  (\lambda^N_{q+1}\delta^{1/2}_{q+1}) \Big(\frac{\lambda_q\delta^{1/2}_q}{\lambda_{q+1}}  \delta^{1/2}_{q+1}\Big), ~~~~&&N=1,2\\
&\|  \Rnash \|_{N}\lesssim  \lambda^N_{q+1}\Big(\frac{\lambda_q\delta^{1/2}_q}{\lambda_{q+1}}  \delta^{1/2}_{q+1}\Big), ~~~~&&N=0,1,2\\
&\| P^{\textup{Nash}}_{q+1}\|_{0}\lesssim \Big(\frac{\lambda_q\delta^{1/2}_q}{\lambda_{q+1}}  \delta^{1/2}_{q+1}\Big),~~~
 \|\RR^{\textup{Nash}}_{q+1}\|_N\lesssim    \lambda^{N-10}_{q+1},~~~~&&N\leq N_0.
\end{align*}
\end{prop}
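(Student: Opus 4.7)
The plan is to mimic the structure of the transport-error decomposition in Proposition \ref{proptrans}. Write $\wloc_{q+1} = \wpq + \wcq$ and split
\[
T^{\textup{Nash}}_{q+1} = \wpq\cdot\nabla u_{\ell_q} + \wcq\cdot\nabla u_{\ell_q} =: T^{\textup{Nash,p}}_{q+1} + T^{\textup{Nash,c}}_{q+1}.
\]
For the principal piece, insert the representation $\wpq = \sum_{i;k\in\Lambda_i}\ii a_{k,i}\, e^{2\pi\ii\lambda_{q+1}k\cdot\Phi_i}\bar k$ to get
\[
T^{\textup{Nash,p}}_{q+1} = \sum_{i;k\in\Lambda_i} \ii\, a_{k,i}\big(\bar k\cdot\nabla u_{\ell_q}\big)\, e^{2\pi\ii\lambda_{q+1}k\cdot\Phi_i},
\]
and apply the microlocal/stationary-phase decomposition (Proposition \ref{tracefree}) with $\Phi=\Phi_i$, $\rho^{(0)}=e^{2\pi\ii k\cdot x}$, and amplitude $G_{\textup{p}} = \ii\, a_{k,i}(\bar k\cdot\nabla u_{\ell_q})$. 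For the incompressibility-corrector piece, use \eqref{rewrite-wcq} to write $\wcq$ as a product of a smooth amplitude and the same phase $e^{2\pi\ii\lambda_{q+1}k\cdot\Phi_i}$ and apply Proposition \ref{tracefree} again with amplitude $G_{\textup{c}} = -\lambda_{q+1}^{-1}[\nabla a_{k,i}+ a_{k,i}\nabla(\ii\lambda_{q+1}k\cdot(\Phi_i-x))](\bar k\cdot\nabla u_{\ell_q})$. This yields
\[
T^{\textup{Nash}}_{q+1} = \Div \RR^{\textup{Nash}}_{q+1} + \nabla P^{\textup{Nash}}_{q+1} + F^{\textup{Nash}}_{q+1}
\]
with the claimed $A^{(m)}$/$B^{(m)}$/$C^{(N_0)}$ structure.

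Next, I would bound the amplitudes. Combining Proposition \ref{p:estimates-for-mollified} with Proposition \ref{est-a} gives
\[
\|G_{\textup{p}}\|_m \lesssim \sum_{m_1+m_2=m}\|a_{k,i}\|_{m_1}\|u_{\ell_q}\|_{m_2+1} \lesssim \ell_q^{-m}\lambda_q\delta_q^{1/2}\delta_{q+1}^{1/2},
\]
and an analogous bound $\|G_{\textup{c}}\|_m\lesssim \lambda_{q+1}^{-1}\ell_q^{-1}\cdot \ell_q^{-m}\lambda_q\delta_q^{1/2}\delta_{q+1}^{1/2}$ after accounting for the extra derivative factors in \eqref{rewrite-wcq} and using Proposition \ref{p:estimates-for-inverse-flow-map}. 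Feeding these into the scheme of Proposition \ref{proptrans} and summing the telescoping series in $m$ (which converges since $\ell_q^{-1}\lambda_{q+1}^{-1}\ll 1$ by \eqref{beta}) produces
\[
\|\RR^{\textup{Nash}}_{q+1}\|_N \lesssim \lambda_{q+1}^N \cdot \frac{\lambda_q\delta_q^{1/2}}{\lambda_{q+1}}\,\delta_{q+1}^{1/2},\qquad \|P^{\textup{Nash}}_{q+1}\|_0\lesssim \frac{\lambda_q\delta_q^{1/2}}{\lambda_{q+1}}\,\delta_{q+1}^{1/2},
\]
while the $C^{(N_0)}$ tail yields $\|F^{\textup{Nash}}_{q+1}\|_N\lesssim \ell_q^{-N_0}\lambda_{q+1}^{N-N_0}\lambda_q\delta_q^{1/2}\delta_{q+1}^{1/2}\lesssim \lambda_{q+1}^{N-10}$ by the choice $N_0=50(b+1)/(b-1)$ and the second inequality in \eqref{beta}.

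For the material-derivative bounds on $\RR^{\textup{Nash}}_{q+1}$, I would commute $D_{t,\ell_q}$ past the microlocal decomposition, using $D_{t,\ell_q}\Phi_i=0$ so the phase is killed. The amplitude generates
\[
D_{t,\ell_q} G_{\textup{p}} = \ii (D_{t,\ell_q} a_{k,i})(\bar k\cdot\nabla u_{\ell_q}) + \ii a_{k,i}\,\bar k\cdot D_{t,\ell_q}\nabla u_{\ell_q},
\]
whose first factor is controlled by Proposition \ref{est-a} ($\tau_q^{-1}\delta_{q+1}^{1/2}$) and whose second factor is bounded by the commutator identity $D_{t,\ell_q}\nabla u_{\ell_q} = \nabla D_{t,\ell_q}u_{\ell_q} - \nabla u_{\ell_q}\cdot\nabla u_{\ell_q}$ together with Proposition \ref{p:estimates-for-mollified}, giving $\|D_{t,\ell_q}\nabla u_{\ell_q}\|_{N-1}\lesssim \lambda_q^{N+1}\delta_q$. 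After the standard bookkeeping this produces the claim $\|D_{t,\ell_q}\RR^{\textup{Nash}}_{q+1}\|_{N-1}\lesssim(\lambda_{q+1}^N\delta_{q+1}^{1/2})\cdot(\lambda_q\delta_q^{1/2}\lambda_{q+1}^{-1}\delta_{q+1}^{1/2})$ for $N=1,2$, absorbing $\tau_q^{-1}\lesssim \lambda_q\delta_q^{1/2}\lambda_{q+1}^{1/2}\delta_{q+1}^{1/4}$ as needed by \eqref{def-tauq}.

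Finally, the spatial-support claim follows because $a_{k,i}$ is supported in $\supp\chi_q\subset\Omega_q$ and $u_{\ell_q}$ is smooth, so $G_{\textup{p}}$, $G_{\textup{c}}$ are supported in $\Omega_q$; since $\Omega_q\subset\Omega_{q+1}$, the inverse-divergence operators in Proposition \ref{tracefree} (which are localized along the flow $\Phi_i$) preserve this containment up to the small $\tau_q$-thickening, consistent with the definition of $\Omega_{q+1}$. The main obstacle is not conceptual but bookkeeping: tracking the commutator $[D_{t,\ell_q},\nabla]u_{\ell_q}$ together with the loss $\ell_q^{-1}\lambda_{q+1}^{-1}$ accrued from $\wcq$, and verifying that the resulting geometric series in $m$ sums to a constant under the parameter choices in \eqref{b-alpha}--\eqref{beta}.
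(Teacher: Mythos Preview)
Your proposal is correct and follows essentially the same route as the paper: split $T^{\textup{Nash}}_{q+1}$ into the $\wpq$ and $\wcq$ contributions, write each as an amplitude times the phase $e^{2\pi\ii\lambda_{q+1}k\cdot\Phi_i}$, apply the inverse-divergence iteration (Proposition~\ref{tracefree}) with $G=\ii a_{k,i}(\bar k\cdot\nabla u_{\ell_q})$ (resp.\ the corrector amplitude), and then sum the resulting geometric series in $m$ using $\ell_q^{-1}\lambda_{q+1}^{-1}\ll1$; the material-derivative bound is obtained exactly by commuting $D_{t,\ell_q}$ past $\partial^{\gamma}$ and using $D_{t,\ell_q}\Phi_i=0$, Proposition~\ref{est-a}, and $\|D_{t,\ell_q}\nabla u_{\ell_q}\|\lesssim\lambda_q^2\delta_q$, together with $\tau_q^{-1}\lesssim\lambda_{q+1}\delta_{q+2}^{1/2}$. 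One minor remark: the support claim is slightly simpler than you state --- Proposition~\ref{tracefree} guarantees directly that $A^{(m)},B^{(m)},C^{(N_0)}$ are supported in $\supp G\subset\supp\chi_q\subset\Omega_{q+1}$, with no $\tau_q$-thickening needed.
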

\begin{proof}
 Note that
 $$w_{q+1}\cdot\nabla  u_{\ell_q}=(\wpq+\wcq)\cdot \nabla  u_{\ell_q},$$
 it is sufficient to estimate $\wpq \cdot\nabla  u_{\ell_q}$. Since
\begin{align*}
 \wpq\cdot\nabla  u_{\ell_q}  =  \sum_{i;k\in\Lambda_i} (\ii a_{k,i}\bar{k}\cdot\nabla  u_{\ell_q})
 e^{ 2\pi\ii \lambda_{q+1}k\cdot\Phi_i},
\end{align*}
and using Proposition \ref{tracefree} with $\Phi=\Phi_i,~G= (\ii a_{k,i}\bar{k}\cdot\nabla  u_{\ell_q}) $ and $\rho^{(0)}=  e^{2\pi \ii k\cdot x}   $, we obtain
 \begin{align*}
 \wpq\cdot\nabla  u_{\ell_q}
= &\Div  \sum_{m,i;k\in\Lambda_i} \lambda^{-1}_{q+1}A^{(m)}_{\text{Nash,p}}
      (\mathcal{L}_{\alpha_m}e^{2\pi ik\cdot })\circ(\lambda_{q+1}\Phi_i   )\\
& +\nabla \sum_{m,i;k\in\Lambda_i} \lambda^{-1}_{q+1}B^{(m)}_{\text{Nash,p}}    (\mathcal{L}_{\alpha_m}e^{2\pi ik\cdot })\circ(\lambda_{q+1}\Phi_i   )
  \\
  &+\sum_{i;k\in\Lambda_i} \lambda^{-N_0}_{q+1}C^{(N_0)}_{\text{Nash,p}}   (\mathcal{L}_{\alpha_m}e^{2\pi ik\cdot })\circ(\lambda_{q+1}\Phi_i   )\\
 :=& \Div \RR_{\text{Nash,p}}+\nabla P_{\text{Nash,p}}+F_{\text{Nash,p}}.
 \end{align*}
 It follows from
 \begin{align*}
&\|(A^{(m)}_{\text{Nash,p}}(G), B^{(m)}_{\text{Nash,p}}(G))\|_0 \lesssim \|G \|_{m-1}\lesssim {\ell^{-(m-1)}_q}\lambda_q\delta^{1/2}_{q }\delta^{1/2}_{q+1}\\
&\|C^{(N_0)}_{\text{Nash,p}}(G)\|_0 \lesssim \|G \|_{N_0}\lesssim {\ell^{-{N_0}}_q}\lambda_q\delta^{1/2}_{q }\delta^{1/2}_{q+1},
\end{align*}
that, for $N=0,1,2$,
\begin{align}
 \|\RR_{\text{Nash,p}}\|_{N}
\lesssim &  \sum_{m=1}^{ N_0}  \lambda^{-m+N}_{q+1}\|A^{(m)}_{\textup{Nash,p}} \|_0+\sum_{n=1}^{ N_0}  \lambda^{-m}_{q+1}\|A^{(m)}_{\textup{Nash,p}} \|_N\notag\\
\lesssim & \lambda^{ N}_{q+1} \sum_{m=1}^{N_0}(\frac{{\ell^{-1}_q}}{\lambda_{q+1}})^{m-1}(\frac{\lambda_q\delta^{1/2}_{q }}{\lambda_{q+1}}) \delta^{1/2}_{q+1}
 \lesssim \lambda^{ N}_{q+1} \frac{\lambda^{1/2}\delta^{1/2}_{q }}{\lambda_{q+1}}  \delta^{1/2}_{q+1},~~~~ \notag\\
 \| P_{\text{Nash,p}}\|_{0}
\lesssim & \sum_{m=1}^{ N_0}  \lambda^{-m}_{q+1}\|B^{(m)}_{\textup{Nash,p}} \|_0
\lesssim
  \frac{\lambda^{1/2}\delta^{1/2}_{q }}{\lambda_{q+1}}  \delta^{1/2}_{q+1},\notag
\end{align}
and for $N\leq N_0$,
\begin{align*}
\|F_{\text{Nash,p}}\|_N
\lesssim&  \lambda^{-{N_0} }_{q+1}\|C^{( N_0)}_\text{Nash,p} \|_N     +\lambda^{-{N_0}+N}_{q+1}\|C^{(N_0)}_{\textup{tran}} \|_0
\lesssim   (\frac{ {\ell^{-1}_q}}{\lambda_{q+1}})^{{ N_0}}\lambda^{4+N}_{q+1}  \delta^{1/2}_{q+1}
\lesssim
\lambda^{N-10}_{q+1}  ,
\end{align*}
where $N_0$ is large enough such that $ (\frac{\ell^{-1}_q}{\lambda_{q+1}})^{ N_0}\leq\lambda^{-10}_{q+1}$.

Now we estimate $\|D_{t,\ell_q} \RR_{\text{Nash,p}}\|_{N-1}$ for $N=1,2$. Note that
\begin{align*}
 D_{t,\ell_q}A^{(m)}_{\text{Nash,p}}
   \sim&
D_{t,\ell_q}\partial^{(\gamma)}  (a_{k,i}\bar{k} \nabla  u_{\ell_q})
\notag\\
\sim  &\sum_{ \substack{  |\zeta|= 1\\ \gamma_1+\gamma_2+\zeta=\gamma\\|\gamma|=m-1}}   \partial^{(\gamma_1)} (\partial^{(\xi)} u_{\ell_q}\cdot \nabla \partial^{(\gamma_2)}(a_{k,i}\bar{k} \nabla  u_{\ell_q}) )+\partial^{(\gamma)}D_{t,\ell_q}(a_{k,i}\bar{k} \nabla  u_{\ell_q}),
\end{align*}
we deduce by $m\le N_0$ that, for $ N=1,2$,
\begin{align*}
 &\|D_{t,\ell_q}A^{(m)}_{\text{Nash,p}}\|_{N-1}\\
  \lesssim&
\sum_{ \substack{  |\zeta|= 1\\ \gamma_1+\gamma_2+\zeta=\gamma\\|\gamma|=m-1}}     \|\partial^{(\gamma_1)} \big(\partial^{(\xi)}  u_{\ell_q}\cdot \nabla \partial^{(\gamma_2)} (a_{k,i}\bar{k} \nabla  u_{\ell_q}) \big)\|_{N-1}
+\|\partial^{(\gamma)}D_{t,\ell_q}(a_{k,i}\bar{k} \nabla  u_{\ell_q}) \|_{N-1}\\
\lesssim& \ell^{-(m-1)-(N-1)}_q\lambda^2_q\delta_{q}\delta^{1/2}_{q+1}
+ \ell^{-(m-1)-(N-1)}_q\lambda_q\delta^{1/2}_{q}\tau^{-1}_q\delta^{1/2}_{q+1}\\
\lesssim&\ell^{-(m-1)-(N-1)}_q\lambda_q\delta^{1/2}_{q}\tau^{-1}_q\delta^{1/2}_{q+1},
\end{align*}
and for $N=1,2$,
\begin{align*}
 \|D_{t,\ell_q} \RR_{\text{Nash,p}}\|_{N-1}
\leq& \sum_{m=1}^{N_0}  \lambda^{-m+N-1}_{q+1}\|D_{t,\ell_q}A^{(m)}_{\text{Nash,p}} \|_{0}+\sum_{n=1}^{N_0}  \lambda^{-m}_{q+1}\|D_{t,\ell_q}A^{(m)}_{\text{Nash,p}} \|_{N-1}\\
\lesssim&
\lambda^{ N-1}_{q+1}\sum_{m=1}^{N_0}(\ell_q\lambda_{q+1})^{-(m-1) } \lambda^{-1}_{q+1}\lambda_q\delta^{1/2}_{q}\tau^{-1}_q \delta^{1/2}_{q+1}\\
\lesssim&
(\lambda^{ N}_{q+1}\delta^{1/2}_{q+2}) \lambda_q\delta^{1/2}_{q} \lambda^{-1}_{q+1}\delta^{1/2}_{q+1}.
\end{align*}
\end{proof}

\begin{prop}[Estimates for $\Rosc $]\label{propRosc}$T^{\textup{osc}}_{q+1}$ has the following decomposition:
 \begin{align*}
 T^{\textup{osc}}_{q+1}=& \textup{div}  \sum_{\substack{i;k\in\Lambda_{i}\\ i';k'\in\Lambda_{i'}}}\sum_{m=1}^{N_0}\lambda^{-m}_{q+1}
 A^{(m)}_{ \textup{osc }}
  (\mathcal{L}_{\alpha_n}e^{2\pi i(k'+k)\cdot })\circ(\lambda_{q+1}\cdot   )\\
 &+\nabla \sum_{\substack{i;k\in\Lambda_{i}\\ i';k'\in\Lambda_{i'}}}\sum_{m=1}^{N_0}\lambda^{-m}_{q+1}B^{(m)}_{ \textup{osc }}     (\mathcal{L}_{\alpha_n}e^{2\pi i(k'+k)\cdot })\circ(\lambda_{q+1}\cdot   ) \\
 &+\sum_{\substack{i;k\in\Lambda_{i}\\ i';k'\in\Lambda_{i'}}} \lambda^{-N_0}_{q+1}C^{(N_0)}_{ \textup{osc }}      (\mathcal{L}_{\alpha_n}e^{2\pi i(k'+k)\cdot })\circ(\lambda_{q+1}\cdot   )\\
 =:& \textup{div}  \Rosc+\nabla P^{\textup{osc}}_{q+1}+F^{\textup{osc}}_{q+1},
 \end{align*}
where $\mathcal{L}_{\alpha_n} f = \partial^{\alpha_n} (\Delta^{-1})^n f$. Moreover,  $\text{spt}_x(\Rosc,P^{\textup{osc}}_{q+1},F^{\textup{osc}}_{q+1})\subset \Omega_{q+1}$ and
\begin{align*}
&\|D_{t,\ell_q} \Rosc\|_{N-1}
\lesssim  \lambda^N_{q+1}\delta^{1/2}_{q+1}   (\tau_q\lambda_{q}\delta^{1/2}_{q}) \delta_{q+1}, ~~~~&&N=1,2,\\
&\|\Rosc \|_{N}\lesssim  \lambda^N_{q+1}  (\tau_q\lambda_{q}\delta^{1/2}_{q}) \delta_{q+1}, ~~~~~~~&&N=0,1,2,\\
&\| P^{\textup{osc}}_{q+1}\|_{0}\lesssim\delta_{q+1},\,\,\,
 \|F^{\textup{osc}}_{q+1}\|_N\lesssim    \lambda^{N-10}_{q+1},~~~&&N\leq N_0.
\end{align*}
\end{prop}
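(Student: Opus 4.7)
The plan is to write $T^{\textup{osc}}_{q+1}$ as a sum of three pieces — one that recovers $-\RR_{\ell_q,\tau_q}$ modulo a pressure gradient, one that is purely oscillatory and can be bounded via the iterated inverse-divergence formula of Proposition \ref{tracefree}, and one that is negligibly small (the tail of the iteration). First I would expand
\[
\wloc_{q+1}\otimes\wloc_{q+1}=\wpq\otimes\wpq+\wpq\otimes\wcq+\wcq\otimes\wpq+\wcq\otimes\wcq,
\]
use \eqref{rewrite-wcq} to rewrite each $\wcq$-factor so that its phase is $e^{2\pi\ii\lambda_{q+1}k\cdot\Phi_i}$ (same as $\wpq$) with amplitude smaller by $(\lambda_{q+1}\ell_q)^{-1}$, and then group everything into double sums indexed by $(i,k)$ and $(i',k')$ with combined amplitude $G_{k,i,k',i'}$ of size $\lesssim \delta_{q+1}$ and total phase $\lambda_{q+1}(k\cdot\Phi_i+k'\cdot\Phi_{i'})$.

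Next I would split the double sum into the \emph{resonant} part $\{(i,k,k'=-k,i'=i)\}$ and the \emph{non-resonant} part. For the resonant part, Lemma \ref{first S} gives
\[
\sum_{k\in\Lambda_i}a_k^2\bigl(\Id-\tfrac{\RR_{\ell_q,\tau_q}}{e_q}\bigr)\bar{k}\otimes\bar{k}=e_q\Id-\RR_{\ell_q,\tau_q},
\]
so (using $\chi_q^2\RR_{\ell_q,\tau_q}=\RR_{\ell_q,\tau_q}$ together with the relation $\sum_i\eta_i^2\equiv 1$ enforced by our cutoff choice) the low-frequency part of $\wpq\otimes\wpq+\RR_{\ell_q,\tau_q}$ collapses to a scalar multiple of the identity, which contributes only to $\nabla P^{\textup{osc}}_{q+1}$ with $\|P^{\textup{osc}}_{q+1}\|_0\lesssim e_q\lesssim\delta_{q+1}$. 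For the non-resonant part, I would use the alternating assignment $\Lambda_i\in\{\Lambda_1,\Lambda_2\}$ to guarantee that $k+k'\neq 0$ whenever $\eta_i\eta_{i'}\not\equiv 0$: when $i=i'$, resonance requires $k'=-k$; when $|i-i'|=1$, the two $k$'s come from $\Lambda_1$ and $\Lambda_2$ respectively, which are arranged so that $k+k'$ never vanishes; when $|i-i'|\geq 2$, $\eta_i\eta_{i'}\equiv 0$. This is the key observation that enables non-stationary phase.

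Having a non-vanishing combined frequency, I would then apply Proposition \ref{tracefree} $N_0$ times to each non-resonant summand, with the phase $\lambda_{q+1}(k\cdot\Phi_i+k'\cdot\Phi_{i'})$ and amplitude $G_{k,i,k',i'}$, to obtain the desired decomposition
\[
\Div\Rosc+\nabla P^{\textup{osc}}_{q+1}+F^{\textup{osc}}_{q+1}
\]
with $m$-th order coefficients $A^{(m)}_{\textup{osc}},B^{(m)}_{\textup{osc}}$ satisfying $\|A^{(m)}_{\textup{osc}}\|_0+\|B^{(m)}_{\textup{osc}}\|_0\lesssim\ell_q^{-(m-1)}\delta_{q+1}$ (by Proposition \ref{est-a} and Leibniz), and error coefficient $C^{(N_0)}_{\textup{osc}}$ bounded in the same manner. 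The size estimate for $\Rosc$ then follows from the geometric series $\sum_{m\geq 1}(\ell_q\lambda_{q+1})^{-(m-1)}\lambda_{q+1}^{-1}\delta_{q+1}\lesssim \lambda_{q+1}^{-1}\delta_{q+1}$, which after identifying $\lambda_{q+1}^{-1}=\tau_q\lambda_q\delta_q^{1/2}\cdot \lambda_q^{-1/2}\lambda_{q+1}^{-1/2}\delta_q^{-1/4}\delta_{q+1}^{1/4}/(\lambda_q\delta_q^{1/2})$ and absorbing the extra small factor into the allowed slack \eqref{beta}, matches the claim $\|\Rosc\|_N\lesssim\lambda_{q+1}^N\tau_q\lambda_q\delta_q^{1/2}\delta_{q+1}$. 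The $F^{\textup{osc}}_{q+1}$ bound $\lesssim\lambda_{q+1}^{N-10}$ uses $N_0$ chosen large as in \eqref{beta}.

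The main obstacle I expect is the material-derivative bound $\|D_{t,\ell_q}\Rosc\|_{N-1}\lesssim\lambda_{q+1}^N\delta_{q+1}^{1/2}\,\tau_q\lambda_q\delta_q^{1/2}\delta_{q+1}$. Here each application of $D_{t,\ell_q}$ to an amplitude $G_{k,i,k',i'}=a_{k,i}a_{k',i'}e^{2\pi\ii\lambda_{q+1}(k+k')\cdot(\Phi_i-x)}\cdots$ produces not only a Proposition \ref{est-a}-type $\tau_q^{-1}$, but also a cross term from differentiating $e^{2\pi\ii\lambda_{q+1}(k+k')\cdot(\Phi_i-\Phi_{i'})}$: since $D_{t,\ell_q}\Phi_i=D_{t,\ell_q}\Phi_{i'}=0$, this cross derivative vanishes identically, which is the decisive cancellation. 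Combining this with $\|D_{t,\ell_q}A^{(m)}_{\textup{osc}}\|_{N-1}\lesssim\ell_q^{-(m-1)-(N-1)}\tau_q^{-1}\delta_{q+1}$ and summing the geometric series yields the claimed bound after using $\tau_q^{-1}\lesssim\lambda_{q+1}\delta_{q+1}^{1/2}$. Finally, the support property $\text{spt}_x(\Rosc,P^{\textup{osc}}_{q+1},F^{\textup{osc}}_{q+1})\subset\Omega_{q+1}$ follows from $\text{spt}_x a_{k,i}\subset\text{spt}_x\chi_q$ together with $|\Phi_i-x|\lesssim\tau_q\ll\lambda_q^{-1/30}$, so that the $\Phi_i$-transport does not leave the slightly enlarged region defining $\Omega_{q+1}$.
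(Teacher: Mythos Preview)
Your overall architecture—split into resonant and non-resonant pairs, absorb the resonant part into a pressure via Lemma \ref{first S}, then run the local inverse-divergence iteration of Proposition \ref{tracefree} on the non-resonant part—matches the paper. But there is a genuine gap in the step where you ``apply Proposition \ref{tracefree} $N_0$ times to each non-resonant summand, with $\ldots$ amplitude $G_{k,i,k',i'}$ of size $\lesssim\delta_{q+1}$'' and conclude $\|\Rosc\|_0\lesssim\lambda_{q+1}^{-1}\delta_{q+1}$. Proposition \ref{tracefree} takes as input a \emph{vector field} of the form $G\rho\circ(\lambda\Phi)$ and returns $\Div\mathring R+\nabla P+E$; it does not act on the tensor entry $\wloc_{q+1}\otimes\wloc_{q+1}$ directly. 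What must be fed in is $T^{\textup{osc}}_{q+1}=\Div(\cdots)$, and when the divergence hits the oscillatory phase $e^{2\pi\ii\lambda_{q+1}(k+k')\cdot x}$ it produces an amplitude of size $\lambda_{q+1}\delta_{q+1}$, not $\delta_{q+1}$. Without further cancellation the iteration then yields only $\|\Rosc\|_0\lesssim\delta_{q+1}$, which is far too large.

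The missing ingredient is Lemma \ref{Betrimi}: because all wave-vectors lie on the circle $|k|=\nu$, the symmetrized phase-derivative contribution $(k\cdot\bar{k}')\bar{k}+(k'\cdot\bar{k})\bar{k}'$ is parallel to $k+k'$, so the entire ``$\Div$-on-phase'' part of $\Div(\wpq\otimes\wpq)$ is a pure gradient and can be absorbed into $P^{\textup{osc}}_{q+1}$. Only \emph{after} this cancellation is the residual input to Proposition \ref{tracefree} the vector $G_{\textup{osc}}e^{2\pi\ii\lambda_{q+1}(k+k')\cdot x}$ with $G_{\textup{osc}}\sim\nabla(\bar a_{k,i}\bar a_{k',i'})$ of size $\ell_q^{-1}\delta_{q+1}$; the iteration then gives $\|\RR_{\textup{osc},1}\|_0\lesssim(\ell_q\lambda_{q+1})^{-1}\delta_{q+1}\lesssim\tau_q\lambda_q\delta_q^{1/2}\delta_{q+1}$ as claimed. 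Two further differences worth noting: the paper works throughout with the Eulerian phase $e^{2\pi\ii\lambda_{q+1}(k+k')\cdot x}$ (i.e.\ $\Phi=x$ in Proposition \ref{tracefree}), absorbing the Lagrangian factor into $\bar a_{k,i}=a_{k,i}e^{2\pi\ii\lambda_{q+1}k\cdot(\Phi_i-x)}$—this sidesteps the mixed-phase issue $k\cdot\Phi_i+k'\cdot\Phi_{i'}$ for $i\neq i'$ that your formulation does not fully address; and the paper treats the corrector cross-terms $\wcq\otimes\wpq+\wpq\otimes\wcq+\wcq\otimes\wcq$ directly as a separate piece $\RR_{\textup{osc},2}$ via Proposition \ref{estimate-wq+1}, rather than folding them into the iteration.
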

\begin{proof}
By making use of the decomposition $w_{q+1}=\wpq+\wcq$, we divide $T^{\textup{osc}}_{q+1}$ as follows:
\begin{align}\label{1}
T^{\textup{osc }  }_{q+1}=& \Div\big(\wpq\otimes\wpq+\RR_{\ell_{t,x}}\big)
 + \Div\big(\wcq {\otimes}\wcq+\wpq {\otimes}\wcq+  \wpq {\otimes}\wcq\big).
\end{align}
Since $\Lambda_i\subset\nu\mathbb{S}^2\cap\mathbb{Z}^3$ and $-\Lambda_i=\Lambda_i$, we have
$$\Div_{\xi}\Big((\sum_{i ;k \in\Lambda_i }\ii c_{k,i}e^{2\pi \ii k\cdot\xi}\bar{k})\otimes (\sum_{i ;k \in\Lambda_i }\ii c_{k,i}e^{2\pi \ii k\cdot\xi}\bar{k})\Big)=\frac{1}{2}\nabla_{\xi}  \Big(\big|\sum_{i ;k \in\Lambda_i }c_{k,i}e^{2\pi \ii k\cdot\xi}\bar{k}\big|^2-\big|\sum_{i ;k \in\Lambda_i } c_{k,i}e^{2\pi\ii k\cdot\xi}  \big|^2\Big),$$
as in \cite{CDS}.  Using Lemmas \ref{Betrimi} and \ref{first S}, a directly computation yields that
\begin{align*}
& \Div\Big({\wpq\otimes\wpq }+\RR_{\ell_q,\tau_q}\Big)\\
=&\frac{1}{2}\nabla_{\xi}  \Big(\big|\sum_{i ;k \in\Lambda_i }\bar{a}_{k,i}e^{2\pi\ii \lambda_{q+1}k\cdot\xi}\bar{k} \big|^2-\big|\sum_{i ;k \in\Lambda_i } {\bar{a}_{k,i}e^{2\pi\ii \lambda_{q+1}k\cdot\xi} } \big|^2\Big)\Big|_{\xi=x}+\Div\RR_{\ell_q,\tau_q}\\
&+\Div_x\big(\sum_{\substack{k'+k\neq0\\ i;k\in\Lambda_{i}\\ i';k'\in\Lambda_{i'}}}- \bar{a}_{k,i}\bar{a}_{k',i'}e^{2\pi\ii \lambda_{q+1}(k'+k)\cdot \xi}\bar{k}'\otimes \bar{k}  +\sum_{\substack{i;k\in\Lambda_{i}\\ }}   {a}^2_{k,i}\bar{k}\otimes \bar{k}\big)\Big|_{\xi=x} \\
 =&\nabla_{x}\Big( \frac{1}{2}\sum_{\substack{k'+k\neq0\\ i;k\in\Lambda_{i}\\ i';k'\in\Lambda_{i'}}} {\bar{a}_{k,i}\bar{a}_{k',i'}  \bar{k } \cdot\bar{k'}} e^{2\pi\ii \lambda_{q+1}(k'+k)\cdot x} - \frac{1}{2}\sum_{\substack{k'+k\neq0\\ i;k\in\Lambda_{i}\\ i';k'\in\Lambda_{i'}}} {\bar{a}_{k,i}\bar{a}_{k',i'}e^{2\pi\ii \lambda_{q+1}(k'+k)\cdot x} }  +\chi^2_q e_q \Big)\\
 &-\sum_{\substack{k'+k\neq0\\ i;k\in\Lambda_{i}\\ i';k'\in\Lambda_{i'}}}\Big(\frac{1}{2}\nabla\big(  {\bar{a}_{k,i}\bar{a}_{k',i'}  \bar{k }\cdot \bar{k'}} -{\bar{a}_{k,i}\bar{a}_{k',i'} }  \big)+\Div \big(\bar{a}_{k,i}\bar{a}_{k',i'}\bar{k}'\otimes \bar{k}\big)\Big) e^{2\pi\ii \lambda_{q+1}(k'+k)\cdot x}\\
=:&\nabla_{x} P_{\textup{osc,0 }}+\sum_{\substack{k'+k\neq0\\ i;k\in\Lambda_{i}\\ i';k'\in\Lambda_{i'}}}G_{\textup{osc}}e^{2\pi\ii \lambda_{q+1}(k'+k)\cdot x}.
\end{align*}
Making use of  Proposition \ref{tracefree} with $\Phi=x,~G= G_{\textup{osc}}  $ and $\rho^{(0)}= e^{2\pi i (k'+k)\cdot x}$,  we obtain
 \begin{align*}
  \sum_{\substack{k'+k\neq0\\ i;k\in\Lambda_{i}\\ i';k'\in\Lambda_{i'}}} G_{\textup{osc}}e^{2\pi\ii \lambda_{q+1}(k'+k)x}
 =& \Div \sum_{\substack{k'+k\neq0\\ i;k\in\Lambda_{i}\\ i';k'\in\Lambda_{i'}}}\sum_{m=1}^{ N_0}\lambda^{-m}_{q+1}A^{(m)}_{\textup{osc}}     (\mathcal{L}_{\alpha_n}e^{2\pi  i(k'+k)\cdot })\circ(\lambda_{q+1}\cdot   )\\
&+\nabla \sum_{\substack{k'+k\neq0\\ i;k\in\Lambda_{i}\\ i';k'\in\Lambda_{i'}}}\sum_{m=1}^{ N_0}\lambda^{-m}_{q+1}B^{(m)}_{\textup{osc}}     (\mathcal{L}_{\alpha_n}e^{2\pi  i(k'+k)\cdot })\circ(\lambda_{q+1}\cdot   )   \\
&+\sum_{\substack{k'+k\neq0\\ i;k\in\Lambda_{i}\\ i';k'\in\Lambda_{i'}}}\lambda^{-N_0}_{q+1} C^{( N_0)}_{\textup{osc}}    (\mathcal{L}_{\alpha_n}e^{2\pi  i(k'+k)\cdot })\circ(\lambda_{q+1}\cdot   )   \\
:=& \Div \RR_{osc,1}+\nabla P_{\textup{osc,1 }}+F^{\textup{osc} }_{q+1}.
\end{align*}
Owing to
 \begin{align*}
&\|(A^{(m)}_{\textup{osc }}(G), B^{(m)}_{\textup{osc }}(G))\|_0 \lesssim \|G \|_{m-1}\lesssim \ell^{-(m-1)}_q \ell^{-1}_q  \delta_{q+1},\\
&\|C^{(N_0)}_{\textup{osc }}(G)\|_0 \lesssim \|G \|_{N_0}\lesssim {\ell^{-{N_0}}_q}\ell^{-1}_q  \delta_{q+1},
\end{align*}
We deduce that, for $N=0,1,2$,
\begin{align}\label{osc1}
 \|\RR_{osc,1}\|_{N}
\lesssim &  \sum_{m=1}^{ N_0}  \lambda^{-m+N}_{q+1}\|A^{(m)}_{\textup{osc}} \|_0+\sum_{m=1}^{ N_0}  \lambda^{-m}_{q+1}\|A^{(m)}_{\textup{osc}} \|_N \\
\lesssim&  \lambda^{ N}_{q+1} \sum_{m=1}^{ N_0}( \ell_q\lambda_{q+1})^{-(m-1)}(( \ell_q\lambda_{q+1})^{-1}\delta^{1/2}_{q+1})\notag\\
 \lesssim& \lambda^{ N}_{q+1} (( \ell_q\lambda_{q+1})^{-1}\delta^{1/2}_{q+1}) \lesssim\lambda^{N }_{q+1}(\tau_q\lambda_{q}\delta^{1/2}_{q}) \delta_{q+1} ,~~~\notag\\
 \| P_{\textup{osc,1 }}\|_{0}
\lesssim & \sum_{m=1}^{ N_0}  \lambda^{-m}_{q+1}\|B^{(m)}_{\textup{osc}} \|_0
\lesssim
  ( \ell_q\lambda_{q+1})^{-1}\delta_{q+1} \lesssim (\tau_q\lambda_{q}\delta^{1/2}_{q}) \delta_{q+1}  ,
\end{align}
and for $N\leq N_0$,
\begin{align}\label{Fosc}
\|F^{\textup{osc} }_{q+1}\|_N
\lesssim   \lambda^{-{ N_0} }_{q+1}\|C^{(  N_0)}_{\textup{osc}} \|_N     +\lambda^{-{ N_0}+N}_{q+1}\|C^{( N_0)}_{\textup{osc}} \|_0
\lesssim\lambda^{N-10}_{q+1}.
\end{align}
To estimate $\|D_{t,\ell_q} \RR_{osc,1}\|_{N-1}$ for $N=1,2$, we bound $\|D_{t,\ell_q}(A^{(m)}_{\textup{osc}})  \|_{N-1}$. Since
\begin{align*}
 D_{t,\ell_q}A^{(m)}_{\textup{osc}}
  \sim &
D_{t,\ell_q}\partial^{(\gamma)}   G_{\textup{osc}}
\notag\\
\sim  &\sum_{\substack{|\zeta|= 1\\ \gamma_1+\gamma_2+\zeta=\gamma \\ |\gamma|=m-1} }  \partial^{(\gamma_1)} (\partial^{(\zeta)} u_{\ell_q}\cdot \nabla \partial^{(\gamma_2)} G_{\textup{osc}}  )-\partial^{(\gamma)}D_{t,\ell_q} G_{\textup{osc}},
\end{align*}
we easily deduce that
\begin{align*}
 \|D_{t,\ell_q}A^{(m)}_{\textup{osc}}\|_{N-1}
  \lesssim&
\sum_{\substack{|\zeta|= 1\\ \gamma_1+\gamma_2+\zeta=\gamma \\
|\gamma|=m-1}}     \|\partial^{(\gamma_1)} \big(\partial^{(\zeta)}  u_{\ell_q}\cdot \nabla \partial^{(\gamma_2)}  G_{\textup{osc}}  \big)\|_{N-1}
+\|\partial^{(\gamma)}D_{t,\ell_q} G_{\textup{osc}}  \|_{N-1}\\
\lesssim& \ell^{-(m-1)-(N-1)}_q\lambda_q\delta^{1/2}_{q}\ell^{-1}_q\delta_{q+1}
+ \ell^{-(m-1)-(N-1)}_q\tau^{-1}_q\ell^{-1}_q\delta_{q+1}\\
\lesssim&\ell^{-(m-1)-(N-1)}_q\tau^{-1}_q\ell^{-1}_q\delta_{q+1}.
\end{align*}
Hence, we have, for $N=1,2$,
\begin{align}\label{dtosc1}
\|D_{t,\ell_q} \RR_{osc,1}\|_{N-1}
\leq& \sum_{m=1}^{  N_0}  \lambda^{-m+N-1}_{q+1}\|D_{t,\ell_q}A^{(m)}_{\textup{osc}} \|_{0}+\sum_{n=1}^{ N_0}  \lambda^{-m}_{q+1}\|A^{(m)}_{\textup{osc}} \|_{N-1}\notag\\
\lesssim&\sum_{m=1}^{ N_0} \lambda^{-m+N-1}_{q+1}\ell^{-(m-1) }_q\tau^{-1}_q\ell^{-1}_q\delta_{q+1}\notag\\
\lesssim&
(\lambda^{ N}_{q+1}\delta^{1/2}_{q+2}) \lambda^{-1}_{q+1}\ell^{-1}_q\delta_{q+1}\lesssim
(\lambda^{ N}_{q+1}\delta^{1/2}_{q+2})(\tau_q\lambda_{q}\delta^{1/2}_{q})\delta_{q+1}.
\end{align}
Finally, it follows from \ref{estimate-wq+1} that, for $N=0,1,2$,
\begin{align}
\| \wcq{\ootimes} \wcq+\wpq {\ootimes} \wcq+  \wcq {\ootimes} \wpq \|_N
\leq&  \lambda^{ N}_{q+1}(\tau_q\lambda_{q}\delta^{1/2}_{q})\delta_{q+1},\label{4}\\
\end{align}
and for $N=1,2$,
\begin{align}
\|D_{t,\ell_q}( \wcq {\ootimes} \wcq+\wpq{\ootimes} \wcq+ \wcq {\ootimes} \wpq) )\|_{N-1}
\leq& (\lambda^{ N}_{q+1}\delta^{1/2}_{q+2})(\tau_q\lambda_{q}\delta^{1/2}_{q})\delta_{q+1}.\label{4.5}
\end{align}
Let
 \begin{align*}
 P_{\textup{osc,2}}:=&\frac{1}{3}\tr (\wcq{\otimes} \wcq+\wpq {\otimes} \wcq+  \wcq {\otimes} \wpq),\\
 \RR_\textup{osc,2}:=&\wcq{\ootimes} \wcq+\wpq {\ootimes} \wcq+  \wcq {\ootimes} \wpq , \end{align*}
 we define $$P^{\textup{osc}}_{q+1}:=P_{\textup{osc,0 }}+P_{\textup{osc,1 }}+P_{\textup{osc,2 }},\quad
 \Rosc:=\RR_{osc,1}+\RR_{osc,2}.$$
 Plugging \eqref{osc1}-- \eqref{4.5} into \eqref{1}, we complete the proof of  Proposition \ref{propRosc}.
\end{proof}

Let $F_{q+1}:=F^{\textup{tran} }_{q+1}+F^{\textup{Nash}}_{q+1}+F^{\textup{osc}}_{q+1}$, we obtain that
\begin{align}\label{est-Fq+1}
\text{spt}_xF_{q+1}\subset \Omega_{q+1},~~~~~~~~~~\|F_{q+1}\|_N\leq \lambda^{N-10}_{q+1}  ,\quad \forall  {N\leq N_0}.\end{align}
Next, we show the local well-posedness of \eqref{e:wt} in $[0,T]$ and establish  higher-order derivative estimates of $(\theta_{q+1},d_{q+1})$. More precisely, we have
\begin{prop}\label{est-GMHD}
 \eqref{e:wt} has a unique solution $(\theta_{q+1},d_{q+1})$ on $[0,T]$ such that
\begin{align}
 &\|\theta_{q+1}\|_{L^2}+\|d_{q+1}\|_{L^2}\leq \lambda^{-10}_{q+1},\label{L2}  \\
&\|\theta_{q+1}\|_{H^4}+\|d_{q+1}\|_{H^4}\leq\lambda^{-3}_{q+1} , \label{H4} \\
 &\|\theta_{q+1}\|_{H^N}+\|d_{q+1}\|_{H^N}\leq \lambda^{N }_{q+1} \delta^{1/2}_{q+1} ,~~~&&4\leq N \leq N_0.\label{HN}\\
  &\|  D_{t,\ell_q}\theta_{q+1}\|_{N}+\|  D_{t,\ell_q}d_{q+1}\|_{N} \le \lambda^{-3}_{q+1}  ,\,\quad \quad\quad &&N=0,1.\label{Dtnon}
 \end{align}
\end{prop}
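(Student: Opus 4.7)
The plan is to treat \eqref{e:wt} as a perturbed $2$D ideal MHD system for the unknowns $(\theta_{q+1},d_{q+1})$, with the fields $(\vnon_{\ell_q},b_{\ell_q})$ and the forcing $(F_{q+1},\Div\RR_{\textup{mol,v}},\Div\RR_{\textup{mol,b}})$ treated as given data. Local well-posedness in $H^N$ for $N>2$ follows from standard symmetric hyperbolic theory for quasilinear $2$D MHD-type systems; the crucial point is to upgrade this to global existence on $[0,T]$ via an a priori $H^4$ bootstrap combined with the smallness of the data, and to pin down the precise decay rates stated in \eqref{L2}--\eqref{Dtnon}.

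The first concrete step is the $L^2$ identity. Testing the $\theta_{q+1}$ equation with $\theta_{q+1}$ and the $d_{q+1}$ equation with $d_{q+1}$, the divergence-free transport terms integrate to zero, while the classical MHD cancellations
\[
\int \theta\cdot(d\cdot\nabla)d+\int d\cdot(d\cdot\nabla)\theta=0,\qquad \int\theta\cdot(b_{\ell_q}\cdot\nabla)d+\int d\cdot(b_{\ell_q}\cdot\nabla)\theta=0
\]
eliminate the genuinely nonlinear Lorentz interactions. The remaining terms are bounded by $(\|\nabla\vnon_{\ell_q}\|_0+\|\nabla b_{\ell_q}\|_0)(\|\theta_{q+1}\|_{L^2}^2+\|d_{q+1}\|_{L^2}^2)$ and by forcing contributions of size $\lambda_{q+1}^{-10}$ coming from \eqref{est-Rell} and \eqref{est-Fq+1}. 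The essential input is that Sobolev embedding $H^4\hookrightarrow W^{1,\infty}$ in $2$D, together with \eqref{e:vq-H3}, controls the coefficient by $\epsilon_0^{3/2}+\sum_l\lambda_l^{-3}\ll 1$. Grönwall's inequality on the fixed interval $[0,T]$ then yields \eqref{L2}.

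For the intermediate and top-order estimates, I would apply $\partial^{\alpha}$ with $|\alpha|\le N$, pair the equations with $(\partial^{\alpha}\theta_{q+1},\partial^{\alpha}d_{q+1})$, and use Kato--Ponce commutator bounds. The key observation is that the $H^4$ norm of the coefficients $(\vnon_{\ell_q},b_{\ell_q})$ is small by \eqref{e:vq-H3}, so the linear part of the energy inequality has a small coefficient and the quadratic self-interaction $\|(\theta_{q+1},d_{q+1})\|_{H^4}^2$ can be absorbed so long as the $H^4$ norm stays in the bootstrap regime. The forcing norm $\|F_{q+1}\|_{H^4}+\|\Div\RR_{\textup{mol,v}}\|_{H^4}+\|\Div\RR_{\textup{mol,b}}\|_{H^4}$ is at most $\lambda_{q+1}^{-6}$, which after Grönwall gives \eqref{H4}. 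For $N\ge 5$ the coefficients $(\vnon_{\ell_q},b_{\ell_q})$ are no longer small in $H^N$, so one closes the loop by propagating the weaker bound $\lambda_{q+1}^N\delta_{q+1}^{1/2}$ in \eqref{HN}, with the nonlinear terms handled by tame estimates that put the top derivative on the small factor. Once $\|(\theta_{q+1},d_{q+1})\|_{H^4}$ is known to remain $\le \lambda_{q+1}^{-3}$, continuation of the local smooth solution to $[0,T]$ is automatic.

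Finally, the material derivative bound \eqref{Dtnon} is read off directly from the system: $D_{t,\ell_q}\theta_{q+1}=\vloc_{\ell_q}\!\cdot\!\nabla\theta_{q+1}+(\text{RHS of }\eqref{e:wt})-\theta_{q+1}\!\cdot\!\nabla\theta_{q+1}-\theta_{q+1}\!\cdot\!\nabla\vnon_{\ell_q}-\nabla P^{(\textup{per})}_{q+1}$, and analogously for $d_{q+1}$. Using \eqref{L2}--\eqref{HN} together with the $H^4$ bounds on $(\vloc_{\ell_q},\vnon_{\ell_q},b_{\ell_q})$ and the forcing estimates, each term is at most $\lambda_{q+1}^{-3}$ in $C^0$ and $C^1$; the pressure is estimated by applying the Leray projector $\mathbb P_H$ and using Calderón--Zygmund bounds on the quadratic source. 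The main technical obstacle I anticipate is engineering the $H^4$ bootstrap so that the Grönwall constant $\exp(CT\,\|{(\vnon_{\ell_q},b_{\ell_q})}\|_{H^4})$ does not degrade the factor $\lambda_{q+1}^{-10}$ from the forcing into something worse than $\lambda_{q+1}^{-3}$; this is what forces the choice \eqref{e:vq-H3} of $\epsilon_0=\lambda_1^{-1/20}$ and the summable tail $\sum_l\lambda_l^{-3}$, both of which must be preserved by the iteration at step $q+1$.
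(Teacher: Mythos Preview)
Your proposal is correct and follows essentially the same approach as the paper: $L^2$ energy identity with the MHD cancellations, an $H^4$ bootstrap exploiting the smallness \eqref{e:vq-H3} of $(\vnon_{\ell_q},b_{\ell_q})$ and the forcing bounds \eqref{est-Rell}, \eqref{est-Fq+1}, a tame $H^N$ estimate for $5\le N\le N_0$, and then reading off $D_{t,\ell_q}$ from the equation. One small refinement: in the $H^4$ step not every term has a small Gr\"onwall coefficient---when all four derivatives land on $\vnon_{\ell_q}$ or $b_{\ell_q}$ one picks up a $W^{5,\infty}$ norm $\lesssim\lambda_q^{7}\delta_q^{1/2}$, which the paper absorbs not by Gr\"onwall but by pairing it with the already-proved $L^2$ bound $\lambda_{q+1}^{-10}$ from \eqref{L2}; this is implicit in your ``tame'' remark but worth making explicit.
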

\begin{proof}
It suffices to give the  priori estimates to $(\theta_{q+1},d_{q+1})$, since the classical fixed point theorem could guarantee the existence and uniqueness of \eqref{e:wt}. Let $(\theta_{q+1},d_{q+1})\in C_TH^N$ with $4\leq N\leq N_0$  be a solution of system \eqref{e:wt}. Taking $L^2_x$ inner product of  the first equation of \eqref{e:wt} with $\theta_{q+1}$ and of the
second one with $d_{q+1}$, we get
\begin{align*}
&\frac{1}{2}\frac{\dd }{\dd t}\int_{\mathbb{T}^2}|\theta_{q+1}|^2\dd x + \int_{\mathbb{T}^2}(\theta_{q+1}\cdot\nabla \theta_{q+1}+\vnon_{\ell_q}\cdot\nabla \theta_{q+1}+\theta_{q+1} + \theta_{q+1}\cdot\nabla \vnon_{\ell_q}+\nabla P^{(\textup{per})}_{q+1})\cdot\theta_{q+1}\dd x \\
   =&  \int_{\mathbb{T}^2}  (d_{q+1}\cdot\nabla d_{q+1}+b_{\ell_q}\cdot\nabla d_{q+1}+d_{q+1}\cdot\nabla b_{\ell_q}+ \Div\RR_{\textup{mol,v}}+F_{q+1})\cdot\theta_{q+1}\dd x,
   \end{align*}
   and
   \begin{align*}
&\frac{1}{2}\frac{\dd }{\dd t}\int_{\mathbb{T}^2}|d_{q+1}|^2\dd x +  \int_{\mathbb{T}^2}(\theta_{q+1}\cdot\nabla d_{q+1} + \vnon_{\ell_q}\cdot\nabla d_{q+1}+\theta_{q+1}\cdot\nabla b_{\ell_q})\cdot d_{q+1}\dd x\\
      =&\int_{\mathbb{T}^2}(d_{q+1}\cdot\nabla \theta_{q+1}+b_{\ell_q}\cdot\nabla \theta_{q+1}+d_{q+1}\cdot\nabla \vnon_{\ell_q} + \Div\RR_{\textup{mol,b}})\cdot d_{q+1}\dd x
\end{align*}
Owning to
 $$\int_{\mathbb{T}^2}(d_{q+1}\cdot\nabla d_{q+1}+b_{\ell_q} \cdot\nabla d_{q+1}) \cdot d_{q+1} +(d_{q+1}\cdot\nabla \theta_{q+1}+b_{\ell_q}\cdot\nabla \theta_{q+1})\cdot\theta_{q+1}=0,$$
 and  $$\int_0^T\|\vnon_{\ell_q}\|_{H^3}+\|b_{\ell_q}\|_{H^3}\dd t\lesssim T\epsilon_0 \lesssim T\lambda^{-\frac{1}{20}}_1 \ll 1,$$ By  Gr\"{o}nwall's inequality, we deduce that
\begin{align*}
&\|\theta_{q+1}\|^2_{L^\infty_TL^2}+\|d_{q+1}\|^2_{L^\infty_TL^2}\\
\lesssim&
 \int_{ 0}^{T}  (\|b_{\ell_q},\vnon_{\ell_q})\|_{H^3}) (\|\theta_{q+1}\|^2_{L^2}+\|d_{q+1}\|^2_{L^2}) +\|(\Div\RR_{\textup{mol,v}}, \Div\RR_{\textup{mol,b}}, F_{q+1})\|_{L^2}\dd t\\
  \lesssim& \exp\Big(\int_{ 0}^{1} \|(b_{\ell_q},\vnon_{\ell_q})\|_{H^3}\dd t\Big)\int_{ 0}^{1}\|(\Div\RR_{\textup{mol,v}},\Div\RR_{\textup{mol,b}}, F_{q+1}\|_{L^2}\dd t\\
  \lesssim& \lambda^{-10}_{q+1}.
\end{align*}
This inequality implies \eqref{L2} by \eqref{est-Rell} and \eqref{est-Fq+1} .

Furthermore, for $\forall j=1,2$, one obtains from \eqref{e:wt}  that
\begin{align*}
&\frac{1}{2}\frac{\dd}{\dd t}\int_{\mathbb{T}^2}|\partial^4_j\theta_{q+1}|^2\dd x + \int_{\mathbb{T}^2}\partial^4_j(\theta_{q+1}\cdot\nabla \theta_{q+1}+\vnon_{\ell_q}\cdot\nabla \theta_{q+1}+\theta_{q+1}\cdot\nabla \vnon_{\ell_q})\cdot\partial^4_j\theta_{q+1}\dd x \\
&   =  \int_{\mathbb{T}^2} \partial^4_j(d_{q+1}\cdot\nabla d_{q+1}+b_{\ell_q}\cdot\nabla d_{q+1}+d_{q+1}\cdot\nabla b_{\ell_q}+ \Div\RR_{\textup{mol,v}}+F_{q+1})\cdot\partial^4_j\theta_{q+1}\dd x,
\end{align*}
and
\begin{align*}
&\frac{1}{2}
\frac{\dd}{\dd t}\int_{\mathbb{T}^2}|\partial^4_jd_{q+1}|^2\dd x +  \int_{\mathbb{T}^2}\partial^4_j(\theta_{q+1}\cdot\nabla d_{q+1} + \vnon_{\ell_q}\cdot\nabla d_{q+1}+\theta_{q+1}\cdot\nabla b_{\ell_q})\cdot\partial^4_jd_{q+1}\dd x\\
&      =\int_{\mathbb{T}^2}\partial^4_j(d_{q+1}\cdot\nabla \theta_{q+1}+b_{\ell_q}\cdot\nabla \theta_{q+1}+d_{q+1}\cdot\nabla \vnon_{\ell_q} + \Div\RR_{\textup{mol,b}})\cdot\partial^4_jd_{q+1}\dd x.
\end{align*}
Thanks to
\begin{align*}
&\Big|\int_{\mathbb{T}^2}\partial^4_j(d_{q+1}\cdot\nabla d_{q+1}+b_{\ell_q}\cdot\nabla d_{q+1}+d_{q+1}\cdot\nabla \theta_{q+1}+b_{\ell_q}\cdot\nabla \theta_{q+1})\cdot\partial^4_jd_{q+1}\dd x\Big|\\ \leq
  & \|(\vnon_{\ell_q},b_{\ell_q},\theta_{q+1}, d_{q+1})\|_{H^4}\|(\theta_{q+1},d_{q+1})\|_{H^4},
 \end{align*}
we deduce that
\begin{align}\label{est-H4}
\|\theta_{q+1}\|^2_{L^\infty_T\dot H^4}+\|d_{q+1}\|^2_{L^\infty_T\dot H^4}
\lesssim&
 \int_{ 0}^{T}  (\|(b_{\ell_q},\vnon_{\ell_q},\theta_{q+1},d_{q+1})\|_{H^4}  (\|\theta_{q+1}\|^2_{L^2}+\|d_{q+1}\|^2_{L^2})\notag\\
  &
 +\|(d_{q+1},\theta_{q+1})\|_{L^2}\|(\vnon_{\ell_q},b_{\ell_q})\|_{W^{5,\infty}}
 \|(d_{q+1},\theta_{q+1})\|_{H^4} \notag\\
  &+\|(\Div\RR_{\textup{mol,v}}, \Div\RR_{\textup{mol,b}},F_{q+1})\|_{H^4}\dd t.
\end{align}
Let $T_1< T$ be the maximum lifespan. We conclude by  a continuity argument and \eqref{L2} that $$\| \theta_{q+1}\|^2_{H^4}+\| d_{q+1}\|^2_{H^4}\leq \lambda^{-6}_{q+1},\quad t\in[0,T_1].$$
Using \eqref{L2} and the fact
  $$\|(\vnon_{\ell_q}, b_{\ell_q})\|_{W^{5,\infty}}
\leq \|\vnon_{\ell_q}\|^{1/2}_{H^{5 }}\|\vnon_{\ell_q}\|^{1/2}_{H^{7 }}
 +\|b_{\ell_q}\|^{1/2}_{H^5 }\|b_{\ell_q}\|^{1/2}_{H^7 }\lesssim \lambda^7_{q}\delta^{1/2}_{q},$$   we deduce that
\begin{align}
&\|\theta_{q+1}\|^2_{L^\infty_T\dot H^4}+\|d_{q+1}\|^2_{L^\infty_T\dot H^4}\notag\\
\lesssim&
 \int_{ 0}^{T_1}  \|(b_{\ell_q},\vnon_{\ell_q},\theta_{q+1},d_{q+1})\|_{H^4}(\|\theta_{q+1}\|^2_{L^2}+\|d_{q+1}\|^2_{L^2})
 \notag\\
 &+ \|(d_{q+1},\theta_{q+1})\|_{L^2}
  \|(\vnon_{\ell_q},b_{\ell_q})\|_{W^{4,\infty}}
 \|(d_{q+1},\theta_{q+1})\|_{H^4}\notag\\
 &+\|\Div\RR_{\textup{mol,v}}\|^2_{H^4}  + \|\Div\RR_{\textup{mol,b}}\|^2_{H^4}+\|F_{q+1}\|^2_{H^4}\dd t\notag\\
  \lesssim&\int_{ 0}^{T_1}(\sum_{l=1}^{q }\lambda^{-1}_l+\lambda^{-4}_{q+1})(\|\theta_{q+1}\|^2_{L^2}+\|d_{q+1}\|^2_{L^2})
  +\lambda^{-10}_{q+1}(\lambda^7_{q}\delta^{1/2}_{q})(\|(d_{q+1},\theta_{q+1})\|_{H^4}\notag\\
  &+\|\Div\RR_{\textup{mol,v}}\|^2_{H^4}  + \|\Div\RR_{\textup{mol,b}}\|^2_{H^4}+\|F_{q+1}\|^2_{L^4}\dd t\notag\\
  \lesssim&  \frac{1}{4} \lambda^{-6}_{q+1} <\lambda^{-6}_{q+1}.
\end{align}
This estimate implies that $T_1=T$ and thus \eqref{H4} is valid  by $\|f\|_{H^s}\approx\|\nabla^sf\|_{L^2 }+\| f\|_{L^2 },~~s>0$.

For each $\gamma$ with $ |{\gamma}|=N $, one deduces that
\begin{align*}
&\frac{1}{2}\frac{\dd}{\dd t}\int_{\mathbb{T}^2}\|\partial^{\gamma}\theta_{q+1}\|^2_{L^2}\dd x + \int_{\mathbb{T}^2}\partial^{\gamma}(\theta_{q+1}\cdot\nabla \theta_{q+1}+\vnon_{\ell_q}\cdot\nabla \theta_{q+1}+\theta_{q+1}\cdot\nabla \vnon_{\ell_q})\cdot\partial^{\gamma}\theta_{q+1} \dd x \\
  =&   \int_{\mathbb{T}^2}  \partial^{\gamma}(d_{q+1}\cdot\nabla d_{q+1}+b_{\ell_q}\cdot\nabla d_{q+1}+d_{q+1}\cdot\nabla b_{\ell_q}+ \Div\RR_{\textup{mol,v}}+F_{q+1})\cdot\partial^{\gamma}\theta_{q+1}\dd x,
  \end{align*}
  and
  \begin{align*}
&\frac{1}{2}\frac{\dd}{\dd t}\int_{\mathbb{T}^2}|\partial^{\gamma}d_{q+1}|^2\dd x +  \int_{\mathbb{T}^2}(\theta_{q+1}\cdot\nabla d_{q+1} + \vnon_{\ell_q}\cdot\nabla d_{q+1}+\theta_{q+1}\cdot\nabla b_{\ell_q})\cdot\partial^{\gamma}d_{q+1}\dd x\\
      =& \int_{\mathbb{T}^2}\partial^{\gamma}(d_{q+1}\cdot\nabla \theta_{q+1}+b_{\ell_q}\cdot\nabla \theta_{q+1}+d_{q+1}\cdot\nabla \vnon_{\ell_q} + \Div\RR_{\textup{mol,b}})\cdot\partial^{\gamma}d_{q+1}\dd x.
\end{align*}
Following the method as in estimating \eqref{est-H4}, we have
\begin{align}
\frac{1}{2} (\|\theta_{q+1}\|^2_{\dot H^N}+\|d_{q+1}\|^2_{\dot H^N})
\lesssim&
 \int_{ 0}^{T}  \|(b_{\ell_q}, \vnon_{\ell_q}, \theta_{q+1},d_{q+1})\|_{H^3}(\|\theta_{q+1}\|^2_{H^N}+\|d_{q+1}\|^2_{H^N})\notag\\
  &
 +\|(d_{q+1},\theta_{q+1})\|_{L^2}\|(\vnon_{\ell_q},b_{\ell_q})\|_{W^{N+1,\infty}}\|(d_{q+1},\theta_{q+1})\|_{H^N} \notag\\
  &+\|\Div\RR_{\textup{mol,v}}\|^2_{H^N}  + \|\Div\RR_{\textup{mol,b}}\|^2_{H^N}+\|F_{q+1}\|^2_{H^N}\dd t.\notag
\end{align}
Assume that $T_1< T $ is the maximum lifespan such that $$\|  \theta_{q+1}\|^2_{\dot H^N}+\| d_{q+1}\|^2_{\dot H^N}\leq \lambda^{2(N+1)}_{q+1}\delta_{q+1},~~~~4\leq N\leq N_0,~~t\in[0,T_1].$$
Since $\|(\Div\RR_{\textup{mol,v}},\Div\RR_{\textup{mol,b}},F_{q+1})\|_{H^N}\leq \lambda^{N+1-10}_{q+1}$, one can easily deduce that
\begin{align*}
\frac{1}{2} (\|\theta_{q+1}\|^2_{\dot H^N}+\|d_{q+1}\|^2_{\dot H^N})
\le & \frac{1}{16}\lambda^{2(N+1)}_{q+1}\delta_{q+1}
+C\lambda^{-10}_{q+1}(\lambda^{N+3}_{q}\delta^{1/2}_{q})
\lambda^{N+1}_{q+1}\delta^{1/2}_{q+1}+ C\lambda^{2N-18}_{q+1}\\
\lesssim&\frac{1}{4}\lambda^{2(N+1)}_{q+1}\delta_{q+1}<\lambda^{2(N+1)}_{q+1}\delta_{q+1},
\end{align*}
and so we  shows \eqref{HN}.

Finally, we can deduce  by \eqref{e:wt} that
\begin{align}\label{dtwnon}
\| P^{(\textup{per})}_{q+1}\|_{N}+\|  \partial_t\theta_{q+1}\|_{N}+\|  \partial_td_{q+1}\|_{N} \le \lambda^{-3}_{q+1},\quad~N=0,1.
\end{align}
This inequality implies that, for $N=0,1$,
 \begin{align*}
 &\|(D_{t,\ell_q}\theta_{q+1},D_{t,\ell_q}d_{q+1})\|_{N}
 \le \| (\partial_t\theta_{q+1},\partial_td_{q+1},u_{\ell_q}\cdot\nabla\theta_{q+1},u_{\ell_q}\cdot\nabla b_{q+1})\|_{N}\le \lambda^{-3}_{q+1},
 \end{align*}
 which yields \eqref{Dtnon}. Thus, we complete the proof of Proposition \ref{est-GMHD}.
\end{proof}

\begin{prop}[Estimates for $\Rrem$]\label{propRem}${\Trem}$ can be decomposed by
$$ {\Trem}=\Div \Rrem+\nabla {\Prem}$$ with $\text{spt}_x (\Rrem ,{\Prem})\subset \Omega_{q+1}$, and
 \begin{align*}
 &\|{\Prem}\|_0\leq\delta_{q+1}\\
&\|\Rrem\|_N\leq\lambda^{N}_{q+1} \tau_q\lambda_q \delta^{1/2}_{q}\delta_{q+1},~~&&N=0,1,2,\\
&\|D_{t,q+1}\Rrem\|_{N-1}\leq \lambda^{ N-1}_{q+1}\tau^{-1}_q\delta_{q+1},~~&&N=1,2.
\end{align*}
\end{prop}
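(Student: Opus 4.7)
The plan is to decompose each symmetric tensor inside $T^{\text{rem}}_{q+1}$ into a trace-free symmetric piece (which enters $\RR^{\text{rem}}$) and a trace (which contributes $\nabla P^{\text{rem}}$ via the identity $\Div(f\,{\rm Id})=\nabla f$). Concretely, I will set
\begin{align*}
\RR^{\text{rem}} &:= \vloc_{\ell_q}\ootimes\wnon_{q+1}+\wnon_{q+1}\ootimes\vloc_{\ell_q}+\wloc_{q+1}\ootimes\wnon_{q+1}+\wnon_{q+1}\ootimes\wloc_{q+1}+(\RR_{\ell_q}-\RR_{\ell_q,\tau_q}),\\
P^{\text{rem}} &:= \vloc_{\ell_q}\cdot\wnon_{q+1}+\wloc_{q+1}\cdot\wnon_{q+1}.
\end{align*}
The support condition is automatic: $\vloc_{\ell_q}$ and $\wloc_{q+1}$ are both supported in $\Omega_{q+1}$ by $\Omega_q\subset\Omega_{q+1}$ and by the construction of $\chi_q$, while $\RR_{\ell_q}-\RR_{\ell_q,\tau_q}$ lives in a slight widening of $\Omega_q$ that is still contained in $\Omega_{q+1}$ because $\ell_q,\tau_q\ll\lambda_q^{-1/30}$, and the $\wnon_{q+1}$ factor in each cross product only appears multiplying something already supported in $\Omega_{q+1}$.

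For the $C^N$ bounds ($N=0,1,2$) I would invoke three inputs: Proposition \ref{p:estimates-for-mollified} gives $\|\vloc_{\ell_q}\|_N\lesssim\delta_q^{1/2}\lambda_q^N$ and, crucially, $\|\RR_{\ell_q}-\RR_{\ell_q,\tau_q}\|_N\lesssim\lambda_q^{1/2}\lambda_{q+1}^{N-1/2}\delta_q^{1/4}\delta_{q+1}^{3/4}=\lambda_{q+1}^N\tau_q\lambda_q\delta_q^{1/2}\delta_{q+1}$ by the definition of $\tau_q$ in \eqref{def-tauq}; Proposition \ref{estimate-wq+1} yields $\|\wloc_{q+1}\|_N\lesssim\lambda_{q+1}^N\delta_{q+1}^{1/2}$; and Proposition \ref{est-GMHD} together with the Sobolev embedding $H^4(\mathbb{T}^2)\hookrightarrow C^2$ gives $\|\wnon_{q+1}\|_{C^N}\lesssim\lambda_{q+1}^{-3}$ for $N\leq2$. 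The Leibniz rule then bounds every cross product by a $\lambda_{q+1}^{-3}$-small multiple of $\lambda_q^N\delta_q^{1/2}$ or $\lambda_{q+1}^N\delta_{q+1}^{1/2}$, each comfortably absorbed in $\lambda_{q+1}^N\tau_q\lambda_q\delta_q^{1/2}\delta_{q+1}$ once $a\gg 1$. The pressure bound $\|P^{\text{rem}}\|_0\leq\delta_{q+1}$ is immediate from $\|\wnon_{q+1}\|_0\lesssim\lambda_{q+1}^{-3}$.

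For the material-derivative bound I would write $D_{t,q+1}=D_{t,\ell_q}+w_{q+1}\cdot\nabla$ and distribute over each product. The cross-term factors $D_{t,\ell_q}\vloc_{\ell_q}$ and $D_{t,\ell_q}\wloc_{q+1}$ are controlled by Propositions \ref{p:estimates-for-mollified} and \ref{estimate-wq+1}, $D_{t,\ell_q}\wnon_{q+1}$ by \eqref{Dtnon}, and the transport commutator $w_{q+1}\cdot\nabla$ contributes at most the factor $\lambda_{q+1}\delta_{q+1}^{1/2}$ via Proposition \ref{estimate-wq+1}; the smallness of $\wnon_{q+1}$ forces each of these contributions to lie well below the target. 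For the mollification difference, $\|D_{t,\ell_q}\RR_{\ell_q,\tau_q}\|_{N-1}\leq\ell_q^{-(N-1)}\tau_q^{-1}\delta_{q+1}\lesssim\lambda_{q+1}^{N-1}\tau_q^{-1}\delta_{q+1}$ is favorable thanks to the flow-adapted temporal mollification.

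The main technical obstacle will be controlling $D_{t,\ell_q}\RR_{\ell_q}$ itself: Proposition \ref{p:estimates-for-mollified} only yields the loss-bearing estimate $\|D_{t,\ell_q}\RR_{\ell_q}\|_{N-1}\lesssim\lambda_q\delta_q^{1/2}\delta_{q+1}\ell_q^{-(N-1)+5\alpha/4}$, and showing that this quantity fits inside $\lambda_{q+1}^{N-1}\tau_q^{-1}\delta_{q+1}$ reduces to a monomial inequality in $\lambda_q,\lambda_{q+1},\delta_q,\delta_{q+1}$ that must be verified using \eqref{beta} together with the precise definitions in \eqref{def-tauq} (roughly, we need $\ell_q^{-5\alpha/4}\lambda_q\delta_q^{1/2}\ll\lambda_{q+1}\tau_q^{-1}$, which forces the choice of $\ell_q$ with exponents tuned to $\beta<\tfrac{1}{5}$). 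Once this parameter bookkeeping is done, the remaining estimates are mechanical applications of the Leibniz rule and the previously established bounds.
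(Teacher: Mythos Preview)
Your proposal is correct and follows essentially the same route as the paper: the paper defines exactly the same $\Rrem$ (the trace-free parts of the cross tensors plus $\RR_{\ell_q}-\RR_{\ell_q,\tau_q}$) and $\Prem$ (the trace), invokes the same estimates on $\vloc_{\ell_q}$, $\wloc_{q+1}$, $\wnon_{q+1}$, and the mollification difference, and handles $D_{t,q+1}$ by passing to $D_{t,\ell_q}$ plus the $w_{q+1}\cdot\nabla$ correction. The one place you are more careful than the paper is the parameter check for $\|D_{t,\ell_q}\RR_{\ell_q}\|_{N-1}\lesssim\lambda_{q+1}^{N-1}\tau_q^{-1}\delta_{q+1}$; the paper simply asserts this, and your reduction to the monomial inequality (which indeed follows from the definitions of $\ell_q,\tau_q$ in \eqref{def-tauq} since $(\lambda_q/\lambda_{q+1})^{5(1-\beta)/4}\lambda_q^{8\alpha}\ll 1$) is the right way to close it.
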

\begin{proof}
We rewrite $\Trem$ to be
\begin{align*}
{\Trem}=& \Div \Big(\vloc_{\ell_q}\ootimes \wnon_{q+1}+\wnon_{q+1}\ootimes \vloc_{\ell_q}+\wloc_{q+1}\ootimes \wnon_{q+1}+\wloc_{q+1}\ootimes \wnon_{q+1} +\RR_{\ell_q}-\RR_{\ell_q,\tau_q} \Big)\\
&+\frac13\nabla {Tr}\Big(\vloc_{\ell_q}\otimes \wnon_{q+1}+\wnon_{q+1}\otimes \vloc_{\ell_q}+\wloc_{q+1}\otimes \wnon_{q+1}+\wloc_{q+1}\otimes \wnon_{q+1} \Big)\\
=:& \Div\Rrem+\nabla {\Prem}.
\end{align*}
With the aid of Proposition \ref{p:estimates-for-mollified}, we deduce that
\begin{align*}
&\|{\Prem}\|_0\lesssim\delta_{q+1}, \,\,\|\RR_{\ell_q}-\RR_{\ell_q,\tau_q}\|_0\lesssim \tau_q\lambda_q \delta^{1/2}_{q}\delta_{q+1},\\
&\|\RR_{\ell_q}-\RR_{\ell_q,\tau_q}\|_N\leq
\|\RR_{\ell_q}\|_N+\|\RR_{\ell_q,\tau_q}\|_N
\lesssim  \ell^{-N}_q\delta_{q+1}\lesssim \lambda^{N}_{q+1} \tau_q\lambda_q \delta^{1/2}_{q}\delta_{q+1},~~N= 1,2,
\end{align*}
 and, for $~N=1,2$,
\begin{align*}
&\|D_{t,q+1}(\RR_{\ell_q}-\RR_{\ell_q,\tau_q})\|_{N-1}  \\
\leq
&\|D_{t,q+1} \RR_{\ell_q}\|_{N-1}+\|D_{t,q+1}\RR_{\ell_q,\tau_q}\|_{N-1}  \\
\leq
&\big\|D_{t,\ell_q}\RR_{\ell_q }\big\|_{N-1}+\Big\| ( u_q- u_{\ell_q}) \cdot\nabla\RR_{\ell_q }\Big\|_{N-1}+\|w_{q+1} \cdot\nabla\RR_{\ell_q }\|_{N-1}\\
& +\big\|D_{t,\ell_q}\RR_{\ell_q,\tau_q}\big\|_{N-1}+\Big\| ( u_q- u_{\ell_q}) \cdot\nabla\RR_{\ell_q,\tau_q}\Big\|_{N-1}+\|w_{q+1}\cdot \nabla\RR_{\ell_q,\tau_q}\|_{N-1}\\
\lesssim&
 \lambda^{ N-1}_{q+1}\tau^{-1}_q\delta_{q+1}
+\lambda^{ N-1-10}_{q+1}\ell^{-1}_q \delta_{q+1}
+\lambda^{ N-1}_{q+1}\lambda_q\delta^{3/2}_{q+1}\\
\lesssim  &  \lambda^{ N-1}_{q+1}\tau^{-1}_q\delta_{q+1}.
\end{align*}
Combining Proposition \ref{p:estimates-for-mollified} with \eqref{H4} and \eqref{dtwnon}, we have for $N=0,1,2$,
\begin{align*}
&\|\vloc_{\ell_q}\ootimes \wnon_{q+1}+\wnon_{q+1}\ootimes \vloc_{\ell_q}+\wloc_{q+1}\ootimes \wnon_{q+1}+\wloc_{q+1}\ootimes \wnon_{q+1} \|_{N }\\
\leq & \lambda^{N-3}_{q+1}\leq  \lambda^{N}_{q+1}\tau_q\lambda_q \delta^{1/2}_{q}\delta_{q+1}
\end{align*}
and, for $N=1,2$,
\begin{align*}
&\|D_{t,q+1}(\vloc_{\ell_q}\ootimes \wnon_{q+1}+\wnon_{q+1}\ootimes \vloc_{\ell_q}+\wloc_{q+1}\ootimes \wnon_{q+1}+\wloc_{q+1}\ootimes \wnon_{q+1})\|_{N-1}  \\
\leq
&\|(D_{t,\ell_q}+(v_{q}- u_{\ell_q}+w_{q+1})\cdot\nabla)(\vloc_{\ell_q}\ootimes \wnon_{q+1}+\wnon_{q+1}\ootimes \vloc_{\ell_q}+\wloc_{q+1}\ootimes \wnon_{q+1}+\wnon_{q+1}\ootimes\wloc_{q+1}) \|_{N-1}  \\
\leq&
  \lambda^{ N-1-3}_{q+1}\leq
  \lambda^{ N-1}_{q+1}\tau^{-1}_q\delta_{q+1}.
\end{align*}
We complete  the proof of  Proposition \ref{propRem}.
\end{proof}

\begin{prop}[Energy estimate]
\label{p:energy}For all $t\in[0,T]$, we have
    \[\bigg|e(t)-\int_{\mathbb T^2}\big(  | u_{q+1}|^{2}+|b_{q+1}|^{2}\big)\dd x -\frac{\delta_{q+2}}2\bigg| \le \frac{\delta_{q+2}}{10} .\]
\end{prop}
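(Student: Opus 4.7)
The strategy is to expand $|u_{q+1}|^2 + |b_{q+1}|^2 = |u_{\ell_q}|^2 + |b_{\ell_q}|^2 + 2 u_{\ell_q}\cdot w_{q+1} + 2b_{\ell_q}\cdot d_{q+1} + |w_{q+1}|^2 + |d_{q+1}|^2$, identify the dominant term $\int|\wpq|^2\,\dd x$ coming from $|w_{q+1}|^2$, and show via the geometric lemma and the definition \eqref{e0:energy-gap} of $e_q$ that it produces exactly the gap $e(t) - \int(|u_q|^2+|b_q|^2)\,\dd x - \frac{\delta_{q+2}}{2}$ up to errors much smaller than $\delta_{q+2}$.

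The mollification error $\int \bigl||u_q|^2 - |u_{\ell_q}|^2\bigr| + \bigl||b_q|^2 - |b_{\ell_q}|^2\bigr|\,\dd x$ is controlled by $\lambda_{q+1}^{-10}$ via Proposition \ref{p:estimates-for-mollified} and \eqref{e:v_ell-vq}. The cross term $\int b_{\ell_q}\cdot d_{q+1}\,\dd x$ is negligible by the $L^2$-smallness of $d_{q+1}$ in Proposition \ref{est-GMHD}. For $\int u_{\ell_q}\cdot w_{q+1}\,\dd x$, the piece involving $\wnon_{q+1}=\theta_{q+1}$ is small by the analogous bound on $\theta_{q+1}$, while $\int u_{\ell_q}\cdot(\wpq+\wcq)\,\dd x$ is handled by integration by parts against the oscillatory phase $e^{2\pi\ii\lambda_{q+1}k\cdot\Phi_i}$, gaining a factor of $\lambda_{q+1}^{-1}$ per application via Lemma \ref{l:non-stationary-phase}. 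All quadratic cross terms in $|w_{q+1}|^2$ other than $|\wpq|^2$ itself are suppressed by the smallness of $\|\wcq\|_0 \lesssim \lambda_{q+1}^{-1}\ell_q^{-1}\delta_{q+1}^{1/2}$ and $\|\wnon_{q+1}\|_0 \lesssim \lambda_{q+1}^{-3}$ from Propositions \ref{estimate-wq+1} and \ref{est-GMHD}.

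The heart of the computation is $\int|\wpq|^2\,\dd x$. Expanding
\[|\wpq|^2 = -\sum_{\substack{i,i'\\ k\in\Lambda_i,\, k'\in\Lambda_{i'}}} a_{k,i}a_{k',i'}(\bar{k}\cdot\bar{k}')\,e^{2\pi\ii\lambda_{q+1}(k\cdot\Phi_i+k'\cdot\Phi_{i'})},\]
the only non-oscillatory contribution occurs when $i=i'$ and $k+k'=0$, yielding $\sum_{i;k\in\Lambda_i} a_{k,i}^2|\bar{k}|^2$. Since $a_{k,i}=\eta_i\chi_q e_q^{1/2}a_k(\Id - \RR_{\ell_q,\tau_q}/e_q)$, the geometric Lemma \ref{first S} (giving $\sum_{k\in\Lambda_i} a_k^2(R)\,k\otimes k = R$), together with the tracelessness of $\RR_{\ell_q,\tau_q}$, implies that $\sum_{k\in\Lambda_i} a_k^2(\Id-\RR_{\ell_q,\tau_q}/e_q)\,|\bar{k}|^2$ is a fixed constant depending only on $\tr(\Id)$. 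The remaining high-frequency modes are treated by iterated application of the stationary phase Lemma \ref{l:non-stationary-phase}, producing errors of order $\lambda_{q+1}^{-N}\delta_{q+1}$ for $N\le N_0$. Combining with the definition of $e_q$ and the fact that $\{\eta_i\}$ is designed so that $\sum_i\eta_i^2 \equiv 1$ (a quadratic partition of unity), one obtains
\[\int|\wpq|^2\,\dd x = e(t) - \int(|u_q|^2+|b_q|^2)\,\dd x - \tfrac{\delta_{q+2}}{2} + O\!\bigl(\delta_{q+2}\lambda_{q+1}^{-\alpha}\bigr).\]

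Assembling everything gives $\bigl|e(t) - \int(|u_{q+1}|^2+|b_{q+1}|^2)\,\dd x - \delta_{q+2}/2\bigr| \le \delta_{q+2}/10$ once $a$ is chosen large enough, thanks to the parameter relations \eqref{b-alpha}--\eqref{beta}. The principal obstacle is the precise constant-matching: one must arrange the geometric-lemma output, the normalization factor $3\int_{\mathbb T^2}\chi_q^2\,\dd x$ in the denominator of $e_q$, and the temporal partition $\{\eta_i\}$ so that $\int|\wpq|^2\,\dd x$ cancels $\int(|u_q|^2+|b_q|^2)\,\dd x - e(t) + \delta_{q+2}/2$ exactly at leading order. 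All residual error terms (mollification, cross terms, high-frequency tails, and the small contributions from $\wcq$, $\theta_{q+1}$, $d_{q+1}$) must then each be verified to be $o(\delta_{q+2})$, which follows from the algebraic relations \eqref{beta} among $\ell_q,\tau_q,\lambda_q,\delta_q$.
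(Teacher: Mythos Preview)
Your proposal is correct and follows essentially the same strategy as the paper: decompose the energy difference, isolate $\int|\wpq|^2\,\dd x$, apply the geometric lemma together with $\tr\RR_{\ell_q,\tau_q}=0$ to recover $3e_q\int\chi_q^2\,\dd x$ (the paper implicitly uses $\sum_i\eta_i^2\equiv 1$ just as you note), and bound all remaining cross and mollification terms. One minor correction: for the oscillatory integrals such as $\int u_{\ell_q}\cdot(\wpq+\wcq)\,\dd x$ and the high-frequency tail of $|\wpq|^2$, Lemma~\ref{l:non-stationary-phase} concerns the inverse-divergence operators $\mathcal R,\mathcal R_{\vex}$ rather than scalar integrals; the paper instead uses the direct spectral bound $\bigl|\int f\,\mathbb{P}_{\ge c}g\,\dd x\bigr|\le c^{-L}\|f\|_{H^L}\|g\|_{L^2}$, which achieves the same frequency gain.
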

\begin{proof}
For $t\in[0,T]$, the total energy error can be rewritten as follows:
\begin{align}\label{energy0}
 &e(t)-\int_{\mathbb T^2}\big( |  u_{q+1}|^{2}+| b_{q+1}|^{2} \big)\dd x -\frac{\delta_{q+2}}2\notag\\
=&e(t)-\int_{\mathbb T^2} \big(| u_q|^{2}+|b_q |^{2}\big) \dd x -\frac{\delta_{q+2}}2-\int_{\mathbb T^2} |\wpq|^2 \dd x -e_{{\rm low},0} ,
\end{align}
where
\begin{align*}
e_{{\rm low},0}:=&\int_{\mathbb T^2}  2\Big( (\wpq+\wcq)\cdot(\wnon_{q+1}+ u_{\ell_q}) +\wpq\cdot\wcq +\wnon_{q+1}\cdot  u_{\ell_q}+d_{q+1}\cdot b_{\ell_q}\\
&+ \wcq\cdot\wcq+\wnon_{q+1}\cdot\wnon_{q+1}+ d_{q+1}\cdot d_{q+1} - (| u_q|^{2}+|b_q |^{2})+ (v^{2}_{\ell_q }+b^{2}_{\ell_q } )    \Big)\dd x.
\end{align*}
Using geometric Lemma \ref{first S}, one has
\begin{align}\label{energy2}
     \int_{\mathbb T^2}  |\wpq|^2\dd x
    =& \int_{\mathbb T^2} \tr\big(\wpq\otimes \wpq\big) \dd x \notag\\
    =&\int_{\mathbb T^2}\tr\Big(\sum_{i;k\in\Lambda_i}a^2_{k,i}
(\bar{k}\otimes\bar{k}) +  \sum_{\substack{k'+k\neq0\\ i;k\in\Lambda_{i}\\ i';k'\in\Lambda_{i'}}} \bar{a}_{k,i}\bar{a}_{k',i'}e^{\ii 2\pi\lambda_{q+1}(k+k')\cdot x}\bar{k}'\otimes\bar{k} \Big)\dd x\notag\\
    =& \int_{\mathbb T^2} \tr\Big( \chi^2_qe_q\big({\rm Id}-\tfrac{\RR_{\ell_q,\tau_q}}{e_q}\big)
+  \sum_{\substack{k'+k\neq0\\ i;k\in\Lambda_{i}\\ i';k'\in\Lambda_{i'}}}  \bar{a}_{k,i}\bar{a}_{k',i'} e^{\ii2\pi \lambda_{q+1}(k+k')\cdot x}\bar{k}'\otimes\bar{k}\Big) \dd x\notag\\
    =:& 3\int_{\mathbb{T}^2}\chi^2_q\dd x\cdot e_q  +e_{{\rm low},1}
\end{align}
where we have used the fact that  $ \tr \RR_{\ell_q,\tau_q}=0$. From the definition of $e_q(t)$ in \eqref{e0:energy-gap}, we
rewrite \eqref{energy2} as
\begin{align}\label{e:energy-wpq+dpq}
 \int_{\mathbb T^3}   |\wpq|^2\dd x
= e(t) - \int_{\mathbb T^3} \big(|v_{  q}|^2+|b_{ q}|^2\big)\dd x -\frac{\delta_{q+2}}2+e_{{\rm low},1}.
\end{align}
Employing
$$\Big|\int_{\mathbb T^3}f\mathbb{P}_{\geq c}g \dd x\Big|=\Big|\int_{\mathbb T^3}|\nabla|^Lf|\nabla|^{-L}
\mathbb{P}_{\geq c}g \dd x\Big|\leq c^{-L}\|g\|_{L^2}\|f\|_{H^L}$$
with $L$ sufficiently large and using \eqref{e:v_ell-vq} in Proposition \ref{p:estimates-for-mollified}, one can easily deduce that
\begin{align}\label{small terms-elow}
|e_{{\rm low},0}|+|e_{{\rm low},1}|\leq\frac{1}{50 }\delta_{q+2}.
 \end{align}
Putting \eqref{e:energy-wpq+dpq}  and
  \eqref{small terms-elow} into \eqref{energy0}, we have, for each $t\in[0,T]$,
  \begin{align}\label{energy4}
  \Big|e(t)-\int_{\mathbb T^3} \big(|v_{  q+1}|^2+|b_{q+1}|^2\big)\dd x -\frac{\delta_{q+2}}2\Big|
=&\Big|\Big(e(t)-\int_{\mathbb T^3}\big(|v_{  q}|^2+|b_{ q}|^2 \big)\dd x -\frac{\delta_{q+2}}2\Big) -e_{{\rm low},0}\notag\\
&-\Big(e(t) - \int_{\mathbb T^3} \big( |v_{  q}|^2+|b_{ q}|^2\big)\dd x - \frac{\delta_{q+2}}2\Big)-e_{{\rm low},1} \Big|\notag\\
\leq& |e_{{\rm low},0}|+|e_{{\rm low},1}|
\leq\frac{1}{10}\delta_{q+2}.
   \end{align}
Hence, we completethe proof of Proposition \ref{p:energy}.
\end{proof}
\noindent{\textbf{The iterative estimates at next step}} In this part, we combine the previously established estimates to prove that there exists a solution $(u_{q+1},b_{q+1},\RR_{q+1})$ of the equations \eqref{e:subsol-euler} satisfies \eqref{e:vq-C0}--\eqref{e:b-spt}. Note that
\begin{align}\label{q+1step}
 u_{q+1}=&\vnon_{q+1}+\vloc_{q+1}:=(\vnon_{\ell_q}+\theta_{q+1})+(\vloc_{\ell_q}+\wpq+\wcq),~~~~~
b_{q+1} =b_{\ell_q}+d_{q+1},
\end{align}
combining with Propositions \ref{proptrans}--\ref{p:energy}  together, we obtain
 \begin{equation}
\left\{ \begin{alignedat}{-1}
&\partial_t  u_{q+1}+\Div \big( u_{q+1}\otimes  u_{q+1}\big)+\Div (b_{q+1} \otimes b_{q+1})+\nabla P_{q+1} = \Div \RR_{q+1},\\
&\partial_t b_{q+1}  +\Div (  \vnon_{q+1}\otimes b_{q+1})    -\Div (  b_{q+1} \otimes \vnon_{q+1})      = 0,
\end{alignedat}\right.
\end{equation}
and
 \eqref{e:vq-C0}-\eqref{e:Dv},~\eqref{e:energy-q-estimate}-\eqref{e:b-spt} are valid for $q+1$ step,where
 \begin{align*}
 P_{q+1}:=&P_{\ell_q}+P^{\textup{tran}}_{q+1}+ P^{\textup{Nash}}_{q+1}+P^{\textup{osc}}_{q+1}+{\Prem}+P^{(\textup{per})}_{q+1},\\
 \RR_{q+1}:=&\Rtrans+\Rnash +\Rosc+\Rrem.\end{align*}
 Then one deduces that, $\|P_{q+1}\|_0\leq 5\sum_{l=1}^{q+1}\delta_{l}$, and for $N=0,1,2,$
 \begin{align*}
\|\RR_{q+1}\|_N\lesssim\lambda^{N}_{q+1}\tau^{-1}_q\lambda_q\delta^{1/2}_{q}\delta_{q+1} +\lambda^{-1+N}_{q+1}\lambda_q\delta^{1/2}_{q}\delta^{1/2}_{q+1}+
\leq\lambda^{N-4 \alpha}_{q+1}\delta_{q+2} ,
\end{align*}
and, for $N=1,2$,
\begin{align*}
 &\|D_{t,{q+1}}\RR_{q+1}\|_{N-1}\\
 \leq& \|\big(D_{t,{\ell_q}}+ (v_{q}- u_{\ell_q}+w_{q+1})\nabla\big)(\Rtrans+\Rnash +\Rosc) \|_{N-1} +\|D_{t,{q+1}}\Rrem\|_{N-1} \\
 \lesssim& \lambda^{N-1}_{q+1}\tau^{-1}_q\delta_{q+1}\leq \lambda^{N-1}_{q+1}\lambda_{q+1}\delta^{1/2}_{q+1}\delta_{q+2}\lambda_{q+1}^{-4 \alpha},
\end{align*}
 where we use \eqref{beta} such that
$$\lambda^{-1}_{q+1}\lambda_q\delta^{1/2}_{q}\delta^{1/2}_{q+1}\leq\tau_q\lambda_q\delta^{1/2}_{q}\delta_{q+1} \leq \delta_{q+2}\lambda_{q+1}^{-4 \alpha},\quad \text{for}~~1<b\leq1+\tfrac{  1-5\beta }{8\beta},~\beta<\frac15.$$   This fact gives \eqref{e:RR_q-C0}-\eqref{e:DRR_q-C0} for $q+1$ step.

Since $\text{spt}_x a_{k,i} \subset \Omega_{q+1}$, we obtain $\text{spt}_x(\vloc_{q+1},\RR_{q+1})\subset \Omega_{q+1}$. Now we show that $\text{spt}_xb_{q+1} \subset \mathcal{O}$ as in \eqref{spt}. Firstly, we rewrite the equation of $b_{q+1}$ such that
\begin{align*}
\partial_t b_{q+1}\circ y(t,x)        =-   (b_{q+1}\cdot \nabla \vnon_{q+1})\circ y(t,x),\quad \partial_ty = \vnon_{q+1}\circ y~~~\text{with} ~~~y(0,x)=x.      \end{align*}
Taking the $L^2$ inner product of this equation with $b_{q+1}\circ y(t,x) $, integrating over the domain $\mathcal{D}$ with
\[\mathcal{D}=\bigcup_{k\in\ZZ^2}\Big(\big([0,1]^2\setminus [\tfrac{1}{2}-\tfrac{1}{4}\epsilon_0  ,  \tfrac{1}{2}+\tfrac{1}{4}\epsilon_0  ]^2\big)+k\Big),\]
 and using  Gr\"{o}nwall's inequality, we deduce from  $\|\vnon_{q+1}\|_{H^4} \sim o(\epsilon_0)$ that
 $$\int_{\mathcal{D}}|b_{q+1}\circ y(t,x)|^2\dd x\lesssim \int_{ \mathcal{D}}|b_{q+1} (0,x)|^2\dd x.$$
The relation \eqref{e:b-spt} implies that $b_{q } (0,x)=b_{q-1}*_x\psi_{\ell_{q-1}}(0,x)$. Together with $d_{q+1}(0,x)=0$, it yields that
 \begin{align*}
 b_{q+1} (0,x)=&b_{\ell_q}(0,x)=b_{ q}*_x\psi_{\ell_q}(0,x)\\
 =&b_{ q-1}*_x\psi_{\ell_{q-1}}*_x\psi_{\ell_q}(0,x)=...=b_1*_x\psi_{\ell_{1}}*_x\psi_{\ell_2}
 *_x...*_x\psi_{\ell_q}(0,x).
 \end{align*}
Recalling that$ ~\text{spt}_xb_1(0,x)\subset \bigcup_{k\in\ZZ^2}  \big(  [\tfrac{1}{2}-\tfrac{1}{16}\epsilon_0  ,  \tfrac{1}{2}+\tfrac{1}{16}  ]^2 +k\big)\subsetneqq  \mathcal{O}$ , we deduce that
 $$\text{spt}_xb_{q+1} (0,x)\subset  \bigcup_{k\in\ZZ^2}  \Big([\tfrac{1}{2}-\tfrac{1}{8}\epsilon_0  ,  \tfrac{1}{2}+\tfrac{1}{8}\epsilon_0  ]^2+k\Big)\quad \text{by}\quad \sum_{l=1}^{q}\ell_l\leq\sum_{l=1}^{+\infty}\ell_l\sim o(\epsilon_0).$$
 Therefore, one gets
 \[\int_{ \mathcal{D}}|b_{q+1}\circ y(t,x)|^2\dd x=0,\]
which implies that
$$b_{q+1}\circ y(t,x)\subset \bigcup_{k\in\ZZ^2}\Big(  [\tfrac{1}{2}-\tfrac{1}{4}\epsilon_0  ,  \tfrac{1}{2}+\tfrac{1}{4}  ]^2 +k\Big).$$
Note that $y(t,x)$ is a differential homeomorphic mapping  such that $$y(t,x)=x+\vnon_{q+1}\circ y\subset [x-o(\epsilon_0),x+o(\epsilon_0)],$$
we deduce that
 $$\text{spt}_x ~b_{q+1}\subset \bigcup_{k\in\ZZ^2}\Big( [\tfrac{1}{2}-\tfrac{2}{4}\epsilon_0  ,  \tfrac{1}{2}+\tfrac{2}{4}\epsilon_0  ]^2 +k\Big)=\mathcal{O},$$
and finish the proof of Proposition \ref{p:main-prop00}.

\subsubsection{Proof of Proposition \ref{p:main-prop2}}
Under the assumptions that
\begin{align*}
  u_{q}=\widetilde{u}_{q},\quad b_{q }=\widetilde{b}_{q},  \quad \RR_{q } ={\tRR}_{q },\quad\text{on}\quad [0, \tfrac{T}{4}+5\tau_q],
\end{align*}
one immediately has
\begin{align*}
u_{\ell_q}=\widetilde{u}_{\ell_q},~~b_{\ell_q}=\widetilde{b}_{\ell_q},  \quad \Phi_i=\widetilde{\Phi}_i,\quad \RR_{\ell_q,\tau_q} =\tRR_{\ell_q,\tau_q},\quad\text{on}\quad [0, \tfrac{T}{4}+4\tau_q].
\end{align*}
Recalling the definition of $ \wloc_{q+1}$ that
\begin{align*}\label{recallpc}
  \wloc_{q+1}
    = \wpq +\wcq
    = \sum_{i;k\in\Lambda_i} \frac{\nabla^{\perp}}{ \lambda_{q+1} }  \big(\bar{a}_{k,i} e^{ 2\pi \ii\lambda_{q+1}k\cdot x}\big),
\end{align*}
where $$\bar{a}_{k,i}=e^{1/2}_q\eta_i\chi_{q}   a_k \Big({\rm Id}-\tfrac{\RR_{\ell_{t,x},\tau_q}}{e_q}\Big) e^{2\pi \ii \lambda_{q+1}k\cdot(\Phi_i-x)},$$  we easily deduces that  $\bar{a}_{k,i}=\widetilde{\bar{a}}_{k,i}$ on $ [0, \tfrac{T}{4}+2\tau_q]$, and thus $\wloc_{q+1}=\widetilde{\wloc}_{q+1} $ on $[0, \tfrac{T}{4}+2\tau_{q }]$.

Moreover, recalling the definition of $\RR_{\textup{mol,v}},~\RR_{\textup{mol,b}}$ and $F_{q+1}$, we deduce that
$$\RR_{\textup{mol,v}}=\widetilde{\RR}_{\textup{mol,v}},~~\RR_{\textup{mol,b}}=\widetilde{\RR}_{\textup{mol,b}},
~~F_{q+1}=\widetilde{F}_{q+1},~~~\text{on}\quad [0, \tfrac{T}{4}+2\tau_q].$$
These facts imply that
$$\theta_{q+1}=\widetilde{\theta}_{q+1},\quad d_{q+1}=\widetilde{d}_{q+1}.$$
Combining with \eqref{q+1step} yields that
$$ u_{q+1}=\widetilde{u}_{q+1},\quad b_{q+1}=\widetilde{b}_{q+1},\quad t\in [0, \tfrac{T}{4}+5\tau_{q+1}].$$
Finally, we easily infer from the  definition of $\RR_{q+1}$  that
$$\RR_{q+1} =\tRR_{q+1},~~~t\in[0, \tfrac{T}{4}+5\tau_{q+1}].$$
Hence, we complete the proof of Proposition \ref{p:main-prop2}.

\subsection{Proof of Theorem \ref{app}}For each incompressible Euler weak solution $u_{E}\in C^{\gamma}_{t,x}~(\gamma>0)$ on $[0,T]$,  we could construct a sequence of approximate solutions for the ideal MHD equations via the   proof of Proposition \ref{t:main00}. The detailed proof proceeds as follows:

 Set $\epsilon=\lambda^{-\frac{1}{20}}_1$,  we introduce the spatial periodic function $\chi_{u}(x)$ satisfying
\begin{align*}
\textup{spt}_x\chi_{u}\subset
\Big(\big([0,1]^2\setminus [\tfrac{1}{2}-\tfrac{4}{5}\epsilon  ,  \tfrac{1}{2}+\tfrac{4}{5}\epsilon]^2\big)+k\Big).
\end{align*}
Let
$$u^{(1)}_{\epsilon}:=(\chi_{u} u_{E} )_{\epsilon^{ \frac{1}{20}}}=(\chi_{u}u_E)*_{x}\psi_{\epsilon^{ \frac{1}{20}}}*_{t}\varphi_{\epsilon^{ \frac{1}{20}}}\quad\textup{and}\quad
u^{(2)}_{\epsilon}:=((1-\chi_{u}) u_{E} )_{\epsilon^{ \frac{1}{20}}}.$$
One verifies that
$$\partial_tu^{(1)}_{\epsilon}+\Div(u^{(1)}_{\epsilon}\otimes u^{(1)}_{\epsilon})+\nabla P^{(1)}_{\epsilon}=
\Div R^{(1)}_{\epsilon} +F_{\epsilon},$$
  where $R^{(1)}_{\epsilon}:= u^{(1)}_{\epsilon}\ootimes u^{(1)}_{\epsilon}-(\chi_{u}   u_E\ootimes\chi_{u}   u_E)_{\epsilon^{ \frac{1}{20}}}$ and
  $$F_{\epsilon}:=\partial_tu^{(2)}_{\epsilon}+\Div\Big(( 1-\chi_{u})  u_{E} \ootimes  (1-\chi_{u}) u_{E}+ (1-\chi_{u})  u_{E} \ootimes  \chi_{u}   u_{E} +  \chi_{u} u_{E} \ootimes  (1-\chi_{u})  u_{E}\Big)_{\epsilon^{ \frac{1}{20}}}.$$
  Since $\|(1-\chi_{u})u_{E}\|_{L^1}\lesssim\epsilon^{2}\|u_{E}\|_0\lesssim\epsilon^{2}$, we obtain that
  $$\|F\|_{H^4}\leq \|F\|_{W^{5,1}}\lesssim\epsilon^{2-\frac{6}{20}}\quad\textup{and}\quad
   \|R^{(1)}\|_0\lesssim\|u^{(1)}\|^2_{\gamma}\lambda^{-\frac{\gamma}{200}}_1
  \lesssim\delta_{2}\lambda^{-3\alpha}_{1},$$
  where we have used $\delta_{q }:=c\lambda_q^{-2\beta}$ in \eqref{delta} with  $c= \lambda^{ 3\alpha-\frac{\gamma}{200}}_{1}\lambda_2^{2\beta},~~\alpha\ll\gamma$.

We consider the following Cauchy problem:
 \begin{equation}
\left\{ \begin{alignedat}{-1}
&\partial_t \theta_{\epsilon}   + \theta_{\epsilon} \cdot\nabla \theta_{\epsilon}  +\nabla P^{(\textup{per})}_{\epsilon}
   =     d_{\epsilon}\cdot\nabla d_{\epsilon} -F_{\epsilon} ,\\
&\partial_t d_{\epsilon}  + \theta_{\epsilon} \cdot\nabla d_{\epsilon}
      =  d_{\epsilon} \cdot\nabla \theta_{\epsilon}  ,
\\
 & \Div \theta_{\epsilon}  = \Div d_{\epsilon}  = 0,
  \\
  & (\theta,~d) |_{t=0}=  (0,g(x)) ,
\end{alignedat}\right.
 \label{again-e:wt}
\end{equation}
where $g(x)$ is a divergence free periodic  vector field satisfying $$\|g\|_{H^{N_0}}\leq\lambda^{-5}_1\quad \textup{and}\quad \textup{spt}_xg\subset[\tfrac{1}{2}-\tfrac{1}{32}\epsilon ,  \tfrac{1}{2}+\tfrac{1}{32}\epsilon ]^2.$$
Since $\lambda_1\gg  \epsilon^{-\frac{1}{20}} \gg T $, the system \eqref{again-e:wt} possesses a unique solution $(\theta_{\epsilon},d_{\epsilon}) $ on $[0,T]$ such that
$$\|\theta_{\epsilon}\|_{H^4}+\|d_{\epsilon}\|_{H^4} \lesssim
    \epsilon^{\frac{3}{2}} \quad \textup{and}\quad \textup{spt}_xd_{\epsilon}\subset\bigcup_{k\in\ZZ^2}\Big( [\tfrac{1}{2}-\tfrac{1}{16}\epsilon  ,  \tfrac{1}{2}+\tfrac{1}{16}\epsilon  ]^2 +k\Big).$$
We set $$(\vloc_{1,\epsilon},\vnon_{1,\epsilon},b_{1,\epsilon}, \RR_{1,\epsilon})=(u^{(1)}_{\epsilon},\theta_{\epsilon},d_{\epsilon},R^{(1)}_{\epsilon}+\vnon_{1,\epsilon}\ootimes \vloc_{1,\epsilon}+\vnon_{1,\epsilon} \ootimes \vloc_{1,\epsilon}),$$
then it is easy to verify that $(\vloc_{1,\epsilon},\vnon_{1,\epsilon},b_{1,\epsilon}, \RR_{1,\epsilon})$ satisfies  \eqref{e:vq-C0}--\eqref{spt} in Proposition \ref{p:main-prop00} with $\epsilon_0=\epsilon$  for $q=1$. In the same way as in the proof of Proposition \ref{p:main-prop00}, we obtain a weak solution $(u_{\epsilon},b_{\epsilon})\in C^{\tfrac{1}{5}-}$ with $u_{\epsilon}=\vloc_{\epsilon}+\vnon_{\epsilon} $ such that
$$ \|\vloc_{\epsilon}-\vloc_{1,\epsilon}\|_0\lesssim \sum_{q=1} \delta^{\frac{1}{2}}_{q+1} \quad\textup{and}\quad\|\vnon_{\epsilon}\|_{H^4}+\|b_{\epsilon}\|_{H^4} \le
    \epsilon^{\frac{3}{2}}+\sum_{q=1} \lambda^{-3}_q. $$
By the definition of $\delta_{q}$ and setting
$\epsilon=\lambda^{-\frac{1}{20}}_1=a^{-\frac{1}{20}b}$,  we deduce that
$$ \|\vloc_{\epsilon}-\vloc_{1,\epsilon}\|_0\lesssim \sum_{q=1} \delta_{q+1}\lesssim\delta_{2}\rightarrow 0 \quad\textup{and}\quad\|\vnon_{\epsilon} \|_{H^4}+\|b_{\epsilon} \|_{H^4} \le
    \epsilon^{\frac{3}{2}}+\sum_{q=1} \lambda^{-3}_q\rightarrow 0,\quad\text{as}\quad\epsilon\to 0.$$
Combining with
    $ \|u_E-\vloc_{1,\epsilon}\|_{L^p}
    \rightarrow 0,\,\,\text{as}\,\,\epsilon\rightarrow 0,$
we obtain that  $$\|u_{\epsilon}-u_E\|_{L^{\infty}_TL^{p}}+\|b_{\epsilon}\|_{L^{\infty}_TL^{p}}\rightarrow 0,\quad \textup{as}\quad \epsilon\rightarrow 0.$$

\quad\\
\appendix
\section{}
\begin{defn}[Mollifiers\cite{BMNV}]\label{Moll}Let  $\widetilde\phi\in C^\infty(\R;\R)$ satisfy $\supp \widetilde\phi\subset[-1,1]$,
\begin{align*}
  \int_{\R}\widetilde\phi(x)\dd x=1,\quad     \int_{\R}\widetilde\phi(x)x^n\dd x=1,\,\,\forall n=1,2,\cdots,L.
\end{align*}
Let $\psi:\R^d \to\R$ be defined by $\psi(x)=\widetilde\phi(|x|)$.
For each $\epsilon>0$, we define the two mollifiers as follows:
\begin{align}
\varphi_{\epsilon}(t)\coloneq \frac1{\epsilon} \psi\Big(\frac{t}\epsilon\Big),\quad \psi_\epsilon(x)
            \coloneq \frac1{\epsilon^d} \psi\Big(\frac{x}\epsilon\Big),\quad   \label{e:defn-mollifier-x}
\end{align}
\end{defn}
\begin{defn}[The operators $\mathcal{R}$ and $\mathcal R_{\vex}$]\label{def.R}
 For each vector field $u\in C^{\infty}(\TTT^3, \R^3)$, the operator $\mathcal{R}$ introduced in \cite{BDIS15} is defined by
\begin{align*}
    \mathcal R u = -(-\Delta)^{-1} (\nabla u + \nabla u^\TT ) - \frac12(-\Delta)^{-2} \nabla^2 \nabla\cdot u  + \frac12 (-\Delta)^{-1} (\nabla\cdot u ){\rm Id}_{3\times 3}.
\end{align*}
It is a matrix-valued right inverse of the divergence operator for the mean-free vector fields, in the sense that
\[ \Div\mathcal R u= u - \dashint_{\mathbb T^3} u. \]
In addition, $\mathcal Ru$ is traceless and symmetric.

For each mean-free scalar function $f\in C^\infty(\TTT^3, \R)$, we define the operator $R_{\vex}$ by
$$\mathcal R_{\vex} f:={\nabla}{\Delta}^{-1} f.$$
It is a vector-valued right inverse of the divergence operator for the mean-free scalar  functions, in the sense that
\[ \Div\mathcal R_{\vex} f= f - \dashint_{\mathbb T^3} f. \]
\end{defn}

\begin{lem}[Stationary flows in 2D\cite{CDS}]\label{Betrimi}
Given  $ \nu\geq1$ and $\Lambda\subset\nu\mathbb{S}^2\cap\mathbb{Z}^3$ such that $-\Lambda=\Lambda$. Then for any $b_k\in\mathbb{R}$ with $b_{k}=b_{-k}$, the vector flied
$$ W(\xi)=\sum_{k\in\Lambda}b_k\frac{\ii k^{\perp}}{|k|}e^{2\pi \ii k\xi} $$
is real-valued, divergence-free and satisfies
$$\Div_{\xi}(W\otimes W)= \frac{1}{2}\nabla_{\xi}(|W|^2-\nu^2\psi^2)$$
and \begin{align}\label{b-g}
W\otimes W=&\sum_{j,k\in\Lambda,~j+k\neq0}-b_j b_k e^{\ii (j+k)\xi}\frac{  j^{\perp}}{|j|}\otimes \frac{  k^{\perp}}{|k|}+\sum_{j,k\in\Lambda,~j+k=0} b^2_k\frac{  k^{\perp}}{|k|}\otimes \frac{  k^{\perp}}{|k|}\notag\\
=&\sum_{j,k\in\Lambda,~j+k\neq0}-\frac{b_k}{|k|}\frac{b_j}{|j|}  e^{\ii (j+k)\xi} {  \bar{j}} \otimes  {  \bar{k}} +\sum_{j,k\in\Lambda,~j+k=0} \Big(\frac{b_k}{|k|}\Big)^2   \bar{k}\otimes \bar{k},
\end{align}
where $\bar{k}:=k^{\perp}$ and $\bar{j}:=j^{\perp}$.
\end{lem}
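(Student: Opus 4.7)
\textbf{Proof proposal for Lemma \ref{Betrimi}.} The plan is to verify the four claims (real-valuedness, divergence-free, the tensor expansion, and the divergence identity) by direct Fourier computation, exploiting three structural facts: the symmetry $b_k=b_{-k}$, the orthogonality $k^\perp\cdot k=0$, and the spectral localization $|k|=\nu$ for all $k\in\Lambda$.

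First I would establish that $W$ is real-valued and divergence-free. For real-valuedness, I pair each $k\in\Lambda$ with $-k\in\Lambda$; since $b_{-k}=b_k$ and $(-k)^\perp=-k^\perp$, the two summands combine into $-2b_k\frac{k^\perp}{|k|}\sin(2\pi k\cdot\xi)$, which is manifestly real. For the divergence, I compute
\[
\nabla_\xi\cdot W=\sum_{k\in\Lambda}b_k\frac{i}{|k|}(2\pi i\,k^\perp\!\cdot k)\,e^{2\pi ik\cdot\xi}=0,
\]
since $k^\perp\cdot k=0$ for every $k$.

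Next I would verify the expansion formula for $W\otimes W$. Expanding the double sum gives
\[
W\otimes W=-\sum_{j,k\in\Lambda}\frac{b_jb_k}{|j||k|}\,\bar{j}\otimes\bar{k}\,e^{2\pi i(j+k)\cdot\xi},
\]
and splitting according to whether $j+k=0$ or not yields the two pieces stated in \eqref{b-g}. On the diagonal $j=-k$ one uses $\bar{j}=-\bar{k}$ and $b_{-k}=b_k$, so the minus signs cancel and one recovers $\sum_k(b_k/|k|)^2\bar{k}\otimes\bar{k}$.

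The main step, and the only non-routine one, is the pressure-gradient identity
\[
\mathrm{div}_\xi(W\otimes W)=\tfrac12\nabla_\xi\bigl(|W|^2-\nu^2\psi^2\bigr).
\]
I would use the standard 2D vector identity $(W\cdot\nabla)W=\tfrac12\nabla|W|^2-W^\perp\,\mathrm{curl}\,W$, valid because $\nabla\cdot W=0$, which reduces the problem to showing that $W^\perp\,\mathrm{curl}\,W$ is an exact gradient. A direct Fourier calculation gives $\mathrm{curl}\,W=-2\pi\sum_k b_k|k|e^{2\pi ik\cdot\xi}$, and using $|k|=\nu$ this collapses to $-2\pi\nu\,\Psi$ where $\Psi:=\sum_k b_ke^{2\pi ik\cdot\xi}$ (proportional to the stream function $\psi$). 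Then
\[
W^\perp\,\mathrm{curl}\,W=2\pi\nu\,i\sum_{j,k}\frac{b_jb_k\,k}{|k|}e^{2\pi i(j+k)\cdot\xi}=2\pi\,i\sum_{j,k}b_jb_k\,k\,e^{2\pi i(j+k)\cdot\xi},
\]
and symmetrizing in $j\leftrightarrow k$ replaces $k$ by $\tfrac12(j+k)$, producing exactly $\tfrac12\nabla_\xi\bigl(\Psi^2\bigr)$. Rescaling $\Psi$ to the normalization of $\psi$ used elsewhere in the paper yields the claimed $\tfrac12\nabla_\xi(|W|^2-\nu^2\psi^2)$.

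The main obstacle is the symmetrization in the last step: one must verify that the apparent lack of symmetry between $j$ and $k$ in the factor $\bar{k}/|k|$ (or $k$) disappears precisely because $|k|=|j|=\nu$ is constant on $\Lambda$. If one allowed multiple radii, the identity would fail and one would only get an asymmetric oscillatory error. The rest of the proof is bookkeeping of signs and factors of $2\pi$, which I would assemble at the end to match the statement.
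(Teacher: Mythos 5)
The paper does not give its own proof of Lemma~\ref{Betrimi}; it is quoted from \cite{CDS}, so I assess your argument on its own terms. The verifications of real-valuedness, of $\nabla\cdot W=0$, and of the expansion \eqref{b-g} are all correct and routine. The problem is in the pressure-gradient identity, where your argument is not internally sign-consistent. You invoke the identity $(W\cdot\nabla)W=\tfrac12\nabla|W|^2-W^\perp\,\curl W$, but then compute $W^\perp$ in Fourier via $(k^\perp)^\perp=-k$, i.e.\ with the counterclockwise convention $v^\perp=(-v_2,v_1)$. For that convention the correct vector identity is $(W\cdot\nabla)W=\tfrac12\nabla|W|^2+W^\perp\,\curl W$ (with a plus sign), since $(W\cdot\nabla)W-\tfrac12\nabla|W|^2=\omega(-W_2,W_1)$ with $\omega=\partial_1W_2-\partial_2W_1$. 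Carrying the signs through honestly yields
\[
\Div_\xi(W\otimes W)=\tfrac12\nabla_\xi\bigl(|W|^2+\Psi^2\bigr),\qquad \Psi:=\sum_{k\in\Lambda}b_k e^{2\pi\ii k\cdot\xi},
\]
i.e.\ a plus rather than a minus in front of the $\Psi^2$-term. A one-line sanity check confirms this: take $\nu=1$, $\Lambda=\{(\pm1,0)\}$, $b_k\equiv1$, so that $W=(0,-2\sin(2\pi x_1))$ and $\Psi=2\cos(2\pi x_1)$; then $\Div(W\otimes W)=0$ and $|W|^2+\Psi^2\equiv4$ is constant, whereas $\nabla(|W|^2-\Psi^2)=\nabla(-4\cos(4\pi x_1))\ne 0$.

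The closing ``rescaling $\Psi$ to the normalization of $\psi$'' is therefore not mere bookkeeping: for any real-valued normalization of the stream function (e.g.\ $\psi=\Psi/(2\pi\nu)$, which makes $W=\nabla^\perp\psi$), the quantity $\nu^2\psi^2$ is a \emph{nonnegative} multiple of $\Psi^2$ and cannot produce the minus sign in $|W|^2-\nu^2\psi^2$. So either the lemma as transcribed here has a sign or normalization typo in $\nu^2\psi^2$ (note $\psi$ is never defined in the statement, and $\nu\mathbb{S}^2\cap\mathbb{Z}^3$ should read $\nu\mathbb{S}^1\cap\mathbb{Z}^2$), or a genuinely different definition of $\psi$ is in force; in either case your proof needs to fix a single $\perp$ convention, use the matching sign in the Lamb identity, and then identify $\psi$ explicitly so the reader can see the constants and signs close. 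As written, your derivation contains one compensating sign error and a hand-waved final step, which together make the argument appear to reproduce the stated formula without actually establishing it.
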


\begin{lem}[{Geometric Lemma}\cite{CDS}]\label{first S}Let $\epsilon>0$ and $B_{\sigma}(0)$ denote the ball of radius $\sigma$ centered at $\rm Id$ in the space of $2\times2$ symmetric matrices. For every $N\in\NN$, we can choose $r_0$ and $\nu\ge1$ with the following property. There exist pairwise disjoint subsets
\begin{align*}
\Lambda_i\subset \{k\in \ZZ^2| |k|=\nu\}, \quad i\in\{1,2,\cdots,N\}
\end{align*}
that consists of vectors $k$ with associated orthonormal basis $(k,\bar{k} ):=(k, {k}^{\perp} )$  and smooth function $a_{k}:B_{r_0}(\rm Id)\rightarrow\mathbb{R}$ such that for each $R\in B_{r_0}(\rm Id)$, we have the following identity:
$$R=\sum_{k\in\Lambda_i}a^2_{k}(R) \bar{k}\otimes\bar{k} ,\quad i=1,2$$
Furthermore,  we have $\Lambda_i=-\Lambda_i$ and $a_{k}=a_{-k}$. For $0\le K\le L+1$, there exists a constant $M$ such that
\begin{equation}\label{M}
\|a_{k}\|_{C^K(\bar{B}_{r_0}({\rm Id}))}\le \tfrac{M}{100}.
\end{equation}
\end{lem}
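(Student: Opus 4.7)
The strategy is a standard implicit function theorem argument applied at $R=\mathrm{Id}$, combined with a number-theoretic choice of $\nu$ to produce enough lattice directions on the circle $|k|=\nu$. The key observation is that the space of symmetric $2\times 2$ matrices is only $3$-dimensional, so once one has a configuration $\Lambda_i$ that both spans and produces $\mathrm{Id}$ as a strictly positive combination of $\bar k\otimes\bar k$, the decomposition extends smoothly to a neighborhood.

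First I would choose $\nu$. For each $N\in\mathbb{N}$, classical number theory (Jacobi's two-square formula / Gauss) shows that by taking $\nu^2$ to be a product of sufficiently many primes $p\equiv 1\pmod 4$, the number of lattice points $k\in\mathbb{Z}^2$ with $|k|=\nu$ can be made as large as desired; in particular at least $8N$ such points, symmetric under $k\mapsto -k$ and not all collinear. Partition these into $N$ pairwise disjoint subsets $\Lambda_1,\dots,\Lambda_N$, each of size $\ge 8$, each symmetric ($\Lambda_i=-\Lambda_i$), and each containing at least three linearly independent directions so that the tensors $\{\bar k\otimes\bar k\}_{k\in\Lambda_i}$ span the $3$-dimensional space $\mathrm{Sym}(\mathbb{R}^2)$.

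Next I would produce a positive decomposition of the identity: for each $i$ I choose positive numbers $c_k^{(i)}>0$ with $c_k^{(i)}=c_{-k}^{(i)}$ such that $\mathrm{Id}=\sum_{k\in\Lambda_i}(c_k^{(i)})^2\,\bar k\otimes\bar k$. This is possible because $\mathrm{Id}$ lies in the interior of the convex cone generated by the $\bar k\otimes\bar k$ (as the mean $\tfrac{1}{|\Lambda_i|}\sum_{k\in\Lambda_i}\bar k\otimes\bar k$ is a positive multiple of $\mathrm{Id}$ by the symmetry $\Lambda_i=-\Lambda_i$ and the fact that no proper symmetry axis contains all directions). Then I would apply the implicit function theorem to the smooth map
\begin{equation*}
\Psi:(c_k)_{k\in\Lambda_i/\{\pm\}}\longmapsto \sum_{k\in\Lambda_i} c_k^2\,\bar k\otimes\bar k.
\end{equation*}
Its differential at the point $(c_k^{(i)})$ is surjective onto $\mathrm{Sym}(\mathbb{R}^2)$ because the images $\bar k\otimes\bar k$ span. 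Selecting any smooth right-inverse produces smooth functions $a_k=a_k(R)$, defined on a sufficiently small ball $B_{r_0}(\mathrm{Id})$, with $a_k(\mathrm{Id})=c_k^{(i)}>0$ (hence positive, hence real-valued when squared), with $a_k=a_{-k}$ by enforcing the symmetry on the branch chosen, and satisfying $R=\sum_{k\in\Lambda_i}a_k^2(R)\,\bar k\otimes\bar k$.

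Finally, the $C^K$ bound $\|a_k\|_{C^K(\bar B_{r_0}(\mathrm{Id}))}\le M/100$ for $0\le K\le L+1$ follows by standard smoothness of the implicit function, after possibly shrinking $r_0$: each derivative of $a_k$ up to order $L+1$ is a polynomial in the derivatives of $\Psi^{-1}$ evaluated at points in $B_{r_0}(\mathrm{Id})$, hence uniformly bounded, and we then choose the universal constant $M$ accordingly. The main obstacle, to my mind, is not the analytic step but the combinatorial/number-theoretic one: guaranteeing that a single radius $\nu$ admits simultaneously (i) enough lattice points to split into $N$ disjoint pieces, (ii) each piece rich enough in genuinely distinct directions to span $\mathrm{Sym}(\mathbb{R}^2)$, and (iii) each piece symmetric under $k\mapsto -k$. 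All three requirements are satisfied by taking $\nu$ to be (a square root of) a product of sufficiently many distinct primes congruent to $1$ modulo $4$, assigning conjugate Gaussian integer pairs appropriately to the $\Lambda_i$; the size bound \eqref{M} then fixes $r_0$ at the end.
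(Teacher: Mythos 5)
Your overall strategy --- choose $\nu$ so the circle $|k|=\nu$ has many lattice points, split them into symmetric families $\Lambda_i$, express $\mathrm{Id}$ as a strictly positive combination of the $\bar k\otimes\bar k$, and invoke the implicit function theorem to obtain smooth coefficients on a neighbourhood $B_{r_0}(\mathrm{Id})$ --- is the standard one and is what the cited reference does (the paper itself gives no proof, only the citation). The final $C^K$ bound and the choice of a smooth square-root branch are both fine. However, there is a genuine gap in the cone-interior step.

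You assert that $\mathrm{Id}$ lies in the interior of the convex cone generated by $\{\bar k\otimes\bar k : k\in\Lambda_i\}$ because the mean $\tfrac{1}{|\Lambda_i|}\sum_{k\in\Lambda_i}\bar k\otimes\bar k$ is a positive multiple of $\mathrm{Id}$, attributing this to $\Lambda_i=-\Lambda_i$ and to the directions not being collinear. This is false, and in fact $\mathrm{Id}$ need not even lie in the cone. Take $\nu=5$ and
\[ \Lambda_i = \{\pm(3,4),\ \pm(4,3),\ \pm(5,0)\}. \]
This set is $\pm$-symmetric, has three pairwise non-parallel directions, and the matrices $\bar k\otimes\bar k$ span $\mathrm{Sym}(\mathbb{R}^2)$ (the relevant $3\times 3$ determinant equals $-2100\ne 0$). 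Yet, with $\bar k=k^\perp$, the three distinct tensors are
\[ \begin{pmatrix}16 & -12 \\ -12 & 9\end{pmatrix},\qquad \begin{pmatrix}9 & -12 \\ -12 & 16\end{pmatrix},\qquad \begin{pmatrix}0 & 0 \\ 0 & 25\end{pmatrix}, \]
so any combination with nonnegative coefficients $c_1,c_2,c_3$ has off-diagonal entry $-12(c_1+c_2)\le 0$, which vanishes only if $c_1=c_2=0$; hence $\mathrm{Id}$ is not in the cone. The hypothesis you actually need is closure of $\Lambda_i$ under the full dihedral symmetry $D_4$ of the square lattice, not merely under $k\mapsto -k$. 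For a single $D_4$-orbit $\{\pm(a,b),\ \pm(a,-b),\ \pm(b,a),\ \pm(b,-a)\}$ with $0<a\ne b$ one computes
\[ \sum_{k\in\Lambda_i}\bar k\otimes\bar k = 4(a^2+b^2)\,\mathrm{Id}, \]
so $\mathrm{Id}$ is a strictly positive combination, the four tensors span $\mathrm{Sym}(\mathbb{R}^2)$, and the implicit function theorem step goes through. Accordingly the number-theoretic requirement becomes that $\nu^2$ admit at least $N$ pairwise $D_4$-inequivalent representations $a^2+b^2$ with $0<a<b$; your choice of $\nu^2$ as a product of distinct primes congruent to $1\bmod 4$ supplies this (and, being odd and squarefree, rules out the degenerate cases $ab=0$ and $a=b$). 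Your closing remark about ``assigning conjugate Gaussian pairs'' enforces only the Klein-four subgroup, which makes $\sum\bar k\otimes\bar k$ diagonal but not scalar and is still insufficient on its own.
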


\begin{lem}[\cite{BDSV, TZ18}]\label{l:non-stationary-phase} {If $a\in C^\infty(\mathbb T^2;\mathbb R^2)$}, $b\in C^\infty(\mathbb T^2;\mathbb R)$ and $\Phi\in  C^\infty (\mathbb T^2 ; \mathbb R^2)$ satisfying  $|\nabla \Phi|  \sim 1$, then
\begin{align*}
\|\mathcal R( a \ee^{\ii k\cdot \Phi}) \|_{C^\alpha}
    \lesssim  \frac{\|a\|_{C^0}}{|k|^{1-\alpha}} + \frac{\|a\|_{C^{N+\alpha}} + \|a\|_{C^0} \|\Phi\|_{C^{N+\alpha}}} {|k|^{N-\alpha}},\\
    \|\mathcal R_{\vex}( b \ee^{\ii k\cdot \Phi}) \|_{C^\alpha}
    \lesssim  \frac{\|b\|_{C^0}}{|k|^{1-\alpha}} + \frac{\| b\|_{C^{N+\alpha}} + \|b\|_{C^0} \|\Phi\|_{C^{N+\alpha}}} {|k|^{N-\alpha}}.
\end{align*}
\end{lem}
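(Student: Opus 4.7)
Both bounds reduce to the same model estimate, since $\mathcal{R}$ and $\mathcal{R}_{\vex}$ are compositions of $(-\Delta)^{-1}$ (or $(-\Delta)^{-2}$) with first- or second-order differential operators (see Definition~\ref{def.R}). By absorbing the outer derivative via Schauder theory, the task becomes controlling
\[
\|(-\Delta)^{-1}(f\,e^{\ii k\cdot\Phi})\|_{C^{1+\alpha}}
\]
by the right-hand side appearing in the lemma. The plan is to attack this via a finite parametrix expansion in powers of $|k|^{-1}$, then sum up the contributions.

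The computation
\[
-\Delta\bigl(g\,e^{\ii k\cdot\Phi}\bigr) = \bigl(|k\cdot\nabla\Phi|^2 g - 2\ii (\nabla\Phi)^{\TT}k\cdot\nabla g - \ii(k\cdot\Delta\Phi)\,g - \Delta g\bigr) e^{\ii k\cdot\Phi},
\]
together with the non-degeneracy $|k\cdot\nabla\Phi|^2 \gtrsim |k|^2$ (which follows from $|\nabla\Phi|\sim 1$), suggests the parametrix choice $g_{(1)} := f/|k\cdot\nabla\Phi|^2$. This will give
\[
(-\Delta)^{-1}(f\,e^{\ii k\cdot\Phi}) = g_{(1)}\,e^{\ii k\cdot\Phi} + (-\Delta)^{-1}\bigl(E_1(f,\Phi)\,e^{\ii k\cdot\Phi}\bigr),
\]
where $E_1$ involves at most one new derivative of $f$ or $\Phi$ and carries an extra factor of $|k|^{-1}$. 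Iterating $N$ times, I would obtain
\[
(-\Delta)^{-1}(f\,e^{\ii k\cdot\Phi}) = \sum_{n=1}^{N} \frac{G_n(f,\Phi)}{|k|^{2n}}\,e^{\ii k\cdot\Phi} + (-\Delta)^{-1}\bigl(H_N\,e^{\ii k\cdot\Phi}\bigr),
\]
with each $G_n$ polynomial in derivatives up to order $2n-2$ of $f$ and order $2n-1$ of $\Phi$, and $\|H_N\|_{C^\alpha}\lesssim |k|^{-N}\bigl(\|f\|_{C^{N+\alpha}}+\|f\|_{C^0}\|\Phi\|_{C^{N+\alpha}}\bigr)$. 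Applying the outer derivative from $\mathcal{R}$ or $\mathcal{R}_{\vex}$, the leading term $g_{(1)}\,e^{\ii k\cdot\Phi}$ will contribute the sharp $\|f\|_{C^0}|k|^{\alpha-1}$ piece via the standard product interpolation $\|g\,e^{\ii k\cdot\Phi}\|_{C^\alpha}\lesssim \|g\|_{C^0}(|k|\|\Phi\|_{C^1})^\alpha + \|g\|_{C^\alpha}$; the remainder will then be controlled by Schauder's estimate $\|(-\Delta)^{-1}F\|_{C^{1+\alpha}} \lesssim \|F\|_{C^{\alpha-1}}$ combined with the bound on $H_N$, producing the $|k|^{\alpha-N}$ tail.

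The main obstacle I foresee is the combinatorial bookkeeping in the iteration, specifically showing that $\|\Phi\|_{C^{N+\alpha}}$ enters only linearly in the final estimate of $H_N$. Each application of the parametrix differentiates the coefficient $|k\cdot\nabla\Phi|^{-2}$, and naively one might produce terms with higher powers of high-order norms of $\Phi$. The key idea to resolve this will be to apply the Fa\`{a} di Bruno formula to $x\mapsto |x|^{-2}$ (valid away from zero thanks to $|\nabla\Phi|\sim 1$) and then distribute derivatives so that all top-order differentiations concentrate on a single $\Phi$-factor, with the remaining factors absorbed into the $\|\nabla\Phi\|_{C^0}\sim 1$ bound, thereby producing the stated linear dependence on $\|\Phi\|_{C^{N+\alpha}}$.
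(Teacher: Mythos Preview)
The paper does not supply its own proof of this lemma; it is quoted from \cite{BDSV, TZ18}. Your parametrix expansion for $(-\Delta)^{-1}$ is a valid route and proves the estimate. The argument in the cited references is organised slightly differently: rather than inverting $-\Delta$, one writes
\[
a\,e^{\ii k\cdot\Phi} = \frac{1}{\ii|k|^2}\,\partial_l\bigl(a\,[(\nabla\Phi)^{-1}k]_l\,e^{\ii k\cdot\Phi}\bigr) - \frac{1}{\ii|k|^2}\,\partial_l\bigl(a\,[(\nabla\Phi)^{-1}k]_l\bigr)\,e^{\ii k\cdot\Phi}
\]
and applies $\mathcal{R}$ (resp.\ $\mathcal{R}_{\vex}$) directly, using that $\mathcal{R}\partial_l$ is a Calder\'on--Zygmund operator bounded on $C^\alpha$. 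Each step then gains $|k|^{-1}$ and costs exactly one derivative on the coefficient, and at the final step one stops at $\mathcal{R}\partial_l(\cdot)$ without differentiating the amplitude again; this is why the tail carries $\|\Phi\|_{C^{N+\alpha}}$ rather than $\|\Phi\|_{C^{N+1+\alpha}}$. Your $-\Delta$ parametrix produces both $\nabla g$ and $\Delta\Phi$ terms per step and closes the remainder by a crude Schauder bound, which naturally puts one extra derivative on $\Phi$ --- harmless for every application here (since $\|\nabla\Phi_i\|_M\lesssim\ell_q^{-M}$), but a minor loss relative to the stated form. Your proposed resolution of the linearity-in-$\|\Phi\|$ issue is exactly the standard one: interpolate each factor $\|\nabla^{m_j+1}\Phi\|_0\lesssim\|\nabla\Phi\|_0^{1-m_j/M}\|\nabla\Phi\|_M^{m_j/M}$ and use $\|\nabla\Phi\|_0\sim 1$ to collapse products into a single top-order norm.
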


\begin{lem}[Inverse divergence iteration step \cite{BMNV}]\label{tracefree}
Let $\lambda \in \mathbb{N}^{+}$ and $\{\rho^{(n)}\}_{0 \leq n \leq N}$ be zero-mean smooth $\mathbb{T}^2$-periodic functions such that $\rho^{(n)} = \Delta \rho^{(n+1)}$. Let $\Phi$ be a volume-preserving transformation of $\mathbb{T}^2$ satisfying $\|\nabla \Phi - \rm{Id}\|_{0} \leq 1/2$, and define the matrix $A = (\nabla \Phi)^{-1}$. Then, for any smooth vector field $G$, we have
\begin{align}\label{div-nabla}
    G^i \rho \circ (\lambda \Phi) = \partial_j \mathring{R}^{ij} + \partial_i P + E^i,
\end{align}
where the traceless symmetric stress $\mathring{R}$ is given by
$$
\mathring{R}^{ij} = \lambda^{-1} \big(G^i A^j_l + G^j A^i_l - A^j_k A^i_k G^p \partial_p \Phi^l \big) (\partial_l \rho^{(1)}) \circ (\lambda \Phi) =: \lambda^{-1} A^{ij}_l(G) (\mathcal{L}_l \rho^{(0)}) \circ (\lambda \Phi),
$$
the pressure term $P$ is given by
$$
P = \lambda^{-1} \big(2 G^j A^j_l - A^j_k A^j_k G^p \partial_p \Phi^l \big) (\partial_l \rho^{(1)}) \circ (\lambda \Phi) =: \lambda^{-1} B_l(G) (\mathcal{L}_l \rho^{(0)}) \circ (\lambda \Phi),
$$
and the error term $E^i$ is given by
$$
E^i = \big(\partial_j (G^p A^i_k A^j_k - G^j A^i_k A^p_k) \partial_p \Phi^l - \partial_j G^i A^j_l \big) (\partial_l \rho^{(1)}) \circ (\lambda \Phi) =: \lambda^{-1} C^i_l(G) (\mathcal{L}_l \rho^{(0)}) \circ (\lambda \Phi),
$$
where $\mathcal{L}_l f= \partial_l (\Delta)^{-1}f$. Moreover, if $G$ is supported on a bounded domain $\Omega \subset \mathbb{T}^2$, then $\mathring{R}$, $P$, and $E$ are also compactly supported on $\Omega$.

Applying the relation \eqref{div-nabla} to the error term $E^i$ iteratively for $N$ steps, we obtain
\begin{align*}
G^i \rho \circ (\lambda \Phi) =& \partial_j \bigg( \sum_{n=1}^N \lambda^{-n} A^{(n)}_{ij \alpha_n}(G) \big(\mathcal{L}_{\alpha_n} \rho^{(0)}) \circ (\lambda \Phi) \bigg) + \partial_i \bigg( \sum_{n=1}^N \lambda^{-n} B^{(n)}_{\alpha_n}(G) \big(\mathcal{L}_{\alpha_n} \rho^{(0)}) \circ (\lambda \Phi) \bigg)\\
&+ \lambda^{-N} C^{(N)}_{i \alpha_N}(G) \big(\mathcal{L}_{\alpha_N} \rho^{(0)}) \circ (\lambda \Phi),
\end{align*}
where $\mathcal{L}_{\alpha_n} f = \partial^{\alpha_n} (\Delta^{-1})^n f$. Here, $A^{(n)}_{ij \alpha_n}$, $B^{(n)}_{\alpha_n}$, and $C^{(N)}_{i \alpha_N}$ depend linearly on $G$, and the following estimates hold:
$$
\|(A^{(n)}_{ij \alpha_n}(G), B^{(n)}_{\alpha_n}(G))\|_0 \leq C^n \|G\|_{n-1} \quad \text{and} \quad \|C^{(N)}_{i \alpha_N}(G)\|_0 \leq C^N \|G\|_N,
$$
for some universal constant $C$. Additionally, the supports of $A^{(n)}_{ij \alpha_n}(G)$, $B^{(n)}_{\alpha_n}(G)$, and $C^{(N)}_{i \alpha_N}(G)$ are contained in $\supp G$.
\end{lem}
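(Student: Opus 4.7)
The plan is to prove the lemma in three stages: establish the one-step decomposition \eqref{div-nabla}, iterate this decomposition $N$ times on the resulting error term, and then extract the uniform bounds on the coefficient operators $A^{(n)}, B^{(n)}, C^{(N)}$.

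For the one-step identity, the starting point is $\rho = \Delta \rho^{(1)} = \partial_l \partial_l \rho^{(1)}$ together with the chain rule
\begin{equation*}
\partial_j[(\partial_l \rho^{(1)})\circ(\lambda\Phi)] = \lambda\, \partial_j\Phi^m\, (\partial_l\partial_m\rho^{(1)})\circ(\lambda\Phi).
\end{equation*}
Since $\Phi$ is volume-preserving, Piola's identity gives $\partial_j A^j_l = 0$, and combined with the inverse relation $A^j_k \partial_j \Phi^m = \delta^m_k$ this yields the key representation
\begin{equation*}
\rho \circ (\lambda\Phi) = \lambda^{-1}\partial_j\bigl(A^j_l F_l\bigr),\qquad F_l := (\partial_l\rho^{(1)})\circ(\lambda\Phi).
\end{equation*}
Multiplying by $G^i$ and applying the product rule produces a non-symmetric inverse divergence $G^i \rho\circ(\lambda\Phi) = \partial_j(\lambda^{-1}G^i A^j_l F_l) - \lambda^{-1}(\partial_j G^i) A^j_l F_l$. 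To obtain a symmetric stress, I will add the symmetrizing piece $\lambda^{-1}G^j A^i_l F_l$: differentiating it via the product rule produces three contributions, and the ``bad'' one involving $G^j A^i_l \partial_j F_l$ contains mixed second derivatives of $\rho^{(1)}$ paired with $\partial_j\Phi^m$. This is precisely where the trace-removal term $-\lambda^{-1}A^j_k A^i_k G^p\partial_p\Phi^l F_l$ earns its keep: upon differentiating, its $A^j_k\partial_j\Phi^m = \delta^m_k$ contraction produces exactly the same mixed-derivative expression, with opposite sign, cancelling it. What remains after this cancellation can be grouped into $\partial_i P + E^i$ where $P$ collects the algebraic trace part and $E^i$ takes the structural form $\lambda^{-1} C^i_l(G)\,(\partial_l\rho^{(1)})\circ(\lambda\Phi)$ — the same shape as the original left-hand side but with $G^i$ replaced by a first-order differential operator in $G$ and the geometry, and multiplied by $\lambda^{-1}$. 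The support conclusion is immediate because every term in $\mathring R, P, E$ is a pointwise algebraic product involving $G$ (or $\partial G$).

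To iterate, I will exploit that $(\partial_l\rho^{(1)})\circ(\lambda\Phi) = (\mathcal L_l \rho^{(0)})\circ(\lambda\Phi)$ with $\mathcal L_l = \partial_l\Delta^{-1}$, and that the hypothesis $\rho^{(n)} = \Delta\rho^{(n+1)}$ allows one to rewrite $(\partial_l\rho^{(1)})\circ(\lambda\Phi) = (\Delta\partial_l\rho^{(2)})\circ(\lambda\Phi)$. Thus $E^i$ itself has exactly the form required to apply the one-step decomposition again, producing a new $(A,B)$-pair of divergence/pressure contributions and a new error with an extra factor of $\lambda^{-1}$ and with $\rho^{(2)}$ replaced by $\partial_l\rho^{(2)}$. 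Repeating this $N$ times builds the telescoping sum stated in the lemma, with $\mathcal L_{\alpha_n} = \partial^{\alpha_n}(\Delta^{-1})^n$ encoding the accumulated derivative/inverse-Laplacian pattern on $\rho^{(0)}$.

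Finally, the $C^0$ estimates on $A^{(n)}, B^{(n)}, C^{(N)}$ follow by induction on $n$: at each stage the new coefficient operators are algebraic combinations of the previous ones with $G$, $\nabla G$, $A$, $\nabla A$, $\nabla\Phi$, and $\nabla^2\Phi$, all of which are controlled uniformly thanks to $\|\nabla\Phi - \Id\|_0 \le 1/2$ (which forces $\|A\|_0 \lesssim 1$). Counting derivatives gives $\|(A^{(n)}(G), B^{(n)}(G))\|_0 \lesssim C^n\|G\|_{n-1}$ and $\|C^{(N)}(G)\|_0 \lesssim C^N\|G\|_N$, with constants depending only on $C^N$-norms of $\nabla\Phi$. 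The support condition $\supp \subset \supp G$ propagates trivially through the induction because the operators $\mathcal L_{\alpha_n}$ act only on $\rho^{(0)}$, never on $G$.

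The main obstacle I expect is the algebraic bookkeeping in the one-step trace cancellation: producing the precise coefficient $A^j_k A^i_k G^p \partial_p\Phi^l$ and verifying that its contribution to $\partial_j\mathring R^{ij}$ exactly removes the mixed-derivative residue left over from the symmetrization requires careful index tracking (made nontrivial by the asymmetric roles of the rows/columns of $A$ in the Piola identity versus the chain rule). Once that one identity is secured, the iteration and the estimates are routine bookkeeping.
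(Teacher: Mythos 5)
Because the paper presents this lemma as a citation to \cite{BMNV} and gives no proof of its own, there is no in-paper argument to compare against; the comparison must be with the standard proof from that reference. Your outline reproduces the essential mechanism: the one-step identity $\rho^{(0)}\circ(\lambda\Phi)=\lambda^{-1}\partial_j\bigl(A^j_l(\partial_l\rho^{(1)})\circ(\lambda\Phi)\bigr)$, which follows from $\rho^{(0)}=\partial_l\partial_l\rho^{(1)}$, the chain-rule relation $A^j_l\partial_j\bigl[(\partial_l\rho^{(1)})\circ(\lambda\Phi)\bigr]=\lambda(\Delta\rho^{(1)})\circ(\lambda\Phi)$ via $A^j_k\partial_j\Phi^m=\delta^m_k$, and the Piola identity $\partial_jA^j_l=0$ for volume-preserving $\Phi$; then multiplication by $G^i$, symmetrization by adding $\lambda^{-1}G^jA^i_lF_l$, and cancellation of the resulting $G^jA^i_l\partial_jF_l$ residue by the derivative of the term $-\lambda^{-1}A^j_kA^i_kG^p\partial_p\Phi^lF_l$ (I checked the index contraction and the cancellation is exact). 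The iteration using $\rho^{(n)}=\Delta\rho^{(n+1)}$ so that the error $E^i$ inherits the same structural shape with $G\mapsto C^i_l(G)$ and an extra $\lambda^{-1}$, and the inductive derivative-count giving $\|A^{(n)},B^{(n)}\|_0\lesssim\|G\|_{n-1}$ and $\|C^{(N)}\|_0\lesssim\|G\|_N$, are also correct. The support claim is immediate since every coefficient is a pointwise algebraic product in $G$, $\nabla G$ and the geometry.

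Two minor caveats worth noting, both concerning the lemma's statement rather than your proof. First, the lemma as printed calls $\mathring R$ ``traceless,'' but with the formula given one has $\textup{tr}\,\mathring R^{ij}=\lambda^{-1}(2G^iA^i_l-A^i_kA^i_kG^p\partial_p\Phi^l)F_l=P$, which vanishes only when $AA^{\TT}=\textup{Id}$; the trace-free version requires subtracting $\tfrac12\delta^{ij}P$ and absorbing the remainder into the pressure, but this does not affect the decomposition identity you derive. Second, the constant in the estimate is called ``universal'' in the statement, whereas (as you correctly observe) it must depend on higher-order norms of $\nabla\Phi$, since each iteration lands a derivative on $A$ and $\nabla\Phi$; you are right to flag this dependence explicitly. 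Neither issue reflects a gap in your argument, and your proposed proof is essentially the one from \cite{BMNV}.
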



\begin{thebibliography}{99}
\bibitem{Alu09}
H. \textsc{Aluie}, Hydrodynamic and magnetohydrodynamic turbulence: invariants, cascades,
 and locality. Doctoral Dissertation, Johns Hopkins University, 2009.




\bibitem{BBV}
R. \textsc{Beekie}, T. \textsc{Buckmaster}, and V. \textsc{Vicol},\textit{ Weak solutions of ideal mhd which do not conserve magnetic helicity}, Annals of PDE 6 (1) (2020), Paper No. 1, 40 pp.


\bibitem{BLN}
A. C. \textsc{Bronzi}, M. C. \textsc{Lopes Filho}, and H. J. \textsc{Nussenzveig Lopes}, \textit{Wild solutions for 2d incompressible ideal
flow with passive tracer}, Comm. Math. Sci. 13
 (5) (2015),1333--1343.



\bibitem{BCD}
E. \textsc{Bru\`{e}}, M.\textsc{Colombo}, C. \textsc{DeLellis}, \emph{Positive solutions of transport equations and classical nonuniqueness of characteristic curves}, Arch. Ration. Mech. Anal.,  240 (2) (2021), 1055--1090,


\bibitem{B15}
 T. \textsc{Buckmaster},  \emph{Onsager's conjecture almost everywhere in Time}, Comm. Math.
Phys., 333 (3) (2015), 1175--1198.



\bibitem{BCV}
 T. \textsc{Buckmaster}, M. \textsc{Colombo}, V. \textsc{Vicol},  \emph{Wild solutions of the   Navier-Stokes equations whose singular
sets in time have hausdorff dimension strictly less than 1}, J. Eur. Math. Soc., 24 (9) (2021), 3333--3378.



\bibitem{BDIS15}
T. \textsc{Buckmaster}, C. \textsc{De Lellis}, P. \textsc{Isett} , L.  \textsc{ Sz\'{e}kelyhidi Jr.}, \emph{Anomalous
dissipation for 1/5-H\"{o}lder Euler flows}, Ann. of Math., \emph{(2)}, 182 (1) (2015), 127--172.





\bibitem{BDS}
T. \textsc{Buckmaster}, C. \textsc{De Lellis}, L. \textsc{Sz\'{e}kelyhidi Jr.}, \emph{Dissipative Euler flows with Onsager'critical spatial regularity}, Commun. Pur.  Appl.  Math., 69 (9) (2016), 1613--1670.

\bibitem{BDSV}
T. \textsc{Buckmaster}, C. \textsc{De Lellis}, L. \textsc{Sz\'{e}kelyhidi Jr.},
V. \textsc{Vicol}, \emph{Onsager's conjecture for admissible weak solutions},  Commun. Pure Appl. Math., 72  (2) (2019), 229--274.









\bibitem{BV21}
T. \textsc{Buckmaster}, V. \textsc{Vicol},
\textit{Convex integration constructions in hydrodynamics},
Bull. Amer. Math. Soc. (N.S.) 58 (1) (2021), 1-44.



\bibitem{BMNV}
T. \textsc{Buckmaster}, N. \textsc{Masmoudi}, M \textsc{Novack}, V. \textsc{Vicol}, \textit{Non-conservative $H^{\frac{1}{2}-}$ weak solutions of the incompressible 3D Euler equations}, arXiv:2101.09278v2.


\bibitem{CKS}
 R. E. \textsc{Caflisch}, I. \textsc{Klapper}, G. \textsc{Steele},  \textit{Remarks on singularities, dimension and energy dissipation for ideal hydrodynamics and MHD},  Commun. Math. Phys. 184 (2) (1997) 443-455.


\bibitem{CL21}
A.  \textsc{Cheskidov},  X. \textsc{Luo},  \emph{Nonuniqueness of weak solutions for the transport equation at critical space
 regularity}, Ann. PDE, 7 (1) (2021), Paper No. 2, 45 pp.


\bibitem{CL}
 A. Cheskidov, X. Luo,  \textit{Sharp nonuniqueness for the Navier-Stokes equations}, Invent. Math. 229  (2022)
 987-1054.




\bibitem{CL22}
A.   \textsc{Cheskidov}, X. \textsc{Luo},  \emph{Extreme temporal intermittency in the linear Sobolev transport: almost
 smooth nonunique solutions}, Anal. PDE 17 (6) (2024), 2161-2177.

\bibitem{CDS}
A. \textsc{Choffrut}, C. \textsc{De Lellis}, L. \textsc{Szz\'{e}kelyhidi Jr}, \textit{Dissipative continuous Euler flows in two and three dimensions}, (2012), arXiv:1205.1226 .


\bibitem{CWT}
 P. \textsc{Constantin}, E. \textsc{Weinan}, and E.S. \textsc{Titi}, \textit{Onsager's conjecture on the energy conservation for solutions of
euler's equation}, Comm. Math. Phys., 165 (1) (1994), 207-209.


\bibitem{Dai}
M. \textsc{Dai}, \textit{Non-uniqueness of Leray-Hopf weak solutions of the 3d Hall-MHD system}, SIAM J. Math. Anal., 53 (5) (2021) 5979-6016.



\bibitem{DSS}
F.  \textsc{Daniel}, L.  \textsc{Sauli}, L. \textsc{Sz\'{e}kelyhidi Jr.},
\textit{Magnetic helicity, weak solutions and relaxation of ideal MHD}, Commun. Pure Appl. Math., 77 (4) (2024), 2181-2576.



\bibitem{DS09}
C. \textsc{De Lellis}, L. \textsc{Sz\'{e}kelyhidi Jr}, \emph{The Euler equations as a
differential inclusion},  Ann. of Math., \emph{(2)} 170 (3) (2009), 1417--1436.


\bibitem{DS2013}
C. \textsc{De Lellis}, L. \textsc{Sz\'{e}kelyhidi Jr}, \emph{Dissipative continuous
Euler flows}, Invent. Math., 193 (2) (2013), 377--407.




\bibitem{DS14}
C. \textsc{De Lellis}, L. \textsc{Sz\'{e}kelyhidi Jr}, \emph{ Dissipative Euler flows and Onsager's conjecture}, J. Eur. Math. Soc., 16 (7) (2014), 1467--1505.


\bibitem{FLS}
D. \textsc{Faraco}, S. \textsc{Lindberg}, L. \textsc{Sz\'{e}kelyhidi}, \textit{Bounded solutions of ideal MHD with compact support in space-time}, Arch. Ration. Mech. Anal., 239 (1) (2021) 1, 51-93.





\bibitem{FL}
 D.  \textsc{Faraco}, S. \textsc{Lindberg}, \textit{Proof of Taylor's conjecture on magnetic helicity conservation}, Comm. Math. Phys., 373 (2020), 707-738.


\bibitem{GLL}
J.-F. \textsc{Gerbeau}, C. \textsc{Le Bris}, T. \textsc{Leli\`{e}vre}, Mathematical methods for the magnetohydrodynamics of liquid metals. Numerical Mathematics and Scientific Computation, Oxford University Press, Oxford, (2006).

\bibitem{GR}
V. \textsc{Giri} and R. O. \textsc{Radu}, \textit{The 2D Onsager conjecture: a Newton-Nash iteration}, Invent. Math., 238 (2) (2024), 691-768.



\bibitem{Ise17}
P. \textsc{Isett}, \textit{H\"{o}lder Continuous Euler Flows in Three Dimensions with Compact Support in Time},  Princeton University Press, (2017).


\bibitem{Ise18}
P. \textsc{Isett}, \emph{ A proof of Onsager's conjecture},  Ann. of Math., \emph{(2)}, 188 (3) (2018), 871--963.



\bibitem{KL07}
E. \textsc{Kang}, J. \textsc{Lee}, \textit{Remarks on the magnetic helicity and energy conservation for ideal magneto-hydrodynamics}, Nonlinearity 20 (11) (2007) 2681-2689.




\bibitem{KMY}
C. \textsc{Khor}, C. \textsc{Miao}, W. \textsc{Ye}, \emph{Infinitely many
non-conservative solutions for the three-dimensional Euler equations with arbitrary initial data in $C^{\frac{1}{3}-\epsilon}$}, 2022, doi.org/10.48550/arXiv.2204.03344.




\bibitem{LZZ}
Y. \textsc{Li}, Z. \textsc{Zeng}, D.  \textsc{Zhang}, \textit{Non-uniqueness of weak solutions to 3D magnetohydrodynamic equations},
J. Math. Pures Appl., 165  (9)  (2022), 232--285.


\bibitem{Luo}
 X. \textsc{Luo},  \emph{Stationary solutions and nonuniqueness of weak solutions for the Navier-Stokes equations in high dimensions}, Arch. Ration. Mech. Anal. 233 (2) (2019), 701--747.




\bibitem{MNY}
C. \textsc{Miao}, Y. \textsc{Nie}, W. \textsc{Ye}, \textit{Onsager's type conjecture for the inviscid Boussinesq equations}, J. Funct. Anal. 287 (7) (2024), Paper No. 110527, 52 pp.


\bibitem{MY}
C.  \textsc{Miao}, W. \textsc{Ye}, \textit{
On the weak solutions for the MHD systems with controllable total energy and cross helicity}, J. Math. Pures Appl., 181 (9) (2024), 190--227.


\bibitem{MoS}
 S. \textsc{Modena}, Jr, L. \textsc{Sz\'{e}kelyhidi}, \emph{Non-uniqueness for the transport equation with Sobolev vector fields}, Ann. PDE, 4 (2) (2018): , 18--38.

\bibitem{MS}
S. \textsc{M\"{u}ller}, V. \textsc{\v{S}ver\'{a}k}, \emph{Convex integration for Lipschitz mappings and counterexamples to regularity}, Ann. of Math., \emph{(2)} 157 (3) (2003), 715--742.



\bibitem{N20}
M. \textsc{Novack}: \textit{Nonuniqueness of weak solutions to the 3 dimensional quasi-geostrophic equations},
 SIAM J. Math. Anal. 52 (4)  (2020) 3301-3349.

\bibitem{Onsa}
L.  \textsc{Onsager}, \textit{Statistical hydrodynamics}, Nuovo Cimento (9), 6 (1949), Supplemento 2,
 279-287.

\bibitem{ST}
M. \textsc{Sermange}, R.  \textsc{Temam}, \emph{Some mathematical questions related to the MHD equations}, Comm. Pure Appl. Math.,  36 (5) (1983), 635--664.

\bibitem{Sh00}
A. \textsc{Shnirelman}, \emph{ Weak solutions with decreasing energy of incompressible Euler equations}, Commun. Math. Phys., 210 (3) (2000), 541--603.

\bibitem{Tay74}
J.B. \textsc{Taylor}, \emph{Relaxation of toroidal plasma and generation of reverse magnetic fields}, Phys. Rev. Lett., 33 (19) (1974), 1139-1141.


\bibitem{Tay86}
J.B. \textsc{Taylor}, \emph{Relaxation and magnetic reconnection in plasmas}, Rev. Mod. Phys., 58 (3) (1986), 741-763.


\bibitem{TZ17}
T. \textsc{Tao}, L. \textsc{Zhang}, \emph{H\"{o}lder continuous solutions of Boussinesq equation
with compact support}, J. Funct. Anal., 272 (10) (2017), 4334--4402.

\bibitem{TZ18}
T. \textsc{Tao}, L. \textsc{Zhang}, \emph{On the continuous periodic weak solutions of Boussinesq equations}, Siam. J. Math. Anal., 50 (1) (2018), 1120--1162.

\end{thebibliography}
\end{document}